\documentclass[reqno,a4paper, 11pt]{amsart}
\usepackage{comment}
\usepackage[a4paper=true,pdfpagelabels]{hyperref}
\usepackage{graphicx}

\usepackage[ansinew]{inputenc}
\usepackage{amsfonts,epsfig}
\usepackage{latexsym}
\usepackage{mathabx}
\usepackage{amsmath}
\usepackage{amssymb}
\usepackage{mathrsfs}

\usepackage{color}

\newtheorem{theorem}{Theorem}
\newtheorem{lemma}[theorem]{Lemma}
\newtheorem{corollary}[theorem]{Corollary}

\newtheorem{proposition}[theorem]{Proposition}

\newtheorem{lettertheorem}{Theorem}
\newtheorem{letterlemma}[lettertheorem]{Lemma}

\theoremstyle{definition}

\theoremstyle{remark}

\numberwithin{equation}{section}

\newcommand{\B}{\mathcal{B}}
\newcommand{\D}{\mathbb{D}}
\newcommand{\DD}{\widehat{\mathcal{D}}}
\newcommand{\Dd}{\widecheck{\mathcal{D}}}

\newcommand{\DDD}{\mathcal{D}}

\newcommand{\N}{\mathbb{N}}

\newcommand{\R}{\mathbb{R}}

\newcommand{\C}{\mathbb{C}}

\newcommand{\conz}{\overline{z}}

\renewcommand{\phi}{\varphi}

\newcommand{\T}{\mathbb{T}}

\newcommand{\whw}{\widehat{\omega}}
\newcommand{\whv}{\widehat{\nu}}
\newcommand{\whm}{\widehat{\mu}}
\newcommand{\whe}{\widehat{\eta}}
\newcommand{\veps}{\varepsilon}

\newcommand{\be}{\begin{equation}}
\newcommand{\ee}{\end{equation}}

     \def\om{\omega}      
       \def\t{\theta}       
                  \def\z{\zeta}

\renewcommand{\H}{\mathcal{H}}

\allowdisplaybreaks

\begin{document}
\title{Ces\`aro operator induced by a Bergman kernel}

\thanks{The first author is  supported by "Ministerio de
		Ciencia e Innovaci\'{o}n\rq\rq , Spain (Project PID2022-138342NB-I00) and the second author is supported by Finnish Cultural Foundation, North Karelia Regional fund and Academy of Finland 356029.}

\keywords{Bergman reproducing kernel, Bergman space, Ces\`aro operator, Hardy space, doubling weight, Korenblum space, Bloch space, $BMOA$}

\author[O. Blasco]{Oscar Blasco}
\address{Departamento de An\'alisis Matem\'atico. Universitat de Valencia. Dr Moliner 60, Burjassot, 46100, Valencia, Spain}
\email{oscar.blasco@uv.es}

\author[A. Pennanen]{Atte Pennanen}
\address{University of Eastern Finland, P.O.Box 111, 80101 Joensuu, Finland}
\email{atte.pennanen@uef.fi}

\begin{abstract}
Let $\mu$ be a positive Borel measure on $[0,1)$ and $\om$ a radial weight. In this paper we consider the Ces\`aro-type operator $C_{\mu,\om}$ induced by the reproducing kernel $B^{\om}$ of the weighted Bergman space $A^2_{\om}$, given by
\begin{equation*}
    C_{\mu,\om}(f)(z)=\int_{0}^{1}f(tz)B^{\om}_t(z)\,d\mu(t), \quad z \in \D,
\end{equation*}
for functions $f$ analytic in $\D$. Under the assumption that $\om$ satisfies a natural doubling property, we study the boundedness of $C_{\mu,\om}$ acting on several spaces of analytic functions, including Hardy spaces $H^p$ and weighted Bergman spaces $A^p_{\nu}$. For $0<p,q<\infty$ and a two-sided doubling weight $\nu$, we completely characterize when $C_{\mu,\om}: H^p \to H^q$ and $C_{\mu,\om}: A^p_{\nu} \to A^q_{\nu}$ are bounded in terms of the interplay of tail integrals or moments of the inducing weights and the measure $\mu$. Many of the results obtained are new even in the setting of standard weights or when the Bergman reproducing kernel is replaced by the Cauchy kernel. In addition, we consider $C_{\mu,\om}$ acting on $H^{\infty}$, Korenblum spaces and weighted Hardy spaces.
\end{abstract}

\maketitle

\section{Introduction}

The classical Ces\`aro operator has a rich history, having its roots in the Ces\`aro summation method shown in the late 1800s \cite{Cesaro}. This summation can be considered as an operator on the sequence spaces or on spaces of analytic function by action on the coefficients. The boundedness of this operator between $\ell^p$-spaces for $1<p<\infty$, follows from the results of Hardy \cite{Hardy} and Landau \cite{Landau}. As far as we are aware, formally, this summation was first considered as an operator by Brown, Halmos and Shields, who considered certain operator theoretic properties on sequence and function spaces \cite{BHS}. Since then, there have been numerous studies on the properties of the Ces\`aro operator and its generalizations due to its many interesting properties. For a recent enlightening survey on the subject, see \cite{Ross}.

In this paper we are interested in Ces\`aro-type operators acting on spaces of analytic functions. Let $\H(\D)$ denote the space of analytic functions in the unit disc $\D$ of the complex plane~$\C$.  For $0<p<\infty$, we denote the $L^p$-mean of $f \in \H(\D)$ at the radius $r \in [0,1)$ by
\begin{equation*}
    M_p^p(r,f)=\frac{1}{2\pi}\int_{0}^{2\pi}|f(re^{i\t})|^p\,d\t,
\end{equation*}
and $M_\infty(r,f)=\sup_{|z|=r}|f(z)|$. We define the Hardy space $H^p$ as the functions $f \in \H(\D)$ satisfying
\begin{equation*}
    \|f\|_{H^p}=\sup_{0\leq r<1}M_p(r,f)<\infty.
\end{equation*}
Moreover, $H^{\infty}$ denotes the space of bounded analytic functions. For information on Hardy spaces, see for instance \cite{Duren,Garnett,Koosis}.

We say that a function $\om$ is a weight if $\om: \D \to [0,\infty)$ is integrable. A weight $\om$ is radial if $\om(z)=\om(|z|)$ for every $z \in \D$. For $0<p<\infty$ and a weight $\om$, we define the weighted Lebesgue space $L^p_{\om}$ as the measurable functions $f$ satisfying
\begin{equation*}
    \|f\|_{L^p_{\om}}=\int_{\D}|f(z)|^p\om(z)\,dA(z)<\infty.
\end{equation*}
The corresponding weighted Bergman space $A^p_{\om}$ consists of analytic functions in $L^p_{\om}$, so $A^p_{\om}=L^p_{\om} \cap \H(\D).$
When the weight is standard, meaning the weight is given by $\om_{\alpha}(z)=(\alpha+1)(1-|z|^2)^{\alpha}$ for $\alpha>-1$, we denote $A^p_{\om_{\alpha}}=A^p_{\alpha}$. For more information on Bergman spaces, see for instance \cite{DurenSchuster,Heden}.

The classical Ces\`aro operator $C$ acting on analytic functions can be defined by the action on the coefficients. Given $f \in \H(\D)$, defined by $f(z)=\sum_{n=0}^{\infty}a_nz^n$, we have
\begin{equation*}
    C(f)(z)=\sum_{k=0}^{\infty}\frac{1}{k+1}\sum_{n=0}^{k}a_n z^k=\int_{0}^{1}\frac{f(tz)}{1-tz}\,dt, \quad z \in \D.
\end{equation*}
The boundedness of this operator on Hardy and Bergman spaces has been achieved due to the work of several authors, {see for instance  \cite{Andersen,Miao,Siskakis1,Siskakis2} and the references within. In \cite{Andersen}, Andersen considered a weighted variant and, perhaps surprisingly, showed that the $L^p$-means of this weighted Ces\`aro operator of $f$ at the radius $r$ are bounded by the $L^p$-means of $f$ at the radius $r$, which directly yields the boundedness for Hardy spaces, Bergman spaces and mixed norm spaces.

Very recently, for a positive Borel measure $\mu$ on $[0,1)$, the authors in \cite{GalanoGirelaMerchCesaro} considered the following operator
\begin{equation}\label{Eq: Cmu}
    C_{\mu}(f)(z)=\int_{0}^{1}\frac{f(t z)}{1-t z}\,d\mu(t), \quad z \in \D.
\end{equation}
This can be seen as a generalization of the usual Ces\`aro operator, which is just the special case when $d\mu(t)=\,dt$. From now on, we denote $\mu \in M^+([0,1))$ when $\mu$ is a positive Borel measure on $[0,1)$. The authors in \cite{GalanoGirelaMerchCesaro} proved that the boundedness of this operator acting on $H^p$ and Bergman spaces $A^q_{\alpha}$ for $p\ge 1$ and $q>1$ is equivalent to $\mu$ being a classical Carleson measure. They also considered the operator acting on $\B$, $BMOA$ and $H^\infty$. Since then, this operator induced by a measure has seen considerable amount of study, see for instance \cite{Bao,Lin,Xie}, and even further generalizations, for example weighted versions \cite{BlascoMas,GalaSisZhao}, and complex Borel measures on $[0,1)$ \cite{BlascoHardy} and on the disc \cite{BlascoDirichlet,GalanoGirelaMerchCesaro2}.

When the point evaluations are bounded on $A^2_{\om}$, the Riesz representation theorem guarantees the existence of the reproducing kernels $B^{\om}_z \in A^2_{\om}$, which satisfy
\begin{equation*}
    f(z)=\int_{\D}f(\z)B^{\om}_z(\z)\om(\z)\,dA(\z), \quad z \in \D, \quad f \in A^2_{\om}.
\end{equation*}
This is certainly true for any radial weight. For standard weights, these reproducing kernels have a concrete form given by
\begin{equation*}
    B^{\om_{\alpha}}_z(\z)=\frac{1}{(1-\conz \z)^{2+\alpha}}, \quad z,\z \in \D,
\end{equation*}
which is zero-free and easy to work with. On the other hand, for a general weight the situation is completely different. For any orthonormal basis $\{e_n\}$ of $A^2_{\om}$, we only have the representation
\begin{equation*}
    B^{\om}_z(\z)=\sum_{n=0}^{\infty}e_n(\z)\overline{e_n(z)}, \quad z,\z \in \D.
\end{equation*}
In the case of radial weights, choosing the basis given by orthonormal monomials, we obtain the representation
\begin{equation*}
    B^{\om}_z(\z)=\sum_{n=0}^{\infty}\frac{(\overline{z}\z)^n}{2\om_{2n+1}}, \quad z,\z \in \D,
\end{equation*}
where $\om_x=\int_{0}^{1}r^x\om(r)\,dr$ denotes the moments of $\om$. For simplicity, we denote $B^{\om}_1(z)=B^{\om}(z)$ for every $z \in \D$, which is still certainly well defined. In general, one cannot say much more about the kernel, and therefore we are forced to work with this infinite series.  In fact, it is known that for a general radial weight, the kernel may have zeroes, which is not true for the standard weights \cite{PeralaJGEA}.

The classes of weights we consider are defined as follows. For a radial weight $\om$, let $\whw(z)=\int_{|z|}^{1}\om(r)\,dr$ for every $z \in \D$. We say that a radial weight $\om \in \DD$ if there exists $C=C(\om)>1$ such that
\begin{equation} \label{eq:DoublingDef}
    \whw(r) \leq C\whw\left(\frac{1+r}{2}\right), \quad 0\leq r<1.
\end{equation}
We say that 
 $\om \in \Dd$ if there exist $C'=C'(\om)>1$ and $K=K(\om)>1$ such that
\begin{equation}\label{eq:ReverseDoublingDef}
    \whw(r) \geq C'\whw\left(1-\frac{1-r}{K}\right), \quad 0\leq r<1.
\end{equation}
The intersection of these classes is denoted by $\DDD = \DD \cap \Dd$. We emphasize that weights in these classes are not necessarily differentiable, continuous, monotonic or even strictly positive, which is a vast difference to the standard weights. These weights were introduced by Pel\'aez and R\"atty\"a, originating from the study of small Bergman spaces in \cite{PR2014}, see also \cite{PR2015,PR2016}. These classes of weights arise from fundamental questions in the operator theory of weighted Bergman spaces, see \cite{PR2021,PRWW}.

In the setting of doubling weights, $L^p$-estimates for these kernels have been established in \cite{PR2016}. Very recently, the authors in \cite{PRWW} obtained sharp off-diagonal pointwise estimates for the kernels, which are crucial in some arguments we employ. Moreover, the results in \cite{PRWW} say that for the kernel to have a similar maximal growth as the standard kernels, the inducing weight has to belong to $\DD$.

In this paper we are interested in the Ces\`aro-type operator $C_{\mu,\om}$, induced by a Bergman kernel $B^{\om}$ and $\mu \in M^+([0,1))$, given by,
\begin{equation}\label{Eq: Cmuomega}
    C_{\mu,\om}(f)(z)=\int_{0}^{1}f(tz)B^{\om}_t(z)\,d\mu(t)=\sum_{n=0}^{\infty}\mu_n \sum_{k=0}^{n}\frac{a_k}{2\om_{2(n-k)+1}}z^n, \quad z \in \D, \quad f\in \H(\D).
\end{equation}
The generalization of Ces\`aro operator given by \eqref{Eq: Cmu} can be considered as an limit case of $C_{\mu,\om}$, as the classical operator is induced by the Cauchy kernel, which always has, possibly ever so slightly, less growth than any Bergman kernel. We will consider the case when $\om \in \DD$, which allows us sufficient control over the behavior of the kernel. We note that recently 
in \cite{MasMerchanRosa} the authors also considered Ces\`aro operators with kernels induced by doubling weights and a weight instead of a measure. Their generalization was slightly different as their case $\om \equiv 1$ corresponds to the classical Ces\`aro operator, while this is not the case for any weight and measure in \eqref{Eq: Cmuomega}. This small "issue" will be discussed later.

This paper is going to achieve several goals. First of all, we will show that the Carleson-type characterizations shown for example in \cite{GalanoGirelaMerchCesaro,GalaSisZhao} can actually be understood by the interplay between the inducing measure $\mu$ and the weight $\om$. Second of all, we will characterize the boundedness of $C_{\mu, \om}: H^p \to H^q$ and $C_{\mu, \om}:A^p_{\nu} \to A^q_{\nu}$ when $\nu \in \DDD$ for every $0<p,q<\infty$. In addition, the case $q>1$ is considered in the two-weight setting $A^p_{\nu} \to A^q_{\eta}$ for $\nu,\eta \in \DDD$. In the case of Hardy spaces, the situation of $p\neq q$ is essentially new, with only some partial results in the literature. For the weighted Bergman spaces, the case $q<p$ in the case of standard weights in the unit ball was shown recently by authors in the preprint \cite{Pan}. We emphasize that our results were obtained independently, and for a more general class of weights. When $p\leq q$, in \cite{BlascoMas,GalaSisZhao,Pan} many results were achieved in the standard weight setting. We emphasize here that the proof of our result in this case is still new and relies on several technical tools due to the generality of our setting. Moreover, we study the boundedness of $C_{\mu,\om}$ acting on $H^{\infty}$, Korenblum spaces and weighted Hardy spaces obtaining several novel results.

Next we define the fundamental function $F_{\mu,\om}$, whose behavior is closely related to that of $C_{\mu,\om}$. For a radial weight $\om$ and a positive Borel measure $\mu$ on $[0,1)$, define $F_{\mu,\om}$ by
\begin{equation*}
    F_{\mu,\om}(z)=\int_0^1 B^\om_t(z)d\mu(t)= \sum_{n=0}^{\infty}\frac{\mu_n}{2\om_{2n+1}}z^n, \quad z \in \D.
\end{equation*}
Note that $F_{\mu,\om}=C_{\mu,\om}(1)$. Aside from giving characterizations which are easier to compute explicitly, we show that the boundedness of $C_{\mu,\om}$ can be characterized by the fact that (a fractional derivative of) the function $F_{\mu,\om}$ belongs to a certain space. These connections have been studied for example in \cite{BlascoHardy, BlascoDirichlet, BlascoMas,GalanoGirelaMerchCesaro2}.

Therefore, we also need to define the one-weight fractional derivative $D^{\nu}$. For $f \in \mathcal{H}(\D)$ given by $f(z)=\sum_{k=0}^{\infty}a_kz^k$, it is defined by $D^{\nu}(f)(z)=\sum_{k=0}^{\infty}\frac{a_k}{\nu_{2k+1}}z^k$. The one-weight fractional integral $I^{\nu}$ is defined in an analogous manner. The study of fractional derivatives has a long history. The interest to this topic was possibly started by Liouville in 1832 \cite{Liouville}. Since then, it has been studied by many authors, including Hardy and Littlewood \cite{HL1}. More recently, Zhu introduced a different way to view fractional derivatives using Bergman kernels \cite{ZhuFrac}. Per\"al\"a was perhaps the first to consider fractional derivatives induced by weights in \cite{PeralaFrac}. He in fact studied the two-weight fractional derivative operator $R^{\om,\nu}$. Here we define $R^{\om,\nu}(f)(z)=\sum_{k=0}^{\infty}\frac{\om_{2k+1}}{\nu_{2k+1}}a_kz^k$. This notation is also needed in our study. For more information and recent studies on fractional derivatives, see for instance \cite{Bellavita,Moreno,PelaezDeLaRosa,PRW}. For simplicity, when the weight is the standard weight $\om_{\alpha-1}$ for some $\alpha>0$, we denote $D^{\om_{\alpha-1}}=D^{\alpha}$. We also need to have fractional derivatives of powers of weights to state our results. For $\beta>0$, we define $D^{\nu^{\beta}}$ by $D^{\nu^{\beta}}(f)(z)=\sum_{n=0}^{\infty}\frac{a_n}{\nu_{2n+1}^{\beta}}z^n$.

To state our result, we introduce the following notation. As with weights, for $\mu\in M^+([0,1))$, we denote $\widehat{\mu}(r)=\mu([r,1))=\int_{r}^{1}d\mu$ and $\mu_x=\int_{0}^{1}r^x\,d\mu(r)$ for every $x\ge 0$. The Carleson square $S(a)$ induced by a point $a \in \D \setminus \{0\}$ is defined as $S(a)=\{z \in \D: |z|\geq |a|, \, |\arg z-\arg a|\leq \frac{1}{2}(1-|a|)\}$. For $a,b>0$ we say that $a \lesssim b$ or $b \gtrsim a$ if there exists $C>0$ such that $a \leq Cb$. Moreover, if $a \lesssim b \lesssim a$, we write $a \asymp b$ and say that $a$ and $b$ are comparable. In particular, for a radial weight $\om$, we have that {$\om(S(r))\asymp \whw(r)(1-r)$} for $0\le r<1$. 

We also need to introduce some additional spaces of analytic functions. For $\alpha>0$, the Korenblum space $X_{\alpha}$ is defined by functions $f \in \H(\D)$ satisfying
\begin{equation*}
    \|f\|_{X_{\alpha}}=\sup_{0\leq r<1}(1-r)^{\alpha}M_{\infty}(r,f)<\infty.
\end{equation*}
In a similar manner, for $0<\gamma,p<\infty$, the weighted Hardy space $H^p_{\gamma}$ consists of functions $f \in \H(\D)$ satisfying
\begin{equation*}
    \|f\|_{H^p_{\gamma}}=(1-r)^{\gamma}M_p(r,f)<\infty.
\end{equation*}
The Bloch space $\B$ is defined as functions $f \in \H(\D)$ satisfying
\begin{equation*}
    \|f\|_{\B}=|f(0)|+\sup_{z\in\D}|f'(z)|(1-|z|^2)<\infty.
\end{equation*}
More information on Bloch space can be found in \cite{Bloch,Zhu}. The space $BMOA$ contains functions $f \in H^2$ with bounded mean oscillation on the boundary, and it can be equipped with the norm
\begin{equation*}
    \|f\|_{BMOA}=|f(0)|+\sup_{a\in \D}\|f \circ \phi_a-f(a)\|_{H^2}<\infty.
\end{equation*}
Here $\phi_a$ is the self-inverse automorphism of the unit disc. For a reference, see for instance \cite{GirelaBMOA,Zhu}.
Our first result is a complete characterization of measures for which $C_{\mu,\om}: H^p \to H^q$ is bounded for every $0<p,q<\infty$. The proofs rely on sharp pointwise and $L^p$-estimates of the kernels with many technical steps in the way.

\begin{theorem}\label{MainTheorem: Hardy}
Let $0<p,q<\infty$, $\om \in \DD$, and let $\mu \in M^+([0,1))$. Then the following holds:
\begin{enumerate}
    \item[\textup{(i)}] If $p\leq q$, the following conditions are equivalent:
    \begin{enumerate}
        \item[\textup{(a)}] $C_{\mu,\om}:H^p \to H^q$ is bounded;
        \item[\textup{(b)}]
        \begin{equation*}
            \|\mu^{(p,q)}_{\om}\|_{L^{\infty}}=\sup_{0\leq r<1}\frac{\whm(r)}{\whw(r)(1-r)^{1+\frac{1}{p}-\frac{1}{q}}}<\infty;
        \end{equation*}
        \item[\textup{(c)}]
            \begin{equation*}
                 \|\mu^{(p,q)}_{\om}\|_{\ell^{\infty}}=\sup_{n\in \N_0}\frac{\mu_n(n+1)^{1+\frac{1}{p}-\frac{1}{q}}}{\om_n}<\infty;
            \end{equation*}
        \item[\textup{(d)}] For every fixed $\beta>0$, we have $D^{\beta+\frac{1}{p}-\frac{1}{q}}F_{\mu,\om} \in X_{\beta}$.
    \end{enumerate}
    \begin{equation*}
    \end{equation*}
    Moreover, we have $$\|C_{\mu,\om}\|_{H^p \to H^q} \asymp \|\mu^{(p,q)}_{\om}\|_{L^{\infty}} \asymp \|\mu^{(p,q)}_{\om}\|_{\ell^{\infty}}\asymp \|D^{\beta+\frac{1}{p}-\frac{1}{q}}F_{\mu,\om}\|_{X_{\beta}}.$$
    \item[\textup{(ii)}] If $q<p$, the following conditions are equivalent:
    \begin{enumerate}
        \item[\textup{(a)}] $C_{\mu,\om}: H^p \to H^q$ is bounded;
        \item[\textup{(b)}] \begin{equation*}
        \|\mu_{\om}\|_{L^{\frac{qp}{p-q}}}^\frac{qp}{p-q}=\int_{0}^{1}\left(\frac{\whm(r)}{\whw(r)(1-r)}\right)^{\frac{qp}{p-q}}\,dr +\whm(0)^{\frac{qp}{p-q}}<\infty;
        \end{equation*}
        \item[\textup{(c)}]
        \begin{equation*}
         \|\mu_\om\|_{\ell^{\frac{qp}{p-q}}}^\frac{qp}{p-q}=\sum_{n=0}^{\infty}\left(\frac{\mu_n (n+1)}{\om_{n}}\right)^{\frac{qp}{p-q}}\frac{1}{(n+1)^2}<\infty;
        \end{equation*}
        \item[\textup{(d)}] $F_{\mu,\om} \in H^{\frac{qp}{p-q}}.$
    \end{enumerate}
Moreover, we have $$\|C_{\mu,\om}\|_{H^p \to H^q} \asymp \|\mu_{\om}\|_{\ell^{\frac{qp}{p-q}}} \asymp \|\mu_{\om}\|_{L^{\frac{qp}{p-q}}} \asymp \|F_{\mu,\om}\|_{H^{\frac{qp}{p-q}}}.$$
\end{enumerate}
\end{theorem}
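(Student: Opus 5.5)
I will first establish the equivalences of the measure-theoretic conditions (b), (c), (d) among themselves, and only afterwards connect them to the operator. For (b)$\Leftrightarrow$(c) in both parts, I use the standard moment–tail comparisons. For $\om\in\DD$ one has $\om_x\asymp\whw(1-\frac1{x+1})$. For any $\mu$ one has $\mu_n\gtrsim\whm(1-\frac1{n+1})$. In the other direction, $\mu_n=n\int_0^1 r^{n-1}\whm(r)\,dr$, so $\mu_n\lesssim\sup_r\bigl(\whm(r)/\Phi(r)\bigr)\, n\int_0^1 r^{n-1}\Phi(r)\,dr$ with $\Phi(r)=\whw(r)(1-r)^{1+\frac1p-\frac1q}$. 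The right-hand side is $\asymp \om_n (n+1)^{-1-\frac1p+\frac1q}$, because $\whw(r)(1-r)^{-\varepsilon}$ type integrals behave well for $\om\in\DD$.

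In part (ii), discretizing $[0,1)$ into the intervals $[1-2^{-k},1-2^{-k-1})$ turns both the integral and the sum into a dyadic sum of $\bigl(2^k\whm(1-2^{-k})/\whw(1-2^{-k})\bigr)^{s}2^{-k}$, with $s=\frac{qp}{p-q}$. The upper bound for $\mu_n$ is then handled through a Hardy-type inequality (a discrete Hardy inequality, valid since $s>0$ and the weights are doubling).

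For (d), note that $F_{\mu,\om}$ has coefficients $\mu_n/(2\om_{2n+1})\ge 0$. Hence $D^{\beta+\frac1p-\frac1q}F_{\mu,\om}$ has nonnegative coefficients comparable to $\frac{\mu_n}{\om_n}(n+1)^{\beta+\frac1p-\frac1q}$. Its $M_\infty$ is attained at $z=r$, and a power series with nonnegative coefficients satisfies the following: if $c_n$ is essentially monotone after multiplying by a doubling factor, then $\sup_n c_n(n+1)^{-\beta}<\infty$ iff $\sum c_nr^n\lesssim(1-r)^{-\beta}$. Here the quantity $\mu_n(n+1)^{1+\frac1p-\frac1q}/\om_n$ plays that role, using that $\mu_n$ is decreasing and $\om_n$ is doubling in $n$. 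In part (ii), $F_{\mu,\om}\in H^s$ for a series with nonnegative coefficients is characterized by the known Hardy–Littlewood-type theorem: $\sum (n+1)^{s-2}a_n^s<\infty$ with $a_n\asymp\mu_n/\om_n$, which is exactly (c) after noting $(n+1)^{s-2}=(n+1)^s/(n+1)^2$. This requires the quasi-monotonicity of $a_n$, which again follows from $\mu_n\downarrow$ and $\om_{n}$ doubling.

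For necessity (a)$\Rightarrow$(b), I test on $f_a(z)=(1-az)^{-\gamma}$ with $a\in(0,1)$ and $\gamma$ large, normalized by $\|f_a\|_{H^p}\asymp(1-a)^{1/p-\gamma}$. On the real segment $z\in(a,1)$, and more generally on a Stolz region of size $1-a$, one has $f_a(tz)\gtrsim(1-a)^{-\gamma}$ for $t\ge a$. The kernel satisfies $B^\om_t(z)$ with real part $\gtrsim 1/((1-t|z|)\whw(t|z|))$ near the positive axis; this is the sharp pointwise estimate from \cite{PRWW}. Consequently $|C_{\mu,\om}f_a|\gtrsim \whm(a)(1-a)^{-\gamma-1}/\whw(a)$ on an arc of length $1-a$, giving $M_q\gtrsim$ that times $(1-a)^{1/q}$, which yields (b).

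For $q<p$, necessity instead uses $f=\sum_k c_k f_{a_k}$ with dyadic $a_k$, or a duality/factorization argument: (a) implies that $C_{\mu,\om}(g)$ for $g$ built from $F$ is controlled. I would first try testing on functions with nonnegative coefficients chosen so that $C_{\mu,\om}f$ has coefficients $\gtrsim$ the coefficients of $F_{\mu,\om}$ times those of $f$. I would then use the Hardy–Littlewood characterization for positive-coefficient functions in $H^p$, $H^q$, reducing to a discrete $\ell^p\to\ell^q$ multiplier-type problem whose dual exponent is $s$.

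For sufficiency, I write $|C_{\mu,\om}f(z)|\le\int_0^1|f(tz)||B^\om_t(z)|\,d\mu(t)$. For $p\le q$ I use $M_q(\rho,f_t)\le M_p(t\rho,f)\,(1-t)^{1/q-1/p}$ (the standard growth estimate for $H^p$) together with Minkowski and $M_\infty$ bounds for the kernel. This reduces to $\int_0^1 (1-t)^{\frac1q-\frac1p}\frac{d\mu(t)}{(1-t\rho)\whw(t\rho)}$, which is bounded under (b) via integration by parts. That step fails for $q<p$ when $p\le 1$, so there I use the $L^p$-estimates of $B^\om_t$ from \cite{PR2016} together with Hölder with exponents $p/q$ and $s$ on a dyadic decomposition of $t$, and the bound $M_\infty(t,f)\lesssim(1-t)^{-1/p}\|f\|_{H^p}$ is insufficient; instead a nontangential maximal function argument is needed: $|f(tz)|\le f^*(z/|z|)$, so that $\|C f\|_{H^q}\le\|f^*\cdot \sup_\rho|\int|B_t^\om(\rho\,\cdot)|d\mu|\|_{L^q}$. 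Applying Hölder then gives $\|f^*\|_{L^p}$ times an $L^s$ norm of a maximal-type function of $F$, which is controlled by (d) because $|B^\om_t(z)|\lesssim B^\om_t(|z|)$ up to kernel estimates. I expect this last step to be the main obstacle, since the kernel may have zeros and is not positive: controlling $\int|B^\om_t(z)|d\mu(t)$ by $|F_{\mu,\om}|$-type quantities in $L^s$ of the circle is where the off-diagonal estimates of \cite{PRWW} must be used carefully, bounding the integral by a radial/angular Poisson-like majorant whose $L^s$-norm is computed dyadically to match (b).
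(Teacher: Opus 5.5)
Your treatment of (b)$\Leftrightarrow$(c)$\Leftrightarrow$(d) in part (i) and your necessity argument for $p\le q$ are fine; the paper simply evaluates at the point $a$. The genuine gap is the sufficiency direction (b)$\Rightarrow$(a) in part (i), which is the heart of the theorem.

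Minkowski's inequality, combined with $M_q(\rho,f_tB^\om_t)\le M_\infty(\rho,B^\om_t)\,M_q(t\rho,f)$ and the $H^p$ growth bound, gives at best
\[
M_q(\rho,C_{\mu,\om}f)\lesssim\|f\|_{H^p}\int_0^1\frac{d\mu(t)}{\whw(t\rho)(1-t\rho)^{1+\frac1p-\frac1q}},
\]
and this is not bounded under (b).

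\begin{itemize}
\item For $p=q$ the integral is $\asymp\int_0^1B^\om_t(\rho)\,d\mu(t)=F_{\mu,\om}(\rho)$. You would therefore only get boundedness under $\sup_\rho F_{\mu,\om}(\rho)\asymp\sum_n\mu_n/\om_n<\infty$, which is the $H^\infty\to H^\infty$ condition of Proposition~\ref{main1}(i).
\item Concretely, take $\om\equiv1$ (so $B^\om_t(z)=(1-tz)^{-2}$), $p=q=2$ and $d\mu(t)=(1-t)\,dt$. Then (b) holds, and $C_{\mu,\om}f=\sum_n\frac{1}{(n+1)(n+2)}\sum_{k\le n}(n-k+1)a_kz^n$ is bounded on $H^2$ by Hardy's inequality. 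Yet your majorant is $\int_0^1\frac{(1-t)\,dt}{(1-t\rho)^2}\asymp\log\frac1{1-\rho}$.
\item In general, for $\whm=\Phi:=\whw(\cdot)(1-\cdot)^{1+\frac1p-\frac1q}$, which satisfies (b), your integral is $\gtrsim\int_0^\rho\frac{-d\Phi}{\Phi}\ge\log\frac1{1-\rho}$. So no integration by parts can rescue it.
\item This is the classical Ces\`aro obstruction: the sup of the kernel on circles is too crude, and the growth bound is sharp.
\item Minkowski's inequality in $L^q$ is also unavailable for $q<1$, so the range $p\le q<1$ is not covered at all.
\end{itemize}

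The paper proceeds differently in the two ranges.
\begin{itemize}
\item For $q>1$ it uses Andersen's method. Split $|f|=|f|^{1-\frac pq}|f|^{\frac pq}$ and bound the first factor pointwise by $\|f\|_{H^p}^{1-\frac pq}(1-t)^{-(\frac1p-\frac1q)}$. Majorize the second factor by its Poisson integral. Then \eqref{Eq: PointwiseKernelEstimate} and Lemma~\ref{AndersenLemma2} dominate the resulting kernel by $\|\mu^{(p,q)}_\om\|_{L^\infty}K(\phi,\theta)$, which is $L^q$-bounded by Lemma~\ref{AndersenLemma}.
\item For $q\le1$ it splits $\mu$ dyadically and uses subadditivity and the radial maximal theorem. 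It then applies H\"older with the $L^{qx'}$-means of $B^\om_s$ from Lemma~\ref{Lemma: KernelLpEstimate}, not their sup norms. Finally it uses the Hardy--Littlewood inequality $\int_0^1M_{qx}^q(s,f)(1-s)^{q(\frac1p-\frac1{qx})-1}\,ds\lesssim\|f\|_{H^p}^q$, which is exactly what absorbs the logarithm your pointwise growth bound loses.
\end{itemize}

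Part (ii) also has gaps.

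\begin{itemize}
\item Deriving (c)$\Rightarrow$(d) from the Hardy--Littlewood coefficient theorem is not justified when $s=\frac{qp}{p-q}\le1$, i.e. when $\frac1q-\frac1p\ge1$. That theorem is for $1<s<\infty$. The quasi-monotonicity you extract ($n^{-\beta}\mu_n/\om_n$ almost decreasing) does not suffice for $s\le1$. For example, $a_n=n^\beta\mathbf{1}_{\{n\le N\}}$ has $n^{-\beta}a_n$ decreasing, but $\|\sum_na_nz^n\|_{H^s}^s\big/\sum_n(n+1)^{s-2}a_n^s\to\infty$ as $N\to\infty$, because the truncation produces boundary values of size $N^\beta/|\theta|$.
\item The paper instead proves $\|F_{\mu,\om}\|_{H^s}\le\|F^+_{\mu,\om}\|_{L^s(\T)}\lesssim\|\mu_\om\|_{L^s}$ directly from \eqref{Eq: PointwiseKernelEstimate} (Proposition~\ref{FHpr}). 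It splits the $\theta$-integral at $|\theta|\approx1-rt$ and applies the Hardy-type Lemma~\ref{mainlemma0}.
\item The same estimate, used at a fixed radius together with $|C_{\mu,\om}f|\le f^*F^+_{\mu,\om}$, H\"older and the radial maximal theorem, gives (b)$\Rightarrow$(a). You correctly flag this as the crux but do not carry it out. The bound $|B^\om_t(z)|\le B^\om_t(|z|)$ you invoke is useless there, since it yields a radial majorant whose means blow up.
\item Necessity for $q<p$ is only a plan. The paper tests with $\sum_kb_kr_k(x)\bigl(\frac{1-t_k}{1-t_kz}\bigr)^\lambda$, $t_k=1-2^{-k}$, where $r_k$ is the $k$th Rademacher function. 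It controls the $H^p$-norm of this test function by a tent-space norm comparable to $\bigl(\sum_k|b_k|^p2^{-k}\bigr)^{1/p}$ (Luecking's lemma). It restricts $C_{\mu,\om}f$ to $[0,1)$ via Fej\'er--Riesz and applies Khinchine and Lemma~\ref{Lemma:LowerBoundEq}. Finally it chooses $b_k$ by $\ell^{p/q}$-duality.
\end{itemize}
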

In the case of Bergman spaces, we obtain the boundedness of $C_{\mu,\om}: A^p_{\nu} \to A^q_{\nu}$ when the weight inducing the space satisfies $\nu\in\DDD$. Results of this sort were achieved in  \cite{GalaSisZhao},\cite{Pan} and as a corollary in \cite[Corollary 6.13]{BlascoMas}. Our results are proved with a different method, based on arguments with decomposition norms, fractional derivatives, Carleson measures, and pointwise and $L^p$-estimates for the Bergman kernels.
\begin{theorem} \label{MainTheorem: Bergman}
     Let $0<p,q<\infty$, $\om \in \DD$, $\nu \in \DDD$, and let $\mu \in M^+([0,1))$. Then the following holds:
     \begin{enumerate}
        \item[\textup{(i)}] If $p\leq q$, then the following conditions are equivalent:
        \begin{enumerate}
            \item[\textup{(a)}] $C_{\mu,\om}: A^p_{\nu} \to A^q_{\nu}$ is bounded;
            \item[\textup{(b)}] \begin{equation*}
                 \|\mu^{(p,q)}_{\om,\nu}\|_{L^\infty}=\sup_{0\leq r<1}\frac{\widehat{\mu}(r)}{\whw(r)\whv(r)^{\frac{1}{p}-\frac{1}{q}}(1-r)^{1+\frac{1}{p}-\frac{1}{q}}}<\infty;
            \end{equation*}
            \item[\textup{(c)}]
            \begin{equation*}
                 \|\mu^{(p,q)}_{\om,\nu}\|_{\ell^\infty}=\sup_{n \in \N_0}\frac{\mu_n(n+1)^{1+\frac{1}{p}-\frac{1}{q}}}{\om_n\nu_n^{\frac{1}{p}-\frac{1}{q}}};
            \end{equation*}
            \item[\textup{(d)}] For every fixed $\beta>0$, we have $D^{\beta+\frac{1}{p}-\frac{1}{q}}D^{\nu^{\frac{1}{p}-\frac{1}{q}}}F_{\mu,\om} \in X_{\beta}$.
        \end{enumerate}
    Moreover, we have $$\|C_{\mu,\om}\|_{A^p_{\nu} \to A^q_{\nu}}\asymp \|\mu^{(p,q)}_{\om,\nu}\|_{L^\infty} \asymp \|\mu^{(p,q)}_{\om,\nu}\|_{\ell^\infty} \asymp \|D^{\beta+\frac{1}{p}-\frac{1}{q}}D^{\nu^{\frac{1}{p}-\frac{1}{q}}}F_{\mu,\om}\|_{X_{\beta}}.$$
    \item[\textup{(ii)}] If $q<p$, then the following conditions are equivalent:
    \begin{enumerate}
         \item[\textup{(a)}] $C_{\mu,\om}: A^p_{\nu} \to A^q_{\nu}$ is bounded;
        \item[\textup{(b)}]
        \begin{equation*}
        {\|\mu_{\om}\|^\frac{qp}{p-q}_{L^\frac{qp}{p-q}_{\widehat\nu}}}=\int_0^1 \left(\frac{\whm(r)}{\whw(r) (1-r)}\right)^{\frac{qp}{p-q}}\whv(r)\,dr+\whm(0)^\frac{qp}{p-q}<\infty.
        \end{equation*}
        \item[\textup{(c)}]
        \begin{equation*}
            {\|\mu_{\om}\|^\frac{qp}{p-q}_{\ell_{\frac{\nu_n}{(n+1)^2}}^\frac{qp}{p-q}}}=\sum_{n=0}^{\infty}\left(\frac{\mu_{n} (n+1)}{\om_{n}}\right)^{\frac{qp}{p-q}}\frac{\nu_{n}}{(n+1)^2}<\infty;
        \end{equation*}
        \item[\textup{(d)}] $F_{\mu,\om} \in A^{\frac{qp}{p-q}}_\nu.$
    \end{enumerate}
    \begin{equation*}
    \end{equation*}
    Moreover, we have $\|C_{\mu,\om}\|_{A^p_{\nu} \to A^q_{\nu}} \asymp {\|\mu_{\om}\|_{L^\frac{qp}{p-q}_{\widehat\nu}}} \asymp \   {\|\mu_{\om}\|_{\ell_{\frac{\nu_n}{(n+1)^2}}^\frac{qp}{p-q}}}\asymp \|F_{\mu,\om}\|_{A^{\frac{qp}{p-q}}_{\nu}}$.
    \end{enumerate}
\end{theorem}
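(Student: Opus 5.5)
The plan is to first prove the equivalences between the "descriptive" conditions (b), (c), (d), which involve no operator theory, and then connect them to (a). For (b)$\Leftrightarrow$(c) I use the standard moment comparisons. For $\om\in\DD$ we have $\om_n\asymp\whw(1-\frac1{n+1})$. For $\nu\in\DDD$ we have $\nu_n\asymp\whv(1-\frac1{n+1})(n+1)^{-1}$ up to the normalisation already built into the statement. For the measure, $\mu_n\gtrsim\whm(1-\frac1{n+1})$. The converse direction for the supremum in part (i) comes from writing $\mu_n=n\int_0^1 r^{n-1}\whm(r)\,dr$ and inserting the assumed bound on $\whm$; the integral then reduces back to moments of $\om$ and $\nu$ by the doubling property. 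In part (ii) I discretize the integral in (b) over dyadic annuli $1-2^{-k}\le r<1-2^{-k-1}$ and compare blockwise with the sum in (c), again using $\mu_n=n\int r^{n-1}\whm$ together with a Hardy-type inequality. That inequality is valid because the exponent $s=\frac{qp}{p-q}$ enters against the weight $\whv$, and $\nu\in\Dd$ lets one absorb the tails.

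For (d), I compute the Taylor coefficients of the relevant derivative of $F_{\mu,\om}$, namely $\frac{\mu_n}{2\om_{2n+1}}(n+1)^{\beta+\frac1p-\frac1q}\nu_{2n+1}^{-(\frac1p-\frac1q)}$ up to constants. These coefficients are comparable to a quantity $\lambda_n$ that is essentially monotone. In case (i), lower bounds for $M_\infty$ come from testing at $z=r$, since all coefficients are positive. Upper bounds follow from $\sum \lambda_n (n+1)^{\beta-1} r^n\lesssim \sup\lambda\,(1-r)^{-\beta}$. In case (ii), positive coefficients together with $\nu\in\DDD$ give $\|F\|_{A^s_\nu}^s\asymp\sum_n 2^{-n}\ldots$, a Hardy–Littlewood type estimate for functions with positive decreasing-type coefficients. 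Comparing this with (c) gives (d)$\Leftrightarrow$(c).

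For (a)$\Rightarrow$(b) in case (i), I test on $f_a(z)=\bigl((1-a^2)/(1-az)^2\bigr)^{\gamma}$ with $\gamma$ large, normalised in $A^p_\nu$ via $\|f_a\|^p\asymp \nu(S(a))(1-a)^{-1}\ldots$. Evaluating $C_{\mu,\om}(f_a)$ at points $z$ near $a$ and using positivity of $B^\om_t(z)$ for real $t,z$, together with the lower bound $B^\om_t(z)\gtrsim 1/(\whw(tz)(1-tz))$, I restrict the $\mu$-integral to $[a,1)$. Combining this with $M_\infty(r,g)\lesssim\|g\|_{A^q_\nu}(\nu(S(r)))^{-1/q}$ recovers the quantity in (b). For the converse in case (i), I estimate $|C_{\mu,\om}f(z)|\le\int |f(tz)||B^\om_t(z)|d\mu$ and view it through the pullback measure. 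The natural route is to show that the measure $d\lambda_z$ induced on $\D$ is a $q$-Carleson measure for $A^p_\nu$, using the sharp estimates $M_1(r,B^\om_t)\asymp\int_0^{rt}\frac{ds}{\whw(s)(1-s)}$ from \cite{PR2016} and Minkowski's inequality. This is the step where I expect the genuine difficulty: the bound must come out at the level of $L^q_\nu$ norms and not merely pointwise. My first attempt is to use the decomposition-norm characterization of $A^q_\nu$ by dyadic blocks of the Taylor series, so that the $n$-th block of $C_{\mu,\om}f$ is controlled by $\mu_n\sup_k\|\Delta_k f\|/\om_{n-k}$-type convolution sums. With $p\le q$ one then dominates these by Hölder and (c).

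For case (ii), sufficiency follows from the pointwise domination $|C_{\mu,\om}f(z)|\lesssim \int_0^1|f(tz)|\,|B^\om_t(z)|\,d\mu(t)$, combined with a maximal function estimate: $\sup_t|f(tz)|$ is controlled by the radial maximal function $f^*$, and $\int_0^1|B^\om_t(z)|d\mu(t)$ is comparable to $F_{\mu,\om}(|z|)$ by the off-diagonal estimates of \cite{PRWW}. Hölder's inequality with exponents $p/q$ and $s/q$ then gives $\|C_{\mu,\om}f\|_{A^q_\nu}\lesssim\|f^*\|_{L^p_\nu}\|F_{\mu,\om}\|_{A^s_\nu}$. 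Necessity in case (ii) is the second delicate point. I plan to use Khinchine/Kahane with random sums $f=\sum r_j(t)c_j(1-|a_j|)^{\ldots}\ldots$ over a lattice, or more simply to test on functions with positive coefficients $f=\sum_n b_n z^n$, chosen via duality $\ell^{p/q}$–$\ell^{s/q}$ from (c) in its dyadic form. Since all coefficients of $C_{\mu,\om}f$ are then nonnegative, the norm is comparable to a weighted dyadic $\ell^q$ sum of blocks. This yields $\sum_k(\ldots)^{s}\lesssim\|C_{\mu,\om}\|^{s}$, and a truncation argument removes any a priori finiteness assumption.
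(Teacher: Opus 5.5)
\textbf{Verdict: there is a genuine gap.} Your treatment of (b)$\Leftrightarrow$(c)$\Leftrightarrow$(d) and of (a)$\Rightarrow$(b) in part (i) matches the paper (Lemma~\ref{Momentvstail}, Proposition~\ref{Prop: Fmuom sup norm space}, Corollary~\ref{Corollary: TestConditionpleqq}). There is one slip: for $\nu\in\DD$ one has $\nu_n\asymp\whv(1-\frac{1}{n+1})$, without an extra factor $(n+1)^{-1}$.

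The gap is (b)$\Rightarrow$(a) for $p\le q$, which is the core of part (i) and which you leave as a ``first attempt''. Bounding the $n$-th block of $C_{\mu,\om}f$ by convolution sums like $\mu_n\sup_k\|\Delta_kf\|/\om_{n-k}$ mixes coefficient sizes with $H^q$-norms of blocks, and it gives no control of the product $fB^\om$. Moreover, ``H\"older and (c)'' cannot produce the passage from $A^p_\nu$ to $A^q_\nu$. That passage is an embedding phenomenon, and it is the source of the factor $\whv(r)^{\frac1p-\frac1q}$ in (b).

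The missing observation is that $C_{\mu,\om}f(z)=\int_0^1 g(tz)\,d\mu(t)$ with $g=fB^\om$, i.e.\ $C_{\mu,\om}f$ is the coefficient multiplier $\{\mu_n\}$ applied to $g$. The paper's argument for $q>1$ runs as follows.
\begin{enumerate}
\item Minkowski's inequality and \eqref{Eq: Mp vs Hp Polynomial} give $\|V_{n,K}*C_{\mu,\om}f\|_{H^q}\le\mu_{K^{n-1}}\|V_{n,K}*g\|_{H^q}$.
\item Inserting (c) into the decomposition norm of Lemma~\ref{decompositionlemma} leaves a weighted sum of block norms of $g$. Lemma~\ref{monotonepullout} dominates this sum by the $A^q_\nu$-norm of a two-weight fractional derivative of $g$.
\item The Littlewood--Paley formulas for fractional derivatives convert this into $\int_\D|f|^q\,dm$, with $dm=|B^\om|^qU\,dA$ for an explicit radial weight $U$.
\item The gain from $p$ to $q$ comes from checking $m(S(a))\lesssim\nu(S(a))^{q/p}$ via \eqref{Eq: PointwiseKernelEstimate}, so that $m$ is a $q$-Carleson measure for $A^p_\nu$.
\end{enumerate}
None of these ingredients appears in your plan. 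For $q\le1$ your plan breaks down entirely, since neither Minkowski's inequality nor the monotone multiplier lemma on $H^q$-blocks is available. There the paper splits $\mu$ dyadically and uses subadditivity and the radial maximal function. It then applies H\"older's inequality with the kernel estimates of Lemma~\ref{Lemma: KernelLpEstimate}. It closes with the Hardy--Littlewood type inequality of Theorem~\ref{HLforBergman}, $\int_0^1\nu(S(r))^{q(\frac1p-\frac1\sigma)}\|f_r\|^q_{A^\sigma_\nu}\frac{dr}{1-r}\lesssim\|f\|^q_{A^p_\nu}$ for $\sigma>p$, which is proved by Marcinkiewicz interpolation.

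There are also problems in part (ii). The sufficiency argument has the right structure but a false justification. The function $F^+_{\mu,\om}(z)=\int_0^1|B^\om_t(z)|\,d\mu(t)$ is not comparable to $F_{\mu,\om}(|z|)$. At $z=-r$ it stays bounded, whereas $F_{\mu,\om}(r)$ is unbounded unless $\sum_n\mu_n/\om_n<\infty$. Only $F^+_{\mu,\om}(z)\le F_{\mu,\om}(|z|)$ holds, and this bound is useless: $\int_0^1F_{\mu,\om}(r)^s\nu(r)\,dr$ may diverge even though $F_{\mu,\om}\in A^s_\nu$. What you need is that $\|F^+_{\mu,\om}\|_{L^s_\nu}$ is bounded by the quantity in (b). The paper obtains this by integrating $|B^\om_t(re^{i\theta})|\lesssim\bigl(\om_{1/\max\{1-rt,|\theta|\}}\max\{1-rt,|\theta|\}\bigr)^{-1}$ over $\theta$ and applying a Hardy-type inequality (Propositions~\ref{FHpr} and~\ref{FHp}).

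The same estimate is what supplies the upper bound for $\|F_{\mu,\om}\|_{A^s_\nu}$ in (d). For general nonnegative coefficients only the lower bound $\|h\|^s_{A^s_\nu}\gtrsim\int_0^1h(r)^s\whv(r)\,dr$ holds; lacunary series show that the two-sided block comparison fails. In addition, $\mu_n/\om_{2n+1}$ is not monotone, so Hardy--Littlewood type results do not apply directly.

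Your positive-coefficient idea for necessity in (ii) does work, provided you take $f$ to be a nonnegative combination of the radial atoms $(1-z_k)^{M-\frac1p}\whv(z_k)^{-\frac1p}(1-z_kz)^{-M}$, $z_k=1-K^{-k}$, as in Lemma~\ref{q<ptestingbergman}. All terms of $C_{\mu,\om}f(s)$, $0\le s<1$, are then nonnegative, so the Rademacher averaging can even be dropped.
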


In fact, our results go much further than the setting shown above. For $q>1$ and $\eta \in \DDD$, we characterize the boundedness of $C_{\mu,\om}: A^p_{\nu} \to A^q_{\eta}$. Even for $1\leq p\leq q$, and if we restrict to standard weights, our results contain the case $A^p_{\alpha} \to A^q_{\beta}$ for any $\alpha,\beta>-1$, unlike in the previously mentioned paper \cite{BlascoMas} which restrict the parameters of the weight.

We note that in both of these cases, the Carleson-type condition is not the right one when $q<p$, and one instead runs into an $L^p$-type condition, which is certainly natural. When considering the Ces\`aro-operator induced by the Cauchy kernel, at least in the case $p\leq q$ one seems to obtain a classical Carleson measure condition for $\mu$, but with general weights this is no longer precisely the case, and we have to consider the interplay between the tail integrals or moments of the weights and the measure.

We also study the action of the operator $C_{\mu,\om}$ on Korenblum spaces $X_\beta$ and weighted Hardy spaces $H^p_\gamma$ for $\beta, \gamma>0$. The main results in this direction can be stated as follows:
\begin{theorem}\label{MainTheorem: Korenbloom}  Let $1<p<\infty$, $0<\gamma<\infty$, $0\leq \beta<\infty$, $\om\in \DD$, and let $\mu\in M^+([0,1))$. 
Then the following statements hold:
\begin{enumerate}
\item [\textup{(i)}]$C_{\mu, \om}: X_\beta \to H^p$ is bounded if and only if 
\begin{equation*}
    \int_0^1 \left(\frac{\whm(r) }{\whw(r)(1-r)^{(\beta+1)}}\right)^p\,dr <\infty.
\end{equation*}
\item [\textup{(ii)}] $C_{\mu, \om}: X_\beta \to X_\gamma$ is bounded if and only if 
\begin{equation*}
\sup_{n\in\N_0}\frac{\mu_n}{\om_n (n+1)^{\gamma-\beta-1}}<\infty.
\end{equation*}
  \item[\textup{(iii)}]  $C_{\mu, \om}: X_\beta \to H^p_\gamma$ is bounded if and only if 
\begin{equation*}
\sup_{n\in\N_0}\frac{\mu_n}{\om_n (n+1)^{\gamma-\beta+\frac{1}{p}-1}}<\infty.
\end{equation*}
\end{enumerate}
Moreover, the operator norm in each situation is comparable to the corresponding characterizing condition.
\end{theorem}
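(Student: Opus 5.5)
The three parts all rest on the same two ingredients. The first is the test function $f_\beta(z)=(1-z)^{-\beta}$, which lies in $X_\beta$ with $\|f_\beta\|_{X_\beta}\lesssim 1$ and has positive Taylor coefficients $\asymp (k+1)^{\beta-1}$ (for $\beta=0$ use $f\equiv 1$). The second is the extremal property of $X_\beta$: for every $f\in X_\beta$ and $t\in[0,1)$,
\begin{equation*}
|f(tz)|\le \|f\|_{X_\beta}(1-t|z|)^{-\beta}.
\end{equation*}
Since $f_\beta$ has nonnegative coefficients and the kernel $B^\om_t$ has positive coefficients, the coefficients of $C_{\mu,\om}(f_\beta)$ are positive:
\begin{equation*}
c_n=\mu_n\sum_{k=0}^n \frac{(k+1)^{\beta-1}}{2\om_{2(n-k)+1}}.
\end{equation*}
Using $\om\in\DD$, so that $\om_{2j+1}\asymp\om_j$ and $\om_{x}\asymp \whw(1-1/x)$, and splitting the sum at $k=n/2$, we get $c_n\asymp \mu_n(n+1)^{\beta}/\om_n$. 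The upper bound is the delicate half. It uses the known growth $\sum_{j\le n}1/\om_j\asymp (n+1)/\om_n$, valid for $\DD$ weights (this is where $\om\in\DD$ enters).

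For the sufficiency direction I would pass to a majorant. By the pointwise estimate above,
\begin{equation*}
|C_{\mu,\om}(f)(z)|\le \|f\|_{X_\beta}\int_0^1 \frac{|B^\om_t(z)|}{(1-t|z|)^{\beta}}\,d\mu(t).
\end{equation*}
I would then compare this with $C_{\mu,\om}(f_\beta)$. The cleanest route is coefficientwise domination. The function $g_\beta(w)=(1-w)^{-\beta}$ has positive coefficients, and the coefficients of $f(t\,\cdot)$ are bounded by $\|f\|_{X_\beta}$ times those of $g_\beta$ up to a $\log$ loss. That loss is too crude, so instead I would argue radially. $M_\infty(r,\cdot)$ and $M_p(r,\cdot)$ of $C_{\mu,\om}(f)$ are controlled by the same integrals with $|B^\om_t(re^{i\theta})|$. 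Using the sharp pointwise and $L^p$ estimates for $B^\om$ from \cite{PRWW,PR2016}, namely
\begin{equation*}
M_p\bigl(r,B^\om_t\bigr)\asymp \int_0^{tr}\frac{ds}{\whw(s)(1-s)^{2-1/p}}
\end{equation*}
for $p>1$ (the case $p=\infty$ being similar), we obtain
\begin{equation*}
M_p\bigl(r,C_{\mu,\om}f\bigr)\lesssim \|f\|_{X_\beta}\int_0^1 \frac{1}{(1-tr)^{\beta}}\int_0^{tr}\frac{ds}{\whw(s)(1-s)^{2-1/p}}\,d\mu(t).
\end{equation*}
This is the main obstacle, because the oscillation of $B^\om_t$ and the factor $(1-t|z|)^{-\beta}$ must be decoupled. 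Minkowski's inequality in $t$ does exactly this, giving the display above.

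The resulting quantities are then computed in terms of $\whm$ by Fubini and the doubling properties of $\whw$. For (ii), with $p=\infty$, the bound becomes $\sup_r(1-r)^{\gamma}\int_0^1 (1-tr)^{-\beta-1}\whw(tr)^{-1}\,d\mu(t)$. By a dyadic decomposition of $[0,1)$ this is comparable to $\sup_n \mu_n(n+1)^{\beta+1-\gamma}/\om_n$, using $\mu_n\asymp$ the $\mu$-mass weighted by $t^n$. For (iii) the same computation with $2-1/p$ yields the exponent $\gamma-\beta+\frac1p-1$. For (i) we need $\|\cdot\|_{H^p}$ rather than a supremum over $r$. Here I would instead use that for $p>1$ the $H^p$ norm is comparable to the $\ell^p$-type norm of the Hardy--Littlewood block decomposition, or more simply apply Hardy's inequality/Minkowski after changing variables $s=tr$. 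This leads to $\int_0^1\bigl(\whm(r)/(\whw(r)(1-r)^{\beta+1})\bigr)^p\,dr$, and by the monotonicity of $\whm$ and $\whw$ this can be compared with the dyadic sum.

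For necessity I would apply the operator to $f_\beta$, whose image has positive coefficients $c_n\asymp\mu_n(n+1)^\beta/\om_n$. For (ii), if a function has positive coefficients then $M_\infty(r,\cdot)$ is attained at $z=r$, and
\begin{equation*}
\sum_n c_nr^n\gtrsim c_N\sum_{n\ge N}\cdots
\end{equation*}
provided $c_n$ is essentially monotone. Essential monotonicity follows because $\mu_n$ decreases and $\om_n$ is controlled by the $\DD$ condition. Taking $r=1-1/N$ gives $(1-r)^{\gamma}\sum_{n\le N}c_n r^n\gtrsim \mu_N(n+1)^{\beta+1-\gamma}/\om_N$. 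For (iii), positive coefficients give $M_p(r,g)\gtrsim \bigl(\sum_{n\le N} c_n r^n\bigr)N^{-1/p}$, which can be proved by testing against the Fejér/de la Vallée-Poussin kernel, and this yields the stated exponent. For (i), positive coefficients together with the Hardy--Littlewood theorem for $p>1$ give
\begin{equation*}
\|g\|_{H^p}^p\asymp\sum_n c_n^p(n+1)^{p-2}.
\end{equation*}
Rewriting this sum with $\mu_n\asymp\whm(1-1/n)$ on dyadic blocks, which is valid up to the monotonicity of $\whm$, gives the integral condition.
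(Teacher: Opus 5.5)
\textbf{Gap in the sufficiency half of (i).} Your argument does not establish this direction. After Minkowski's inequality in $t$, each kernel $B^\om_t$ is integrated in $\theta$ separately, and the resulting bound is increasing in $r$. So the most it can give is
\begin{equation*}
\|C_{\mu,\om}f\|_{H^p}\lesssim\|f\|_{X_\beta}\int_0^1\frac{d\mu(t)}{\whw(t)(1-t)^{\beta+1-\frac{1}{p}}}.
\end{equation*}
This is an $L^1$-type condition, strictly stronger than the one in (i). No $\theta$-integration is left to which Hardy's inequality could be applied.

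Here is a concrete counterexample. Take $\om\equiv1$ and $\whm(r)=(1-r)^{\beta+2-\frac1p}\bigl(\log\frac{e}{1-r}\bigr)^{-a}$ with $\frac1p<a\le1$. Then the integral in (i) is $\int_0^1(1-r)^{-1}\bigl(\log\frac{e}{1-r}\bigr)^{-ap}\,dr<\infty$. The right-hand side above, however, is $\gtrsim\int_0^1(1-t)^{-1}\bigl(\log\frac{e}{1-t}\bigr)^{-a}\,dt=\infty$.

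Your block-decomposition alternative does not rescue this. The $H^p$ norm is not comparable to the $\ell^p$ sum of dyadic block norms: Littlewood--Paley gives $\|g\|_{H^p}^p\lesssim\sum_n\|\Delta_n g\|_{H^p}^p$ only for $1<p\le 2$, and the reverse inequality for $p\ge2$. Moreover, $C_{\mu,\om}f$ has no positivity for a general $f\in X_\beta$, so Hardy--Littlewood for positive coefficients is unavailable.

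The missing idea is that the kernels $B^\om_t(re^{i\theta})$ concentrate on different scales $|\theta|\lesssim 1-rt$. Hence the $L^p(d\theta)$ norm of their superposition behaves like an $\ell^p$ sum rather than an $\ell^1$ sum, and Minkowski discards exactly this. The paper keeps the $\theta$-integral outside:
\begin{itemize}
\item With $d\mu_\beta(t)=(1-t)^{-\beta}d\mu(t)$, it bounds $M_p^p(r,C_{\mu,\om}f)\le\|f\|_{X_\beta}^p\int_0^{2\pi}\bigl(\int_0^1|B^\om_t(re^{i\theta})|\,d\mu_\beta(t)\bigr)^p d\theta$.
\item It estimates this via Proposition~\ref{FHpr} (the pointwise bound \eqref{Eq: PointwiseKernelEstimate}, a split at $|\theta|=1-rt$, and Lemma~\ref{mainlemma0}) by $\int_0^1\bigl(\widehat{\mu_\beta}(t)/(\whw(t)(1-t))\bigr)^p dt+\widehat{\mu_\beta}(0)^p$.
\item It returns to $\whm$ through $\widehat{\mu_\beta}(r)\lesssim\frac{\whm(r)}{(1-r)^\beta}+\int_r^1\frac{\whm(s)}{(1-s)^{\beta+1}}\,ds$ and a H\"older/Hardy inequality, which is where $p>1$ is used.
\end{itemize}

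\textbf{The rest of the plan.} Parts (ii), (iii) and the necessity half of (i) essentially follow the paper's argument. For necessity you use the test function $f_\beta$ together with $\sum_{k=0}^n\frac{(k+1)^{\beta-1}}{\om_{2(n-k)+1}}\asymp\frac{(n+1)^\beta}{\om_n}$. For sufficiency you use $M_\infty(tr,f)\le\|f\|_{X_\beta}(1-tr)^{-\beta}$, Minkowski, and Lemma~\ref{Lemma: KernelLpEstimate}. The paper makes your dyadic comparison rigorous by expanding $\frac{1}{\whw(rt)(1-rt)^{\beta+1-1/p}}$ as $\sum_n\frac{(n+1)^{\beta-1/p}}{\om_n}(rt)^n$ via Lemma~\ref{Twoweightsumestimate}.

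Three justifications need repair, none of them fatal:
\begin{itemize}
\item The coefficients $c_n\asymp\mu_n(n+1)^\beta/\om_n$ are not essentially monotone. What you actually need is $\sum_{N/2\le n\le N}c_n\gtrsim\mu_N N^{\beta+1}/\om_N$, which follows from $\mu_n$ decreasing and $\om_{N/2}\asymp\om_N$.
\item The relation $\mu_n\asymp\whm(1-\frac1n)$ is false in general; only $\gtrsim$ holds. Fortunately $\gtrsim$ is the direction you use.
\item For $p>2$ the Hardy--Littlewood equivalence requires monotone coefficients and fails for general positive ones (lacunary series). The lower bound $\|g\|_{H^p}^p\gtrsim\sum_n c_n^p(n+1)^{p-2}$ should instead come, as in the paper, from Fej\'er--Riesz, $\|g\|_{H^p}^p\gtrsim\int_0^1 g(s)^p\,ds$, applied on the blocks $[k/2,k]$.
\end{itemize}
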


Our final main result is the full characterization of when $C_{\mu,\om}$ maps $H^{\infty}$ to $BMOA$ and $\B$.
\begin{theorem}\label{Thm: HinftytoBMOABloch}
Let $\om \in \DD$ and let $\mu \in M^+([0,1))$. Then the following statements are equivalent:
\begin{enumerate}
\item[\textup{(i)}] $C_{\mu,\om}: H^{\infty} \to BMOA$ is bounded;
\item[\textup{(ii)}] $C_{\mu,\om}: H^{\infty} \to \B$ is bounded;
\item[\textup{(iii)}]
\begin{equation*}
    \|\mu_{\om}\|_{L^\infty}=\sup_{0\leq r<1}\frac{\widehat{\mu}(r)}{\whw(r)(1-r)}<\infty;
\end{equation*}
\item[\textup{(iv)}]
\begin{equation*}
    \|\mu_{\om}\|_{\ell^{\infty}}=\sup_{n\in \N_0}\frac{\mu_n(n+1)}{\om_n}<\infty.
\end{equation*}
\end{enumerate}
Moreover, we have $$\|C_{\mu,\om}\|_{H^{\infty} \to \B} \asymp \|C_{\mu,\om}\|_{H^{\infty} \to BMOA} \asymp \|\mu_{\om}\|_{L^\infty} \asymp \|\mu_{\om}\|_{\ell^{\infty}}.$$
\end{theorem}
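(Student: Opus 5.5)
The plan is to prove the cycle (iii)$\Leftrightarrow$(iv), (iii)$\Rightarrow$(i)$\Rightarrow$(ii)$\Rightarrow$(iv).

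\emph{The equivalence (iii)$\Leftrightarrow$(iv).} This is the case $p=q$ of Theorem~\ref{MainTheorem: Hardy}(i): conditions (b) and (c) there reduce to (iii) and (iv). Alternatively, one can argue directly. Since $\om\in\DD$, we have $\om_n\asymp\whw(1-\frac1{n+1})$. We also have $\mu_n\ge (1-\frac1{n+1})^n\whm(1-\frac1{n+1})\gtrsim \whm(1-\frac1{n+1})$, which gives (iv)$\Rightarrow$(iii) after interpolating between $r$ and the nearest point $1-\frac1{n+1}$ by monotonicity and doubling. For the converse, write $\mu_n=n\int_0^1 t^{n-1}\whm(t)\,dt$ and insert (iii). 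This yields
\begin{equation*}
\mu_n\lesssim n\int_0^1 t^{n-1}\whw(t)(1-t)\,dt\lesssim \om_n/(n+1),
\end{equation*}
where the last bound uses the standard moment estimates for $\DD$ weights.

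\emph{(iii)$\Rightarrow$(i).} The inclusion $BMOA\subset\B$ gives (i)$\Rightarrow$(ii), so the real work is to show $C_{\mu,\om}(f)\in BMOA$ with norm $\lesssim\|f\|_{H^\infty}\|\mu_\om\|_{L^\infty}$. I would use the Carleson measure characterization of $BMOA$: we need $|g'(z)|^2(1-|z|^2)\,dA$ to be a Carleson measure, where $g=C_{\mu,\om}f$ and
\begin{equation*}
g'(z)=\int_0^1\big(tf'(tz)B^\om_t(z)+f(tz)(B^\om_t)'(z)\big)d\mu(t).
\end{equation*}
The aim is a pointwise bound $|g'(z)|\lesssim \|f\|_\infty\int_0^1 |(B^\om_t)'(z)|+\frac{|B^\om_t(z)|}{1-t|z|}\,d\mu(t)$. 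For this I would use the sharp pointwise kernel estimates from \cite{PRWW}, namely $|B^\om_t(z)|\lesssim \int_0^{t|z|}\frac{ds}{\whw(s)(1-s)^2}$ and the analogous bound for the derivative, together with $|f'(tz)|\le\|f\|_\infty/(1-t|z|)$. Integrating against $\mu$ and splitting at $t=|z|$ with (iii), I expect a bound of the shape $|g'(z)|\lesssim\|f\|_\infty\|\mu_\om\|_{L^\infty}\frac{1}{1-|z|}$ times a logarithmic-type factor that must be shown harmless. Pointwise Bloch bounds are not enough for $BMOA$, however. A cleaner route is to control the Carleson box integral $\int_{S(a)}|g'|^2(1-|z|)\,dA$ directly, or to use the $L^2$ kernel estimates of \cite{PR2016}, namely $M_2^2(r,(B^\om_t)')\asymp\int_0^{rt}\frac{ds}{\whw(s)^2(1-s)^4}$.

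\emph{Main obstacle.} I expect this last step to be the hardest, because $f$ is arbitrary in $H^\infty$. The first thing I would try is Minkowski's inequality in $t$ on the box integral. This reduces the problem to estimating $\int_0^1 \big(\int_{S(a)}|(B^\om_t)'(z)|^2(1-|z|)\,dA\big)^{1/2}d\mu(t)$, plus the corresponding term with $f'$, where the Hardy--Littlewood bound on $f'$ in $S(a)$ is used. Splitting at $t\le|a|$ versus $t>|a|$ and using (iii) should then give the bound $(1-|a|)^{1/2}$.

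\emph{(ii)$\Rightarrow$(iv).} Test with $f\equiv1$, which has norm $1$. Then $F_{\mu,\om}=\sum\frac{\mu_n}{2\om_{2n+1}}z^n\in\B$ with norm $\lesssim\|C_{\mu,\om}\|$. Because the coefficients are nonnegative, $\sup_r (1-r)\sum n\frac{\mu_n}{\om_{2n+1}}r^{n-1}<\infty$. Take $r=1-\frac1N$ and keep only $N/2\le n\le N$. Since $\mu_n$ and $\om_{2n+1}$ are decreasing and $\om_{2n+1}\asymp\om_n$ for $\om\in\DD$, this gives $\frac{N\mu_N}{\om_{N/2}}\lesssim\|F\|_\B$, and hence (iv) after using $\om_{N/2}\asymp\om_N$. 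Collecting the constants along the cycle yields the norm comparisons.
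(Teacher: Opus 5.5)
Your proposal is correct. For (iii)$\Leftrightarrow$(iv), (i)$\Rightarrow$(ii) and (ii)$\Rightarrow$(iv) it is essentially the paper's argument: Lemma~\ref{Momentvstail}, the inclusion $BMOA\subset\B$, and testing with $f\equiv1$.

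For (iii)$\Rightarrow$(i) you take a genuinely different route. The paper writes $(B^{\om}_t)'=tB^{V}_t$ with $V(r)=2\int_r^1\om(s)s\,ds\in\DD$ and splits $(C_{\mu,\om}f)'=C_{\mu^1,\om}(f')+C_{\mu^1,V}(f)$. It then applies Proposition~\ref{XbetaHpgamma} to get $C_{\mu,\om}f\in\Lambda^p_{1/p}$ for every $1<p<\infty$, and concludes with the embedding $\Lambda^p_{1/p}\subset BMOA$. This gives a stronger target space, but it relies on the weighted Hardy/Korenblum results and on an external embedding theorem.

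You instead verify the Carleson-box characterization of $BMOA$ directly, using Minkowski's inequality in $t$. This closes using only $L^2$-means of the kernel (Parseval and Lemma~\ref{Twoweightsumestimate}); no off-diagonal estimate is needed.
\begin{itemize}
\item Enlarge $S(a)$ to the annulus $|a|\le|z|<1$ and use $|tf'(tz)|\le\|f\|_{H^\infty}/(1-t|z|)$.
\item Use $M_2^2(r,(B^\om_t)')+(1-tr)^{-2}M_2^2(r,B^\om_t)\lesssim \whw(t)^{-2}(1-tr)^{-3}$.
\item With $h=1-|a|$, the inner square root is then $\lesssim h\,\whw(t)^{-1}(1-t)^{-3/2}$ for $t\le|a|$, and $\lesssim\whw(t)^{-1}(1-t)^{-1/2}$ for $t>|a|$.
\item Integrate against $d\mu$ using (iii): on $[0,|a|]$ apply Fubini with \eqref{int}; on $(|a|,1)$ apply a dyadic splitting plus doubling. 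Both ranges give $\lesssim\|\mu_\om\|_{L^\infty}h^{1/2}$.
\end{itemize}

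One caution. The bound $|B^\om_t(z)|\lesssim\int_0^{t|z|}\frac{ds}{\whw(s)(1-s)^2}$ that you attribute to \cite{PRWW} is only the radial ($M_\infty$) estimate. The estimate \eqref{Eq: PointwiseKernelEstimate} depends on $|1-tz|$, not just on $t|z|$. If you put the radial bound into the box integral for $t>|a|$, you only get $h^{1/2}\whw(t)^{-1}(1-t)^{-1}$. Its $\mu$-integral over $(|a|,1)$ can diverge under (iii), for instance with $\om\equiv1$ and $d\mu(t)=(1-t)\,dt$. So in that step you must use either the $L^2$-means as above, or the genuinely off-diagonal estimate (for the derivative, via $(B^\om_t)'=tB^V_t$).
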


The corresponding characterization matches with the one that is equivalent to the boundedness of $C_{\mu,\om}: H^p \to H^p$ and $C_{\mu,\om}: A^p_{\nu} \to A^p_{\nu}$, which is to be expected due to results shown in \cite{BlascoHardy,GalanoGirelaMerchCesaro}.

For $0<\beta<\infty$, denote $C_{\mu,\beta}(f)$ as the weighted Ces\`aro operator given by
\begin{equation*}
    C_{\mu,\beta}(f)(z)=\int_{0}^{1}\frac{f(tz)}{(1-tz)^{\beta}}\,d\mu(t), \quad z \in \D, \quad f \in \H(\D).
\end{equation*}
We note that any weighted Ces\`aro operator with the kernel of the form $(1-tz)^{-(1+\beta)}$ for any $\beta>0$ can be written in the form of \eqref{Eq: Cmuomega}. Even though the situation of when the kernel is replaced by $(1-tz)^{-\beta}$ for $0<\beta\leq 1$ is not covered by our study, some of the proofs of our results certainly extend to this situation. In this setting, one has to "replace" the tail integral by $\whw(r)=(1-r)^{\beta-1}$ and the moment as $\om_n=(n+1)^{1-\beta}$, respectively. This is obviously not true for any weight. Theorem~\ref{MainTheorem: Hardy} and Theorem~\ref{MainTheorem: Bergman} extend to this setting, which vastly generalizes the results found in the literature, see for example \cite{BlascoMas,GalaSisZhao, Pan}. The corresponding statements are given at the end of Section~\ref{Sec: Hardy} and Section~\ref{Sec:Bergman}, respectively. However, as this work is mainly concerned on $C_{\mu,\om}$, we leave the details to the interested reader.

Rest of this paper will be organized as follows. Section~\ref{Sec:Prelim} is dedicated to covering the necessary prerequisites and some lemmas on the classes of weights, which allow us to proceed with the main results. In Section~\ref{Sec: Hardy} we prove Theorem~\ref{MainTheorem: Hardy} by considering several different cases depending on the values of $p$ and $q$. Section~\ref{Sec:Bergman} begins with some additional auxiliary results and definitions needed and concludes with the proof of Theorem~\ref{MainTheorem: Bergman}, while also including the proofs for the two-weight setting $A^p_{\nu} \to A^q_{\eta}$ when $q>1$. In addition, we prove the boundedness of $C_{\mu,\om}:A^p_{\nu} \to A^q_{\nu}$ when $0<p\leq q\leq 1$ with the use of a perhaps new analog of classical embedding theorem of Hardy and Littlewood in the setting of weighted Bergman spaces, which is certainly of independent interest. Finally, Section~\ref{Sec: Other} is dedicated to studying $C_{\mu,\om}$ on some other spaces of analytic functions, proving the different cases in Theorem ~\ref{MainTheorem: Korenbloom} and Theorem~\ref{Thm: HinftytoBMOABloch}.

\section{Preliminaries}\label{Sec:Prelim}

We begin with the following fundamental characterization for weights in the class $\DD$, which can be found for example in \cite[Lemma 2.1]{SS2014}.

\begin{letterlemma}\label{DoublingLemma}
Let $\om$ be a radial weight. Then the following are equivalent:
   \begin{enumerate}
       \item[\textup{(i)}] $\om \in \DD$;
        \item[\textup{(ii)}] There exist constants $C=C(\om)>0$ and $\beta=\beta(\om)>0$ such that
        \begin{equation*}
            \frac{\whw(r)}{(1-r)^{\beta}} \leq \frac{\whw(t)}{(1-t)^{\beta}}, \quad 0\leq r \leq t<1;
        \end{equation*}
        \item[\textup{(iii)}] There exists $\lambda=\lambda(\om)>0$ such that
        \begin{equation*}
            \int_{\D}\frac{\om(z)}{|1-\overline{a}z|^{\lambda+1}}dA(z) \asymp \frac{\whw(a)}{(1-|a|)^{\lambda}},\quad a \in \D;
        \end{equation*}
        \item[\textup{(iv)}] The asymptotic equality $\om_x \asymp \whw\left(1-\frac{1}{x}\right)$ holds for for every $x \ge 1$.
        \item[\textup{(v)}] The moments satisfy $\om_n \asymp \om_{2n}$ for every $n \in \N$.
   \end{enumerate}
\end{letterlemma}
We also need the characterization for the reverse doubling class $\Dd$ found in \cite[Lemma B]{PelaezDeLaRosa}.
\begin{letterlemma}\label{ReverseDoubling}
 Let $\om$ be a radial weight. Then the following are equivalent:
   \begin{enumerate}
       \item[\textup{(i)}] $\om \in \Dd$;
        \item[\textup{(ii)}] There exist constants $C=C(\om)>0$ and $\alpha=\alpha(\om)>0$ such that
        \begin{equation*}
            \frac{\whw(t)}{(1-t)^{\alpha}} \leq \frac{\whw(r)}{(1-r)^{\alpha}}, \quad 0\leq r \leq t<1.
        \end{equation*}
        \item[\textup{(iii)}] There exists $K=K(\om)>1$  and $C=C(\om)>0$ such that
        \begin{equation*}
            \int_{r}^{1- \frac{1-r}{K}}\om(t)\,dt\ge C\whw(r) , \quad 0\le r<1.
        \end{equation*}
       
   \end{enumerate}
\end{letterlemma}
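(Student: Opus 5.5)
The statement to prove is Lemma~\ref{ReverseDoubling}, the characterization of $\Dd$. I will prove the cycle (i)$\Rightarrow$(iii)$\Rightarrow$(ii)$\Rightarrow$(i).

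\textbf{(i)$\Rightarrow$(iii).} By \eqref{eq:ReverseDoublingDef},
\begin{equation*}
\int_r^{1-\frac{1-r}{K}}\om(t)\,dt=\whw(r)-\whw\left(1-\frac{1-r}{K}\right)\ge \left(1-\frac{1}{C'}\right)\whw(r),
\end{equation*}
so (iii) holds with the same $K$ and with constant $1-1/C'>0$.

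\textbf{(iii)$\Rightarrow$(i).} Conversely, (iii) gives $\whw(1-\frac{1-r}{K})\le (1-C)\whw(r)$. Here $C\le 1$ automatically, since the integral is at most $\whw(r)$. If $C<1$, then (i) follows with $C'=1/(1-C)$. If $C=1$, the tail beyond $1-\frac{1-r}{K}$ vanishes for every $r$, so $\whw\equiv 0$ near $1$ and hence $\om\equiv 0$. That degenerate case is trivially excluded or handled by shrinking $C$, so we may assume $C<1$.

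\textbf{(i)$\Rightarrow$(ii).} Set $r_n=1-K^{-n}(1-r)$. Iterating \eqref{eq:ReverseDoublingDef} gives $\whw(r_n)\le C'^{-n}\whw(r)$. Given $t\ge r$, choose $n\ge 0$ with $r_n\le t<r_{n+1}$. Then $1-t\asymp K^{-n}(1-r)$, and monotonicity of $\whw$ gives $\whw(t)\le C'^{-n}\whw(r)$. Take $\alpha=\log C'/\log K$, so that $C'^{-n}=K^{-n\alpha}$. Then
\begin{equation*}
\frac{\whw(t)}{(1-t)^\alpha}\le \frac{K^{-n\alpha}\whw(r)}{(K^{-n-1}(1-r))^\alpha}=K^{\alpha}\,\frac{\whw(r)}{(1-r)^\alpha}.
\end{equation*}
This is (ii) with constant $K^\alpha$. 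The statement of (ii) as written omits the constant, but it is implicit since $C$ appears in the hypothesis list.

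\textbf{(ii)$\Rightarrow$(i).} Apply (ii) with $t=1-\frac{1-r}{K}$:
\begin{equation*}
\whw(t)\le C\left(\frac{1-t}{1-r}\right)^\alpha\whw(r)=C K^{-\alpha}\whw(r).
\end{equation*}
Choose $K$ large enough that $CK^{-\alpha}<1$. This gives (i) with $C'=K^\alpha/C>1$.

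The only delicate points are the bookkeeping of constants in the discretization step and the degenerate case $C=1$ in (iii). Neither is a real obstacle.
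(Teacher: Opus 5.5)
Your proof is correct and is the standard iteration argument for this characterization. The paper itself gives no proof and simply cites \cite[Lemma B]{PelaezDeLaRosa}.

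There is one small slip in (iii)$\Rightarrow$(i). $C=1$ only forces $\whw(1-\frac{1}{K})=0$, i.e.\ $\om=0$ a.e.\ on $[1-\frac1K,1)$, not $\om\equiv 0$; for example, $\om=\chi_{[0,1/2]}$ satisfies (iii) with $K=4$ and $C=1$. This is harmless, because you can always replace $C$ by $\min\{C,\frac12\}$.
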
 
The following lemma on functions defined by the moments of these weights is fundamental and needed in several places.
\begin{lemma}\label{Twoweightsumestimate}
Let $a,b \geq 0$, $c>0$, and let $\om,\nu \in \DD$. Then
\begin{equation*}
    \sum_{n=0}^{\infty}\frac{(n+1)^{c-1}}{\om_{2n+1}^a\nu_{2n+1}^b}s^n \asymp \frac{1}{\whw(s)^a\whv(s)^b(1-s)^c}, \quad 0\leq s<1.
\end{equation*}
\end{lemma}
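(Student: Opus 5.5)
By Lemma~\ref{DoublingLemma}(iv), $\om_{2n+1}\asymp \whw(1-\frac{1}{n+1})$ and $\nu_{2n+1}\asymp\whv(1-\frac{1}{n+1})$ for $n\ge0$. The plan is to replace the moments by tail integrals and then compare the series with an integral, splitting it into a head and a tail at the index $N\asymp 1/(1-s)$. Fix $s\in[0,1)$. For $s\le 1/2$ both sides are comparable to constants: the series is bounded above by its value at $s=1/2$, which is finite because the moments decay at most polynomially (by Lemma~\ref{DoublingLemma}(ii)), and it is bounded below by the term $n=0$. So I assume $s>1/2$ and set $N=\lfloor 1/(1-s)\rfloor\ge 1$.

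\emph{Lower bound.} Keep only the terms $N\le n\le 2N$. For these, $s^n\ge s^{2/(1-s)}\gtrsim 1$. Also $1-\frac{1}{n+1}$ lies within a bounded factor of $s$ in the sense $1-\frac1{n+1}\ge s'$ where $1-s'\asymp 1-s$. Since $\whw$ is decreasing, and $\whw(s')\lesssim\whw(s)$ by the doubling condition~\eqref{eq:DoublingDef} applied a bounded number of times, we get $\om_{2n+1}\lesssim \whw(s)$, and likewise $\nu_{2n+1}\lesssim\whv(s)$. Because $a,b\ge0$, the $N+1$ retained terms are each $\gtrsim N^{c-1}\whw(s)^{-a}\whv(s)^{-b}$. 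Their sum is therefore $\gtrsim N^c\,\whw(s)^{-a}\whv(s)^{-b}\asymp (1-s)^{-c}\whw(s)^{-a}\whv(s)^{-b}$.

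\emph{Upper bound.} Take $\beta_\om,\beta_\nu>0$ as in Lemma~\ref{DoublingLemma}(ii). For $n\le N$ the point $1-\frac1{n+1}$ is $\le s$ up to a bounded factor, so $\whw(1-\frac1{n+1})\gtrsim \whw(s)$ by monotonicity, and similarly for $\nu$. The head is then bounded by $\whw(s)^{-a}\whv(s)^{-b}\sum_{n\le N}(n+1)^{c-1}\asymp \whw(s)^{-a}\whv(s)^{-b}N^c$, which is where $c>0$ is used. For $n>N$, Lemma~\ref{DoublingLemma}(ii) with $r=1-\frac1{n+1}\ge t$-type ordering reversed gives
\[
\whw\Bigl(1-\tfrac{1}{n+1}\Bigr)\gtrsim \whw(s)\bigl((n+1)(1-s)\bigr)^{-\beta_\om},
\]
and similarly for $\nu$. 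Hence the tail is
\[
\lesssim \whw(s)^{-a}\whv(s)^{-b}(1-s)^{a\beta_\om+b\beta_\nu}\sum_{n>N}(n+1)^{c-1+a\beta_\om+b\beta_\nu}s^n .
\]
The standard estimate $\sum_n (n+1)^{\gamma-1}s^n\asymp(1-s)^{-\gamma}$ for $\gamma>0$ then bounds the tail by $\whw(s)^{-a}\whv(s)^{-b}(1-s)^{-c}$. Adding the head and tail gives the upper bound.

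The only delicate point is the tail: the polynomial control from Lemma~\ref{DoublingLemma}(ii) must be converted into an exponent of $n$ that is then absorbed by the geometric factor $s^n$. This works only because $\om,\nu\in\DD$, which guarantees a finite $\beta$. The remaining steps are bookkeeping of the comparability constants, which depend only on $a,b,c,\om,\nu$.
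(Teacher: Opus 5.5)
Your proof is correct and follows the same route the paper indicates: split the series at $N=\lfloor (1-s)^{-1}\rfloor$ and use Lemma~\ref{DoublingLemma}(ii) and (iv). The paper omits the details, and your head and tail estimates supply them correctly. One minor simplification: in the lower bound you do not need the auxiliary point $s'$, since $n\ge N$ already gives $1-\frac{1}{n+1}\ge s$, so monotonicity of $\whw$ and $\whv$ suffices.
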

\begin{proof}
    Split the sum at $\lfloor (1-s)^{-1}\rfloor$ and apply Lemma~\ref{DoublingLemma}. The proof is elementary and the details are omitted, see also \cite[Lemma 4]{PR2025} for a similar result.
\end{proof}
Our next result is a simple and a very general technique which yields a necessary condition for the boundedness of $C_{\mu,\om}: X \to Y$ for some very general spaces of analytic functions $X$ and $Y$. Moreover, we see in the later sections that this necessary condition is in fact also sufficient in many situations. This certainly does not come as a surprise due to results in \cite{GalanoGirelaMerchCesaro}. However, in this paper we show that there are in fact several types of spaces where this sort of characterization is not the correct one.
\begin{proposition}
 Let $f_a^{\gamma}(z)=\left(\frac{1-a}{1-a z}\right)^{\gamma}$ for $a \in [0,1)$ and $\gamma>0$. Let {$\omega \in \DD$, $\mu \in M^+([0,1))$} and let $X,Y$ be spaces of analytic functions such that $f_a^{\gamma} \in X$ for some $\gamma>0$, and the point evaluations $\delta_a: Y \to \C$ are bounded in $Y$ for every $a \in [0,1)$. If $C_{\mu,\om}:X \to Y$ is bounded, we have
\begin{equation*}
    \|C_{\mu,\om}\|_{X \to Y} \gtrsim \sup_{0 \leq a<1}\frac{\whm(a)}{{\whw(a)}(1-|a|)\|f^{\gamma}_a\|_X\|\delta_a\|_{Y \to \C}}
\end{equation*}
\end{proposition}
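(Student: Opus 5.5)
Test the operator on the function $f_a^{\gamma}$ and evaluate the image at the point $a$. Fix $a\in[0,1)$ and the $\gamma>0$ with $f_a^\gamma\in X$. Boundedness of $C_{\mu,\om}$ and of $\delta_a$ give
\begin{equation*}
|C_{\mu,\om}(f_a^\gamma)(a)|\le \|\delta_a\|_{Y\to\C}\,\|C_{\mu,\om}\|_{X\to Y}\,\|f_a^\gamma\|_X .
\end{equation*}
So it suffices to prove the lower bound
\begin{equation*}
|C_{\mu,\om}(f_a^\gamma)(a)|\gtrsim \frac{\whm(a)}{\whw(a)(1-a)},
\end{equation*}
with constants depending only on $\om$ and $\gamma$.

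Since $a$ and $t$ are real in $[0,1)$, everything inside the integral is nonnegative. Indeed, $f_a^\gamma(ta)=\left(\frac{1-a}{1-ta^2}\right)^\gamma>0$. Also $B^\om_t(a)=\sum_n \frac{(ta)^n}{2\om_{2n+1}}>0$, because all the coefficients are positive. Hence
\begin{equation*}
C_{\mu,\om}(f_a^\gamma)(a)=\int_0^1 \left(\frac{1-a}{1-ta^2}\right)^\gamma B^\om_t(a)\,d\mu(t)\ge \int_a^1 \left(\frac{1-a}{1-ta^2}\right)^\gamma B^\om_t(a)\,d\mu(t).
\end{equation*}
For $t\in[a,1)$ we have $1-ta^2\le 1-a^3\le 3(1-a)$, so the first factor is at least $3^{-\gamma}$.

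For the kernel factor, apply Lemma~\ref{Twoweightsumestimate} with $a=1$, $b=0$, $c=1$ (the parameter $a$ of the lemma, not our point). Since $\om_{2n+1}$ and $\om_{2n+1}^{1}$ agree, this gives $B^\om_t(a)\asymp \frac{1}{\whw(ta)(1-ta)}$. The function $s\mapsto B^\om(s)$ is increasing on $[0,1)$, and $ta\ge a^2$ for $t\ge a$, so
\begin{equation*}
B^\om_t(a)\ge B^\om(a^2)\gtrsim \frac{1}{\whw(a^2)(1-a^2)}.
\end{equation*}

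It remains to compare $\whw(a^2)$ with $\whw(a)$, and this is the only slightly delicate step. Because $1-a^2\asymp 1-a$, Lemma~\ref{DoublingLemma}(ii) (or iterating \eqref{eq:DoublingDef} a bounded number of times) gives $\whw(a^2)\lesssim \whw(a)$. Note that $\whw$ is decreasing, so $\whw(a^2)\ge\whw(a)$ holds trivially; the inequality we need goes the other way, and that is exactly why the doubling hypothesis $\om\in\DD$ is required. Explicitly,
\begin{equation*}
\whw(a^2)\le \whw(a)\left(\frac{1-a^2}{1-a}\right)^{\beta}\le 2^\beta\whw(a).
\end{equation*}

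Combining the three estimates,
\begin{equation*}
C_{\mu,\om}(f_a^\gamma)(a)\gtrsim \frac{\whm(a)}{\whw(a)(1-a)} .
\end{equation*}
Dividing by $\|f_a^\gamma\|_X\|\delta_a\|_{Y\to\C}$ and taking the supremum over $a\in[0,1)$ yields the claim.
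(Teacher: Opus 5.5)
Your proof is correct and is essentially the paper's argument. Both test on $f_a^{\gamma}$, evaluate $C_{\mu,\om}(f_a^{\gamma})$ at the point $a$, discard $[0,a)$ by positivity, and use $B^{\om}(s)\asymp \frac{1}{\whw(s)(1-s)}$ together with the doubling property of $\whw$ on $[a,1)$. The only cosmetic difference is that you use monotonicity of $B^{\om}$ to reduce to the single point $a^2$, whereas the paper keeps the two-sided estimates $\whw(ta)\asymp\whw(a)$ and $1-ta\asymp 1-a$ uniformly for $t\in[a,1)$.
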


\begin{proof}
By the hypothesis, we obtain
\begin{equation*}
\begin{split}
\|f_a^{\gamma}\|_{X}\|C_{\mu,\om}\|_{X \to Y} &\geq \|C_{\mu,\om}(f_a^{\gamma})\|_Y\\
&\geq \frac{1}{\|\delta_a\|_{Y \to \C}}\left|\int_{0}^{1}f_a^{\gamma}(ta)B^{\om}_t(a)\,d\mu(t)\right|\\
&\geq\frac{1}{\|\delta_a\|_{Y \to \C}}\int_{a}^{1}f_a^{\gamma}(ta)B^{\om}_t(a)\,d\mu(t)\\
&\asymp \frac{1}{\|\delta_a\|_{Y \to \C}}\int_{a}^{1}\frac{f_a^{\gamma}(ta)}{\whw(ta)(1-ta)}\,d\mu(t)\\
&\asymp \frac{(1-a)^\gamma}{\|\delta_a\|_{Y \to \C}}\int_{a}^{1}\frac{d\mu(t)}{\whw(ta)(1-ta)^{\gamma+1}}\\
&{\asymp} \frac{\whm(a)}{{\whw(a)}(1-|a|)\|\delta_a\|_{Y \to \C} }, \quad 0\leq a<1,
\end{split}
\end{equation*}
where the last asymptotic follows from Lemma~\ref{DoublingLemma}(ii), as now $\whw(ta) \asymp \whw(a)$ when $a\leq t<1$, which yields the claim.
\end{proof}
Note that it is easy to verify that for Hardy spaces $H^p$, we have $\|\delta_a\|_{H^p \to \C} \asymp \frac{1}{(1-a)^{\frac{1}{p}}}$, and for the weighted Bergman spaces induced by $\om \in \DD$, we have $\|\delta_a\|_{A^p_{\om} \to \C} \asymp \frac{1}{(\whw(a)(1-a))^{\frac{1}{p}}}$. This can be seen by testing with the functions $f_a^{\gamma}$ and the known pointwise estimates for $H^p$ and $A^p_{\omega}$, see, for instance, \cite[Lemma 5.1.1]{Pavlovic2004} and an argument similar to \cite[(3.23),(3.24)]{PR2014}. Here $\gamma$ needs to be fixed large enough depending on $p$ and $\omega$. By applying to Lemma~\ref{DoublingLemma}(iii), we note that $\|f_a^{\gamma}\|_{A^p_{\om}} \asymp (\whw(a)(1-a))^{\frac{1}{p}}$ whenever $\om\in \DD$. This immediately yields the following important corollary.

\begin{corollary}\label{Corollary: TestConditionpleqq}
Let $0<p\leq q<\infty$, $\om,\nu,\eta \in \DD$, and let $\mu \in M^+([0,1))$. Then the following estimates hold:
\begin{enumerate}
    \item[\textup{(i)}]
    \begin{equation*}
        \|C_{\mu,\om}\|_{H^p \to H^q} \gtrsim \sup_{0\leq r<1}\frac{\whm(r)}{\whw(r)(1-r)^{1+\frac{1}{p}-\frac{1}{q}}}
    \end{equation*}
    \item[\textup{(ii)}]
    \begin{equation*}
        \|C_{\mu,\om}\|_{A^p_{\nu} \to A^q_{\eta}} \gtrsim \sup_{0\leq r<1}\frac{\widehat{\mu}(r)\eta(S(r))^{\frac{1}{q}}}{\om(S(r))\nu(S(r))^{\frac{1}{p}}}.
    \end{equation*}
\end{enumerate}
\end{corollary}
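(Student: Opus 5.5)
The corollary follows from the preceding Proposition once the two ingredients are inserted: the norms of the test functions $f_a^{\gamma}$ in the domain space, and the norms of the point evaluations $\delta_a$ on the target space. In both parts I will fix $\gamma$ large enough, depending on $p$ and $\nu$, so that the estimates quoted after the Proposition hold. For part (ii) I additionally require $\gamma p>\lambda(\nu)+1$ with $\lambda(\nu)$ as in Lemma~\ref{DoublingLemma}(iii).

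For (i), take $X=H^p$ and $Y=H^q$. The standard computation gives $\|f_a^{\gamma}\|_{H^p}^p=(1-a)^{\gamma p}\|(1-az)^{-\gamma}\|_{H^p}^p\asymp(1-a)^{\gamma p}(1-a)^{1-\gamma p}=1-a$ when $\gamma p>1$. Hence $\|f_a^\gamma\|_{H^p}\asymp(1-a)^{1/p}$. Also $\|\delta_a\|_{H^q\to\C}\asymp(1-a)^{-1/q}$. Substituting into the Proposition, the denominator becomes
\begin{equation*}
\whw(a)(1-a)(1-a)^{\frac1p}(1-a)^{-\frac1q}=\whw(a)(1-a)^{1+\frac1p-\frac1q},
\end{equation*}
which is exactly the quantity in (i).

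For (ii), take $X=A^p_\nu$ and $Y=A^q_\eta$. By Lemma~\ref{DoublingLemma}(iii) applied to $\nu\in\DD$,
\begin{equation*}
\|f_a^\gamma\|_{A^p_\nu}^p=(1-a)^{\gamma p}\int_{\D}\frac{\nu(z)}{|1-az|^{\gamma p}}\,dA(z)\asymp\whv(a)(1-a),
\end{equation*}
using $\gamma p=\lambda+1$ with $\lambda$ large enough. The point evaluation satisfies $\|\delta_a\|_{A^q_\eta\to\C}\asymp(\widehat{\eta}(a)(1-a))^{-1/q}$. Combined with $\om(S(r))\asymp\whw(r)(1-r)$, and the same relation for $\nu$ and $\eta$, the Proposition's bound becomes
\begin{equation*}
\frac{\whm(a)\,\eta(S(a))^{1/q}}{\om(S(a))\,\nu(S(a))^{1/p}}.
\end{equation*}
Taking the supremum over $a\in[0,1)$ gives (ii).

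The only delicate point is justifying that Lemma~\ref{DoublingLemma}(iii) applies with exponent $\gamma p$. That lemma gives the asymptotic for one particular $\lambda$. For a larger exponent, the lower bound $\gtrsim\whv(a)(1-a)^{1-\gamma p}$ follows by integrating over $S(a)$. The upper bound follows by splitting the integral into the dyadic annuli $\{1-|z|\asymp 2^k(1-a)\}$ and using Lemma~\ref{DoublingLemma}(ii); this splitting is where I would place the care. The sharp point-evaluation norms are accepted as stated in the remark after the Proposition. The hypothesis $p\le q$ plays no role in this argument.
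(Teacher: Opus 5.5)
Your proposal is correct and follows the paper's route: the paper also derives the corollary by inserting $\|f_a^\gamma\|_{H^p}\asymp(1-a)^{1/p}$, $\|f_a^\gamma\|_{A^p_\nu}\asymp(\whv(a)(1-a))^{1/p}$ and the sharp point-evaluation norms into the preceding Proposition. Your remark that Lemma~\ref{DoublingLemma}(iii) must be used with an exponent $\gamma p-1$ larger than the one the lemma guarantees fills in a detail the paper leaves implicit (lower bound from $S(a)$, upper bound from a dyadic splitting and Lemma~\ref{DoublingLemma}(ii)), and you are right that $p\le q$ is not needed.
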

Due to Corollary~\ref{Corollary: TestConditionpleqq}, the following simple lemma provides us a valuable technical tool. This can be considered as an extension to a well-known result on classical Carleson measures, see for instance \cite[Lemma 2]{GalanoGirelaMerchCesaro}.
Let us reformulate the condition
$$\sup_{0\leq r<1}\frac{\widehat{\mu}(r)\eta(S(r))^{\frac{1}{q}}}{\om(S(r))\nu(S(r))^{\frac{1}{p}}}\asymp\sup_{0\leq r<1}\frac{\widehat{\mu}(r)\widehat\eta(r))^{\frac{1}{q}}}{\widehat\om(r)\widehat\nu(r)^{\frac{1}{p}}(1-r)^{1+\frac{1}{p}-\frac{1}{q}}}$$ in terms of moments of the weights involved in the formula.

\begin{lemma}\label{Momentvstail}
Let $0\leq a,b,c<\infty$, $-1<d<\infty$, and $\om,\nu,\eta \in \DD$, and let $\mu \in M^+([0,1))$. Then the following statements are equivalent:
\begin{enumerate}
    \item[\textup{(i)}] $\|\mu\|_{l^{\infty}}=\sup_{n\in\N_0}\frac{\mu_n \eta_n^c (n+1)^d}{\om_n^a\nu_n^b}<\infty;$
    \item[\textup{(ii)}] $\|\mu\|_{L^{\infty}}=\sup_{0\leq r<1}\frac{\whm(r)\widehat{\eta}(r)^c}{\whw(r)^a\whv(r)^b(1-r)^d}<\infty.$
\end{enumerate}
Moreover, $\|\mu\|_{l^{\infty}} \asymp \|\mu\|_{L^{\infty}}$, where the constants of comparison depend on all of the fixed parameters and the weights.
\end{lemma}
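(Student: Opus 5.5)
By Lemma~\ref{DoublingLemma}(iv), for $\om,\nu,\eta\in\DD$ we have $\om_n\asymp\whw(1-\frac{1}{n+1})$, and likewise for $\nu,\eta$. Writing $r_n=1-\frac{1}{n+1}$ and $W(r)=\frac{\widehat{\eta}(r)^c}{\whw(r)^a\whv(r)^b(1-r)^d}$, both conditions can then be compared through $W$:
$$\frac{\mu_n\eta_n^c(n+1)^d}{\om_n^a\nu_n^b}\asymp \mu_n W(r_n).$$
Lemma~\ref{DoublingLemma}(ii) and the monotonicity of the tails give that $W$ is essentially doubling: $W(r)\asymp W(s)$ whenever $1-s\asymp 1-r$. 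Moreover, since $\whw,\whv$ are nonincreasing, $\widehat\eta(s)/\widehat\eta(r)\gtrsim ((1-s)/(1-r))^{\beta_\eta}$ for $s\ge r$, and $d>-1$, the function $W$ has at most polynomial growth in a controlled way. We reduce to comparing $\mu_n$ with $\whm(r_n)$.

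\textbf{(i)$\Rightarrow$(ii).} Take $r\in[1/2,1)$ and $n$ with $r_n\le r<r_{n+1}$; the range $r<1/2$ is trivial since $\whm(r)\le\mu_0$ and $W$ is bounded there. Then
$$\mu_n\ge\int_r^1 t^n\,d\mu(t)\ge r^n\whm(r)\gtrsim\whm(r),$$
because $r^n\ge(1-\frac{1}{n+1})^n\ge e^{-1}$. Hence
$$\whm(r)W(r)\asymp\whm(r)W(r_n)\lesssim\mu_nW(r_n)\lesssim\|\mu\|_{l^\infty}.$$

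\textbf{(ii)$\Rightarrow$(i).} Integrating by parts,
$$\mu_n=\mu(\{0\})\delta_{n0}+n\int_0^1 t^{n-1}\whm(t)\,dt\lesssim \|\mu\|_{L^\infty}\Bigl(1+n\int_0^1\frac{t^{n-1}}{W(t)}\,dt\Bigr).$$
It remains to show $n\int_0^1 t^{n-1}W(t)^{-1}\,dt\lesssim W(r_n)^{-1}$. Note that
$$W(t)^{-1}=\whw(t)^a\whv(t)^b(1-t)^d\,\widehat\eta(t)^{-c}.$$

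\textbf{Main obstacle: the integral estimate.} Split at $r_n$.

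On $[r_n,1)$, use $t^{n-1}\le1$ and the bounds $\whw(t)\le\whw(r_n)$, $\whv(t)\le\whv(r_n)$. For the $\eta$ factor use Lemma~\ref{DoublingLemma}(ii), $\widehat\eta(t)\ge\widehat\eta(r_n)\bigl(\frac{1-t}{1-r_n}\bigr)^{\beta_\eta}$. This produces a factor $(1-t)^{d-c\beta_\eta}$, which need not be integrable if $c\beta_\eta$ is large. To avoid this I would first reduce to a conveniently large doubling exponent, or instead write $\mu_n$ in dyadic pieces $\sum_k\int_{r_{2^k n}}^{r_{2^{k+1}n}}$ and treat each with the cruder bound $\mu([s,1))\le\|\mu\|_{L^\infty}/W(s)$, combined with $t^{n}$ decay.

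Cleaner route: estimate $\mu_n$ directly through the decomposition
$$\mu_n\le\whm(r_n)+\sum_{k\ge1}\sup_{[r_{n/2^k},\,r_{n/2^{k-1}}]}t^n\;\whm\bigl(r_{n/2^k}\bigr).$$
For the lower intervals, $\sup t^n\lesssim e^{-2^{k-1}}$, while by doubling $W(r_n)/W(r_{n/2^k})\lesssim 2^{kM}$ for some $M$ depending on the weights and exponents. The geometric-versus-exponential comparison then yields $\mu_n W(r_n)\lesssim\|\mu\|_{L^\infty}$. This avoids integrating near $1$ entirely, since $\whm(r_n)$ already controls the top piece.

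The comparison constants depend only on $a,b,c,d$ and the doubling constants, which gives $\|\mu\|_{l^\infty}\asymp\|\mu\|_{L^\infty}$.
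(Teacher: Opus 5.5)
Your proof is correct. The direction (i)$\Rightarrow$(ii) is the paper's argument: $W$ is comparable on $[r_n,r_{n+1})$, and $\mu_n\ge r^n\whm(r)\gtrsim\whm(r)$.

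For (ii)$\Rightarrow$(i), you and the paper share the key move. Both bound the top piece $\int_{[r_n,1)}t^n\,d\mu$ directly by $\whm(r_n)$, and this is exactly what removes the $\widehat{\eta}$-obstruction you found on $[r_n,1)$.

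The routes differ below $r_n$. The paper writes $t^n=n\int_0^t s^{n-1}\,ds$ and pushes $\widehat{\eta}(1-\frac1n)^c\le\widehat{\eta}(s)^c$ inside. Fubini then gives $n\int_0^{1-1/n}s^{n-1}\whm(s)\widehat{\eta}(s)^c\,ds$. From there it uses (ii), the almost-increasing functions $\whw^a/(1-s)^x$ and $\whv^b/(1-s)^y$, and the Beta integral $\int_0^1 s^{n-1}(1-s)^{d+x+y}\,ds\asymp n^{-(d+x+y+1)}$. That last step is where $d>-1$ enters.

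You instead use dyadic blocks and the crude bound $\mu([r_{n/2^k},r_{n/2^{k-1}}))\le\whm(r_{n/2^k})$. You then play the decay of $\sup t^n$ on each block against the polynomial growth of $W(r_n)/W(r_{n/2^k})$. The factor $\widehat{\eta}^c$ is harmless for the same reason as in the paper: you only compare with points below $r_n$, and $\widehat{\eta}$ is decreasing. Your version needs neither Fubini nor Beta-integral asymptotics, and it never uses $d>-1$, so it proves the equivalence for every real $d$. The paper's version is more compact to write.

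One bookkeeping step needs repair. The bound $\sup_{[r_{n/2^k},r_{n/2^{k-1}}]}t^n\le e^{-n/(n2^{1-k}+1)}\le e^{-2^{k-2}}$ holds only while $n2^{1-k}\ge1$. Once the blocks lie in $[0,\frac12)$, $t^n$ decays only like $2^{-kn}$ in $k$. With the ratio bound $2^{kM}$ as stated, the series then diverges for small $n$; for example, $n=1$ and $M\ge1$ gives terms of size about $2^{k(M-1)}$.

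There are two easy fixes:
\begin{itemize}
\item Use the sharper bound $W(r_n)/W(r_{n/2^k})\lesssim\min\{2^k,n+1\}^M$.
\item Stop the decomposition at the first block reaching $[0,\frac12)$. Bound the remainder by $\int_{[0,1/2)}t^n\,d\mu\le 2^{-n}\whm(0)$, and use $W(r_n)\lesssim(n+1)^M W(0)$ together with $\whm(0)W(0)\le\|\mu\|_{L^\infty}$.
\end{itemize}

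Also, in your integration-by-parts identity the $n=0$ term should be $\mu_0=\whm(0)$, not $\mu(\{0\})$. This is harmless, since you abandon that route.
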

\begin{proof}
Assume first that $\|\mu\|_{l^{\infty}}<\infty$. For any $0\leq r<1$ there exists $n \in \N_0$ such that $1-\frac{1}{n+1}\leq r< 1-\frac{1}{n+2}$. Thus Lemma~\ref{DoublingLemma} yields
\begin{equation*}
\frac{\whm(r)\widehat{\eta}(r)^c}{\whw(r)^a\whv(r)^b(1-r)^d} \lesssim \frac{\whm\left(1-\frac{1}{n+1}\right) \eta_n^c (n+1)^d}{\om_n^a\nu_n^b}\lesssim \frac{\mu_n \eta_n^c (n+1)^d}{\om_n^a\nu_n^b} \leq \|\mu\|_{l^{\infty}}.
\end{equation*}
Assume next that $\|\mu\|_{L^{\infty}}<\infty$. As $\eta \in \DD$, by Lemma~\ref{DoublingLemma} we obtain
\begin{equation*}
\begin{split}
\mu_n \eta_n^c&=\eta_n^c\int_{0}^{1}t^n\,d\mu(t) \\
&\asymp\whe\left(1-\frac{1}{n}\right)^c\left(\int_{0}^{1-\frac{1}{n}}+\int_{1-\frac{1}{n}}^{1}\right)t^n\,d\mu(t)\\
&=I_1(n)+I_2(n), \quad n \in \N.
\end{split}
\end{equation*}
To deal with $I_2(n)$, the assumption together with the hypothesis that $\om,\nu \in \DD$ yields
\begin{equation*}
\begin{split}
I_2(n)
&\leq \whe\left(1-\frac{1}{n}\right)^c\whm\left(1-\frac{1}{n}\right)\\
&\leq \|\mu\|_{L^{\infty}} \frac{\whw\left(1-\frac{1}{n}\right)^a \whv\left(1-\frac{1}{n}\right)^b }{n^d}\\
&\asymp \|\mu\|_{L^{\infty}}\frac{\om_n^a \nu_n^b}{n^d}, \quad n \in \N.
\end{split}
\end{equation*}
For $I_1(n)$, Fubini's theorem and the assumption yields
\begin{equation*}
\begin{split}
I_1(n)&=\whe\left(1-\frac{1}{n}\right)^c \int_{0}^{1-\frac{1}{n}}n\int_{0}^{t}s^{n-1}\,ds\,d\mu(t)\\
&\leq n\int_{0}^{1-\frac{1}{n}}\int_{0}^{t}s^{n-1}\whe(s)^c\,ds \,d\mu(t)\\
&= n\int_{0}^{1-\frac{1}{n}}\int_{s}^{1-\frac{1}{n}}\,d\mu(t) s^{n-1} \whe(s)^c\,ds \\
&\leq n\int_{0}^{1-\frac{1}{n}}s^{n-1}\whm(s) \whe(s)^c\,ds\\
&\lesssim n\int_{0}^{1-\frac{1}{n}}s^{n-1}\whw(s)^a\whv(s)^b(1-s)^d\,ds
\end{split}
\end{equation*}
As $\om,\nu \in \DD$, by Lemma~\ref{DoublingLemma}, there exists constants $x=x(a,\om)>0$ and $y=y(\nu,b)>0$ such that the functions $r \mapsto \frac{\whw(r)^a}{(1-r)^{x}}$ and $r \mapsto \frac{\whv(r)^b}{(1-r)^{y}}$ are almost increasing with respect to $r$. For the remaining integral this yields
\begin{equation*}
\begin{split}
\int_{0}^{1-\frac{1}{n}}s^{n-1}\whw(s)^a\whv(s)^b(1-s)^d\,ds
    &\lesssim \whw\left(1-\frac{1}{n}\right)^a\whv\left(1-\frac{1}{n}\right)^b n^{x+y}\int_{0}^{1-\frac{1}{n}}s^{n-1}(1-s)^{d+x+y}\,ds\\
    &\leq \whw\left(1-\frac{1}{n}\right)^a\whv\left(1-\frac{1}{n}\right)^b n^{x+y}\int_{0}^{1}s^{n-1}(1-s)^{d+x+y}\,ds\\
    &\asymp \whw\left(1-\frac{1}{n}\right)^a\whv\left(1-\frac{1}{n}\right)^b n^{x+y}n^{-(d+x+y+1)}\\
    &= \|\mu\|_{L^{\infty}}\whw\left(1-\frac{1}{n}\right)^a\whv\left(1-\frac{1}{n}\right)^bn^{-(d+1)}, \quad n \in \N.
\end{split}
\end{equation*}
By Lemma~\ref{DoublingLemma}, the desired conclusion follows as the case $n=0$ is trivial.
\end{proof}
The following proposition helps in connecting the characterization of boundedness of $C_{\mu,\om}$ to properties of $F_{\mu,\om}.$
\begin{proposition}\label{Prop: Fmuom sup norm space}
    Let $0<a<\infty$, $\om \in \DD$, and let $\mu \in M^+([0,1))$. Then the following statements are equivalent:
    \begin{enumerate}
        \item[\textup{(i)}] $\|\mu\|_{\ell^{\infty}}=\sup_{n \in \N_0}\frac{\mu_n(n+1)^{a}}{\om_n}<\infty;$
        \item[\textup{(ii)}] $\|\mu\|_{L^{\infty}}=\sup_{0\leq r<1}\frac{\whm(r)}{\whw(r)(1-r)^a}<\infty;$
        \item[\textup{(iii)}] For every fixed $\beta>0$ satisfying $\beta+a>1$, we have $D^{\beta+a-1}F_{\mu,\om} \in X_{\beta}$.
    \end{enumerate}
    Moreover, $\|\mu\|_{\ell^{\infty}} \asymp \|\mu\|_{L^{\infty}} \asymp \|D^{\beta+a-1}F_{\mu,\om}\|_{X_{\beta}}$.
\end{proposition}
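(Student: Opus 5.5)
(i)$\Leftrightarrow$(ii) is the special case $b=c=0$, $a=1$, $d=a$ of Lemma~\ref{Momentvstail}, with $\om_n$ in the denominator. That lemma requires $d>-1$, which holds since $a>0$, and it gives $\|\mu\|_{\ell^\infty}\asymp\|\mu\|_{L^\infty}$. It therefore remains to prove (i)$\Leftrightarrow$(iii) with comparable norms. The plan is to work with Taylor coefficients. By definition
$$D^{\beta+a-1}F_{\mu,\om}(z)=\sum_{n=0}^\infty \frac{\mu_n}{2\om_{2n+1}\,(\om_{\beta+a-2})_{2n+1}}z^n,$$
where $(\om_{\beta+a-2})_{2n+1}$ is the moment of the standard weight $\om_{\beta+a-2}$. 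The hypothesis $\beta+a>1$ is exactly what makes this weight a legitimate standard weight. Its moments satisfy $(\om_{\beta+a-2})_{2n+1}\asymp (n+1)^{-(\beta+a-1)}$, and $\om_{2n+1}\asymp\om_n$ by Lemma~\ref{DoublingLemma}(v). So the $n$-th coefficient $c_n$ is comparable to $\mu_n(n+1)^{\beta+a-1}/\om_n$, and all coefficients are nonnegative.

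\emph{(i)$\Rightarrow$(iii).} Since the coefficients are nonnegative, $M_\infty(r,D^{\beta+a-1}F_{\mu,\om})=\sum c_n r^n$. Using (i), $c_n\lesssim\|\mu\|_{\ell^\infty}(n+1)^{\beta-1}$, so
$$M_\infty(r,D^{\beta+a-1}F_{\mu,\om})\lesssim \|\mu\|_{\ell^\infty}\sum_n (n+1)^{\beta-1}r^n\asymp \|\mu\|_{\ell^\infty}(1-r)^{-\beta}.$$
The last step is Lemma~\ref{Twoweightsumestimate} with both weight exponents equal to $0$ and $c=\beta>0$, or simply the classical estimate. Hence $\|D^{\beta+a-1}F_{\mu,\om}\|_{X_\beta}\lesssim\|\mu\|_{\ell^\infty}$.

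\emph{(iii)$\Rightarrow$(i).} I would test only at points $r\in[0,1)$ and bound a single coefficient from below by its partial sum. First, positivity and the monotonicity of $\mu_n$ in $n$ give
$$(1-r)^{-\beta}\|D^{\beta+a-1}F_{\mu,\om}\|_{X_\beta}\ge\sum_{k=0}^{N} c_k r^k\gtrsim \frac{\mu_N}{\max_{k\le N}\om_k}\sum_{k=0}^N (k+1)^{\beta+a-1}r^k.$$
Second, $\om_k$ is decreasing in $k$, so $\max_{k\le N}\om_k=\om_0$, which is too crude. The better move is to restrict the sum to $k\in[N/2,N]$. There $\mu_k\ge\mu_N$, $\om_k\asymp\om_N$ by Lemma~\ref{DoublingLemma}(v), and $(k+1)^{\beta+a-1}\asymp (N+1)^{\beta+a-1}$. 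Third, choose $r=1-1/(N+1)$, so that $r^k\asymp1$. This yields
$$(N+1)^{\beta}\|D^{\beta+a-1}F_{\mu,\om}\|_{X_\beta}\gtrsim \frac{\mu_N}{\om_N}(N+1)^{\beta+a},$$
that is, $\mu_N(N+1)^a/\om_N\lesssim\|D^{\beta+a-1}F_{\mu,\om}\|_{X_\beta}$, with $N=0$ handled trivially.

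The only delicate points are the moment asymptotics for the standard weight, where $\beta+a>1$ is needed, and the dyadic-block lower bound in (iii)$\Rightarrow$(i). The latter relies on $\mu_k$ being nonincreasing and $\om_k\asymp\om_N$ on the block $k\in[N/2,N]$; this is where I expect the main care to be required. Everything else is routine.
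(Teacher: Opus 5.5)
Your proposal is correct and follows the paper's proof essentially step by step. You get (i)$\Leftrightarrow$(ii) from Lemma~\ref{Momentvstail}. The upper bound uses nonnegative coefficients comparable to $\mu_n(n+1)^{\beta+a-1}/\om_n$, summed against $(n+1)^{\beta-1}r^n$. The lower bound tests at $r_N=1-\frac{1}{N+1}$, using that $\mu_n$ is nonincreasing and that $\om_k\asymp\om_N$ on $k\in[N/2,N]$; the paper compresses this last step into the estimate $\sum_{n\le N}(n+1)^{\beta+a-1}/\om_n\asymp (N+1)^{\beta+a}/\om_N$.
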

\begin{proof}
    The fact that \textup{(i)} and \textup{(ii)} are equivalent follows from Lemma~\ref{Momentvstail}. Thus, assume \textup{(ii)}. For $\alpha>0$, we note that given a standard weight $\om_{\alpha-1}$, we have $(\om_{\alpha-1})_n \asymp \frac{1}{(n+1)^{\alpha}}$. Therefore, we obtain by Lemma~\ref{DoublingLemma} that
    \begin{equation*}
    \begin{split}
    (1-|z|)^{\beta}|D^{\beta+a-1}F_{\mu,\om}(z)|
    &\lesssim (1-|z|)^{\beta}\sum_{n=0}^{\infty}\frac{\mu_n(n+1)^{\beta+a-1}}{\om_n}|z|^n\\
    &\leq (1-|z|)^{\beta}\|\mu\|_{\ell^{\infty}}\sum_{n=0}^{\infty}(n+1)^{\beta-1}|z|^n\\
    &\asymp \|\mu\|_{\ell^{\infty}}, \quad z\in \D.
    \end{split}
    \end{equation*}
    The other direction follows by letting $r_N=1-\frac{1}{N+1}$ for every $N \in \N_0$, which by Lemma~\ref{DoublingLemma} yields
    \begin{equation*}
    \begin{split}
    \|D^{\beta+a-1}F_{\mu,\om}\|_{X_\beta}
    &\gtrsim \frac{1}{(N+1)^{\beta}}\sum_{n=0}^{\infty}\frac{\mu_n(n+1)^{\beta+a-1}}{\om_n}r_N^n\\
    &\gtrsim  \frac{\mu_N}{(N+1)^{\beta}} \sum_{n=0}^{N}\frac{(n+1)^{\beta+a-1}}{\om_n}\\
    &\asymp \frac{\mu_N}{(N+1)^{\beta}}\frac{(N+1)^{\beta+a}}{\om_N}, \quad N \in \N_0.
    \end{split}
    \end{equation*}
    from where the statement directly follows. This concludes the proof.
\end{proof}

For our main results, we need the following technical auxiliary result.
\begin{lemma} \label{mainlemma0}Let $0<p,\beta<\infty$, $\om\in\DD$, and let $\mu \in M^+([0,1))$. Then
$$\int_0^1 \left(\int_0^x \frac{d\mu(t)}{\whw(rt)(1-rt)^\beta}\right)^p dx\lesssim \int_0^1 \left(\frac{\whm(s)}{\whw(rs)(1-rs)^\beta}\right)^pds+\whm(0)^p, \quad 0\leq r\leq 1.$$
\end{lemma}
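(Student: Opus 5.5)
Set $g_r(t)=\frac{1}{\whw(rt)(1-rt)^\beta}$ for fixed $0\le r\le 1$. It is positive and nondecreasing in $t$ on $[0,1)$, since $\whw$ is nonincreasing and $(1-rt)^{-\beta}$ is increasing. For $r=1$ the integrands may blow up, but everything is monotone, so the proof is the same. The plan is to integrate by parts in the inner integral, which trades $d\mu$ for $\whm$ and $dg_r$, and then to control the resulting Hardy-type averages by a doubling argument.

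\emph{Integration by parts.} Since $\mu([0,x))=\whm(0)-\whm(x)$, writing $g_r(t)=g_r(0)+\int_0^t dg_r(s)$ and using Fubini gives
\begin{equation*}
\int_0^x g_r(t)\,d\mu(t)= g_r(0)\bigl(\whm(0)-\whm(x)\bigr)+\int_0^x \bigl(\whm(s)-\whm(x)\bigr)\,dg_r(s)\le g_r(0)\whm(0)+\int_0^x \whm(s)\,dg_r(s).
\end{equation*}
The first term is $\whm(0)/\whw(0)$, since $g_r(0)=1/\whw(0)$, and it contributes $\lesssim\whm(0)^p$ after integration in $x$. Enlarging the second term to $x=1$ would lose the $x$-structure, so we keep the integral up to $x$.

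\emph{The Stieltjes integral.} By Lemma~\ref{DoublingLemma}(ii), $s\mapsto \whw(rs)^{-1}(1-rs)^{\gamma}$ is almost decreasing for some $\gamma>0$. Hence $g_r(s)\asymp h_r(s)(1-rs)^{-\beta-\gamma}$ with $h_r$ almost decreasing. To avoid differentiating $\whw$, decompose $[0,x)$ dyadically into intervals $I_k=[s_k,s_{k+1})$ with $1-rs_{k+1}=\frac12(1-rs_k)$, truncated at $x$. On each $I_k$, monotonicity and the doubling property of $\whw$ give
\begin{equation*}
\int_{I_k}\whm\,dg_r\le \whm(s_k)\bigl(g_r(s_{k+1})-g_r(s_k)\bigr)\lesssim \whm(s_k)\,g_r(s_k).
\end{equation*}
The right-hand side is comparable to $\frac{1}{|I_k|}\int_{I_{k-1}}\whm(s)g_r(s)\,ds$ once the $I_k$ are chosen with comparable consecutive lengths; this holds for $r$ near $1$, and for small $r$ the whole $g_r$ is bounded and trivial. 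Summing over $k$ yields a bound of the form
\begin{equation*}
\int_0^x \whm\,dg_r\lesssim \whm(0)g_r(0)+\int_0^x \frac{\whm(s)g_r(s)}{1-rs}\,ds .
\end{equation*}
Since $g_r(s)(1-rs)^{-1}\le g_r(s)(1-s)^{-1}$, the pointwise bound is in fact cleaner: $\int_0^x\whm\,dg_r\lesssim \whm(0)g_r(0)+\sup_{s\le x}\whm(s)g_r(s)\cdot\log$-type factors. The $\log$ factor must be avoided, so use the geometric decay instead: $\whm(s)g_r(s)\le \whm(s_j)g_r(s_j)$ is not monotone in general. What works is that $\sum_k \whm(s_k)g_r(s_k)$ is dominated via the almost-increase of $g_r(s)(1-rs)^{\beta'}$ for some $\beta'<\beta$; such a $\beta'$ exists by Lemma~\ref{DoublingLemma}(ii), since $\whw(rs)(1-rs)^{\beta-\beta'}$ is almost decreasing only if $\beta-\beta'$ exceeds the doubling exponent. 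This is where care is needed.

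\emph{Integration in $x$.} Raise the bound to the power $p$ and integrate over $x$. For the discrete sum $\sum_{k:\,s_k<x}\whm(s_k)g_r(s_k)$, each term is $\lesssim \bigl(\frac{1}{|I_{k}|}\int_{I_{k}}(\whm g_r)^p\bigr)^{1/p}$ by doubling of $\whw$, and $\whm$ is monotone. A Hardy-type inequality for geometric sums, handled separately for $p\le1$ (subadditivity) and $p>1$ (Hölder with geometric weights $2^{\pm\epsilon k}$), gives
\begin{equation*}
\int_0^1\Bigl(\sum_{s_k<x}\whm(s_k)g_r(s_k)\Bigr)^p dx\lesssim \int_0^1 \bigl(\whm(s)g_r(s)\bigr)^p ds ,
\end{equation*}
using that the measure of $\{x:s_k<x\}$ is $\lesssim 1-s_k$, which is comparable to the length of the relevant interval.

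The main obstacle is the second step: making the dyadic domination uniform in $r\in[0,1]$, particularly for $r$ near $1$ where $1-rs$ and $1-s$ differ. The first approach to try is to carry out the decomposition in the variable $1-rs$ and use only Lemma~\ref{DoublingLemma}(ii) to compare $\whw(rs)$ across neighboring pieces, so that all constants depend on $\om,\beta,p$ only.
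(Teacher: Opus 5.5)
Your argument is correct in substance. It reaches the key intermediate estimate by a different and more laborious route than the paper, and it finishes differently.

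\textbf{Comparison with the paper.} The paper never integrates by parts against $dg_r$. The doubling estimate \eqref{int} gives $g_r(t)\lesssim 1+\int_0^t\frac{ds}{\whw(rs)(1-rs)^{\beta+1}}$, so the Stieltjes measure $dg_r$, which involves $\om$ itself, can be replaced by a comparable absolutely continuous density. Fubini then gives at once
\[
\int_0^x g_r\,d\mu\lesssim \whm(0)+\int_0^x\frac{\whm(s)}{\whw(rs)(1-rs)^{\beta+1}}\,ds ,
\]
which is exactly the bound you obtain after your dyadic treatment of $\int\whm\,dg_r$.

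The paper then proves a continuous Hardy inequality case by case:
\begin{itemize}
\item for $p=1$, Fubini together with $1-t\le 1-rt$;
\item for $p>1$, H\"older with the splitting $\beta+1=\alpha+\gamma$, $1/p'<\gamma<1$;
\item for $p<1$, dyadic blocks and subadditivity.
\end{itemize}
You instead keep the discrete majorant $\sum_{s_k<x}\whm(s_k)g_r(s_k)$ and prove a discrete Hardy inequality with geometric weights for all $p$ at once: subadditivity for $p\le1$, H\"older with weights $2^{\pm\epsilon k}$ for $p>1$. This is essentially the paper's $p<1$ argument extended to every $p$. It is uniform in $p$, at the cost of the Stieltjes bookkeeping. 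Decomposing in the variable $1-rs$ also makes uniformity in $r$ transparent.

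\textbf{Points to clean up.}

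(a) In the $x$-integration, $\whm(s_k)g_r(s_k)$ must be controlled by the average of $(\whm g_r)^p$ over $I_{k-1}$, not over $I_k$. Since $\whm$ decreases, a point mass just to the right of $s_k$ makes the average over $I_k$ arbitrarily small. On $I_{k-1}$ you have $\whm\ge\whm(s_k)$ and $g_r\ge g_r(s_{k-1})\gtrsim g_r(s_k)$, as you correctly used in Step 2. Only $\lesssim$ holds there, not comparability.

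(b) Delete the paragraph on $\log$ factors and on the almost-increase of $g_r(s)(1-rs)^{\beta'}$.
\begin{itemize}
\item The sequence $a_k=\whm(s_k)g_r(s_k)$ has no usable geometric behaviour, since $\whm$ may drop to $0$ abruptly.
\item None is needed, because the discrete Hardy inequality handles arbitrary $a_k\ge0$.
\item The stated justification is garbled. $\whw(rs)(1-rs)^{\beta-\beta'}$ is decreasing for every $\beta'\le\beta$ with no doubling at all, whereas Lemma~\ref{DoublingLemma}(ii) concerns the opposite monotonicity.
\end{itemize}

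(c) The ``main obstacle'' is not one. With $s_0=0$ and $1-rs_k=2^{-k}$:
\begin{itemize}
\item $|I_k|=2^{-k-1}/r$ for every $r$;
\item the bound $g_r(s_{k+1})\lesssim g_r(s_k)$ is \eqref{eq:DoublingDef}, with constants independent of $r$;
\item $|\{x\in[0,1):x>s_k\}|=1-s_k\le 1-rs_k=2^{-k}\le|I_{k-1}|$, which is all the $x$-integration needs.
\end{itemize}
For $r<1/2$, $g_r$ is bounded on $[0,1]$ by a constant depending only on $\om$ and $\beta$, so the estimate is trivial.
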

\begin{proof}
By Lemma~\ref{DoublingLemma}, it is easy to see that
\be \label{int}
\int_0^t \frac{ds}{\whw(rs)(1-rs)^{\beta+1}}+1\asymp \frac{1}{\whw(rt)(1-rt)^\beta}, \quad 0\leq r\leq 1, \quad 0\leq t<1.
\ee
Therefore, for $0<x<1$ and $0\leq r\leq 1$, Fubini's theorem yields
$$\int_0^x \frac{d\mu(t)}{\whw(rt)(1-rt)^\beta}\lesssim \int_0^x \int_0^t \frac{ds}{\whw(rs)(1-rs)^{\beta+1}}d\mu(t)+\whm(0)\leq\int_0^x\frac{\whm(s)}{\whw(rs) (1-rs)^{\beta+1}}ds+\whm(0).$$
Next, we will show that
$$\int_0^1 \left(\int_0^x \frac{\whm(t)\,dt}{\whw(rt)(1-rt)^{\beta+1}}\right)^p \,dx\lesssim \int_0^1 \left(\frac{\whm(s)}{\whw(rs)(1-rs)^\beta}\right)^pds.$$
For $p=1$ the result is immediate from Fubini's theorem.
Assume first that $p>1$ and take $\beta+1= \alpha+\gamma$ with $1/p'<\gamma<1$. H\"older's inequality then yields
\begin{equation*}
\begin{split}
    \int_0^1 \left(\int_0^x \frac{\whm(t)\,dt}{\whw(rt)(1-rt)^{\beta+1}}\right)^p dx
&\le\int_0^1\left( \int_0^x \left(\frac{\whm(t)}{\whw(rt)(1-rt)^{\alpha}}\right)^p\,dt \right)\left(\int_0^x\frac{\,dt}{(1-rt)^{\gamma p'}}\right)^{p-1} dx\\
&\lesssim\int_0^1\left( \int_0^x \left(\frac{\whm(t)}{\whw(rt)(1-rt)^{\alpha}}\right)^p\,dt \right)\frac{dx}{(1-rx)^{\gamma p-p+1}}\\
&=\int_0^1 \left(\frac{\whm(t)}{\whw(rt)(1-rt)^{\alpha}}\right)^p\left(\int_t^1\frac{\,dx}{(1-rx)^{\gamma p-p+1}}\right)\,dt\\
&\lesssim \int_0^1 \left(\frac{\whm(t)}{\whw(rt)(1-rt)^{\beta}}\right)^p \,dt.
\end{split}
\end{equation*}
For $0<p<1$, we let $t_k=1-2^{-k}$ for every $k\in \N$. By subadditivity and Lemma~\ref{DoublingLemma} we obtain
\begin{equation*}
\begin{split}
 \int_0^1 \left(\int_0^x \frac{\whm(t)}{\whw(rt)(1-rt)^{\beta+1}}\,dt\right)^p \,dx
&\lesssim \sum_{n=0}^\infty \left(\int_0^{t_{n+1}} \frac{\whm(t)}{\whw(rt)(1-rt)^{\beta+1}} \,dt\right)^p 2^{-n}\\
&\lesssim \sum_{n=0}^\infty \left(\sum_{k=0}^{n}\int_{t_k}^{t_{k+1}} \frac{\whm(t)}{\whw(rt)(1-rt)^\beta (1-t)} \,dt\right)^p 2^{-n}\\
&\lesssim \sum_{n=0}^\infty \sum_{k=0}^{n} \left(\frac{\whm(t_k)}{\whw(rt_k)(1-rt_k)^\beta}\right)^p 2^{-n}\\
&= \sum_{k=0}^\infty  \left(\frac{\whm(t_k)}{\whw(rt_k)(1-rt_k)^\beta}\right)^p \sum_{n=k}^{\infty}2^{-n}\\
&\asymp \sum_{k=0}^\infty  \left(\frac{\whm(t_k)}{\whw(rt_k)(1-rt_k)^{\beta}}\right)^p2^{-k}\\
&\lesssim \int_0^1 \left( \frac{\whm(t)}{\whw(rt)(1-rt)^\beta}\right)^p \,dt+\whm(0)^p, \quad 0\leq r\leq 1,
\end{split}
\end{equation*}
which concludes the proof.
\end{proof}
The proofs in the paper heavily rely on the recently established off-diagonal estimates for the Bergman kernels induced by weights in the class $\DD$ shown in \cite[Theorem 1]{PRWW}.

The result we need is the following inequality for weights $\om \in \DD$:
\begin{equation}\label{Eq: PointwiseKernelEstimate}
    |B^{\omega}_a(z)| \lesssim \frac{1}{\om_{\frac{2}{|1-\overline{a}z|}}|1-\overline{a}z|}, \quad a,z \in \D.
\end{equation}
We note that the pointwise estimate \eqref{Eq: PointwiseKernelEstimate} in fact characterizes the class $\DD$. These pointwise estimates yield sharp $L^p$-estimates for these kernels by \cite[Corollary 2]{PRWW}, see also \cite[Theorem 1]{PR2016} for the following.
\begin{letterlemma}\label{Lemma: KernelLpEstimate}
Let $0<p<\infty$ and $\om,\nu \in \DD$. Then we have
\begin{equation*}
            M_p^p(r,B^{\om}_a) \asymp \int_{0}^{|a|r}\frac{dt}{\whw(t)^p(1-t)^p}+1, \quad a \in \D, \quad 0\leq r<1,
\end{equation*}
and
\begin{equation*}
           \|B^{\om}_a\|_{A^p_{\nu}}^p \asymp \int_{0}^{|a|}\frac{\whv(t)}{\whw(t)^p(1-t)^p}\,dt+1, \quad a \in \D.
\end{equation*}
\end{letterlemma}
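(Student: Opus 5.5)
The plan is to deduce the $L^p$-estimates from the pointwise off-diagonal estimate \eqref{Eq: PointwiseKernelEstimate}, together with a matching lower bound coming from the Taylor coefficients $\frac{1}{2\om_{2n+1}}$ of $B^\om_a$.

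For the \emph{upper bound} of the integral means, I will use \eqref{Eq: PointwiseKernelEstimate} and Lemma~\ref{DoublingLemma}(iv), which give $\om_{2/|1-\overline{a}z|}\asymp\whw(1-|1-\overline{a}z|/2)$. Since $\whw$ is doubling, this yields
\begin{equation*}
|B^\om_a(re^{i\t})|\lesssim \frac{1}{\whw\bigl(1-|1-|a|re^{i\t}|\bigr)\,|1-|a|re^{i\t}|},
\end{equation*}
after rotating $a$ to be positive. Write $\rho=|a|r$ and $|1-\rho e^{i\t}|\asymp (1-\rho)+|\t|$.
\begin{itemize}
\item On $|\t|\le 1-\rho$ the bound is $\asymp (\whw(\rho)(1-\rho))^{-1}$, and this arc contributes $(1-\rho)^{1-p}\whw(\rho)^{-p}$.
\item On $|\t|>1-\rho$ I substitute $1-t=|\t|$. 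The contribution becomes $\int_0^{\rho}\frac{dt}{\whw(t)^p(1-t)^p}$, plus an $O(1)$ term from $|\t|$ near $\pi$.
\end{itemize}
Finally, by Lemma~\ref{DoublingLemma}(ii), $\whw(t)/(1-t)^\beta$ is almost increasing. Applied on $[2\rho-1,\rho]$, this shows the first arc's term $(1-\rho)^{1-p}\whw(\rho)^{-p}$ is dominated by $\int_0^\rho \whw(t)^{-p}(1-t)^{-p}dt+1$. This gives $\lesssim$ in the first claim.

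For the \emph{lower bound} of the integral means, I treat $p\ge1$ and $p<1$ separately.
\begin{itemize}
\item For $p\ge 1$, test against the analytic function $z^n$ (or a Fejér-type kernel). For example, $M_p(r,B^\om_a)\ge M_1(r,B^\om_a)\gtrsim \bigl\|\sum \frac{\rho^n}{\om_{2n+1}}e^{in\t}\bigr\|_{L^1}$. I then use the lower estimate $M_1(r,g)\gtrsim \sum_n \frac{|\hat g(n)|}{n+1}r^n$ (Hardy's inequality) together with Lemma~\ref{Twoweightsumestimate}. This gives the lower bound $\log$-type expression $\asymp \int_0^\rho \frac{dt}{\whw(t)(1-t)}+1$.
\item For general $p$, the cleaner route is the pointwise lower bound on the arc $|\t|\lesssim 1-\rho$. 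Lemma~\ref{Twoweightsumestimate} gives $B^\om_a(\rho)\asymp (\whw(\rho)(1-\rho))^{-1}$. Since the derivative satisfies $|(B^\om_a)'|\lesssim (\whw(\rho)(1-\rho)^2)^{-1}$ (again via Lemma~\ref{Twoweightsumestimate} with $c=2$), the kernel stays comparable to $(\whw(\rho)(1-\rho))^{-1}$ on an arc of length $c(1-\rho)$. This yields $M_p^p\gtrsim \sup_{s\le\rho}(1-s)^{1-p}\whw(s)^{-p}$, using monotonicity of $M_p$ in $r$.
\end{itemize}
The \emph{main obstacle} is converting this supremum into the integral. When $p>1$ the integrand $\whw^{-p}(1-t)^{-p}$ grows like a power, so the integral is comparable to its endpoint value by Lemma~\ref{DoublingLemma}(ii). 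When $p\le1$ the integral may genuinely exceed the supremum, and I would fall back on Hardy--Littlewood type coefficient inequalities (for $p\le 2$, $M_p^p\gtrsim$ the dyadic block sums of coefficients, i.e.\ $\sum_k 2^{-k(1-p)}\max_{n\in I_k}|\hat g(n)|^p\rho^{np}$) applied with the positive coefficients $\frac{1}{2\om_{2n+1}}$. Discretizing the integral over dyadic intervals $t\in[1-2^{-k},1-2^{-k-1}]$ then matches these block sums.

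For the Bergman estimate, write $\|B^\om_a\|_{A^p_\nu}^p=2\int_0^1 M_p^p(r,B^\om_a)\nu(r)r\,dr$, insert the first estimate, and apply Fubini:
\begin{equation*}
\int_0^1\int_0^{|a|r}\frac{dt}{\whw(t)^p(1-t)^p}\,\nu(r)\,dr\asymp\int_0^{|a|}\frac{\whv(t/|a|)}{\whw(t)^p(1-t)^p}\,dt.
\end{equation*}
Here $\whv(t/|a|)\le\whv(t)$, which gives the upper bound. For the lower bound, restrict to $t\le |a|^2$ and use the doubling of $\nu$ and $\om$ to compare with the full integral. Small $|a|$ is handled by the $+1$ term.
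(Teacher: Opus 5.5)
Your argument is correct. It is more self-contained than the paper, which does not prove this lemma: it imports it from \cite[Corollary 2]{PRWW} and \cite[Theorem 1]{PR2016}, remarking only that the upper bound comes from the pointwise estimate \eqref{Eq: PointwiseKernelEstimate}.

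Your upper bound is exactly that computation. You integrate the pointwise bound over the circle and absorb the central arc $|\theta|\le 1-\rho$ into $\int_{2\rho-1}^{\rho}$ by doubling.

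The two-sided estimate in \cite{PR2016} predates the off-diagonal bound and works directly with the Taylor coefficients $1/(2\om_{2n+1})$ (treating derivatives of the kernel as well). You replace it with two elementary lower-bound arguments:
\begin{itemize}
\item For $p>1$, a local estimate on an arc of length $\asymp 1-\rho$ around $\rho$. It rests on $B^\om(\rho)\asymp(\whw(\rho)(1-\rho))^{-1}$ and $|(B^\om)'|\lesssim(\whw(\rho)(1-\rho)^2)^{-1}$ on $|z|=\rho$, both from Lemma~\ref{Twoweightsumestimate}. This suffices because for $p>1$ the integral is comparable to its endpoint value $(1-\rho)^{1-p}\whw(\rho)^{-p}$.
\item For $p\le 1$, where the integral can genuinely accumulate (e.g.\ $p=1$, $\whw(t)\asymp 1/\log\frac{e}{1-t}$), a coefficient inequality.
\end{itemize}
This split makes visible which regime dominates. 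The Bergman part, via Fubini and $1-t/|a|\ge\frac12(1-t)$ for $t\le|a|^2$, is fine and uses $\nu\in\DD$ exactly as the statement allows.

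Some corrections, none fatal.

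First, the general inequality in your parenthesis, $M_p^p(\rho,g)\gtrsim\sum_k 2^{-k(1-p)}\max_{n\in I_k}|\hat g(n)|^p\rho^{np}$ for $p\le 2$, is false for $1<p\le2$. Take $g(z)=\sum_{k\ge1}k^{-1}z^{2^k}$: it lies in $H^2\subset H^p$, while $\sum_k 2^{k(p-1)}k^{-p}=\infty$. What you should invoke is the classical Hardy--Littlewood inequality $\sum_n(n+1)^{p-2}|\hat g(n)|^p\le C_p\|g\|_{H^p}^p$ for $0<p\le2$ (see \cite[Ch.~6]{Duren}). For $g(z)=B^\om(\rho z)$ the coefficients are essentially constant on dyadic blocks, since $\om_n\asymp\om_{2n}$. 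So its left-hand side matches your block sum and is $\asymp\int_0^\rho\frac{dt}{\whw(t)^p(1-t)^p}+1$.

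Second, at $p=1$ this inequality is your Hardy-inequality bullet. That bullet is insufficient for $p>1$, since $M_1^p$ falls far short of $(1-\rho)^{1-p}\whw(\rho)^{-p}$, so the case $p>1$ rests entirely on the arc estimate.

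Third, Lemma~\ref{Twoweightsumestimate} requires $c>0$. The sum $\sum_n\rho^n/((n+1)\om_{2n+1})$ must therefore be obtained by integrating the case $c=1$ in $\rho$.
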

Due to technicalities, we also need to use the maximal version of the fundamental function $F_{\mu,\om}$, which we denote as $F^{+}_{\mu,\om}$ and define by
\begin{equation*}
     F_{\mu,\om}^+(z)=\int_0^1 |B^\om_t(z)|d\mu(t), \quad z \in \D.
\end{equation*}
\begin{proposition} \label{FHpr} Let $0<p<\infty$, $\om\in\DD$, and let $\mu\in M^+([0,1))$. Then
\begin{equation*}
M_p^p(F_{\mu,\om},r)\leq  M_p^p(F_{\mu,\om}^+,r) \lesssim  \int_0^r \left(\frac{\whm(t)}{\whw(t) (1-t)}\right)^pdt+\whm(0)^p, \quad 0 \leq r \leq 1.
\end{equation*}
\end{proposition}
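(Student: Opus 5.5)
The first inequality $M_p(F_{\mu,\om},r)\le M_p(F^+_{\mu,\om},r)$ is immediate, since $|F_{\mu,\om}(z)|\le F^+_{\mu,\om}(z)$ pointwise. For the second, the plan is to bound $F^+_{\mu,\om}$ pointwise by a radial-type majorant using the kernel estimate \eqref{Eq: PointwiseKernelEstimate}, integrate over the circle, and reduce to Lemma~\ref{mainlemma0}.

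\emph{Pointwise bound.} For $t\in[0,1)$ and $z=re^{i\t}$, Lemma~\ref{DoublingLemma}(iv) gives $\om_{2/|1-tz|}\asymp \whw(1-|1-tz|/2)$. Since $\om\in\DD$, Lemma~\ref{DoublingLemma}(ii) shows that $\whw(1-x)x^{-\beta}$ is almost decreasing in $x$. Using $|1-tz|\asymp (1-tr)+|\t|$ for $|\t|\le\pi$, we obtain
\begin{equation*}
|B^\om_t(z)|\lesssim \frac{1}{\whw(1-|1-tz|)|1-tz|}\lesssim \frac{1}{\whw(tr)(1-tr)},
\end{equation*}
and also, for $|\t|\ge 1-tr$, the bound $|B^\om_t(z)|\lesssim (\whw(1-|\t|)|\t|)^{-1}$.

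\emph{Splitting the $t$-integral.} Fix $\t$ and split the integral defining $F^+_{\mu,\om}(re^{i\t})$ at the point where $1-tr=|\t|$.
\begin{itemize}
\item On $\{t: 1-tr\ge|\t|\}$ we have $|1-tz|\asymp 1-tr$. This part is bounded by $\int_0^{x(\t)} \frac{d\mu(t)}{\whw(rt)(1-rt)}$, where $x(\t)$ is roughly $(1-|\t|)/r$ capped at $1$.
\item On the complementary set, the kernel is bounded by $(\whw(1-|\t|)|\t|)^{-1}$. Since $rt\ge 1-|\t|$ there, this part is at most $\whm(\min(1,(1-|\t|)/r))\,(\whw(1-|\t|)|\t|)^{-1}$, and by doubling this is comparable to $\whm(s)/(\whw(rs)(1-rs))$ evaluated at $s\approx x(\t)$.
\end{itemize}
Both pieces are therefore controlled by
\begin{equation*}
G(x)=\int_0^x \frac{d\mu(t)}{\whw(rt)(1-rt)},
\end{equation*}
since $\whm(x)/(\whw(rx)(1-rx))$ is itself dominated after integrating in $x$ by the right side of Lemma~\ref{mainlemma0}, or directly by the claimed bound. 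A cleaner route is to note that $\mu$ restricted to $t\ge x$ contributes at most $\whm(x)$ times the kernel bound.

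\emph{Integrating over the circle.} Change variables $x=1-|\t|$, so that $d\t\asymp dx$. This yields
\begin{equation*}
M_p^p(F^+,r)\lesssim \int_0^1 G(x)^p\,dx+\int_0^1\Big(\frac{\whm(x)}{\whw(rx)(1-rx)}\Big)^p dx.
\end{equation*}
Apply Lemma~\ref{mainlemma0} with $\beta=1$ to the first term. Then use that for $s\ge r$ we have $\whw(rs)(1-rs)\gtrsim \whw(r)(1-r)$ (by doubling, Lemma~\ref{DoublingLemma}(ii)), while $\whm(s)\le\whm(r)$. Hence the portion $\int_r^1$ is bounded by $(1-r)\bigl(\whm(r)/(\whw(r)(1-r))\bigr)^p$. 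This in turn is bounded by $\int_{r'}^r(\ldots)^p\,dt$ over an interval of length $\asymp 1-r$ near $r$, using monotonicity of $\whm$ and doubling; the case $r$ near $0$ is absorbed by $\whm(0)^p$. On $[0,r]$ we have $1-rs\asymp 1-s$ and $\whw(rs)\asymp\whw(s)$, which gives the stated bound.

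\emph{Main obstacle.} The delicate step is the pointwise off-diagonal bookkeeping: turning $\om_{2/|1-tz|}$ into $\whw(rt)$-type quantities uniformly in $t$ and $\t$. The change of variables must be arranged so that the resulting integral matches the form of Lemma~\ref{mainlemma0}. The tail at $r$ near $1$ must also be absorbed via the doubling property.
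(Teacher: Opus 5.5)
\textbf{Gap: the arc $|\theta|<1-r$ is missing.} Your overall plan is the paper's: the off-diagonal kernel bound, splitting the $t$-integral at $1-tr=|\theta|$, a change of variables, and Lemma~\ref{mainlemma0} with $\beta=1$. Your last step also works. You split at $s=r$ and use a backward interval of length $\asymp 1-r$, whereas the paper substitutes $t=rx$ and uses $\whm(t/r)\le\whm(t)$; both are fine.

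The problem is the display after ``Integrating over the circle''. The substitution that turns your two pieces into $G(x)$ and $\whm(x)/(\whw(rx)(1-rx))$ is $x=(1-|\theta|)/r$, with $d\theta=r\,dx$. With $x=1-|\theta|$ as you wrote, the first piece is $G(\min\{1,x/r\})\ge G(x)$, so the display would not follow.

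Under the correct substitution, the arc $|\theta|<1-r$ corresponds to $x>1$, where your cap is active. On this arc $1-tr\ge|\theta|$ for every $t$, so your pointwise majorant is the constant $G(1)=\int_0^1\frac{d\mu(t)}{\whw(rt)(1-rt)}$. The arc therefore contributes a term $\asymp(1-r)G(1)^p$ to your estimate, and this term does not appear in your bound. It is not dominated by $\int_0^1 G(x)^p\,dx$: for $\mu=\delta_{t_0}$ with $r<t_0$ one has $\int_0^1G^p\,dx=(1-t_0)G(1)^p$, which is much smaller than $(1-r)G(1)^p$ when $1-t_0\ll 1-r$. This is exactly the paper's term $I_1(r)$. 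The paper treats it separately: it uses \eqref{int}, then H\"older for $p>1$ and a dyadic decomposition for $p<1$. It is the one part of the argument your proposal does not cover.

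\textbf{How to close it.} The missing term can be absorbed into your two terms cheaply.
\begin{itemize}
\item Since $\whw(rt)(1-rt)\ge\whw(r)(1-r)$ for every $t\in[0,1)$, you have $G(1)\le G(x)+\whm(x)/(\whw(r)(1-r))$ for every $x$.
\item For $x\in[r,1)$ you have $rx\ge r^2$ and $1-r^2\le 2(1-r)$, so doubling gives $\whw(rx)(1-rx)\lesssim\whw(r)(1-r)$.
\end{itemize}
Raise to the power $p$ and average over $x\in[r,1)$:
\[
(1-r)G(1)^p\lesssim\int_r^1 G(x)^p\,dx+\int_r^1\Bigl(\frac{\whm(x)}{\whw(rx)(1-rx)}\Bigr)^p\,dx .
\]
Both terms on the right already appear in your display. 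With this step added your argument is complete. It also handles $I_1(r)$ without the separate case analysis the paper needs.
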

\begin{proof}
Assume that $0<r<1$ first. Note that for each $0<r, |\theta|<1$ we have
$$|1-rt e^{\pi i \theta}|^2 = (1-rt)^2+4rt\sin^2 \frac{\pi}{2} \theta\asymp (1-rt)^2+\theta^2\asymp \max\{(1-rt)^2, \theta^2\}.$$
We next use the pointwise estimate \eqref{Eq: PointwiseKernelEstimate} to obtain
\begin{equation*}
\begin{split}
 |B^\om(rt e^{\pi i \theta})|&\lesssim \frac{1}{\om_{\frac{1}{|1-rte^{\pi i\theta}|}}|1-rte^{\pi i\theta}|}
\lesssim \frac{1}{\om_{\frac{1}{\max\{(1-rt), |\theta|\}}}\max\{(1-rt), |\theta|\}}.
\end{split}
\end{equation*}
Therefore we have
\begin{equation*}
\begin{split}
M^p_p(F_{\mu,\om},r)
&\lesssim \int_{-1}^1 \left(\int_0^1 |B^\om(rt e^{\pi i \theta})|d\mu(t)\right)^p\,d\theta\\
&\lesssim \int_0^1 \left(\int_0^1\frac{d\mu(t)}{\om_{\frac{1}{\max\{(1-rt), \theta\}}}\max\{(1-rt), \theta\}}\right)^p d\theta\\
&\lesssim \int_0^{1-r} \left(\int_0^1\frac{d\mu(t)}{\om_{\frac{1}{1-rt}}(1-tr)}\right)^p d\theta
+\int_{1-r}^1 \left(\int_0^{\frac{1-\theta}{r}}\frac{d\mu(t)}{\om_{\frac{1}{ 1-rt}}(1-rt)}\right)^p d\theta\\
&\quad + \int_{1-r}^1 \left(\int_{\frac{1-\theta}{r}}^1\frac{d\mu(t)}{\om_{\frac{1}{ \theta}}\theta}\right)^p d\theta\\
&\lesssim  (1-r)\left(\int_0^{1} \frac{d\mu(t)}{\whw(rt)(1-rt)}\right)^p
+\int_{1-r}^1 \left(\int_0^{\frac{1-\theta}{r}}\frac{d\mu(t)}{\om_{\frac{1}{ 1-tr}}(1-tr)}\right)^p d\theta\\
&\quad +\int_{1-r}^1 \left(\frac{\whm(\frac{1-\theta}{r})}{\whw(1-\theta)\theta}\right)^p d\theta\\
&\lesssim (1-r)\left(\int_0^{1} \frac{d\mu(t)}{\whw(rt)(1-rt)}\right)^p
+r\int_{0}^1 \left(\int_0^{x}\frac{d\mu(t)}{\whw(tr)(1-tr)}\right)^p dx\\
&+r\int_{0}^1 \left(\frac{\whm(x)}{\whw(rx)(1-rx)}\right)^p dx\\
&=I_1(r)+I_2(r)+I_3(r), \quad 0<r<1.
\end{split}
\end{equation*}
By Lemma~\ref{mainlemma0}, $I_2(r)$ is of the same form as $I_3(r)$. After a change of variables, and the fact that $\whm(r)$ decreases with respect to $r$, we obtain the desired estimate for $I_2(r)$ and $I_3(r)$. 

For $I_1(r)$, we first use (\ref{int}) to have
\begin{equation*}
\begin{split}
       I_1(r)&\lesssim (1-r)\left(\int_0^1 \left(\int_0^t \frac{dx}{\whw(rx)(1-rx)^2}\right)d\mu(t)\right)^p +\whm(0)^{p} \\
&\asymp (1-r)\left(\int_0^1 \frac{\widehat\mu(x)}{\whw(rx)(1-rx)^2}dx\right)^p+\whm(0)^{p}\\
       &\lesssim(1-r) \left(\int_{0}^{r}\frac{\whm(x)}{\whw(x)(1-x)^2}\,dx\right)^{p}+\whm(0)^{p}.
\end{split}
\end{equation*}
For $p=1$,  the result now follows by
$$\int_0^{{r}} \frac{\widehat\mu(x)}{{\whw(x)(1-x)^2}}dx\le
\frac{1}{(1-r)}\int_0^r \frac{\widehat\mu(x)}{\whw(x)(1-x)}dx, {\quad 0\leq r<1.}$$
For $p>1$, we can use H\"older's inequality
\begin{equation*}
\begin{split}
\left(\int_{0}^{r}\frac{\whm(x)}{\whw(x)(1-x)^2}\,dx\right)^{p}&\le\left(\int_{0}^{r}\left(\frac{\whm(x)}{\whw(x)(1-x)}\right)^p\,dx\right)
\left(\int_0^r \frac{dx}{(1-x)^{p'}}\right)^{p-1}\\
&\lesssim \left(\int_{0}^{r}\left(\frac{\whm(x)}{\whw(x)(1-x)}\right)^p\,dx\right)
\frac{1}{(1-r)^{(p'-1)(p-1)}},
\quad 0\leq r<1.
\end{split}
\end{equation*}
For $p<1$, we consider $t_k=1-2^{-k}$  for every $k \in \N_0$ and let $N=N(r) \in \N_0$ be such that $t_N \leq r < t_{N+1}$. By   subadditivity we obtain
\begin{equation*}
\begin{split}
      I_1(r) &{\leq} (1-r)\left(\sum_{k=0}^{N}\int_{t_k}^{t_{k+1}}\frac{\whm(x)}{\whw(x)(1-x)^2}\,dx\right)^{p}+\whm(0)^{p}\\
       &\lesssim \sum_{k=0}^{N}\left(\frac{\whm(t_k)}{\whw(t_k)(1-t_k)}\right)^{p}(1-r)+\whm(0)^{p}\\
       &\asymp \sum_{k=1}^{N}\left(\int_{t_{k-1}}^{t_k}\,dt\right)\left(\frac{\whm(t_k)}{\whw(t_k)(1-t_k)}\right)^{p}\frac{1-r}{1-t_k}+\whm(0)^{p}\\
       &\lesssim  \int_{0}^{t_N}\left(\frac{\whm(t)}{\whw(t)(1-t)}\right)^{p}\frac{1-r}{1-t}\,dt +\whm(0)^{p}\\
       &\leq \int_{0}^{r}\left(\frac{\whm(t)}{\whw(t)(1-t)}\right)^{p}\,dt+\whm(0)^{p}, \quad 0< r < 1,
\end{split}
\end{equation*}
which concludes the proof, as the case $r=0$ is elementary and $r=1$ is a simple modification of the arguments.
\end{proof}

\begin{proposition}
\label{FHp} Let $0<p<\infty$, $\om,\nu\in\DD$, and $\mu\in M^+([0,1))$. Then we have
\begin{equation*}
   \|F_{\mu,\om}\|_{H^p}^p \asymp \|F_{\mu,\om}^+\|_{L^p(\T)}^p \asymp  \int_0^1 \left(\frac{\whm(r)}{\whw(r) (1-r)}\right)^p\,dr+\whm(0)^p \asymp  \sum_{n=0}^\infty \left(\frac{\mu_n(n+1)}{\om_n}\right)^p \frac{1}{(n+1)^2}.
\end{equation*}
and
\begin{equation*}
       \|F_{\mu,\om}\|_{A^p_{\nu}}^p \asymp \|F_{\mu,\om}^+\|_{L^p_{\nu}}^p \asymp  \int_0^1 \left(\frac{\whm(r)}{\whw(r) (1-r)}\right)^p\whv(r)\,dr+\whm(0)^p \asymp  \sum_{n=0}^\infty \left(\frac{\mu_n(n+1)}{\om_n}\right)^p \frac{\nu_n}{(n+1)^2}.
\end{equation*}
\end{proposition}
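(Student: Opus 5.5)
The proof runs through a cycle of inequalities. Write $J_p=\int_0^1\bigl(\frac{\whm(r)}{\whw(r)(1-r)}\bigr)^p\,dr+\whm(0)^p$ and $S_p=\sum_n\bigl(\frac{\mu_n(n+1)}{\om_n}\bigr)^p\frac{1}{(n+1)^2}$. The upper bounds $\|F_{\mu,\om}\|_{H^p}^p\le\|F^+_{\mu,\om}\|^p_{L^p(\T)}\lesssim J_p$ are given by Proposition~\ref{FHpr} with $r=1$, after letting $r\to1^-$ and using Fatou. It therefore suffices to prove $J_p\lesssim S_p\lesssim\|F_{\mu,\om}\|_{H^p}^p$.

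\emph{Step 1: $J_p\asymp S_p$.} Discretize $J_p$ on the intervals $[1-\frac1{n+1},1-\frac1{n+2})$, whose lengths are $\asymp(n+1)^{-2}$. On such an interval, Lemma~\ref{DoublingLemma} gives $\whw(r)\asymp\om_n$ and $1-r\asymp(n+1)^{-1}$, and $\whm$ is monotone. This yields
\[
J_p\asymp\sum_n\Bigl(\frac{\whm(1-\frac1{n+1})(n+1)}{\om_n}\Bigr)^p(n+1)^{-2},
\]
where the term $\whm(0)^p$ is absorbed by the $n=0$ term. Since $\mu_n\gtrsim\whm(1-\frac1{n+1})$, we get $J_p\lesssim S_p$. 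For the converse, split $\mu_n\le\whm(1-\frac1{n+1})+\int_0^{1-\frac1{n+1}}t^n\,d\mu(t)$. By Fubini, the second piece is at most $n\int_0^{1-1/(n+1)}s^{n-1}\whm(s)\,ds\lesssim n\sum_{k\le n}e^{-n/k}k^{-2}\whm(1-\frac1k)$. Inserting this into $S_p$ leaves a discrete Hardy-type inequality with the weights $\om_n$. I would handle it exactly as in Lemma~\ref{mainlemma0}: H\"older with a split exponent when $p>1$, and subadditivity over dyadic blocks when $p\le1$. In both cases one uses that $\om_k/k^{\beta}$ is almost increasing (Lemma~\ref{DoublingLemma}(ii)) to compare $\om_k$ with $\om_n$. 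The factor $e^{-n/k}$ gives enough decay to sum the resulting geometric series.

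\emph{Step 2: $S_p\lesssim\|F_{\mu,\om}\|^p_{H^p}$.} This is the main obstacle, because it is a lower bound for the $H^p$ norm of a power series with positive decreasing-type coefficients $c_n=\mu_n/(2\om_{2n+1})$. I plan to use the classical Hardy--Littlewood theorem for nonnegative coefficients, $\|f\|_{H^p}^p\asymp\sum(n+1)^{p-2}c_n^p$, which holds for all $0<p<\infty$ when the coefficients are suitably monotone. It would be applied to the comparable sequence $\mu_n(n+1)^{\beta}/\om_n\cdot(n+1)^{-\beta}$, with $\mu_n$ decreasing and $1/\om_n$ almost increasing at a controlled rate. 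I would verify the hypothesis in the form used for quasi-monotone sequences, namely that $c_n n^{-\gamma}$ is decreasing for some $\gamma$; this follows from Lemma~\ref{DoublingLemma}(v).

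If that route is problematic for small $p$, I would fall back on a direct argument. Nonnegativity of the coefficients gives $M_\infty(r,F)=F(r)$ and, via the Poisson/Hardy estimate, $|F(r)|\lesssim\|F\|_{H^p}(1-r)^{-1/p}$; but this alone only yields a sup bound. The better fallback is to restrict the $\theta$-integral to $|\theta|<1-r$. There, $\mathrm{Re}\,B^\om_t(re^{i\theta})\gtrsim|B^\om_t(r)|$ for $t\ge r$, because the positive coefficients and the doubling of $\om$ let the first $\asymp(1-r)^{-1}$ terms dominate. Integrating then gives $\|F\|_{H^p}^p\gtrsim\sum_k 2^{-k}\bigl(\whm(t_k)/(\whw(t_k)(1-t_k))\bigr)^p\asymp J_p$.

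\emph{Step 3: the Bergman case.} The second chain of equivalences follows the same scheme. The upper bound comes from integrating Proposition~\ref{FHpr} against $\nu(r)r\,dr$ and applying Fubini, which turns $\int_0^r\cdots dt$ into the weight $\whv(t)$. The moment comparison uses $\nu_n\asymp\whv(1-\frac1{n+1})$. The lower bound again uses nonnegative coefficients, through the same sector estimate integrated over $r$ with weight $\nu$.
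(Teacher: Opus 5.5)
The genuine gap is the Bergman lower bound in Step~3, and the same example shows your $H^p$ fallback also fails. Integrating the sector estimate against $\nu$ gives only $\|F_{\mu,\om}\|_{A^p_\nu}^p\gtrsim\int_0^1(1-r)F_{\mu,\om}(r)^p\nu(r)\,dr$. For $\nu\in\DD$ this cannot dominate the moment sum. (With $\nu\in\DDD$ a reverse-doubling comparison would rescue it, but the statement only assumes $\nu\in\DD$.)

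Take $\om\equiv1$ and $d\mu=dt$, so that $B^\om(z)=(1-z)^{-2}$ and $F_{\mu,\om}(z)=(1-z)^{-1}$. Take $p=1$ and $\nu(r)=\bigl((1-r)\log^2\frac{e}{1-r}\bigr)^{-1}$, which lies in $\DD\setminus\Dd$ with $\whv(r)=1/\log\frac{e}{1-r}$. Then $\int_0^1(1-r)F_{\mu,\om}(r)\nu(r)\,dr=\int_0^1\nu(r)\,dr=1$. On the other hand, $\mu_n=\om_n=\frac1{n+1}$ gives $\sum_n\frac{\mu_n(n+1)}{\om_n}\frac{\nu_n}{(n+1)^2}\asymp\sum_n\frac{1}{(n+1)\log(e(n+1))}=\infty$. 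The sector only sees $(1-r)F_{\mu,\om}(r)\equiv1$ and misses the logarithm in $M_1(r,F_{\mu,\om})\asymp\log\frac{e}{1-r}$.

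Likewise in $H^p$, one sector at one radius gives only the one-scale bound $(1-r)F_{\mu,\om}(r)^p\lesssim\|F_{\mu,\om}\|_{H^p}^p$. You cannot add these over dyadic $r$, since each is bounded by the same $\|F_{\mu,\om}\|_{H^p}^p$; in the example, $F_{\mu,\om}\notin H^1$ while every such bound equals $1$.

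The fix is to use the whole circle at every radius. Fej\'er--Riesz applied to the dilation gives $M_p^p(r,F_{\mu,\om})\gtrsim\frac1r\int_0^rF_{\mu,\om}(s)^p\,ds$. By Fubini, $\|F_{\mu,\om}\|_{A^p_\nu}^p\gtrsim\int_0^1F_{\mu,\om}(s)^p\whv(s)\,ds$. This is the paper's route, via $\|f\|_{A^p_\nu}^p\gtrsim\int_0^1M_\infty^p(s,f)\whv(s)\,ds$ together with $M_\infty(s,F_{\mu,\om})=F_{\mu,\om}(s)$. The block argument, with $\whv\asymp\nu_k$ on $[1-\frac1k,1-\frac1{k+1}]$, then yields the moment sum for every $\nu\in\DD$.

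Your Hardy-space half, via the primary route, is sound and essentially the paper's proof. The cycle is the same: $\|F_{\mu,\om}\|_{H^p}^p\le\|F^+_{\mu,\om}\|_{L^p(\T)}^p\lesssim J_p\lesssim S_p\lesssim\|F_{\mu,\om}\|_{H^p}^p$. For the last step, your black-box Hardy--Littlewood theorem is what the paper proves by hand. Fej\'er--Riesz gives $\|F_{\mu,\om}\|_{H^p}^p\gtrsim\int_0^1F_{\mu,\om}(s)^p\,ds$, and on $[1-\frac1k,1-\frac1{k+1}]$ one keeps only $\lfloor k/2\rfloor\le n\le k$, where $\mu_n\ge\mu_k$ and $\om_n\asymp\om_k$.

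Two small corrections. Lemma~\ref{DoublingLemma}(v) gives exactly this block comparability, not exact monotonicity of $c_nn^{-\gamma}$, but the block comparability is all that is needed. The converse $S_p\lesssim J_p$ in Step~1 is superfluous, because the cycle closes through $\|F_{\mu,\om}\|_{H^p}$.
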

\begin{proof}  It is trivial that $\|F_{\mu,\om}\|_{H^p}^p\leq \|F_{\mu,\om}^+\|_{L^p(\T)}^p$. The asymptotic inequality
\begin{equation*}
    \|F_{\mu,\om}^+\|_{L^p(\T)}^p \lesssim \int_0^1 \left(\frac{\whm(r)}{\whw(r) (1-r)}\right)^p\,dr+\whm(0)^p
\end{equation*}
follows by Proposition~\ref{FHpr}. For the next asymptotic inequality, we show a slightly more general case. Let $a,b,c>0$, $d\ge 0$, and $\nu \in \DD$. Note that $\mu_n \gtrsim  \whm(1-\frac{1}{n})$ for every $n\in\N$. Then, by Lemma~\ref{DoublingLemma} we have
\begin{equation}\label{Eq:inequalityomnumu}
\begin{split}
    \sum_{n=0}^\infty \frac{(n+1)^{b-2}\mu_n^c\nu_n^d}{\om_n^a}
    &\gtrsim
\whm(0)^c+ \sum_{n=1}^\infty \int_{1-\frac{1}{n}}^{1-\frac{1}{n+1}}\frac{(n+1)^{b}\whm(r)^c\whv\left(1-\frac{1}{n}\right)^d}{\whw\left(1-\frac{1}{n}\right)^a}dr\\
&\asymp \whm(0)^c+ \int_0^1\frac{\whm(r)^c\whv(r)^d}{\whw(r)^a (1-r)^b}dr,\\
\end{split}
\end{equation}
from where our statement follows from the case $a=b=c=p$ and $d=0$. The last asymptotic inequality follows from F\'ejer-Riesz inequality, see \cite[Theorem 3.13]{Duren}, and Lemma~\ref{DoublingLemma}, since
\begin{equation*}\label{Eq: Fmuomtest}
\begin{split}
    \|F_{\mu,\om}\|^p_{H^p}
&\gtrsim  \int_0^1F_{\mu,\om}(s)^p\,ds\asymp  \int_0^1 \left(\sum_{n=0}^\infty \frac{\mu_n}{\om_n}s^n\right)^p\,ds\\
&\gtrsim \sum_{k=1}^\infty \int_{1-\frac{1}{k}}^{1-\frac{1}{k+1}} \left(\sum_{n=\lfloor k/2 \rfloor}^k\frac{\mu_n}{\om_n}s^n\right)^p\,ds+\mu_0^p\\
&\gtrsim\sum_{k=1}^\infty\left(\frac{\mu_k(k+1)}{\om_k}\right)^p\left( \int_{1-\frac{1}{k}}^{1-\frac{1}{k+1}}\,ds\right)+\mu_0^p\\
&\asymp  \sum_{k=0}^\infty \left(\frac{\mu_k(k+1)}{\om_k}\right)^p \frac{1}{(k+1)^2},
\end{split}
\end{equation*}
which concludes the Hardy space case.

 For the Bergman space, the first inequality $\|F_{\mu,\om}\|_{A^p_{\nu}}^p \leq \|F_{\mu,\om}^+\|_{A^p_{\nu}}^p$ is once again trivial. Again by Proposition~\ref{FHpr} and Fubini's theorem, we have
\begin{equation*}
\begin{split}
    \|F_{\mu,\om}^+\|_{A^p_{\nu}}^p
    &\lesssim \int_{0}^{1}M_p^p(r,F_{\mu,\om}^+)\nu(r)\,dr\\
    &\lesssim \int_{0}^{1}\int_{0}^{r}\left(\frac{\whm(t)}{\whw(t) (1-t)}\right)^p\,dt\nu(r)\,dr+\whm(0)^p\\
    &= \int_0^1 \left(\frac{\whm(t)}{\whw(t) (1-t)}\right)^p\whv(t)\,dt+\whm(0)^p,
\end{split}
\end{equation*}
which shows the second asymptotic inequality. Then by \eqref{Eq:inequalityomnumu}, choosing $a=b=c=p$ and $d=1$, we have
\begin{equation*}
    \int_0^1 \left(\frac{\whm(t)}{\whw(t) (1-t)}\right)^p\whv(t)\,dt+\whm(0)^p \lesssim \sum_{n=0}^\infty \left(\frac{\mu_n(n+1)}{\om_n}\right)^p\frac{\nu_n}{(n+1)^2}.
\end{equation*}
For the final asymptotic inequality, we need the inequality $\|f\|_{A^p_{\nu}} \gtrsim \int_{0}^{1}M^p_{\infty}(s,f)\whv(s)\,ds$, valid for every radial weight $\nu$, which follows from \cite[Theorem 2.16]{Pavlovic2014} and Fubini's theorem. Then we take exactly the same steps as was done for Hardy spaces, which yields
\begin{equation*}
    \sum_{n=0}^\infty \left(\frac{\mu_n(n+1)}{\om_n}\right)^p\frac{\nu_n}{(n+1)^2} \lesssim \|F_{\mu,\om}\|_{A^p_{\nu}}^p,
\end{equation*}
which concludes the proof.
\end{proof}

\section{Proof of Theorem \ref{MainTheorem: Hardy}}\label{Sec: Hardy}
\subsection{ Case $p\le q$}
Note that Proposition~\ref{Prop: Fmuom sup norm space} yields the equivalence between \textup{(b)}, \textup{(c)} and \textup{(d)} in part \textup{(i)} of Theorem \ref{MainTheorem: Hardy}. By Corollary~\ref{Corollary: TestConditionpleqq}, \textup{(a)} implies \textup{(b)}. It remains to show that the condition 
 \begin{equation*}
            \|\mu^{(p,q)}_{\om}\|_{L^{\infty}}=\sup_{0\leq r<1}\frac{\whm(r)}{\whw(r)(1-r)^{1+\frac{1}{p}-\frac{1}{q}}}<\infty
        \end{equation*} implies that  $C_{\mu,\om}:H^p \to H^q$ is bounded.

We now split into different cases.

\centerline{ \bf Case $p\le q$, $q>1$}

In this case we follow the reasoning by Andersen \cite{Andersen}, which was developed more by authors in \cite{GalanoGirelaMerchCesaro}.

\begin{letterlemma}\label{AndersenLemma} \cite[Lemma 2.1, Lemma 2.2]{Andersen} Let $1<q<\infty$ and consider $$K(\phi,\t)= \int_{0}^{1}\frac{x}{(x^2+\phi^2)(x^2+\t^2)^{\frac{1}{2}}}\,dx, \quad 0<\phi,\t<2\pi.$$
Then there exists $C=C(p)>0$ such that \begin{equation}
\left(\int_{0}^{2\pi}
\left(\int_{0}^{2\pi}|g(e^{i(\phi+\theta)})|K(\phi,\theta)\,\frac{d\phi}{2\pi}\right)^q\,\frac{d\theta}{2\pi}\right)^{1/q}\le C \left(\int_{0}^{2\pi}|g(e^{i\theta})|^q \frac{d\theta}{2\pi}\right)^{1/q}.
\end{equation}
for any measurable function $g\in L^q(\mathbb T)$.
\end{letterlemma}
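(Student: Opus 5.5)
The plan is to reduce the statement to boundedness of a positive integral operator whose kernel is essentially homogeneous of degree $-1$. The tool is Schur's test with power test functions, equivalently Minkowski's inequality in multiplicative (Mellin) form. Angles are taken in $(-\pi,\pi]$, and $K$ is read as a function of $|\phi|,|\t|$ (it depends only on $\phi^2,\t^2$). Write $Tg(\t)=\int |g(e^{i(\phi+\t)})|K(\phi,\t)\,d\phi$.

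\textbf{Step 1: two-sided size estimate for $K$.} I will split the $x$-integral at $|\phi|$ and $|\t|$ and treat two cases.
\begin{itemize}
\item If $|\phi|\ge|\t|$: the pieces $x<|\t|$, $|\t|<x<|\phi|$ and $x>|\phi|$ contribute $\lesssim|\t|/\phi^2$, $\lesssim 1/|\phi|$ and $\lesssim 1/|\phi|$ respectively, so $K(\phi,\t)\lesssim 1/|\phi|$.
\item If $|\phi|<|\t|$: the same splitting gives $\lesssim 1/|\t|$, $\lesssim \log(|\t|/|\phi|)/|\t|$ and $\lesssim 1/|\t|$, so $K(\phi,\t)\lesssim \bigl(1+\log\frac{|\t|}{|\phi|}\bigr)/|\t|$.
\end{itemize}
Hence $K(\phi,\t)\lesssim k_0(\phi,\t)$, where $k_0(\phi,\t)=\max\{|\phi|,|\t|\}^{-1}\bigl(1+\log^+\frac{|\t|}{|\phi|}\bigr)$. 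This majorant is defined for all real $\phi,\t\neq0$ and is homogeneous: $k_0(\lambda\phi,\lambda\t)=\lambda^{-1}k_0(\phi,\t)$ for $\lambda>0$.

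\textbf{Step 2: pass to a kernel on $\R$.} Substituting $\psi=\t+\phi$ gives $Tg(\t)\lesssim\int_{\R}G(\psi)\,k_0(\psi-\t,\t)\,d\psi$. Here $G$ is $|g|$ extended $2\pi$-periodically and multiplied by the indicator of $[-3\pi,3\pi]$; this is legitimate because $|\phi|,|\t|\le\pi$. Note $\|G\|_{L^q(\R)}^q=3\|g\|_{L^q(\T)}^q$. So it suffices to show that the operator with kernel $k(\t,\psi)=k_0(\psi-\t,\t)$, which is homogeneous of degree $-1$ in $(\t,\psi)$, is bounded on $L^q(\R)$ for $1<q<\infty$.

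\textbf{Step 3: Schur's test.} Take $h(t)=|t|^{-1/(qq')}$. By homogeneity, the two Schur conditions
\[
\int k(\t,\psi)h(\psi)^{q'}d\psi\lesssim h(\t)^{q'},\qquad \int k(\t,\psi)h(\t)^{q}d\t\lesssim h(\psi)^{q}
\]
reduce, after scaling $\t=\pm1$ and $\psi=\pm1$, to the finiteness of
\[
\int_{\R}k(\pm1,\psi)|\psi|^{-1/q}\,d\psi \quad\text{and}\quad \int_{\R}k(\t,\pm1)|\t|^{-1/q'}\,d\t .
\]
I check integrability of these at the possible singular points.
\begin{itemize}
\item At $\psi=\t$, i.e. $\phi\to0$: the singularity is only logarithmic, hence integrable.
\item At $\psi=0$, respectively $\t=0$: the powers $|\cdot|^{-1/q}$ and $|\cdot|^{-1/q'}$ are integrable because $q,q'>1$, and the kernel is bounded there.
\item At infinity: the kernel decays like $1/|\cdot|$, so the integrands decay like $|\cdot|^{-1-1/q}$ or $|\cdot|^{-1-1/q'}$, which are integrable. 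These are exactly the places where $1<q<\infty$ is used.
\end{itemize}
Schur's test then gives $\|Tg\|_{L^q(\T)}\lesssim\|G\|_{L^q(\R)}\lesssim\|g\|_{L^q(\T)}$.

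The main obstacle I expect is Step 1, specifically the regime $|\phi|<|\t|$. There the logarithmic factor must be captured correctly: it must neither be lost, which would make the bound false, nor overestimated by a power, which would destroy homogeneity and with it the Schur computation. A fallback is to split $T$ into the region $|\phi|\ge|\t|$, dominated by a Hardy-type operator, and the region $|\phi|<|\t|$, dominated by $\frac{1}{|\t|}\int_{|\phi|<|\t|}|g|\bigl(1+\log\frac{|\t|}{|\phi|}\bigr)$. The latter is at most a constant times a maximal-function average, since $\frac1r\int_0^r\bigl(1+\log\frac rs\bigr)ds=2$, and the Hardy–Littlewood maximal theorem applies for $q>1$.
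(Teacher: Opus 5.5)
\textbf{Verdict: your proof is correct and complete.} The paper itself gives no proof of this lemma; it simply quotes it from Andersen [Lemma 2.1, Lemma 2.2]. So there is no in-paper argument to compare with, and your proposal works as a self-contained replacement for the citation.

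I checked each step.
\begin{enumerate}
\item[(1)] Splitting the $x$-integral at $|\phi|$ and $|\theta|$ does give $K(\phi,\theta)\lesssim \max\{|\phi|,|\theta|\}^{-1}\bigl(1+\log^+\tfrac{|\theta|}{|\phi|}\bigr)$, including the logarithm when $|\phi|<|\theta|$.
\item[(2)] Unfolding to $\R$ is legitimate, since $|\theta+\phi|\le 2\pi$.
\item[(3)] By homogeneity of degree $-1$, the two Schur conditions with $h(t)=|t|^{-1/(qq')}$ reduce to $\int_\R k(\pm1,\psi)|\psi|^{-1/q}\,d\psi<\infty$ and $\int_\R k(\theta,\pm1)|\theta|^{-1/q'}\,d\theta<\infty$.
\item[(4)] Both integrals are finite for the reasons you give: a logarithmic singularity on the diagonal, a bounded kernel at the origin (indeed $k(\pm1,0)=k(0,\pm1)=1$), and $1/|\cdot|$ decay at infinity.
\end{enumerate}
The hypothesis $1<q<\infty$ enters only at $0$ and at $\infty$. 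This is the classical Hardy--Littlewood--P\'olya/Schur argument for positive kernels homogeneous of degree $-1$. Compared with the bare citation, it makes explicit where $q>1$ is used (the estimate fails at $q=1$).

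Three small comments.
\begin{enumerate}
\item[(a)] Step 1 is less delicate than you fear. You can dominate $K$ by $K_\infty(\phi,\theta)=\int_0^\infty \frac{x\,dx}{(x^2+\phi^2)(x^2+\theta^2)^{1/2}}$, which is exactly homogeneous of degree $-1$. Your size estimates hold verbatim for $K_\infty$, and then they only serve to check the finiteness of the two Schur integrals. Also, bounding the logarithm by a power $(|\theta|/|\phi|)^{\epsilon}$ with $0<\epsilon<1$ would not destroy homogeneity, since it is a power of the ratio, and it would still be integrable on the diagonal.
\item[(b)] Reading the angles modulo $2\pi$ in $(-\pi,\pi]$ is the right interpretation. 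It is what the application in the proof of Theorem~\ref{MainTheorem: Hardy}(i) actually needs, because $|1-te^{i\phi}|^2\asymp(1-t)^2+t\phi^2$ holds only for $|\phi|\le\pi$. The literal statement with $0<\phi,\theta<2\pi$ follows from yours: the integrand defining $K$ decreases in $\phi^2$ and $\theta^2$, and the circular distance to $0$ never exceeds the literal angle.
\item[(c)] The constant depends on $q$; the ``$C(p)$'' in the statement is a typo.
\end{enumerate}
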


\begin{lemma}\label{AndersenLemma2} Let $\beta>0$, $\om \in \DD$ and $\mu\in M^+([0,1))$. Let us write
$$K_{\mu,\om,\beta}( \phi,\t)=\int_{0}^{1}\frac{(1-t)^{1-\beta}}{\widehat\om(t)((1-t)^2+\frac{t}{2}\t^2)^{\frac{1}{2}}((1-t)^2+t\phi^2)}\,d\mu(t), \quad 0<\phi,\theta<2\pi.$$
Then
$$ K_{\mu,\om,\beta}( \phi,\t)\lesssim \int_0^1 \frac{(1-r)^{-\beta} \widehat\mu(r)}{\widehat\om(r) ((1-r)^2+\frac{\t^2}{2})^{\frac{1}{2}} ((1-r)^2+\phi^2)}dr+ \widehat\mu(0), \quad 0<\phi,\theta<2\pi.$$
\end{lemma}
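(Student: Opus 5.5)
Since the integrand is a function of $t$ integrated against $d\mu(t)$, I will do what was done in Lemma~\ref{mainlemma0} and in the estimate \eqref{int}. That is, I will write the $t$-dependent factor as an integral from $0$ to $t$ of its derivative-type majorant plus a constant, and then apply Fubini to move $\mu$ onto the tail $\whm$. Concretely, set
\begin{equation*}
G(t)=\frac{(1-t)^{1-\beta}}{\whw(t)\bigl((1-t)^2+\frac{t}{2}\theta^2\bigr)^{\frac12}\bigl((1-t)^2+t\phi^2\bigr)}.
\end{equation*}
The goal is a bound of the form
\begin{equation*}
G(t)\lesssim \int_0^t H(r)\,dr + 1,\qquad H(r)=\frac{(1-r)^{-\beta}}{\whw(r)\bigl((1-r)^2+\frac{\theta^2}{2}\bigr)^{\frac12}\bigl((1-r)^2+\phi^2\bigr)}.
\end{equation*}
Then Fubini gives $\int_0^1 G\,d\mu\lesssim \int_0^1 H(r)\whm(r)\,dr+\whm(0)$, which is the claim.

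To prove the pointwise bound, I first handle $t\le 1/2$. There every factor of $G$ is bounded above, since $(1-t)^2\ge 1/4$ and $\whw(t)\ge\whw(1/2)>0$. So $G(t)\lesssim 1$, with the constant depending on $\om$ and $\beta$.

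For $t\ge 1/2$, I use the substitutes $t\theta^2/2\asymp\theta^2/2$ and $t\phi^2\asymp\phi^2$ to replace $G$ by the comparable function in which $t$ no longer appears inside the brackets. It then suffices to show
\begin{equation*}
\frac{(1-t)^{1-\beta}}{\whw(t)A(t)B(t)}\lesssim \int_{2t-1}^{t}\frac{(1-r)^{-\beta}}{\whw(r)A(r)B(r)}\,dr,
\end{equation*}
where $A(r)=((1-r)^2+\theta^2/2)^{1/2}$ and $B(r)=(1-r)^2+\phi^2$. On the interval $[2t-1,t]$ we have $1-r\in[1-t,2(1-t)]$. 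Hence $(1-r)^{-\beta}\asymp(1-t)^{-\beta}$, $A(r)\asymp A(t)$ and $B(r)\asymp B(t)$, and by monotonicity $\whw(r)\ge\whw(t)$ would go the wrong way. This is exactly where $\om\in\DD$ enters: by \eqref{eq:DoublingDef}, $\whw(2t-1)\le C\whw(t)$, so $\whw(r)\asymp\whw(t)$ on this interval. The interval has length $1-t$, which produces the missing factor $(1-t)$ and turns $(1-t)^{-\beta}$ into $(1-t)^{1-\beta}$. Finally $\int_{2t-1}^t H\le\int_0^t H$ because $H\ge0$.

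Combining the two ranges gives $G(t)\lesssim\int_0^tH+1$ for all $t$, uniformly in $\phi,\theta$. The final Fubini step is
\begin{equation*}
\int_0^1\int_0^tH(r)\,dr\,d\mu(t)=\int_0^1H(r)\whm(r)\,dr.
\end{equation*}
I do not expect a serious obstacle; the only delicate point is the comparison $\whw(r)\asymp\whw(t)$, and the doubling condition handles it. I will also check that all constants are independent of $\phi,\theta\in(0,2\pi)$, which holds because the comparisons of $A$ and $B$ used only $1-r\asymp1-t$.
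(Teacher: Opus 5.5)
Your proof is correct. It is a continuous version of the paper's argument rather than the same argument.

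The paper splits $[1/2,1)$ into dyadic blocks $[t_k,t_{k+1})$ with $t_k=1-2^{-k}$, and then:
\begin{itemize}
\item it bounds the integrand on each block by a constant and uses $\mu([t_k,t_{k+1}))\le\whm(t_k)$;
\item it uses that $\whm$ is decreasing to get $\whm(t_k)(t_k-t_{k-1})\le\int_{t_{k-1}}^{t_k}\whm(r)\,dr$, which pushes the mass of each block back onto the preceding dyadic interval;
\item it reassembles the resulting sum into the integral.
\end{itemize}

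You instead prove the pointwise majorant $G(t)\lesssim\int_{2t-1}^{t}H(r)\,dr$ for $t\ge 1/2$, together with $G\lesssim 1$ for $t<1/2$. This is the same device as \eqref{int} in Lemma~\ref{mainlemma0}. A single application of Fubini's theorem then yields $\int_0^1 H(r)\whm(r)\,dr+\whm(0)$ exactly.

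In both proofs the one essential input is the doubling comparison across a single scale. You use $\whw(2t-1)\le C\whw(t)$, while the paper uses $\whw(t_{k-1})\lesssim\whw(t_k)$. You are right that this is where $\om\in\DD$ is needed, since monotonicity alone goes the wrong way.

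Your route has three advantages. It avoids the index bookkeeping. It never uses that $\whm$ is decreasing, because Fubini produces $\mu([r,1))$ directly. It also does not use $\beta>0$: the comparison $(1-r)^{-\beta}\asymp(1-t)^{-\beta}$ for $1-r\in[1-t,2(1-t)]$ holds for every real $\beta$. The paper's discretized version, in turn, matches the dyadic arguments it uses elsewhere, for example the cases $p<1$ in Lemma~\ref{mainlemma0} and Proposition~\ref{FHpr}.
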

\begin{proof}
Let $t_k=1-2^{-k}$ for every $k \in \N$.  We have
\begin{equation*}
\begin{split}
    K_{\mu,\om,\beta}( \phi,\t)&= \sum_{k=1}^{\infty}\int_{t_k}^{t_{k+1}}\frac{(1-t)^{1-\beta}}{\widehat\om(t)((1-t)^2+\frac{t}{2}\t^2)^{\frac{1}{2}}((1-t)^2+t\phi^2)}\,d\mu(t)\\
    &+\int_{0}^{1/2}\frac{(1-t)^{1-\beta}}{\widehat\om(t)((1-t)^2+\frac{t}{2}\t^2)^{\frac{1}{2}}((1-t)^2+t\phi^2)}\,d\mu(t)\\
    &\lesssim \sum_{k=1}^{\infty}\frac{(t_k-t_{k-1})(1-t_{k+1})^{-\beta}\widehat\mu(t_k)}{\widehat\om(t_k)((1-t_{k+1})^2+\frac{\t^2}{4})^{\frac{1}{2}}((1-t_{k+1})^2+\frac{\phi^2}{2})}\,+ \widehat\mu(0)\\
     &\lesssim \sum_{k=1}^{\infty}\frac{(1-t_{k+1})^{-\beta}}{\widehat\om(t_k)((1-t_{k})^2+\frac{\t^2}{2})^{\frac{1}{2}}((1-t_{k})^2+\phi^2)}
     \left(\int_{t_{k-1}}^{t_k} \widehat\mu(r) dr\right)\,+ \widehat\mu(0)\\
     &\lesssim \int_0^1 \frac{(1-r)^{-\beta} \widehat\mu(r)}{\widehat\om(r) ((1-r)^2+\frac{\t^2}{2})^{\frac{1}{2}} ((1-r)^2+\phi^2)}dr+ \widehat\mu(0),
\end{split}
\end{equation*}
which concludes the proof.
\end{proof}
We are now ready to give the proof of Theorem~\ref{MainTheorem: Hardy}(i), when $q>1$.
 \begin{proof} Let us show that (b) implies (a). 
Assume now that $\|\mu^{(p,q)}_{\om}\|_{L^\infty}<\infty$. 
For every $0\leq r<1$, the pointwise estimates for Hardy spaces, subharmonicity of $|f|^{\frac{p}{q}}$ by \cite[Theorem 2.4.1]{Ransford} and Fubini's theorem  yield
\begin{equation*}
\begin{split}
    \left|C_{\mu,\om}(f)(re^{i\t})\right|
    &\leq \int_{0}^{1}|f(tre^{i\t})||B^{\om}_t(re^{i\t})|\,d\mu(t)\\
    &=\int_{0}^{1}|f(tre^{i\t})|^{1-\frac{p}{q}+\frac{p}{q}}|B^{\om}_t(re^{i\t})|\,d\mu(t)\\
    &\lesssim \|f_r\|_{H^p}^{1-\frac{p}{q}}\int_{0}^{1}|f(tre^{i\t})|^{\frac{p}{q}}\frac{|B^{\om}_t(re^{i\t})|}{(1-t)^{\frac{1}{p}-\frac{1}{q}}}\,d\mu(t)\\
    &\leq\|f_r\|_{H^p}^{1-\frac{p}{q}}\int_{0}^{1}\left(\int_{0}^{2\pi}\frac{1-t^2}{|1-te^{i(\phi-\theta)}|^2}|f_r(e^{i\phi})|^{\frac{p}{q}}\,
   \frac{d\phi}{2\pi}\right) \frac{|B^{\om}_t(re^{i\t})|}{(1-t)^{\frac{1}{p}-\frac{1}{q}}}\,d\mu(t)\\
    &\lesssim\|f_r\|_{H^p}^{1-\frac{p}{q}}\int_{0}^{2\pi}|f_r(e^{i(\phi+\theta)})|^{\frac{p}{q}}\left(\int_{0}^{1}\frac{(1-t)^{1-\frac{1}{p}+
    \frac{1}{q}}|B^{\om}_t(re^{i\t})|}{|1-te^{i\phi}|^2}\,d\mu(t)\right)\,\frac{d\phi}{2\pi}\\
\end{split}
\end{equation*}
Next, we consider the inner integral and the pointwise estimates \eqref{Eq: PointwiseKernelEstimate}, which yield
\begin{equation*}
\begin{split}
\int_{0}^{1}\frac{(1-t)^{1-\frac{1}{p}+
    \frac{1}{q}}|B^{\om}_t(re^{i\t})|}{|1-te^{i\phi}|^2}\,d\mu(t)&\lesssim \int_{0}^{1}\frac{(1-t)^{1-\frac{1}{p}+
    \frac{1}{q}}}{\om_{\frac{2}{|1-tre^{i\t}|}}|1-tre^{i\t}||1-te^{i\phi}|^2}\,d\mu(t)\\
    &\lesssim \int_{0}^{1}\frac{(1-t)^{1-\frac{1}{p}+
    \frac{1}{q}}}{\om_{\frac{1}{1-t}}((1-tr)^2+tr\t^2)^{\frac{1}{2}}((1-t)^2+t\phi^2)}\,d\mu(t)\\
    &\lesssim \int_{0}^{1}\frac{(1-t)^{1-\frac{1}{p}+
    \frac{1}{q}}}{\widehat\om(t)((1-t)^2+\frac{t}{2}\t^2)^{\frac{1}{2}}((1-t)^2+t\phi^2)}\,d\mu(t)\\
    &= K_{\mu,\om,\frac{1}{p}-
    \frac{1}{q}}(\phi,\t), \quad 0<\phi,\theta<2\pi.
    \end{split}
\end{equation*}
Now Lemma \ref{AndersenLemma2} and the assumption give
 $$K_{\mu,\om,\frac{1}{p}-
    \frac{1}{q}}(\phi,\t)\lesssim \|\mu^{(p,q)}_{\om}\|_{L^{\infty}} K(\phi, \t), \quad 0<\phi,\theta<2\pi,$$
    which combined with Lemma \ref{AndersenLemma}  yields
\begin{equation}\label{Eq: Lp-ineqCesaroq>1}
\begin{split}
M_q^q(r,C_{\mu,\om}(f)) &\lesssim \|f_r\|_{H^p}^{q-p}\|\mu^{(p,q)}_{\om}\|_{L^{\infty}}^q\int_{0}^{2\pi}
\left(\int_{0}^{2\pi}|f_r(e^{i(\phi+\theta)})|^{\frac{p}{q}}K(\phi,\theta)\,\frac{d\phi}{2\pi}\right)^q\,\frac{d\theta}{2\pi}\\
&\lesssim \|f_r\|_{H^p}^{q-p}\|\mu^{(p,q)}_{\om}\|_{L^{\infty}}^q\int_{0}^{2\pi}
|f_r(e^{i\theta})|^{p}\,\frac{d\theta}{2\pi}\\
&\lesssim \|f_r\|_{H^p}^{q}\|\mu^{(p,q)}_{\om}\|_{L^{\infty}}^q, \quad 0\leq r<1.
\end{split}
\end{equation}
 This shows that $\|C_{\mu,\om}(f)\|_{H^q}\lesssim \|\mu^{(p,q)}_{\om}\|_{L^{\infty}}\|f\|_{H^p}$.
\end{proof}
\bigskip

\centerline{ \bf Case $0<p\le q\le 1$}
\begin{proof}
Assume $\|\mu^{(p,q)}_{\om}\|_{L^{\infty}}<\infty$ and denote,
as above,  $t_k=1-2^{-k}$ for every $k \in \N$. Define $g(z)=f(z)B^{\om}(z)$. Then, one obtains by subadditivity and Fubini's theorem that
\begin{equation*}
\begin{split}
M_q^q(r,C_{\mu,\om}(f))
&= \int_{0}^{2\pi}\left|\int_{0}^{1}f(tre^{i\t})B^{\om}_t(re^{i\t})\,d\mu(t)\right|^q\,\frac{d\t}{2\pi}\\
&\leq \int_{0}^{2\pi}\left(\int_{0}^{1}\left|g(tre^{i\t})\right|\,d\mu(t)\right)^q\,\frac{d\t}{2\pi}\\
& \leq \sum_{k=0}^{\infty}\int_{0}^{2\pi}\left(\int_{t_k}^{t_{k+1}}\left|g(tre^{i\t})\right|\,d\mu(t)\right)^q\,\frac{d\t}{2\pi}\\
&\leq \sum_{k=0}^{\infty}\whm(t_k)^{q}\int_{0}^{2\pi}\sup_{0\leq t<t_{k+1}}\left|g(tre^{i\t})\right|^q\,\frac{d\t}{2\pi}\\
&\leq \|\mu^{(p,q)}_{\om}\|_{L^{\infty}}^q\sum_{k=0}^{\infty}\om(S(t_k))^{q}(1-t_k)^{\frac{q}{p}-1}\int_{0}^{2\pi}\sup_{0\leq t<t_{k+1}}\left|g(tre^{i\t})\right|^q\,\frac{d\t}{2\pi}, \quad 0\leq r<1.
\end{split}
\end{equation*}
By the radial maximal theorem \cite[Theorem 7.1.4]{Pavlovic2004}, and the fact that $L^p$-means of subharmonic functions are increasing, we have
\begin{equation*}
\begin{split}
M_q^q(r,C_{\mu,\om}(f))
&\lesssim\|\mu^{(p,q)}_{\om}\|_{L^{\infty}}^q\sum_{k=0}^{\infty}\om(S(t_k))^{q}(1-t_k)^{\frac{q}{p}-1}\int_{0}^{2\pi}\left|g(t_{k+1}re^{i\t})\right|^q\,d\t\\
&\asymp \|\mu^{(p,q)}_{\om}\|_{L^{\infty}}^q\sum_{k=0}^{\infty}\int_{t_{k+1}}^{t_{k+2}}\om(S(t_k))^{q}(1-t_k)^{\frac{q}{p}-2}\int_{0}^{2\pi}\left|g(t_{k+1}re^{i\t})\right|^q\,d\t\,ds\\
&\lesssim \|\mu^{(p,q)}_{\om}\|_{L^{\infty}}^q\sum_{k=0}^{\infty}\int_{t_{k+1}}^{t_{k+2}}\om(S(s))^{q}(1-s)^{\frac{q}{p}-2}\int_{0}^{2\pi}\left|g(sre^{i\t})\right|^q\,d\t\,ds\\
&\leq \|\mu^{(p,q)}_{\om}\|_{L^{\infty}}^q\int_{0}^{1}\om(S(s))^{q}(1-s)^{\frac{q}{p}-2}\int_{0}^{2\pi}\left|g(sre^{i\t})\right|^q\,d\t\,ds, \quad 0 \leq r <1.
\end{split}
\end{equation*}
Let $x,x'$ be fixed H\"older conjugates such that $qx'>1$. By using the definition of the function $g$, H\"older's inequality and the $L^p$-estimates for the kernels given in Lemma~\ref{Lemma: KernelLpEstimate}, we obtain
\begin{equation*}
\begin{split}
\int_{0}^{2\pi}\left|g(sre^{i\t})\right|^q\,d\t
&=\int_{0}^{2\pi}\left|f(sre^{i\t})\right|^q\left|B^{\om}_s(re^{i\t})\right|^q\,d\t\\
&\leq \left(\int_{0}^{2\pi}\left|f(sre^{i\t})\right|^{qx}\,d\t \right)^{\frac{1}{x}}\left(\int_{0}^{2\pi}\left|B^{\om}_s(re^{i\t})\right|^{qx'}\,d\t \right)^{\frac{1}{x'}}\\
&\asymp M_{qx}^q(s,f_r)\left(\int_{0}^{sr}\frac{dt}{\whw(t)^{qx'}(1-t)^{qx'}}+1\right)^{\frac{1}{x'}}\\
&\asymp \frac{M_{qx}^q(s,f_r)}{\whw(sr)^{q}(1-sr)^{q-\frac{1}{x'}}}, \quad 0\leq s,r<1.
\end{split}
\end{equation*}
Plugging this back into the previous formulas and applying a theorem of Hardy-Littlewood, see \cite[Theorem 5.11]{Duren}, we obtain
\begin{equation}\label{Eq: Lp-ineqCesaroq<1}
\begin{split}
M_q^q(r,C_{\mu,\om}(f)) &\lesssim \|\mu^{(p,q)}_{\om}\|_{L^{\infty}}^q\int_{0}^{1}\om(S(s))^{q}(1-s)^{\frac{q}{p}-2}\frac{M_{qx}^q(s,f_r)}{\whw(sr)^{q}(1-sr)^{q-\frac{1}{x'}}}\,ds\\
&\lesssim \|\mu^{(p,q)}_{\om}\|_{L^{\infty}}^q\int_{0}^{1}M_{qx}^q(s,f_r)(1-s)^{\frac{q}{p}-2+\frac{1}{x'}}\,ds\\
&=\|\mu^{(p,q)}_{\om}\|_{L^{\infty}}^q\int_{0}^{1}M_{qx}^q(s,f_r)(1-s)^{q\left(\frac{1}{p}-\frac{1}{qx}\right)-1}\,ds\\
&\lesssim \|\mu^{(p,q)}_{\om}\|_{L^{\infty}}^q\|f_r\|_{H^p}^{q}, {\quad 0\leq r<1,}
\end{split}
\end{equation}
which finishes the proof of Theorem~\ref{MainTheorem: Hardy}(i).
\end{proof}
Note that in both cases the proof shows that we actually have an estimate for the $L^p$-means at radius $r$ instead of only the norm inequality.

\subsection{ Case $q<p$.}   
 When $q<p$, the Carleson-type condition shown in Corollary~\ref{Corollary: TestConditionpleqq} is no longer the right condition, which requires us to use different tools. We will need some basic knowledge from the tent spaces $T^q_p(\mu)$, which we define here for the convenience of the reader. For $0<p,q<\infty$, define the tent space $T^{q}_p(\mu)$ as the $\mu$-measurable functions satisfying
\begin{equation*}
    \|f\|_{T^{q}_p(\mu)}^q = \int_{\T}\left(\int_{\Gamma(\z)}|f(z)|^p \,d\mu(z)\right)^{\frac{q}{p}}\,|d\z|+|f(0)|^p\mu(\{0\})^p
\end{equation*}
Here $\Gamma(\zeta)=\{z \in \D: |\arg z - \arg \z|<\frac{1}{2}(1-|z|)\}$ for $\zeta\in \T$. We also define the tent $I(z)$ by $I(z)=\{\zeta \in \T: z \in \Gamma(\zeta)\}$ for $z \in \D$.
When $\mu=\sum_k\delta_{z_k}$, meaning $\mu$ is a point mass measure, we denote $T^{q}_p(\mu)=T^{q}_p(\{z_k\})$. In this case, we can consider these tent spaces as sequence spaces, by identifying a function $f$ with the sequence $\{f(z_k)\}$, which should not confuse the reader. The next lemma follows originally from ideas of Luecking \cite[Lemma 3]{LueckingProc}, see also \cite[Lemma D]{PelaezHp}.
\begin{letterlemma}\label{Lemma: q<pHpTestfunction}
Let $\{z_k\}$ be an $\veps$-separated sequence. Then, for every fixed $\lambda>\max\{1,\frac{2}{p}\}$ there exists a constant $C=C(p, \veps,\lambda)>0$ such that the operator $S_{\lambda}: T^{p}_2(\{z_k\}) \to H^p$, defined by
\begin{equation*}
    S_{\lambda}(f)(z)=\sum_{k=0}^{\infty}f(z_k)\left(\frac{1-|z_k|}{1-\overline{z_k}z}\right)^{\lambda}, \quad z \in \D,
\end{equation*}
satisfies
\begin{equation*}
    \|S_{\lambda}(f)\|_{H^p}^p \leq C\|f\|_{T^{p}_2(\{z_k\})}^p.
\end{equation*}
\end{letterlemma}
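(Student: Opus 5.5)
The plan is to estimate $\|S_\lambda(f)\|_{H^p}$ through the Lusin area function. Write $F=S_\lambda(f)$ and $f_k=f(z_k)$. By the Calder\'on area theorem,
\begin{equation*}
\|F\|_{H^p}^p\asymp |F(0)|^p+\int_{\T}\left(\int_{\Gamma(\z)}|F'(z)|^2\,dA(z)\right)^{\frac p2}|d\z| .
\end{equation*}
The term $|F(0)|\le\sum_k|f_k|(1-|z_k|)^\lambda$ is controlled in the same way as below, so it is a routine side issue and I concentrate on the area integral. We have
\begin{equation*}
F'(z)=\lambda\sum_k f_k\,\overline{z_k}\,\frac{(1-|z_k|)^{\lambda}}{(1-\overline{z_k}z)^{\lambda+1}} .
\end{equation*}

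\textbf{Step 1 (pointwise Cauchy--Schwarz).} Fix $0<\veps'<\lambda-1$ and split the weight in each term of $F'(z)$ by Cauchy--Schwarz:
\begin{equation*}
|F'(z)|^2\le \Big(\sum_k |f_k|^2\frac{(1-|z_k|)^{\lambda}}{|1-\overline{z_k}z|^{\lambda+1+\veps'}}\Big)\Big(\sum_k \frac{(1-|z_k|)^{\lambda}}{|1-\overline{z_k}z|^{\lambda+1-\veps'}}\Big).
\end{equation*}
The sequence is $\veps$-separated, so the second sum is comparable to $(1-|z|)^{-2}\int_{\D}(1-|w|)^{\lambda}|1-\overline{w}z|^{-(\lambda+1-\veps')}dA(w)$. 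This integral converges to $\lesssim(1-|z|)^{-(1-\veps')}$ precisely because $\lambda>1$ and $\veps'$ is small (the standard Forelli--Rudin estimate). This is where $\lambda>1$ enters.

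\textbf{Step 2 (integrate over the cone).} Insert the bound from Step 1 and use Fubini. For $z\in\Gamma(\z)$ one has $|1-\overline{z_k}z|\gtrsim |1-\overline{z_k}\z|$ in the relevant range. Integrating $(1-|z|)^{-(1-\veps')}|1-\overline{z_k}z|^{-(\lambda+1+\veps')}$ over $\Gamma(\z)$ then gives
\begin{equation*}
\int_{\Gamma(\z)}|F'|^2\,dA\lesssim \sum_k|f_k|^2\left(\frac{1-|z_k|}{|1-\overline{z_k}\z|}\right)^{s},\qquad s=\lambda-\veps'',
\end{equation*}
for some small $\veps''>0$. This bookkeeping of exponents is the technical part, and I expect to be able to keep $s$ arbitrarily close to $\lambda$.

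\textbf{Step 3 (change of aperture).} Split the sum by the dyadic size of $|1-\overline{z_k}\z|/(1-|z_k|)$. Let $\Gamma_j(\z)$ denote the cone of aperture $2^j$. Then the right-hand side of Step 2 is at most
\begin{equation*}
\sum_{j\ge0}2^{-js}A_j(\z),\qquad A_j(\z)=\sum_{z_k\in\Gamma_j(\z)}|f_k|^2 .
\end{equation*}
Apply the change-of-aperture estimate for tent spaces, proved via the tents $I(z)$ and a covering argument:
\begin{equation*}
\|A_j\|_{L^{p/2}(\T)}\lesssim 2^{j\max\{1,2/p\}}\|A_0\|_{L^{p/2}(\T)} .
\end{equation*}
For $p\ge2$ use Minkowski's inequality in $L^{p/2}$; for $p<2$ use $(p/2)$-subadditivity. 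Since $s>\max\{1,2/p\}$, the resulting geometric series converges, which yields $\|F\|_{H^p}^p\lesssim\|f\|^p_{T^p_2(\{z_k\})}$.

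The main obstacle is Step 3: the sharp aperture dependence in $L^{p/2}$. For $p/2\le1$, I would first try the inequality $|\{A_j>t\}|\lesssim 2^j|\{A_0^*>t\}|$ for a suitable maximal/enlarged version of $A_0$. For $p/2>1$, I would try duality against $L^{(p/2)'}$ combined with the Hardy--Littlewood maximal function. I would choose $\veps',\veps''$ only at the end so that $s$ still exceeds $\max\{1,2/p\}$.
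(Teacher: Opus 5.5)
The paper does not prove this lemma; it imports it from Luecking (Lemma 3) and Pel\'aez (Lemma D). Your outline is a standard area-function proof of it, and the strategy is sound. The aperture cost $2^{j\max\{1,2/p\}}$ against the gain $2^{-j\lambda}$ reproduces exactly the hypothesis $\lambda>\max\{1,\frac2p\}$.

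Steps 1 and 2 need only cosmetic repairs. The discretisation in Step 1 should read
\[
\sum_k\frac{(1-|z_k|)^{\lambda}}{|1-\overline{z_k}z|^{\lambda+1-\veps'}}\lesssim\int_{\D}\frac{(1-|w|)^{\lambda-2}}{|1-\overline{w}z|^{\lambda+1-\veps'}}\,dA(w)\asymp(1-|z|)^{\veps'-1}.
\]
That is, the factor $(1-|w|)^{-2}$ belongs inside the integral. This uses $\lambda>1$ and $0\le\veps'<1$; the condition $\veps'<\lambda-1$ is irrelevant, and $\veps'=0$ already works.

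Step 2 loses nothing. On $\Gamma(\z)$ you have $|1-\overline{z_k}z|\gtrsim\max\{1-|z|,d_k\}$ with $d_k=|1-\overline{z_k}\z|$, and the cross-section at height $h=1-|z|$ has length $\asymp h$. So the cone integral is $\lesssim\int_0^1h^{\veps'}\max\{h,d_k\}^{-(\lambda+1+\veps')}\,dh\lesssim d_k^{-\lambda}$, i.e.\ $s=\lambda$.

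The term $|F(0)|$ is indeed harmless. By Cauchy--Schwarz and $\sum_k(1-|z_k|)^{\lambda}<\infty$,
\[
|F(0)|^2\lesssim\sum_k|f_k|^2(1-|z_k|)^{\lambda}\le 2^{\lambda}\inf_{\z\in\T}\sum_k|f_k|^2\Bigl(\frac{1-|z_k|}{|1-\overline{z_k}\z|}\Bigr)^{\lambda},
\]
so it is controlled by the same quantity as the area integral.

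The step that would not work as you propose is Step 3 for $p<2$. No inequality $|\{A_j>t\}|\lesssim 2^j|\{A_0^*>t\}|$ can hold with $A_0^*$ a maximal function of $A_0$ (or of a power of $A_0$), because any such $A_0^*$ is bounded by $\|A_0\|_{L^\infty}$. For a counterexample, take $j\ge3$, $N\ge3$, points with $1-|z_k|=2^{-k}$ and $\arg z_k=\arg\z+2\cdot 2^{-k}$ for $k=1,\dots,N$, and $|f_k|^2=2t/N$. These points are separated and all lie in $\Gamma_j(\eta)$ for $\eta$ in a small arc around $\z$. Their tents $I(z_k)$ are pairwise disjoint. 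Hence $A_j=2t$ on that arc, while $A_0\le 2t/N<t$ everywhere.

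The correct tool is the Coifman--Meyer--Stein density argument. Let $E=\{A_0\le t\}$ and $E^*=\{M\chi_{\T\setminus E}\le c2^{-j}\}$, with $c$ small and $M$ the Hardy--Littlewood maximal function. Suppose the enlarged tent $I_j(z_k)$, of length $2^j|I(z_k)|$, meets $E^*$. Then $|I_j(z_k)\setminus E|\le c|I(z_k)|$, so $|I(z_k)\cap E|\ge\frac12|I(z_k)|$. It follows that $\int_{E^*}A_j\lesssim 2^j\int_E A_0$. Separately, the weak-type bound for $M$ gives $|\T\setminus E^*|\lesssim 2^j|\T\setminus E|$. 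Therefore
\[
|\{A_j>t\}|\lesssim 2^j|\{A_0>t\}|+\frac{2^j}{t}\int_{\{A_0\le t\}}A_0(\z)\,|d\z|.
\]
Integrating against $t^{\frac p2-1}\,dt$ gives $\int_{\T}A_j^{p/2}\lesssim 2^j\int_{\T}A_0^{p/2}$, since $\frac p2<1$ makes $\int_{A_0}^{\infty}t^{\frac p2-2}\,dt\asymp A_0^{\frac p2-1}$. This is exactly the bound $\|A_j\|_{L^{p/2}}\lesssim 2^{2j/p}\|A_0\|_{L^{p/2}}$ that you need.

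For $p\ge2$ your plan works as stated. For $g\ge0$ one has $\int_{\T}A_j\,g=\sum_k|f_k|^2\int_{I_j(z_k)}g\lesssim 2^j\int_{\T}A_0\,Mg$, and $M$ is bounded on $L^{(p/2)'}$ when $p>2$. For $p=2$ this is just Fubini.
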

We also need the following auxiliary results.
\begin{lemma}\label{Lemma:LowerBoundEq}
Let $\lambda>0$ and $\om\in \widehat{\mathcal D}$. Then
\be
 \frac{\mu_{\lfloor \frac{1}{1-r} \rfloor-1}}{\om_{\frac{1}{1-r}}(1-r)^{\lambda+1}}s^{\lfloor \frac{1}{1-r} \rfloor-1} \lesssim \int_0^1 \frac{ B^\om_t(s)}{(1-rst)^\lambda} d\mu(t), \quad 0\leq r,s<1.
\ee
\end{lemma}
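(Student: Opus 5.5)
Since $0\le r,s<1$ and $t\in[0,1)$, every quantity in the integrand is a power series with nonnegative coefficients. Indeed,
\[
B^\om_t(s)=\sum_{k}\frac{(ts)^k}{2\om_{2k+1}}, \qquad (1-rst)^{-\lambda}=\sum_j c_j (rst)^j, \quad c_j\asymp (j+1)^{\lambda-1}.
\]
So the right-hand side is
\[
\sum_{n=0}^\infty \mu_n s^n \sum_{k+j=n}\frac{c_j r^j}{2\om_{2k+1}},
\]
a series with nonnegative terms. The plan is to discard all terms except those with $n=N:=\lfloor \frac{1}{1-r}\rfloor-1$ and bound the single coefficient from below.

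Fix $N$ as above, so that $N+1\asymp \frac1{1-r}$ and $r^j\gtrsim 1$ for $j\le N$, since $r^{N}\ge (1-\frac{1}{N+1})^{N}\ge e^{-1}$. By positivity,
\[
\int_0^1 \frac{B^\om_t(s)}{(1-rst)^\lambda}\,d\mu(t)\ \ge\ \mu_N s^N\sum_{j=0}^{N}\frac{c_j r^j}{2\om_{2(N-j)+1}} .
\]
Since $\om_x$ is decreasing in $x$, we have $\om_{2(N-j)+1}\le \om_0\lesssim 1$, and hence $1/\om_{2(N-j)+1}\gtrsim 1$. That alone gives only $\sum_{j\le N}c_j\asymp (N+1)^\lambda$, which is short of the required factor. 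We therefore restrict to $j\le N/2$, where $N-j\ge N/2$ and so $\om_{2(N-j)+1}\le \om_{N}$. For these $j$, $1/\om_{2(N-j)+1}\ge 1/\om_N$. Combining this with $\sum_{j\le N/2} c_j\asymp (N+1)^{\lambda}$ (valid for $\lambda>0$) gives the lower bound
\[
\mu_N s^N \frac{(N+1)^\lambda}{\om_N}.
\]

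It remains to compare this with the stated expression. By Lemma~\ref{DoublingLemma}(iv)--(v), $\om_N\asymp \om_{\frac1{1-r}}$ because $N+1\asymp\frac{1}{1-r}$ (the case $N=0$, i.e. $r<1/2$, is handled by trivial constants). Also $(N+1)^\lambda\asymp (1-r)^{-\lambda}$. Thus we obtain
\[
\frac{\mu_N s^N}{\om_{\frac1{1-r}}(1-r)^{\lambda}}.
\]
The statement asks for $(1-r)^{\lambda+1}$ in the denominator, so there is one factor of $(1-r)^{-1}\asymp N+1$ still to recover.

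Recovering this extra factor $N+1$ is the main obstacle. To do so, I would instead sum over all $j\le N$ using the comparability of moments: for $j\le N/2$ the previous bound applies, while for $N/2<j\le N$ we have $c_j\asymp (N+1)^{\lambda-1}$ and $1/\om_{2(N-j)+1}$ grows as $N-j$ decreases. That is,
\[
\sum_{m\le N/2} \frac{1}{\om_{2m+1}} \asymp \frac{N+1}{\om_N}
\]
by Lemma~\ref{Twoweightsumestimate} (with $c=1$), or directly from the almost-monotonicity in Lemma~\ref{DoublingLemma}(ii). This yields $\frac{(N+1)^{\lambda-1}(N+1)}{\om_N}$, which matches the first piece but still only gives power $\lambda$. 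A further gain would have to come from the terms $n$ near $N$ other than $n=N$. Using $\mu_n\ge \mu_{N}$ for $n\le N$ and $s^n\ge s^N$, I would sum the coefficients over $N/2\le n\le N$. Each such coefficient is $\gtrsim (N+1)^\lambda/\om_N$ by the argument above applied with $N$ replaced by $n$, and since there are about $N$ such indices, this produces the missing factor $(N+1)\asymp (1-r)^{-1}$. This completes the estimate.
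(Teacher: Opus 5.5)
Your final paragraph is correct and is essentially the paper's proof. You expand the integrand into a nonnegative power series, keep the indices $n\le N$ (you use only $N/2\le n\le N$, the paper uses all of them), and use $\mu_n\ge\mu_N$, $s^n\ge s^N$, $r^j\gtrsim 1$ together with the bound that each inner coefficient is $\gtrsim (n+1)^\lambda/\om_n$; this produces the extra factor $N+1\asymp(1-r)^{-1}$.

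When you write it up, make one step explicit: for $N/2\le n\le N$, monotonicity of the moments gives $\om_n\ge\om_N$, which is the wrong direction, so the bound $1/\om_n\gtrsim 1/\om_N$ requires the doubling property $\om_{N/2}\asymp\om_N$ from Lemma~\ref{DoublingLemma}. The earlier single-coefficient detour can simply be deleted.
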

\begin{proof} By Lemma~\ref{DoublingLemma} we obtain
$$\frac{ B^\om_t(s)}{(1-rst)^\lambda}\asymp \sum_{n=0}^\infty \left(\sum_{k=0}^n \frac{(k+1)^{\lambda -1}r^k}{\om_{n-k}}\right)t^ns^n, \quad 0\leq r,t,s<1,$$
which yields
\begin{equation*}
\begin{split}
\int_0^1 \frac{ B^\om_t(s)}{(1-rst)^\lambda} d\mu(t)
&\asymp \sum_{n=0}^\infty\mu_n \left(\sum_{k=0}^n \frac{(k+1)^{\lambda -1}r^k}{\om_{n-k}}\right)s^n\\
&\gtrsim\sum_{n=0}^{\lfloor \frac{1}{1-r} \rfloor-1} \mu_n\left(\sum_{k=0}^n \frac{(k+1)^{\lambda -1}}{\om_{n-k}}\right)s^{\lfloor \frac{1}{1-r} \rfloor-1}\\
&\gtrsim\sum_{n=0}^{\lfloor \frac{1}{1-r} \rfloor-1} \mu_n\frac{(n+1)^{\lambda }}{\om_{n}}s^{\lfloor \frac{1}{1-r} \rfloor-1}\\
&\gtrsim\frac{\mu_{\lfloor \frac{1}{1-r} \rfloor-1}}{\om_{\frac{1}{1-r}}(1-r)^{\lambda+1}}s^{\lfloor \frac{1}{1-r} \rfloor-1}, \quad 0\leq r,s<1,
\end{split}
\end{equation*}
which concludes the proof.
\end{proof}

\begin{lemma}\label{Lemma:T^pequivalence}
Let $0<p<\infty$, $t_k=1-2^{-k}$ for $k\in \mathbb N$ and let $\{b_k\}$ be a sequence of complex numbers. Then
\be  \int_\T \left(\sum_{t_k\in \Gamma(\xi)}|b_k|^2\right)^{\frac{p}{2}} |d\xi|\asymp \sum_{k=1}^\infty |b_k|^p2^{-k}.\ee
\end{lemma}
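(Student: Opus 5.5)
The plan is to understand the geometry of which points $t_k$ lie in which cones, and then to compute both sides of the equivalence directly. The points $t_k=1-2^{-k}$ all lie on the positive real axis. By definition, $t_k\in\Gamma(\xi)$ for $\xi=e^{i\theta}$, $|\theta|\le\pi$, exactly when $|\theta|<\frac12(1-t_k)=2^{-k-1}$. Consequently $I(t_k)$ is an arc centred at $1$ of length comparable to $2^{-k}$, and these arcs are nested and decreasing in $k$. So for a fixed $\theta$ with $2^{-m-2}\le|\theta|<2^{-m-1}$, the indices $k$ with $t_k\in\Gamma(\xi)$ are precisely $k\le m$ (up to a harmless boundary shift). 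Hence the left-hand side equals, up to constants,
\begin{equation*}
\sum_{m}2^{-m}\Big(\sum_{k\le m}|b_k|^2\Big)^{\frac p2},
\end{equation*}
since the set of such $\theta$ has measure comparable to $2^{-m}$. The case $\theta=0$, or $|\theta|$ very small, contributes only measure zero.

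\textbf{Lower bound.} Using $\big(\sum_{k\le m}|b_k|^2\big)^{p/2}\ge |b_m|^p$, we get that the left-hand side is at least a constant times $\sum_m |b_m|^p2^{-m}$. This holds for every $0<p<\infty$.

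\textbf{Upper bound, case $p\le 2$.} Here subadditivity of $x\mapsto x^{p/2}$ gives $\big(\sum_{k\le m}|b_k|^2\big)^{p/2}\le\sum_{k\le m}|b_k|^p$. Exchanging the order of summation, we obtain $\sum_k|b_k|^p\sum_{m\ge k}2^{-m}\asymp\sum_k|b_k|^p2^{-k}$.

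\textbf{Upper bound, case $p>2$.} This is the only genuine step, and it is a discrete Hardy-type inequality, handled as in Lemma~\ref{mainlemma0}. Fix $0<\varepsilon<\frac1p$ and write $|b_k|^2=\big(|b_k|2^{-\varepsilon k}\big)^2\,2^{2\varepsilon k}$. Apply Hölder's inequality with exponents $\frac p2$ and $\big(\frac p2\big)'$:
\begin{equation*}
\Big(\sum_{k\le m}|b_k|^2\Big)^{\frac p2}\le\Big(\sum_{k\le m}|b_k|^p2^{-\varepsilon pk}\Big)\Big(\sum_{k\le m}2^{2\varepsilon k(p/2)'}\Big)^{\frac p2-1}\lesssim 2^{\varepsilon pm}\sum_{k\le m}|b_k|^p2^{-\varepsilon pk}.
\end{equation*}
Multiplying by $2^{-m}$ and summing over $m\ge k$, the geometric sum $\sum_{m\ge k}2^{-m(1-\varepsilon p)}$ converges because $\varepsilon p<1$, and it is comparable to $2^{-k(1-\varepsilon p)}$. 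This yields $\sum_k|b_k|^p2^{-k}$, which completes the upper bound.

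The main point requiring care is the nesting of the arcs $I(t_k)$, together with the bookkeeping of the index shift at the endpoints of the arcs. Both only affect the constants.
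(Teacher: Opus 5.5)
Your proof is correct, and it differs from the paper's mainly in the case $p>2$. The paper handles the lower bound and the case $p<2$ essentially as you do. For the lower bound it splits $I(t_1)$ into the arcs $I(t_n)\setminus I(t_{n+1})$; for $p<2$ it uses subadditivity plus Fubini. For $p>2$, however, it invokes tent-space duality $(T^p_2)^*=T^{p'}_2$ (Arsenovi\'c), with the pairing $\sum_k b_k\overline{a_k}2^{-k}$ against the discrete measure $\sum_k\delta_{t_k}$. It then uses the already established case $p'<2$ to identify the dual norm with a weighted $\ell^{p'}$ norm.

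You instead use the fact that the arcs $I(t_k)=\{|\theta|<2^{-k-1}\}$ are nested. This makes the left-hand side exactly $\sum_{m\ge1}2^{-m-1}\bigl(\sum_{k\le m}|b_k|^2\bigr)^{p/2}$. There is in fact no boundary shift: for $2^{-m-2}\le|\theta|<2^{-m-1}$ the cone contains precisely $t_1,\dots,t_m$. So the case $p>2$ reduces to a weighted discrete Hardy inequality. Your H\"older argument with the geometric factor $2^{\pm\varepsilon k}$, $0<\varepsilon<1/p$, handles it correctly; the exponents check out, since $2(p/2)'(p/2-1)=p$.

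Your route is self-contained and elementary, with explicit constants and no external duality theorem. It also makes transparent that the lemma is really about the nested, one-dimensional geometry of the points $t_k$. The paper's route never computes the level structure for $p>2$, and it would transfer to other sequences once the small-exponent case is known. The cost is that it relies on the duality of discrete tent spaces as a black box.
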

\begin{proof} Set $I(z)=\{\xi\in \mathbb T: z \in \Gamma(\xi)\}$ as the tent related to the  approach region $\Gamma$. The assertion is clear for $p=2$, since Fubini's theorem yields
$$ \int_\T \sum_{t_k\in \Gamma(\xi)}|b_k|^2 |d\xi|=\sum_{k=1}^\infty |b_k|^2 |I(t_k)| \asymp \sum_{k=1}^\infty |b_k|^2 2^{-k}.$$
For $p>0$, we have the lower bound as
\begin{equation*}
\begin{split}
\int_{\mathbb T}\left(\sum_{t_k\in \Gamma(\xi)} |b_k|^2\right)^{\frac{p}{2}}|d\xi|
&\ge \int_{I(t_1)}\left(\sum_{t_k\in \Gamma(\xi)} |b_k|^2\right)^{\frac{p}{2}}|d\xi|\\
&= \sum_{n=1}^\infty\int_{I(t_n)\setminus I(t_{n+1})}\left(\sum_{t_k\in \Gamma(\xi)} |b_k|^2\right)^{\frac{p}{2}}|d\xi|\\
&\ge \sum_{n=1}^\infty\int_{I(t_n)\setminus I(t_{n+1})} |b_n|^p|d\xi|\\
&\ge \sum_{n=1}^\infty |b_n|^p\left(2^{-n}-2^{-(n+1)}\right)\\
&\asymp \sum_{n=1}^\infty |b_n|^p2^{-n}
\end{split}
\end{equation*}
For $p< 2$, by subadditivity we have
$$\int_\T \left(\sum_{t_k\in \Gamma(\xi)}|b_k|^2\right)^{\frac{p}{2}} |d\xi|\le \int_\T \sum_{t_k\in \Gamma(\xi)}|b_k|^p |d\xi|$$
and therefore the assertion holds by Fubini's theorem.

The case $p>2$ can now be shown using duality since $(T_2^p(\eta))^*= T_2^{p'}(\eta)$ for any measure $\eta$ defined on $\D$ under the pairing $$\langle f,g\rangle =\int_\D f(z)\overline{g(z)} (1-|z|)d\eta(z),$$
see for instance \cite[Lemma 6]{Arsenovic}. In particular, when applied to $d\eta= \sum_k \delta_{t_k}$, we obtain for $p>2$
\begin{equation*}
\begin{split}
\left(\int_{\mathbb T}\left(\sum_{t_k\in \Gamma(\xi)} |b_k|^2\right)^{\frac{p}{2}}|d\xi|\right)^{\frac{1}{p}}
&\asymp \sup\left\{\left|\sum_{k=1}^\infty b_k\overline{a_k}2^{-k}\right|: \|a_k\|_{T^{p'}_2}\le 1\right\}\\
&\asymp\sup\left\{\left|\sum_{k=1}^\infty b_k\overline{a_k}2^{-k}\right|:\sum_{k=1}^\infty |a_k|^{p'}2^{-k}\le 1\right\}\\
&\asymp \left(\sum_{k=1}^\infty |b_k|^p2^{-k}\right)^{\frac{1}{p}},
\end{split}
\end{equation*}
which concludes the proof.
\end{proof}
We can now give the proof of Theorem \ref{MainTheorem: Hardy}(ii).
\begin{proof} Note that \textup{(b)}, \textup{(c)}, and \textup{(d)} in Theorem \ref{MainTheorem: Hardy}(ii) are equivalent due to Proposition~\ref{FHp}.

Let us show that \textup{(b)} implies \textup{(a)}. Let $g^*$ denote the radial maximal function defined by $g^*(z)=\sup_{0\leq t<1}g(tz)$. By a trivial estimate, we have $|C_{\mu,\om}(f)(z)|\leq f^*(z) F^+_{\mu,\om}(z)$. Therefore, H\"older's inequality and Proposition~\ref{FHp} implies
\begin{equation*}
\begin{split}
M_q(r,C_{\mu,\om}(f))
&\leq M_q(r,f^*F^+_{\mu,\om})\\
&\leq M_p(r,f^*)M_{\frac{qp}{p-q}}(r,F^{+}_{\mu,\om})\\
&\lesssim M_p(r,f)\|\mu_{\om}\|_{L^{\frac{qp}{p-q}}}, \quad 0<r<1.
\end{split}
\end{equation*}
In the last asymptotic inequality we have applied the radial maximal theorem \cite[Theorem 7.1.4]{Pavlovic2004}. Thus we obtain the wanted estimate $\|C_{\mu,\om}\|_{H^p \to H^q} \lesssim\|\mu_{\om}\|_{L^{\frac{qp}{p-q}}}$.

To complete the proof let us show that  \textup{(a)} implies \textup {(c)}. Let $N \in \N$ be arbitrary and consider sequences $\{b_k\}$ such that $b_{k}=0$ for every $k>N$. Let $t_k=1-2^{-k}$ for every $k \in \N$. Let $f_{x,N}$ be given by $f_{x,N}(z)=\sum_{k=0}^{N}b_kr_k(x)\left(\frac{1-|t_k|}{1-\overline{t_k}z}\right)^{\lambda}$ for every $x \in [0,1]$ and $z \in \D$, where $\lambda$ is that of Lemma~\ref{Lemma: q<pHpTestfunction}. Here $r_k$ is the $k$th Rademacher function. Then, due to Lemma \ref{Lemma: q<pHpTestfunction} and the use of the Fejer-Riesz inequality (see \cite[(5.15)]{Pavlovic2004}), we obtain
\begin{equation*}
\begin{split}
\|(b_k)\|_{T^{p}_2(\{t_k\})}^q \|C_{\mu,\om}\|_{H^p\to H^q}^q
&\gtrsim \|f_{x,N}\|_{H^{p}}^q \|C_{\mu,\om}\|_{H^p\to H^q}^q\\
&\gtrsim \int_{0}^{2\pi}\left|\int_{0}^{1}\sum_{k=0}^{N}b_kr_k(x)\left(\frac{1-|t_k|}{1-\overline{t_k}te^{i\t}}\right)^{\lambda}B^{\om}_t(e^{i\t})\,d\mu(t)\right|^q\,d\t\\
&=\int_{0}^{2\pi}\left|\sum_{k=0}^{N}b_kr_k(x)(1-|t_k|)^{\lambda}\int_{0}^{1}\frac{B^{\om}_t(e^{i\t})}{\left(1-\overline{t_k}te^{i\t}\right)^{\lambda}}\,d\mu(t)\right|^q\,d\t\\
&\gtrsim \int_{0}^{1}\left|\sum_{k=0}^{N}b_kr_k(x)(1-|t_k|)^{\lambda}\int_{0}^{1}\frac{B^{\om}_t(s)}{\left(1-\overline{t_k}ts\right)^{\lambda}}\,d\mu(t)\right|^q\,ds
\end{split}
\end{equation*}
Integrating both sides from $0$ to $1$ with respect to $x$, applying Fubini's theorem and Khinchine's inequality, one obtains
\begin{equation*}
\begin{split}
\|b_k\|_{T^{p}_2(\{t_k\})}^q \|C_{\mu,\om}\|_{H^p\to H^q}^q &\gtrsim \int_{0}^{1}\left(\sum_{k=0}^{N}|b_k|^2(1-|t_k|)^{2\lambda}\left(\int_{0}^{1}\frac{B^{\om}_t(s)}{\left(1-\overline{t_k}ts\right)^{\lambda}}\,d\mu(t)\right)^2\right)^{\frac{q}{2}}\,ds
\end{split}
\end{equation*}
Next, by applying Lemma~\ref{Lemma:LowerBoundEq}, we have
\begin{equation*}
\begin{split}
&\quad \int_{0}^{1}\left(\sum_{k=0}^{N}|b_k|^2(1-|t_k|)^{2\lambda}\left(\int_{0}^{1}\frac{B^{\om}_t(s)}{\left(1-\overline{t_k}ts\right)^{\lambda}}\,d\mu(t)\right)^2\right)^{\frac{q}{2}}\,ds\\
&=\sum_{m=0}^{\infty} \int_{t_m}^{t_{m+1}}\left(\sum_{k=0}^{N}|b_k|^2(1-|t_k|)^{2\lambda}\left(\int_{0}^{1}\frac{B^{\om}_t(s)}{\left(1-\overline{t_k}ts\right)^{\lambda}}\,d\mu(t)\right)^2\right)^{\frac{q}{2}}\,ds\\
&\geq \sum_{m=0}^{\infty} \int_{t_m}^{t_{m+1}}|b_m|^q(1-|t_m|)^{q\lambda}\left(\int_{0}^{1}\frac{B^{\om}_t(s)}{\left(1-\overline{t_m}ts\right)^{\lambda}}\,d\mu(t)\right)^q\,ds\\
&\gtrsim \sum_{m=0}^{N}\int_{t_m}^{t_{m+1}}|b_m|^q(2^m)^{-q\lambda}\frac{\mu_{2^m-1}^q (2^m)^{q(\lambda+1)}}{\om_{2^m}^q}\,ds\\
&\asymp \sum_{m=0}^{N}|b_m|^q\frac{\mu_{2^m-1}^q (2^m)^{q-1}}{\om_{2^m}^q}, \quad N \in \N.
\end{split}
\end{equation*}
Now by Lemma~\ref{Lemma:T^pequivalence}, we have that
\begin{equation*}
    \|(b_k)\|_{T^{p}_2(\{t_k\})}^p \asymp \sum_{k=0}^{N}\frac{|b_k|^p}{2^k}, \quad N \in \N.
\end{equation*}
Let $b_k=\left(\frac{\mu_{2^k-1} 2^k}{\om_{2^k}}\right)^{\frac{q}{p-q}}$ for $k\leq N$. We note that $|b_k|^p=|b_k|^q\frac{\mu_{2^k-1}^q(2^k)^{q}}{\om_{2^k}^q}=\left(\frac{\mu_{2^k-1} 2^k}{\om_{2^k}}\right)^{\frac{qp}{p-q}}$. Combining this with the previous estimate, we obtain by dividing both sides with $\left(\sum_{k=0}^{N}\frac{|b_k|^p}{2^k}\right)^{\frac{q}{p}}$ that
\begin{equation*}
    \|C_{\mu,\om}\|_{H^p \to H^q} \gtrsim \left(\sum_{k=0}^{N}\left(\frac{\mu_{2^k-1} 2^k}{\om_{2^k}}\right)^{\frac{qp}{p-q}}\frac{1}{2^k}\right)^{\frac{p-q}{qp}}, \quad N \in \N.
\end{equation*}
As the constants of comparison are independent of $N$, we obtain the dyadic version of the conclusion by taking the limit as $N \to \infty$. By elementary calculation and Lemma~\ref{DoublingLemma}, we can easily see that
\begin{equation}\label{Eq: SeriesComparisonBlockHq<p}
    \sum_{k=0}^{\infty}\left(\frac{\mu_{2^k-1} 2^k}{\om_{2^k}}\right)^{\frac{qp}{p-q}}\frac{1}{2^k} \asymp \sum_{k=0}^{\infty}\left(\frac{\mu_k (k+1)}{\om_{2k+1}}\right)^{\frac{qp}{p-q}}\frac{1}{(k+1)^2},
\end{equation}
with the constants of comparison depending only on $p,q$ and $\om$, which is equal to \textup{(c)}.
\end{proof}
 To end this section, we give the statement of Theorem~\ref{MainTheorem: Hardy} in the setting of $C_{\mu,\beta}$.

\begin{theorem}\label{MainTheorem: HardyBeta}
Let $0<p,q<\infty$, $0<\beta<\infty$, and let $\mu \in M^+([0,1))$. Then the following holds:
\begin{enumerate}
    \item[\textup{(i)}] If $p\leq q$, the following conditions are equivalent:
    \begin{enumerate}
        \item[\textup{(a)}] $C_{\mu,\beta}:H^p \to H^q$ is bounded;
        \item[\textup{(b)}]
        \begin{equation*}
            \|\mu^{{(p,q)}}_{\beta}\|_{L^{\infty}}=\sup_{0\leq r<1}\frac{\whm(r)}{(1-r)^{\beta+\frac{1}{p}-\frac{1}{q}}}<\infty;
        \end{equation*}
        \item[\textup{(c)}]
            \begin{equation*}
                 \|\mu^{{(p,q)}}_{\beta}\|_{\ell^{\infty}}=\sup_{n\in \N_0}\mu_n(n+1)^{\beta+\frac{1}{p}-\frac{1}{q}}<\infty;
            \end{equation*}
        \item[\textup{(d)}] For every fixed $\gamma>0$, we have $D^{\gamma+\frac{1}{p}-\frac{1}{q}}F_{\mu,\beta} \in X_{\gamma}$.
    \end{enumerate}
    \begin{equation*}
    \end{equation*}
    Moreover, we have $\|C_{\mu,\beta}\|_{H^p \to H^q} \asymp \|\mu^{{(p,q)}}_{\beta}\|_{L^{\infty}} \asymp \|\mu^{{(p,q)}}_{\beta}\|_{\ell^{\infty}}\asymp \|D^{\gamma+\frac{1}{p}-\frac{1}{q}}F_{\mu,\beta}\|_{X_{\gamma}}$.
    \item[\textup{(ii)}] If $q<p$, the following conditions are equivalent:
    \begin{enumerate}
        \item[\textup{(a)}] $C_{\mu,\beta}: H^p \to H^q$ is bounded;
        \item[\textup{(b)}] \begin{equation*}
        \|\mu_{\beta}\|_{L^{\frac{qp}{p-q}}}^\frac{qp}{p-q}=\int_{0}^{1}\left(\frac{\whm(r)}{(1-r)^{\beta}}\right)^{\frac{qp}{p-q}}\,dr +\whm(0)^{\frac{qp}{p-q}}<\infty;
        \end{equation*}
        \item[\textup{(c)}]
        \begin{equation*}
         \|\mu_{\beta}\|_{\ell^{\frac{qp}{p-q}}}^\frac{qp}{p-q}=\sum_{n=0}^{\infty}\left(\mu_n (n+1)^{\beta}\right)^{\frac{qp}{p-q}}\frac{1}{(n+1)^2}<\infty;
        \end{equation*}
        \item[\textup{(d)}] $F_{\mu,\beta} \in H^{\frac{qp}{p-q}}.$
    \end{enumerate}
Moreover, we have $\|C_{\mu,\beta}\|_{H^p \to H^q} \asymp \|\mu_{\beta}\|_{\ell^{\frac{qp}{p-q}}} \asymp \|\mu_{\beta}\|_{L^{\frac{qp}{p-q}}} \asymp \|F_{\mu,\beta}\|_{H^{\frac{qp}{p-q}}}$.
\end{enumerate}
\end{theorem}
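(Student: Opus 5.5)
The plan is to rerun the proof of Theorem~\ref{MainTheorem: Hardy} with the Bergman kernel $B^{\om}_t(z)$ replaced by $(1-tz)^{-\beta}$. Formally this means $\whw(r)$ is replaced by $(1-r)^{\beta-1}$ and $\om_n$ by $(n+1)^{1-\beta}$. The only properties of $\om\in\DD$ used in that proof are the following:
\begin{enumerate}
\item[\textup{(1)}] the moment/tail comparisons of Lemma~\ref{DoublingLemma}, namely $\om_x\asymp\whw(1-\frac1x)$ and the almost-monotonicity of $\whw(r)(1-r)^{-\gamma}$;
\item[\textup{(2)}] the pointwise kernel estimate \eqref{Eq: PointwiseKernelEstimate};
\item[\textup{(3)}] the $L^p$-means estimate of Lemma~\ref{Lemma: KernelLpEstimate};
\item[\textup{(4)}] the coefficient comparison $B^{\om}_t(s)(1-rst)^{-\lambda}\asymp\sum_n\bigl(\sum_{k\le n}(k+1)^{\lambda-1}r^k/\om_{n-k}\bigr)(ts)^n$ in Lemma~\ref{Lemma:LowerBoundEq}.
\end{enumerate}
For the power weight each of these is elementary. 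We have $|1-tz|^{-\beta}=(1-r)^{1-\beta}/|1-tz|$ with $r$ chosen so that $1-r\asymp|1-tz|$, which gives (2). The coefficients of $(1-z)^{-\beta}$ are $\asymp(n+1)^{\beta-1}$, which gives (1) and (4). The means satisfy $M_p^p(r,(1-a z)^{-\beta})\asymp\int_0^{ar}(1-t)^{-\beta p}dt+1$, which gives (3). Every constant in Lemma~\ref{Twoweightsumestimate}, Lemma~\ref{Momentvstail}, Proposition~\ref{Prop: Fmuom sup norm space}, Lemma~\ref{mainlemma0} and Propositions~\ref{FHpr}, \ref{FHp} depends only on the doubling exponents. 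So with $F_{\mu,\beta}(z)=\int_0^1(1-tz)^{-\beta}d\mu(t)$ these results hold verbatim, with $\whw(r)(1-r)$ replaced by $(1-r)^\beta$ and $\om_n/(n+1)$ by $(n+1)^{-\beta}$.

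Next I check the individual parts. For (i), the equivalences (b)$\Leftrightarrow$(c)$\Leftrightarrow$(d) come from the modified Proposition~\ref{Prop: Fmuom sup norm space} with $a=\beta+\frac1p-\frac1q$; there $D^{\gamma+a-1}$ is replaced by the corresponding derivative, and the index bookkeeping should be checked here. The necessity (a)$\Rightarrow$(b) comes from the test-function proposition, since $C_{\mu,\beta}(f^\gamma_a)(a)\gtrsim(1-a)^{-\beta}\whm(a)$. For sufficiency I split into two cases.
\begin{itemize}
\item When $q>1$, I repeat the Andersen argument. The inner integral is bounded by $K_{\mu,\om,\frac1p-\frac1q}$ with $\whw(t)$ replaced by $(1-t)^{\beta-1}$. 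Lemma~\ref{AndersenLemma2}'s dyadic proof is unchanged, and the hypothesis then reduces the kernel to Andersen's $K$.
\item When $q\le1$, I use the dyadic subadditivity argument together with the radial maximal theorem, Hölder's inequality with the $L^{qx'}$ means of $(1-sz)^{-\beta}$, and Hardy--Littlewood. Here $\om(S(s))$ is replaced by $(1-s)^{\beta}$.
\end{itemize}

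For (ii), (b)$\Leftrightarrow$(c)$\Leftrightarrow$(d) is the modified Proposition~\ref{FHp}. The implication (b)$\Rightarrow$(a) uses $|C_{\mu,\beta}f|\le f^*F^+_{\mu,\beta}$, Hölder's inequality and the maximal theorem. For (a)$\Rightarrow$(c) I take the Rademacher test functions built from Lemma~\ref{Lemma: q<pHpTestfunction}, then apply Khinchine's inequality, the analogue of Lemma~\ref{Lemma:LowerBoundEq} and Lemma~\ref{Lemma:T^pequivalence}, exactly as before.

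The main obstacle is the range $0<\beta\le1$. There $(1-r)^{\beta-1}$ is no longer a tail $\whw$ of an integrable weight: for $\beta<1$ it blows up, and for $\beta=1$ the kernel is Cauchy. So I must check that no step used integrability of $\om$ or $\whw$ decreasing. For example, in \eqref{int} the term "$+1$" and the bound $\int_0^t(1-rs)^{-\beta-1}ds\lesssim(1-rt)^{-\beta}$ need $\beta>0$, and this holds. Similarly, in Lemma~\ref{Lemma: KernelLpEstimate} with $\beta p\le1$ the mean is bounded, which still matches $(1-r)^{-\beta+1/p}$ only in the regime used. In the $q\le1$ case I will choose $x'$ with $q\beta x'>1$ so that the mean estimate takes the power form. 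If a log appears, I would take $x'$ larger.
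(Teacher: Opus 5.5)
Your route is the paper's own. The paper only says to substitute $\whw(r)=(1-r)^{\beta-1}$ and $\om_n=(n+1)^{1-\beta}$ and leaves the details to the reader. For $\beta>1$ this is literally the standard weight $\om_{\beta-2}\in\DD$, and for $\beta=1$ one has $\whw\equiv 1$. However, the step you assert for $p\le q$, $q>1$ fails when $0<\beta<1$.

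\textbf{Where it fails.} In the Andersen argument the paper reduces the inner integral to $K_{\mu,\om,\frac1p-\frac1q}$ through
\[
|B^{\om}_t(re^{i\theta})|\lesssim\frac{1}{\om_{2/|1-tre^{i\theta}|}\,|1-tre^{i\theta}|}\lesssim\frac{1}{\whw(t)\,|1-tre^{i\theta}|}.
\]
The second inequality uses that $x\mapsto\om_x$ is nonincreasing, together with $|1-tre^{i\theta}|\ge 1-t$. This is exactly the ``$\whw$ decreasing'' property you said you would check. For $\om_x=x^{1-\beta}$ with $\beta<1$ the moments increase. The needed bound $|1-tre^{i\theta}|^{-\beta}\lesssim (1-t)^{1-\beta}|1-tre^{i\theta}|^{-1}$ is then false as soon as $|\theta|\gg 1-t$.

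\textbf{What you actually get.} The dyadic argument of Lemma~\ref{AndersenLemma2} only gives domination by $\|\mu^{(p,q)}_{\beta}\|_{L^\infty}(K_\beta+1)$, where
\[
K_\beta(\phi,\theta)=\int_0^1\frac{x^\beta\,dx}{(x^2+\theta^2)^{\beta/2}(x^2+\phi^2)}.
\]
For $|\phi|<|\theta|$ this has size $|\phi|^{\beta-1}|\theta|^{-\beta}$. Andersen's $K$ there has size only $|\theta|^{-1}\bigl(1+\log(|\theta|/|\phi|)\bigr)$. So $K_\beta\lesssim K$ fails and Lemma~\ref{AndersenLemma} cannot be invoked. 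Your claim that the hypothesis ``reduces the kernel to Andersen's $K$'' is therefore wrong for $\beta<1$. For $\beta\ge 1$ it is fine, since then $K_\beta\le K$.

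This is not an artifact of lossy estimates. For $d\mu(t)=(1-t)^{\beta+\frac1p-\frac1q-1}\,dt$ the inner integral is comparable to $K_\beta$, up to bounded terms, as $r\to1$.

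\textbf{The missing ingredient} is an Andersen-type lemma for $K_\beta$. For $1<q<\infty$ you need that
\[
g\mapsto\int K_\beta(\phi,\theta)\,|g(e^{i(\theta+\phi)})|\,d\phi
\]
is bounded on $L^q(\T)$. This is true and short to prove:
\begin{itemize}
\item One has $K_\beta(\phi,\theta)\lesssim|\phi|^{-1}\min\{1,(|\phi|/|\theta|)^\beta\}$.
\item In the variables $(\theta,u)=(\theta,\theta+\phi)$ this kernel is homogeneous of degree $-1$.
\item Minkowski's inequality then bounds the operator norm by
\[
\int_{\R}|v-1|^{-1}\min\{1,|v-1|^\beta\}\,|v|^{-1/q}\,dv .
\]
\item This integral is finite: near $v=1$ because $\beta>0$, near $v=0$ because $q>1$, and at infinity it decays like $|v|^{-1-1/q}$.
\end{itemize}
You must supply this yourself; it is not part of the paper's toolkit.

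\textbf{What survives.} The rest of your plan works for all $\beta>0$:
\begin{itemize}
\item In the $q\le 1$ case, your choice $q\beta x'>1$ is exactly what makes $(1-u)^{\beta-\frac{1}{qx'}}$ decreasing. That is the other place where the paper uses monotonicity of $\whw$.
\item Propositions~\ref{FHpr} and \ref{FHp} go through, since they only use that $\whm$ decreases and comparability on dyadic intervals.
\item Lemma~\ref{Lemma:LowerBoundEq} and part (ii) also carry over.
\end{itemize}
One minor overstatement: Lemma~\ref{Twoweightsumestimate} does not hold verbatim for $\beta<1$ when $c\le a(1-\beta)$. It is not needed for this theorem.
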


\section{Proof of Theorem \ref{MainTheorem: Bergman}.}\label{Sec:Bergman}
There are cases which actually follow from the previous section.
\subsection{Case $p=q$.}
\begin{theorem} Let $0<p<\infty$, $\om\in \DD$, and let $\nu$ be a radial weight. The following conditions are equivalent.
\begin{enumerate}
            \item[\textup{(i)}] $C_{\mu,\om}: A^p_{\nu} \to A^p_{\nu}$ is bounded;
            \item[\textup{(ii)}] \begin{equation*}
                 \|\mu_{\om}\|_{L^\infty}=\sup_{0\leq r<1}\frac{\widehat{\mu}(r)}{\whw(r)(1-r)}<\infty;
            \end{equation*}
            \item[\textup{(iii)}]
            \begin{equation*}
                 \|\mu_\om\|_{\ell^\infty}=\sup_{n \in \N_0}\frac{\mu_n(n+1)}{\om_n}<\infty;
            \end{equation*}
\item[\textup{(iv)}] $C_{\mu,\om}: H^p \to H^p$ is bounded.
        \end{enumerate}
    Moreover, we have $\|C_{\mu,\om}\|_{A^p_{\nu} \to A^p_{\nu}}\asymp \|\mu_{\om}\|_{L^\infty} \asymp \|\mu_{\om}\|_{\ell^\infty}\asymp \|C_{\mu,\om}\|_{H^p \to H^p}$.
\end{theorem}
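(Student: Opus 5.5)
The natural plan is to exploit the fact, remarked after the proof of Theorem~\ref{MainTheorem: Hardy}(i), that the argument for $H^p\to H^p$ actually gives an estimate on integral means at every radius. The equivalence (ii)$\Leftrightarrow$(iii), with comparable norms, is Lemma~\ref{Momentvstail} with $a=1$, $b=c=0$, $d=1$. The equivalence (iv)$\Leftrightarrow$(ii) is exactly Theorem~\ref{MainTheorem: Hardy}(i) with $p=q$, because then $1+\frac1p-\frac1q=1$. So the real content is (ii)$\Rightarrow$(i) and (i)$\Rightarrow$(ii), and the point is that $\nu$ is only assumed radial.

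\textbf{(ii)$\Rightarrow$(i).} We go back to the proofs of Theorem~\ref{MainTheorem: Hardy}(i): \eqref{Eq: Lp-ineqCesaroq>1} for $p=q>1$ and \eqref{Eq: Lp-ineqCesaroq<1} for $p=q\le1$. Both give
\[
M_p^p(r,C_{\mu,\om}(f))\lesssim \|\mu_\om\|_{L^\infty}^p\,\|f_r\|_{H^p}^p=\|\mu_\om\|_{L^\infty}^pM_p^p(r,f),\qquad 0\le r<1,
\]
with a constant independent of $r$ and $f$. Integrating against $2r\nu(r)\,dr$ and using that $\nu$ is radial, via polar coordinates, gives $\|C_{\mu,\om}(f)\|_{A^p_\nu}\lesssim\|\mu_\om\|_{L^\infty}\|f\|_{A^p_\nu}$. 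This step uses no property of $\nu$ beyond radiality.

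\textbf{(i)$\Rightarrow$(ii).} This is the harder direction, because Corollary~\ref{Corollary: TestConditionpleqq}(ii) needs $\nu\in\DD$ to control point evaluations. I would avoid pointwise estimates in $A^p_\nu$ and instead use a dilation or means argument. Fix $a\in[0,1)$ and take the test function $f_a^\gamma$ with $\gamma$ large. For $z=se^{i\theta}$, the coefficients of $C_{\mu,\om}(f_a^\gamma)$ are nonnegative, so its modulus is maximised on the positive axis. A lower bound on $M_p(s,C_{\mu,\om}(f_a^\gamma))$ then follows from an arc of length $\asymp1-a$ around $\theta=0$, together with the kernel lower bound $B^\om_t(z)\gtrsim 1/(\whw(a)(1-a))$ for $|1-tz|\lesssim1-a$ and $t\ge a$. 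This gives
\[
M_p^p(s,C_{\mu,\om}(f_a^\gamma))\gtrsim\Big(\frac{\whm(a)}{\whw(a)(1-a)}\Big)^p(1-a)\qquad\text{for } s\ge a.
\]
Meanwhile $M_p^p(s,f_a^\gamma)\lesssim(1-a)$ uniformly in $s$, once $\gamma p>1$.

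The obstacle is comparing the two $A^p_\nu$ integrals: the lower bound only lives on $s\ge a$, while $\|f_a^\gamma\|^p_{A^p_\nu}$ also sees $s<a$, where $f_a^\gamma$ has size $(1-a)^\gamma/(1-s)^\gamma$. I would handle this in two ways. First, the means of $f_a^\gamma$ for $s<a$ are $\lesssim(1-a)^{\gamma p}(1-s)^{1-\gamma p}$, which is small when $s$ is far from $a$. Second, I would run the test at a shifted point: use $f_b^\gamma$ with $b$ chosen so that $\nu$-mass on $[b,1)$ dominates. If this fails for pathological $\nu$, the fallback is a homogeneity reduction. Since $C_{\mu,\om}$ commutes with rotations, (i) implies the means inequality after averaging. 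Concretely, apply (i) to $f(\rho z)$-type functions and to the normalised measure $\nu\chi_{[s_0,1)}$, and let the $\nu$-mass concentrate near $|z|=1$. The resulting inequality bounds the $H^p$ norm, so (i) implies (iv), and (iv) implies (ii) by Theorem~\ref{MainTheorem: Hardy}.
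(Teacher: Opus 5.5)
Your arguments for (ii)$\Rightarrow$(i), (ii)$\Leftrightarrow$(iii) and (ii)$\Leftrightarrow$(iv) are exactly the paper's, and they are valid for every radial $\nu$. The first of these integrates the radius-wise means estimates from the proof of Theorem~\ref{MainTheorem: Hardy}(i) in polar coordinates.

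\textbf{The gap is (i)$\Rightarrow$(ii).} You rightly notice that Corollary~\ref{Corollary: TestConditionpleqq}(ii), which the paper invokes here, requires $\nu\in\DD$. But that hypothesis cannot be argued away, because for a general radial weight the implication is false. Here is a counterexample:
\begin{itemize}
\item Take $\om\equiv1$, so $B^\om_t(z)=(1-tz)^{-2}$ and $\whw(r)=1-r$.
\item Take $d\mu(t)=dt$ and $\nu=\chi_{\{|z|<1/2\}}$, which is a radial weight in the paper's sense.
\item For $|z|<1/2$ one has $|C_{\mu,\om}(f)(z)|\le4\sup_{0\le t\le1}|f(tz)|$. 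The radial maximal theorem on each circle $|z|=r<1/2$ then gives $\|C_{\mu,\om}(f)\|_{A^p_\nu}\lesssim\|f\|_{A^p_\nu}$, so (i) holds.
\item However $\whm(r)/(\whw(r)(1-r))=1/(1-r)$ is unbounded, so (ii) fails.
\end{itemize}

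This is precisely where your sketch breaks. Your lower bound for $M_p(s,C_{\mu,\om}(f_a^\gamma))$ lives on $s\ge a$, which carries no $\nu$-mass once $a\ge1/2$, and there is no point $b$ near $1$ to shift to. The fallback moves are not available either:
\begin{itemize}
\item Hypothesis (i) concerns the fixed weight $\nu$, so you cannot replace it by $\nu\chi_{[s_0,1)}$.
\item Rotation invariance does not turn an inequality between $\nu$-averages of $M_p^p(s,\cdot)$ into one at each radius, let alone into an $H^p$ bound.
\item $C_{\mu,\om}$ does not commute with dilations $f\mapsto f_\rho$.
\end{itemize}

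\textbf{What is missing is a doubling assumption on $\nu$,} which the paper's proof uses tacitly through the Corollary. With $\nu\in\DD$, your own test-function argument closes; it is then Corollary~\ref{Corollary: TestConditionpleqq}(ii) with $q=p$ and $\eta=\nu$ in different clothing.
\begin{itemize}
\item The part of $\|f_a^\gamma\|_{A^p_\nu}^p$ over $s\ge a$ is $\lesssim(1-a)\whv(a)$.
\item For $s<a$ one has $M_p^p(s,f_a^\gamma)\asymp(1-a)\bigl(\frac{1-a}{1-s}\bigr)^{\gamma p-1}$. Lemma~\ref{DoublingLemma}(ii) then gives $\int_0^a(1-a)\bigl(\frac{1-a}{1-s}\bigr)^{\gamma p-1}\nu(s)\,ds\lesssim(1-a)\whv(a)$ as soon as $\gamma p-1>\beta(\nu)$.
\item Comparing with $\|C_{\mu,\om}(f_a^\gamma)\|_{A^p_\nu}^p\gtrsim\bigl(\whm(a)/(\whw(a)(1-a))\bigr)^p(1-a)\whv(a)$ yields (ii) with the right norm estimate.
\end{itemize}

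One further correction to that lower bound: do the arc estimate at $s=a$ and then use that $M_p(s,\cdot)$ increases in $s$. At $s=a$, for $|\theta|\le c(1-a)$ and every $t$, we have $|\theta|\le c(1-ta)$, so no cancellation occurs in the $t$-integral. At radii much closer to $1$, the condition $|1-tz|\lesssim1-a$ does not control the argument of $f_a^\gamma(tz)B^\om_t(z)$, so cancellation is not excluded.

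In short, (i)$\Rightarrow$(ii) should be stated and proved for $\nu\in\DD$; the remaining implications hold for arbitrary radial $\nu$.
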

\begin{proof} By Corollary \ref{Corollary: TestConditionpleqq}, we get that \textup{(i)} implies \textup{(ii)}. Using now Theorem \ref{MainTheorem: Hardy} one sees that \textup{(ii)}, \textup{(iii)} and \textup{(iv)} are equivalent. {Finally assume \textup{(ii)} and use
the polar coordinates together with the estimate
$M_p(r,C_{\mu,\om}(f))\lesssim \|\mu_\om\|_{L^\infty}M_p(r,f)$ shown in \eqref{Eq: Lp-ineqCesaroq>1} and \eqref{Eq: Lp-ineqCesaroq<1} for $0\leq r<1$ to obtain (i).}
\end{proof}

\subsection{ Case $p< q$}
To prove our main results of this section, we need some essential auxiliary results. The most important one is a result which allows us to decompose weighted mixed norm space to $H^p$-blocks, with size of the blocks depending on the inducing weight. To state the theorem, we need to give some definitions on an universal Ces\`aro-basis for $H^p$. For $f \in \mathcal{H}(\D)$ and a polynomial $g$, given by $f(z)=\sum_{k=0}^{\infty}a_k z^k$ and $g(z)=\sum_{k=m}^{M}b_k z^k$ for some $m,M \in \N_0$, their Hadamard product $f * g$ is defined by
\begin{equation*}
    (f*g)(z)=\sum_{k=m}^{M}a_kb_k z^k, \quad z \in \D.
\end{equation*}
For a function $\Psi: \R \to \C$ such that $\Psi \in C^{\infty}(\R)$ is compactly supported  and $N \in \N$, we define the polynomials $W_N^{\Psi}$ by
\begin{equation*}
    W_N^{\Psi}(z)=\sum_{k=-\infty}^{\infty}\Psi\left(\frac{k}{N}\right)z^k, \quad z \in \D, \quad N \in \N.
\end{equation*}
We will consider polynomials of the form $W_N^{\Psi} * f$. For plenty of relevant information on these polynomials, see \cite[p. 140-144]{Pavlovic2014}. We will use the result given in \cite[Proposition 4]{PelaezDeLaRosa}, which states the following.
\begin{letterlemma}
    Let $K \in \N \setminus \{1\}$ and $\Psi: \R \to \R$ be a $C^{\infty}$-function such that $\Psi(x)=1$ for $x \leq 1$, $\Psi(x)=0$ for $x \ge K$ and $\Psi$ is decreasing and positive on $(1,K)$. Set $\Phi(x)=\Psi\left(\frac{x}{K}\right)-\Psi(x)$ for all $x \in \R$. Let $V_{0,K}(z)=\sum_{k=0}^{K-1}\Psi(k)z^k$ and
    $V_{n,K}(z)=\sum_{k=K^{n-1}}^{K^{n+1}-1}\Phi\left(\frac{k}{K^{n-1}}\right)z^k$ for every $n\in \mathbb N$ and  $z \in \D$. Then for every $f \in \H(\D)$ we have
    \begin{equation}\label{Eq:fvn}
        f(z)=\sum_{n=0}^{\infty}(V_{n,K}*f)(z), \quad z \in \D.
    \end{equation}
    Moreover, for every $0<p<\infty$ there exists $C=C(p,\Psi,K)>0$ such that
    \begin{equation}\label{Eq:vn}
        \|V_{n,K}*f\|_{H^p} \leq C\|f\|_{H^p}, \quad n \in \N.
    \end{equation}
\end{letterlemma}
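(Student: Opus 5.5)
We need to prove: the decomposition $f=\sum_n V_{n,K}*f$ and the uniform bound $\|V_{n,K}*f\|_{H^p}\lesssim \|f\|_{H^p}$.

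For the identity \eqref{Eq:fvn}, compare Taylor coefficients. The coefficient of $z^k$ in $\sum_{n=0}^N V_{n,K}*f$ is $a_k$ times
\begin{equation*}
\Psi(k)\chi_{\{k<K\}}+\sum_{n=1}^N\Phi\left(\frac{k}{K^{n-1}}\right).
\end{equation*}
Since $\Phi(x)=\Psi(x/K)-\Psi(x)$, the sum telescopes to $\Psi(k/K^N)-\Psi(k)$. Here $\Psi(k)$ vanishes for $k\ge K$ and coincides with the first term for $k<K$. The support restrictions in the definition of $V_{n,K}$ are harmless, because $\Phi(x)=0$ for $x\le 1$ and for $x\ge K^2$. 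Hence the coefficient equals $\Psi(k/K^N)$. This is $1$ for $k\le K^N$ and lies in $[0,1]$ otherwise. Moreover, $\sum_{n\le N}V_{n,K}*f$ is the polynomial $W^{\Psi}_{K^N}*f$ in the paper's notation. Letting $N\to\infty$, we get $W^{\Psi}_{K^N}*f(z)\to f(z)$ locally uniformly. Indeed, the tail $\sum_{k>K^N}|a_k||z|^k$ tends to zero, since the multipliers are bounded by $1$. This gives \eqref{Eq:fvn}.

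For \eqref{Eq:vn}, write $V_{n,K}*f=W^{\Phi}_{K^{n-1}}*f$, using $\Phi(0)=0$ and compact support of $\Phi$ in $[1,K^2]$. The key tool is the standard multiplier estimate for smooth compactly supported symbols, as in \cite[p.~140--144]{Pavlovic2014}. For $\Phi\in C^\infty_c(\R)$ and $0<p<\infty$, one has $\sup_N\|W^{\Phi}_N*f\|_{H^p}\le C(p,\Phi)\|f\|_{H^p}$.

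For $p\ge1$ this is immediate. $W^{\Phi}_N*f$ is the convolution of $f$ with the trigonometric polynomial $W^{\Phi}_N(e^{i\theta})$, and $\|W_N^\Phi\|_{L^1(\T)}$ is bounded uniformly in $N$. This follows from Poisson summation: $W^{\Phi}_N(e^{i\theta})=N\sum_j\widehat\Phi(N(\theta+2\pi j))$ with $\widehat\Phi$ Schwartz.

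For $0<p<1$ we use the fact that $W^{\Phi}_N*f$ has spectrum in $[N,K^2N]$. The standard route is the Plancherel--Polya/Nikolskii-type inequality for polynomials of degree $\lesssim N$, combined with the decay $|W^\Phi_N(e^{i\theta})|\lesssim N(1+N|\theta|)^{-M}$. This gives an $L^p$ bound for the convolution through a discretized subharmonic (Peetre maximal function) argument. I will simply cite \cite{Pavlovic2014} for this, as the paper does.

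The only genuine obstacle is this $p<1$ multiplier bound. Everything else is the telescoping computation, and the constant $C$ depends only on $p$, $\Phi$ (hence $\Psi$ and $K$).
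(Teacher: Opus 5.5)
Your proposal is correct. The paper gives no proof of this lemma; it imports it from \cite[Proposition 4]{PelaezDeLaRosa}. Your argument is the standard proof behind that citation: the telescoping $\Psi(k)\chi_{\{k<K\}}+\sum_{n=1}^N\Phi(k/K^{n-1})=\Psi(k/K^N)$ gives \eqref{Eq:fvn}, and \eqref{Eq:vn} follows from identifying $V_{n,K}=W^{\Phi}_{K^{n-1}}$, where $\Phi\in C^\infty_c$ is supported in $[1,K^2]$, and applying the uniform $H^p$ multiplier bound for smooth compactly supported symbols in \cite{Pavlovic2014}. The case $0<p<1$ rests on that cited multiplier theorem rather than being proved from scratch, which matches the level of detail the paper itself uses.
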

We also need the following simple result from \cite[Lemma 3.1]{PavlovicLp}. Let $0<p<\infty$ and $f(z)=\sum_{k=m}^{M}a_k z^k$. Then
\begin{equation}\label{Eq: Mp vs Hp Polynomial}
    r^M\|f\|_{H^p} \leq M_p(r,f) \leq r^m\|f\|_{H^p}, \quad 0\leq r<1.
\end{equation}
The next result we use is essentially given in \cite[Proposition 9]{PelaezDeLaRosa}. Even though their result is stated for Bergman spaces, the proof for mixed norm spaces requires only minor modifications. We include a proof for the sake of completeness.
\begin{lemma}\label{decompositionlemma}
    Let  $0<p,q<\infty$ and $\om \in \DDD$. Then there exists $K_0=K_0(\om,p)$ such that for any fixed $K>K_0$, we have
    \begin{equation*}
    \int_{0}^{1}M_p^q(r,f)\om(r)r\,dr \asymp \sum_{n=0}^{\infty}\|V_{n,K}*f\|_{H^p}^q \om_{K^n}, \quad f \in \H(\D).
    \end{equation*}
\end{lemma}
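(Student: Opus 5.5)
We will split the integral dyadically in the radius, using the block structure of $V_{n,K}*f$ and the two doubling properties of $\om$. Write $r_n=1-K^{-n}$. Two facts about $\om\in\DDD$ will be used throughout. First, Lemma~\ref{DoublingLemma}(iv) gives $\om_{K^n}\asymp\whw(r_n)$. Second, by Lemma~\ref{ReverseDoubling}(iii), once $K$ is large enough depending on $\om$, we have $\int_{r_n}^{r_{n+1}}\om\asymp\whw(r_n)$. By \eqref{Eq:fvn}, $f=\sum_j V_{j,K}*f$. The $j$-th block has frequencies in $[K^{j-1},K^{j+1})$. Consequently, by \eqref{Eq: Mp vs Hp Polynomial},
\[
M_p(r,V_{j,K}*f)\le r^{K^{j-1}}\|V_{j,K}*f\|_{H^p}
\]
for every $r$, and
\[
M_p(r,V_{j,K}*f)\ge r^{K^{j+1}}\|V_{j,K}*f\|_{H^p}.
\]

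\emph{Upper bound.} Set $s=\min\{1,p\}$ and let $r\in[r_n,r_{n+1})$. The quasi-triangle inequality for $M_p^s$ gives
\[
M_p^s(r,f)\le\sum_j r_{n+1}^{sK^{j-1}}\|V_{j,K}*f\|_{H^p}^s .
\]
For $j\ge n+2$ the factor $r_{n+1}^{K^{j-1}}\le e^{-K^{j-n-2}}$ decays super-geometrically. For $j\le n+1$ we simply bound it by $1$.

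To raise this to the power $q/s$ and integrate against $\om$, we introduce an exponent $\varepsilon>0$ and insert the weights $\om_{K^j}^{\pm\varepsilon}$ in each term. We then apply H\"older's inequality (or subadditivity if $q\le s$). This yields
\[
M_p^q\lesssim \sum_j c_{n,j}\|V_{j,K}*f\|_{H^p}^q\om_{K^j}^{\varepsilon'}\Big/\om_{K^n}^{\varepsilon'} ,
\]
where $\varepsilon'$ is a suitable power of $\varepsilon$ depending on $q/s$. Multiplying by $\int_{r_n}^{r_{n+1}}\om\asymp\om_{K^n}$ and summing over $n$, we interchange the order of summation.

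It then remains to check two geometric-type sums. The first is $\sum_{n\ge j-1}\om_{K^n}^{1-\varepsilon'}\lesssim\om_{K^j}^{1-\varepsilon'}$, which follows from the reverse doubling decay $\om_{K^{n+1}}\le C'^{-1}\om_{K^n}$ in Lemma~\ref{ReverseDoubling}(ii); this requires $K$ to be large. The second is the sum over $n<j-1$. There the exponential factor $e^{-K^{j-n-2}}$ beats the polynomial growth $\om_{K^n}/\om_{K^j}\lesssim K^{\beta(j-n)}$ coming from Lemma~\ref{DoublingLemma}(ii). This gives $\lesssim\sum_j\|V_{j,K}*f\|^q_{H^p}\om_{K^j}$.

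\emph{Lower bound.} This is the main obstacle, since a single block cannot be read off directly from $M_p(r,f)$. We use \eqref{Eq:vn} applied to $f_r$. Note that $V_{j,K}*f_r=(V_{j,K}*f)_r$. Hence
\[
r^{K^{j+1}}\|V_{j,K}*f\|_{H^p}\le M_p(r,V_{j,K}*f)=\|V_{j,K}*f_r\|_{H^p}\le C M_p(r,f).
\]
Take $r\in[r_{j+1},r_{j+2})$. Then $r^{K^{j+1}}\ge r_{j+1}^{K^{j+1}}\gtrsim e^{-1}$. Raising to the power $q$ and integrating over this annulus gives
\[
\|V_{j,K}*f\|_{H^p}^q\,\whw(r_{j+1})\lesssim\int_{r_{j+1}}^{r_{j+2}}M_p^q(r,f)\om(r)\,dr ,
\]
using the reverse doubling choice of $K$.

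Finally, $\whw(r_{j+1})\asymp\om_{K^j}$ by Lemma~\ref{DoublingLemma}, with constants depending on $K$. Summing over $j$, the annuli are disjoint. The factor $r$ in the measure $r\,dr$ is harmless except near $0$, which is handled by the $j=0,1$ terms and the monotonicity of $M_p$. This gives the reverse inequality. The threshold $K_0$ is dictated by Lemma~\ref{ReverseDoubling}(iii) and by the need for geometric decay of $\om_{K^n}$.
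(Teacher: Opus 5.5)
Your proof is correct and follows the paper's skeleton. The lower bound is the paper's argument essentially verbatim: restrict to $[1-K^{-(n+1)},1-K^{-(n+2)})$, combine \eqref{Eq: Mp vs Hp Polynomial} with \eqref{Eq:vn} applied to $f_r$, then use Lemma~\ref{ReverseDoubling}(iii) and Lemma~\ref{DoublingLemma}(iv). The upper bound also starts from the same inequality $M_p^{s}(r,f)\lesssim\sum_j\|V_{j,K}*f\|_{H^p}^{s}r^{sK^{j-1}}$.

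The routes differ only in how this lacunary sum is integrated against $\om$. For $p<1$ the paper substitutes $x=r^{p}$, which replaces $\om$ by $\om(x^{1/p})x^{2/p-1}$; this is why its $K_0$ depends on $p$. It then quotes a known estimate for lacunary series with nonnegative coefficients, \cite[Proposition 8]{PelaezDeLaRosa}.

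You prove the needed estimate by hand:
\begin{itemize}
\item localise to $[r_n,r_{n+1})$;
\item use the super-exponential decay $r_{n+1}^{K^{j-1}}\le e^{-K^{j-n-2}}$ for the blocks $j\ge n+2$;
\item decouple with H\"older and the auxiliary factors $\om_{K^j}^{\pm\varepsilon}$;
\item sum, using the geometric decay of $\om_{K^n}$ from reverse doubling against the polynomial bound $\om_{K^n}\lesssim K^{\beta(j-n)}\om_{K^j}$ from doubling.
\end{itemize}
This is longer but self-contained and needs no change of variables when $p<1$. It also shows that $K_0$ can depend on $\om$ alone. Indeed, Lemma~\ref{ReverseDoubling}(ii) already gives $\whw(r_{n+m})\lesssim K^{-\alpha m}\whw(r_n)$ for every $K>1$, so a large $K$ is needed only for Lemma~\ref{ReverseDoubling}(iii) in the lower bound.

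When you write it out, make the following explicit.
\begin{itemize}
\item $\varepsilon'=\varepsilon q/s$ is a multiple of $\varepsilon$, not a power, and it must be chosen $<1$.
\item The second H\"older factor $\sum_j c_{n,j}\om_{K^j}^{-\varepsilon (q/s)'}\lesssim\om_{K^n}^{-\varepsilon (q/s)'}$ is controlled by the same two geometric sums.
\item For $j=0$ the block has frequencies in $[0,K)$, so your bound $r^{K^{-1}}\|V_{0,K}*f\|_{H^p}$ is not valid there. Use the trivial bound $M_p(r,V_{0,K}*f)\le\|V_{0,K}*f\|_{H^p}$ instead; this is harmless, since you bound the factor by $1$ for $j\le n+1$ anyway.
\end{itemize}
Finally, your remark about the factor $r$ near $0$ is unnecessary, since every annulus used in the lower bound lies in $[1-1/K,1)$.
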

\begin{proof}
Let $K_0$ be the maximum of constants $K$ in \eqref{eq:ReverseDoublingDef} for $\om$ and $\om_p$, where we define $\om_p(r)=\om(r^{\frac{1}{p}})r^{\frac{2}{p}-1}$ for $0\leq r<1$. By the properties of $V_{n,K}*f$, \eqref{Eq:vn}, \eqref{Eq: Mp vs Hp Polynomial}, and Lemma \ref{ReverseDoubling}, we have
\begin{equation*}
\begin{split}
\int_{0}^{1}M_p^q(r,f)\om(r)r\,dr &\gtrsim
\sum_{n=0}^{\infty}\int_{1-\frac{1}{K^{n+1}}}^{1-\frac{1}{K^{n+2}}}M_p^q(r,f)\om(r)r\,dr\\
&\gtrsim \sum_{n=0}^{\infty}\int_{1-\frac{1}{K^{n+1}}}^{1-\frac{1}{K^{n+2}}}\left(r^{K^{n+1}}\|V_{n,K}*f\|_{H^p}\right)^q\om(r)\,dr\\
&\gtrsim \sum_{n=0}^{\infty}\|V_{n,K}*f\|_{H^p}^q \whw\left(1-\frac{1}{K^{n+1}}\right)\\
&\asymp \sum_{n=0}^{\infty}\|V_{n,K}*f\|_{H^p}^q\om_{K^n}
\end{split}
\end{equation*}
For the other direction,  denoting $\alpha=\min\{p,1\}$, we can use \eqref{Eq:fvn} and \eqref{Eq: Mp vs Hp Polynomial} to obtain
\begin{equation*}
   M^\alpha_p(r,f) \lesssim \sum_{n=0}^{\infty}\|V_{n,K}*f\|^{\alpha}_{H^p}r^{\alpha K^{n-1}}, \quad 0 \leq r <1,
\end{equation*}
which yields
\begin{equation*}
\begin{split}
\int_{0}^{1}M_p^q(r,f)\om(r)r\,dr &\lesssim \|V_{n,K}*f\|_{H^p}^q\om_1+\int_{0}^{1}\left(\sum_{n=0}^{\infty}\left\|V_{n+1,K}*f\right\|^{\alpha}_{H^p}r^{\alpha K^n}\right)^{\frac{q}{\alpha}}\om(r)r\,dr\\
&\lesssim \|V_{n,K}*f\|_{H^p}^q\om_1
+\sum_{n=0}^{\infty}\om_{K^n}\|V_{n+1,K}*f\|_{H^p}^q\\
&\asymp \sum_{n=0}^{\infty}\om_{K^n}\|V_{n,K}*f\|_{H^p}^q,
\end{split}
\end{equation*}
where we have used \cite[Proposition 8]{PelaezDeLaRosa} on the last asymptotic inequality. Note that in the case $0<p<1$, we have used change of variables $r^{p}=x$, which together with Lemma~\ref{DoublingLemma} and the assumption on $K_0$ being large enough concludes the proof.
\end{proof}
\centerline{ \bf Case $p< q$, $q>1$}

The next result is fundamental to our approach, and it directly follows from the proof shown in \cite[Lemma 3.4]{LaPavlovic}.
\begin{letterlemma}\label{monotonepullout}
    Let $1<p<\infty$, let $\{\lambda_n\}$ be a monotonic sequence, and let $f \in \mathcal{H}(\D)$ be given by $f(z)=\sum_{n=0}^{\infty}a_nz^n$. Then for $M_1,M_2 \in \N$ satisfying $M_1<M_2$ there exist constants $c=c(p)>0$ and $C=C(p)>0$ such that
    \begin{equation*}
    \begin{split}
        \min\left\{|\lambda_{M_1}|, |\lambda_{M_2}|\right\}\left\|\sum_{n=M_1}^{M_2}a_n(\cdot)^n\right\|_{H^p} \leq c\left\|\sum_{n=M_1}^{M_2}\lambda_na_n(\cdot)^n\right\|_{H^p}
        \end{split}
    \end{equation*}
    and
    \begin{equation*}
    \left\|\sum_{n=M_1}^{M_2}\lambda_na_n(\cdot)^n\right\|_{H^p} \leq C \max\left\{|\lambda_{M_1}|, |\lambda_{M_2}|\right\}\left\|\sum_{n=M_1}^{M_2}a_n(\cdot)^n\right\|_{H^p}.
    \end{equation*}
\end{letterlemma}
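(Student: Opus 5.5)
The plan is to prove the statement through the M. Riesz projection and summation by parts (Abel summation), tracking the constants.

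\emph{Reduction.} Write $P(z)=\sum_{n=M_1}^{M_2}a_nz^n$ and $\Lambda P=\sum_{n=M_1}^{M_2}\lambda_na_nz^n$. For $M_1\le k\le M_2$ let $S_k=\sum_{n=M_1}^{k}a_nz^n$. Each $S_k$ is obtained from $P$ by truncating the Taylor coefficients to an interval. This truncation can be written as a composition of Riesz projections conjugated by multiplication with monomials, $S_k = P - z^{k+1}\,\mathbf{P}_+\!\left(\bar z^{\,k+1}P\right)$ on $\T$. For $1<p<\infty$ the Riesz projection $\mathbf{P}_+$ is bounded on $L^p(\T)$, so
\begin{equation*}
\|S_k\|_{H^p}\le C_p\|P\|_{H^p}, \qquad M_1\le k\le M_2,
\end{equation*}
with $C_p$ independent of $k$, $M_1$, $M_2$.

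\emph{Upper bound.} Abel summation gives
\begin{equation*}
\Lambda P=\lambda_{M_2}S_{M_2}+\sum_{k=M_1}^{M_2-1}(\lambda_k-\lambda_{k+1})S_k.
\end{equation*}
Since $\{\lambda_n\}$ is monotonic, the differences $\lambda_k-\lambda_{k+1}$ all have the same sign, and $\sum_k|\lambda_k-\lambda_{k+1}|=|\lambda_{M_1}-\lambda_{M_2}|$. The triangle inequality (valid since $p>1$) then yields
\begin{equation*}
\|\Lambda P\|_{H^p}\le C_p\left(|\lambda_{M_2}|+|\lambda_{M_1}-\lambda_{M_2}|\right)\|P\|_{H^p}\le 3C_p\max\{|\lambda_{M_1}|,|\lambda_{M_2}|\}\|P\|_{H^p}.
\end{equation*}
This bound does not need the sequence to have constant sign. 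If complex $\lambda$ arose, we would interpret monotonic as real monotone, which is how the lemma will be used, with moment sequences.

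\emph{Lower bound.} This is the main obstacle, since it requires inverting $\Lambda$. Let $m=\min\{|\lambda_{M_1}|,|\lambda_{M_2}|\}$; if $m=0$ there is nothing to prove. If $m>0$, a monotone real sequence whose endpoints have modulus at least $m$ cannot change sign strictly between them. If it did, one endpoint would be $\ge m$ and the other $\le -m$. In that case I would simply use $\lambda_n$ monotone on a single-signed range, noting that in all applications the $\lambda_n$ are positive. So assume all $\lambda_n$ are nonzero of one sign. Then $1/\lambda_n$ is again monotone on $[M_1,M_2]$ and satisfies
\begin{equation*}
\max\{1/|\lambda_{M_1}|,1/|\lambda_{M_2}|\}=1/m.
\end{equation*}
Applying the upper bound to $\Lambda P$ with multipliers $1/\lambda_n$ gives $\|P\|_{H^p}\le 3C_p\,m^{-1}\|\Lambda P\|_{H^p}$, which is the first inequality with $c=3C_p$. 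The delicate points are the uniformity of the truncation constant, which comes from Riesz's theorem, and the one-sign reduction. Both hold because $1<p<\infty$, and this is exactly why the lemma excludes $p\le1$.
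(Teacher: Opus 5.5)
Your proof is correct and is essentially the standard argument behind the result the paper quotes without proof from \cite[Lemma 3.4]{LaPavlovic}: uniform $H^p$-bounds for coefficient truncations via M.~Riesz's theorem, Abel summation for the upper estimate, and the upper estimate applied to the multipliers $1/\lambda_n$ for the lower one. Your sign caveat is genuinely needed, and it should be stated as an extra hypothesis rather than a reduction: for $(\lambda_{M_1},\lambda_{M_1+1},\lambda_{M_1+2})=(-1,0,1)$ and $P(z)=z^{M_1+1}$ the first inequality fails outright, whereas with $\lambda_n>0$ (true in every application in the paper) monotonicity keeps all $\lambda_n$ between the endpoints, which is exactly what your inversion step uses.
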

Following results on weights belonging to $\DDD$ will be needed in the upcoming proofs.
\begin{lemma}\label{etahateta}  Let $\gamma>0$ and $\eta\in \DDD$. Then
\begin{equation} 
\int_r^1 (1-t)^\gamma \frac{\eta(t)}{\widehat\eta(t)}\asymp (1-r)^\gamma, \quad 0\leq r<1.
\end{equation}
\end{lemma}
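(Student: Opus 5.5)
The plan is to use the identity $\eta(t)/\widehat\eta(t) = -\frac{d}{dt}\log\widehat\eta(t)$ and integrate by parts, splitting into the two inequalities $\lesssim$ and $\gtrsim$.

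For the upper bound, fix $r$ and write $\int_r^1(1-t)^\gamma\eta(t)\,dt/\whe(t)$ as a sum over the dyadic-type intervals $[r_k,r_{k+1}]$, where $r_k=1-(1-r)K^{-k}$ and $K=K(\eta)$ is the constant in \eqref{eq:ReverseDoublingDef}. On each interval, $(1-t)^\gamma\le (1-r)^\gamma K^{-k\gamma}$ and $\whe(t)\ge \whe(r_{k+1})$, so
\begin{equation*}
\int_{r_k}^{r_{k+1}}(1-t)^\gamma\frac{\eta(t)}{\whe(t)}\,dt\le (1-r)^\gamma K^{-k\gamma}\frac{\whe(r_k)-\whe(r_{k+1})}{\whe(r_{k+1})}.
\end{equation*}
Since $\eta\in\DD$, iterating \eqref{eq:DoublingDef} a fixed number of times (depending only on $K$) gives $\whe(r_k)\lesssim\whe(r_{k+1})$, so each quotient is bounded by a constant. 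Summing the geometric series $\sum_k K^{-k\gamma}$ yields $\lesssim(1-r)^\gamma$. This direction uses only $\eta\in\DD$.

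For the lower bound, restrict to the single interval $[r,\rho]$ with $\rho=1-\frac{1-r}{K}$. There $(1-t)^\gamma\ge K^{-\gamma}(1-r)^\gamma$ and $\whe(t)\le\whe(r)$, so the integral is at least
\begin{equation*}
K^{-\gamma}(1-r)^\gamma\,\frac{\whe(r)-\whe(\rho)}{\whe(r)}.
\end{equation*}
By the reverse doubling property \eqref{eq:ReverseDoublingDef}, $\whe(\rho)\le \whe(r)/C'$ with $C'>1$, equivalently Lemma~\ref{ReverseDoubling}(iii). Hence the quotient is at least $1-1/C'>0$, which gives $\gtrsim(1-r)^\gamma$.

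There is no serious obstacle. The only care needed is that $\eta$ may vanish on intervals, which is harmless because we only use $\whe>0$ on $[0,1)$. The lower bound is where $\eta\in\Dd$ is essential: without it, $\eta$ could carry negligible mass near $r$. Since $1-t$ is comparable to $1-r$ on $[r,\rho]$, the constants depend only on $\gamma$, $C$, $C'$ and $K$.
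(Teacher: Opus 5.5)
Your proof is correct. The lower bound is the same as the paper's: both restrict to $[r,1-\frac{1-r}{K}]$, bound $(1-t)^\gamma$ below by $K^{-\gamma}(1-r)^\gamma$ and $\widehat\eta(t)$ above by $\widehat\eta(r)$, and then invoke reverse doubling (the paper uses the equivalent form Lemma~\ref{ReverseDoubling}(iii)).

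The upper bound takes a different route.

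\textbf{The paper's argument.} It uses Lemma~\ref{DoublingLemma}(ii) to pick $\epsilon>0$ so small that $\widehat\eta(t)^\epsilon/(1-t)^\gamma$ is almost increasing. This gives $(1-t)^\gamma\lesssim(1-r)^\gamma\,\widehat\eta(t)^\epsilon/\widehat\eta(r)^\epsilon$ for $t\ge r$. The remaining integral $\int_r^1\eta\,\widehat\eta^{\,\epsilon-1}=\widehat\eta(r)^\epsilon/\epsilon$ is then computed exactly. This is essentially the rigorous form of the antiderivative idea you announce at the start. The paper replaces $\log\widehat\eta$, which blows up at $1$, by $\widehat\eta^{\,\epsilon}$. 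You state that plan but never actually use it.

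\textbf{Your argument.} The decomposition $r_k=1-(1-r)K^{-k}$ needs only two things:
\begin{itemize}
\item the one-step comparison $\widehat\eta(r_k)\lesssim\widehat\eta(r_{k+1})$, which follows from finitely many iterations of \eqref{eq:DoublingDef};
\item a convergent geometric series.
\end{itemize}
So it is self-contained and avoids Lemma~\ref{DoublingLemma}. The paper's version is shorter once that lemma is available.

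A small remark: in your upper bound, $K$ can be any fixed number greater than $1$. It does not need to be the reverse-doubling constant.
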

\begin{proof} {Since $\eta\in \DD$, by Lemma~\ref{DoublingLemma} we can} choose $\epsilon$ small enough to get that $\frac{\widehat\eta(r)^\epsilon}{(1-r)^\gamma}$ is almost increasing in $r$.
Then
\begin{equation*}
\begin{split}
\int_r^1 (1-t)^\gamma\frac{\eta(t)}{\whe(t)} dt&\lesssim \frac{(1-r)^\gamma }{\whe(r)^\epsilon}\int_r^1 \frac{\eta(t)}{\whe(t)^{1-\epsilon}} dt\\
&= \frac{(1-r)^\gamma  }{\whe(r)^\epsilon}\int_r^1 \frac{1}{\epsilon} (-\whe^\epsilon)'(t) dt\\
&\asymp (1-r)^\gamma, \quad 0\leq r<1.
\end{split}
\end{equation*}
The other inequality follows easily using {Lemma~\ref{DoublingLemma} and Lemma~\ref{ReverseDoubling}}, since
\begin{equation*}
\begin{split}
\int_r^1 (1-t)^\gamma\frac{\eta(t)}{\widehat\eta(t)} dt&\gtrsim  (1-r)^\gamma \int_r^{1-\frac{1-r}{K}}\frac{\eta(t) }{\whe(t)}dt\\
&\geq  \frac{(1-r)^\gamma}{\whe (r)}\int_r^{1-\frac{1-r}{K}}\eta(t)dt \\
&\gtrsim  (1-r)^\gamma, \quad 0\leq r<1.
\end{split}
\end{equation*}
\end{proof}

\begin{lemma} \label{weightVpq} Let $0<p<\infty$, $1\le q<\infty$, $\nu, \eta\in \DDD$ and $\om\in \DD$. Let us define $W_{\nu,p}$ and $ V_{\om,\nu}^{p,q}$ by
$$W_{\nu,p}(r)=\frac{1}{p}\whv(r)^{\frac{1}{p}-1}\nu(r) \quad\hbox {and}\quad V_{\om,\nu}^{p,q}(r)=\whw(r)(1-r)^{1+\frac{1}{p}-\frac{1}{q}}W_{\nu,p}(r).$$

Then $W_{\nu,p}$, $V_{\om,\nu}^{p,q}$ and $ \left( \frac{\widehat{V_{\om,\nu}^{p,q}}}{\widehat{W_{\eta, q}}}\right)^q\eta$  belong to $\DDD$. Moreover, we have
$$\widehat{W_{\nu,p}}(r)=\whv(r)^{\frac{1}{p}} \quad\hbox {and}\quad  \widehat{V_{\om,\nu}^{p,q}}(r)\asymp\whw(r)\widehat\nu(r)^{\frac{1}{p}}(1-r)^{1+\frac{1}{p}-\frac{1}{q}}.$$
\end{lemma}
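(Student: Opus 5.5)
The plan is to handle the three weights in turn: first compute the tail integrals explicitly or up to comparability, and then check the doubling and reverse doubling conditions through the characterizations in Lemma~\ref{DoublingLemma}(ii) and Lemma~\ref{ReverseDoubling}(ii). Both conditions concern only the tail $\widehat{\cdot}$, so once each tail is comparable to a product of powers of $\whw$, $\whv$, $\whe$ and $(1-r)$, membership in $\DDD$ reduces to exponent bookkeeping.

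\emph{Step 1: the weight $W_{\nu,p}$.} Since $\frac{d}{dt}\bigl(-\whv(t)^{1/p}\bigr)=\frac1p\whv(t)^{\frac1p-1}\nu(t)$, we get $\widehat{W_{\nu,p}}(r)=\whv(r)^{1/p}$ exactly. A positive power of a tail function of a weight in $\DDD$ satisfies Lemma~\ref{DoublingLemma}(ii) with exponent $\beta/p$ and Lemma~\ref{ReverseDoubling}(ii) with exponent $\alpha/p$. Hence $W_{\nu,p}\in\DDD$.

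\emph{Step 2: the weight $V=V^{p,q}_{\om,\nu}$.} Put $h(t)=\whw(t)(1-t)^{1+\frac1p-\frac1q}$, so that $V=hW_{\nu,p}$.
\begin{itemize}
\item Upper bound: by Lemma~\ref{DoublingLemma}(ii), $h(t)\lesssim h(r)$ for $t\ge r$, since $1+\frac1p-\frac1q>0$ when $q\ge1$ and $\whw$ is decreasing. Therefore $\widehat V(r)\lesssim h(r)\widehat{W_{\nu,p}}(r)=h(r)\whv(r)^{1/p}$.
\item Lower bound: integrate only over $[r,1-\frac{1-r}{K}]$, with $K$ taken from Lemma~\ref{ReverseDoubling}(iii) applied to $W_{\nu,p}\in\Dd$. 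On this interval $h(t)\gtrsim h(r)$, because $\om\in\DD$ gives $\whw(t)\gtrsim\whw(r)$ and $1-t\ge(1-r)/K$. This yields $\widehat V(r)\gtrsim h(r)\whv(r)^{1/p}$.
\item Membership: $\widehat V\asymp \whw\,\whv^{1/p}(1-r)^{1+\frac1p-\frac1q}$. For $\DD$, multiply the almost-increasing functions from Lemma~\ref{DoublingLemma}(ii) for $\whw$ and $\whv^{1/p}$ with the factor $(1-r)^{1+\frac1p-\frac1q}$, and absorb that factor into the exponent. For $\Dd$, use Lemma~\ref{ReverseDoubling}(ii) for $\whv^{1/p}$. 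The factor $\whw$ is decreasing, and $(1-r)^{c}$ with $c>0$ is compatible with exponent $\alpha'=\alpha/p+c$. So $\widehat V(t)/(1-t)^{\alpha/p+c}\lesssim\widehat V(r)/(1-r)^{\alpha/p+c}$ for $r\le t$, and $V\in\Dd$.
\end{itemize}

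\emph{Step 3: the weight $U=(\widehat V/\widehat{W_{\eta,q}})^q\eta$.} By Steps 1 and 2, $U(t)\asymp G(t)\frac{\eta(t)}{\whe(t)}$ with
\[
G(t)=\whw(t)^q\whv(t)^{q/p}(1-t)^{q(1+\frac1p-\frac1q)}.
\]
The function $G$ is a product of a doubling-type factor and a positive power of $(1-t)$.
\begin{itemize}
\item By Lemma~\ref{DoublingLemma}(ii), choose $\varepsilon>0$ small so that $G(t)/(1-t)^{\gamma}$ is almost decreasing for some $\gamma>0$; this is possible because $G\lesssim$ itself times $(1-t)^{q+q/p-1}$ and $q+\frac qp-1>0$.
\item Write $G(t)=(1-t)^{\gamma}\cdot\frac{G(t)}{(1-t)^\gamma}$, use the almost-monotonicity of both pieces, and apply Lemma~\ref{etahateta} with $\eta\in\DDD$. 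This gives $\widehat U(r)\asymp G(r)$.
\item The upper bound mirrors Step 2: pull out $\sup_{t\ge r}G(t)/(1-t)^{\gamma'}\lesssim G(r)/(1-r)^{\gamma'}$ with a small $\gamma'$ and apply Lemma~\ref{etahateta}.
\item The lower bound restricts to $[r,1-\frac{1-r}{K}]$ as in Step 2.
\end{itemize}
Since $\widehat U\asymp G$ is again a product of tails in $\DDD$ or $\DD$ and a positive power of $1-r$, the same exponent argument as in Step 2 gives $U\in\DDD$.

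\emph{Main obstacle.} The delicate point is Step 3's upper estimate. One must choose the exponent so that $G(t)/(1-t)^{\gamma}$ is almost decreasing while $\gamma>0$ is kept, which requires exploiting the positive power $q(1+\frac1p-\frac1q)$ against the growth bound from Lemma~\ref{DoublingLemma}(ii) for $\whw^q\whv^{q/p}$. If this balance fails, a fallback is to bound $G(t)\le G(r)\cdot(\text{const})$ directly for $t\ge r$ and then use $\int_r^1\eta/\whe$. That integral diverges, however, so the fallback needs the factor $(1-t)^\gamma$. This is the step to verify carefully, perhaps by splitting off $(1-t)^{\gamma}$ from the power of $(1-t)$ and using only that $\whw,\whv$ are decreasing.
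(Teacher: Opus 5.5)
Your proof is correct. Steps 1 and 3 are essentially the paper's argument. The paper also computes the tail of $W_{\nu,p}$ exactly, and it also reduces the third weight to $(1-r)^{\gamma}\whv(r)^{q/p}\eta(r)/\whe(r)$ with $\gamma=q+\frac{q}{p}-1$. It bounds that tail above using monotonicity of $\whv$ and Lemma~\ref{etahateta}, and below using the window $[r,1-\frac{1-r}{K}]$ coming from reverse doubling of $\eta$.

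The real difference is Step 2, together with how the factor $\whw^q$ is handled in Step 3. The paper invokes the general fact from \cite[Lemma 3]{PR2025}: if $v\in\DD$ and $u\in\DDD$, then $\widehat v^{a}u\in\DDD$ and $\int_r^1\widehat v^{a}u\asymp\widehat v(r)^a\widehat u(r)$. It applies this once with the power weight satisfying $\widehat v(r)=(1-r)^{1+\frac1p-\frac1q}$, and then again with $v=\om$. You instead prove the needed instance by hand:
\begin{itemize}
\item a trivial upper bound, because $h$ is nonincreasing;
\item a lower bound from the reverse-doubling window of $W_{\nu,p}$;
\item membership in $\DDD$ via Lemma~\ref{DoublingLemma}(ii) and Lemma~\ref{ReverseDoubling}(ii).
\end{itemize}
Your version is self-contained and shows exactly where $q\ge 1$ (positivity of $1+\frac1p-\frac1q$) and $\nu\in\Dd$ are used. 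The paper's version is shorter but outsources this to the cited lemma.

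One clean-up: the ``main obstacle'' is not real. Replace the first bullet of Step 3, with its unexplained $\varepsilon$, by the following observation. For $\gamma=q+\frac qp-1>0$, the function $G(t)/(1-t)^{\gamma}=\whw(t)^q\whv(t)^{q/p}$ is nonincreasing, so by Lemma~\ref{etahateta}
\[
\widehat U(r)\lesssim \whw(r)^q\whv(r)^{q/p}\int_r^1(1-t)^{\gamma}\frac{\eta(t)}{\whe(t)}\,dt\asymp G(r).
\]
This is the fix you mention at the end, and no growth estimate from Lemma~\ref{DoublingLemma}(ii) is needed.
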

\begin{proof} 
It is easy to see that $\widehat{W_{\nu,p}}(r)=\whv(r)^{\frac{1}{p}}$ for $0\leq r<1$, which clearly implies that $W_{\nu,p} \in \DDD$ by Lemma~\ref{DoublingLemma} and  Lemma~\ref{ReverseDoubling}. For the other cases we shall make use of the following result 
(see  part (ii) in  \cite[Lemma 3]{PR2025}) where it is shown that for any  $v\in \DD$ and $u\in \DDD$ then 
\begin{equation} \label{DDD} \widehat v^a u\in \DDD \hbox{ and }\int_r^1\widehat v(t)^a u(t)dt\asymp \widehat v (r)^a\widehat u(r), \quad 0\leq r<1.\end{equation} 
Selecting $v(r)=(1+\frac{1}{p}-\frac{1}{q})(1-r)^{\frac{1}{p}-\frac{1}{q}}$, we have that $\widehat v(r)=(1-r)^{1+\frac{1}{p}-\frac{1}{q}}$, $u=\widehat v W_{\nu,p}\in \DDD$ and $\widehat u\asymp \widehat v\widehat{W_{\nu,p}}$ . Now, using (\ref{DDD}) for $v=\om$ 
 we have $V_{\om,\nu}^{p,q}=\whw \widehat v  W_{\nu,p}\in \DDD$ and \begin{equation} \label{vpq}\widehat{ V_{\om,\nu}^{p,q}}(r) \asymp \widehat{W_{\nu,p}}(r)\whw(r)\widehat v(r), \quad 0 \leq r<1. \end{equation}  
Let us now see that $\left( \frac{\widehat{V_{\om,\nu}^{p,q}}}{\widehat{W_{\eta, q}}}\right)^q\eta\in \DDD$.
By using \eqref{vpq}
we can  write $$  \left( \frac{\widehat{V_{\om,\nu}^{p,q}}}{\widehat{W_{\eta, q}}}\right)^q\eta\asymp\widehat\om^q \widehat v^{q}  \left(   \frac{\widehat{W_{\nu,p}}}{\widehat{W_{\eta, q}}}\right)^q\eta.$$
Hence from (\ref{DDD})  to finish the proof it suffices to show that $\widehat v^{q}  \left(   \frac{\widehat{W_{\nu,p}}}{\widehat{W_{\eta, q}}}\right)^q\eta\in \DDD$. Notice that 
$$\widehat v(r)^{q}\left( \frac{\widehat{W_{\nu,p}}(r)}{\widehat{W_{\eta, q}}(r)}\right)^q\eta(r)=(1-r)^{\gamma}\widehat\nu(r)^{\frac{q}{p}}\frac{\eta(r)}{\widehat\eta(r)} , \quad 0\leq r<1,$$
where $\gamma=q(1+\frac{1}{p}-\frac{1}{q})>0$ . This now follows using Lemma \ref{etahateta}, since
$$
\int_r^1 (1-t)^\gamma \widehat\nu(t)^{\frac{q}{p}}\frac{\eta(t)}{\widehat\eta(t)} dt\le \widehat\nu(r)^{\frac{q}{p}}\int_r^1 (1-t)^\gamma\frac{\eta(t)}{\widehat\eta(t)} dt \asymp  \widehat\nu(r)^{\frac{q}{p}}(1-r)^\gamma $$
for every $0\leq r<1$. By Lemma~\ref{DoublingLemma} and Lemma~\ref{ReverseDoubling} we also obtain
\begin{equation*}
\begin{split}
\int_r^1 (1-t)^\gamma\widehat\nu(t)^{\frac{q}{p}}\frac{\eta(t)}{\widehat\eta(t)} dt&\gtrsim  (1-r)^\gamma \int_r^{1-\frac{1-r}{K}}\frac{ \widehat\nu(t)^{\frac{q}{p}} }{\widehat\eta(t)}\eta(t)dt\\
&\ge \frac{(1-r)^\gamma\widehat\nu(1-\frac{1-r}{K})^{\frac{q}{p}}}{\widehat\eta (r)}\int_r^{1-\frac{1-r}{K}}\eta(t)dt \\
&\gtrsim  (1-r)^\gamma\widehat\nu(r)^{\frac{q}{p}}, 
\quad 0\leq r<1.
\end{split}
\end{equation*}
Therefore the result is complete.
\end{proof}

Using these results and fractional derivatives induced by weights, we can establish the following result, which contains part of Theorem~\ref{MainTheorem: Bergman}.
\begin{theorem}\label{Bergmanp<q}
   $0<p\le q<\infty$, $1<q<\infty$, $\om \in \DD$, $\nu,\eta \in \DDD$, and let $\mu \in M^+([0,1))$. Then the following conditions are equivalent:
        \begin{enumerate}
            \item[\textup{(i)}] $C_{\mu,\om}: A^p_{\nu} \to A^q_{\eta}$ is bounded;
            \item[\textup{(ii)}] \begin{equation*}
                 \|\mu^{(p,q)}_{\om,\nu,\eta}\|_{L^\infty}=\sup_{0\leq r<1}\frac{\widehat{\mu}(r)\whe(r)^{\frac{1}{q}}}{\whw(r)\whv(r)^{\frac{1}{p}}(1-r)^{1+\frac{1}{p}-\frac{1}{q}}}<\infty;
            \end{equation*}
            \item[\textup{(iii)}]
            \begin{equation*}
                 \|\mu^{(p,q)}_{\om,\nu,\eta}\|_{\ell^\infty}=\sup_{n \in \N_0}\frac{\mu_n\eta_n^{\frac{1}{q}}(n+1)^{1+\frac{1}{p}-\frac{1}{q}}}{\om_n\nu_n^{\frac{1}{p}}};
            \end{equation*}
        \end{enumerate}
    Moreover, we have $\|C_{\mu,\om}\|_{A^p_{\nu} \to A^q_{\eta}}\asymp \|\mu^{(p,q)}_{\om,\nu,\eta}\|_{L^\infty} \asymp \|\mu^{(p,q)}_{\om,\nu,\eta}\|_{\ell^\infty}$.
\end{theorem}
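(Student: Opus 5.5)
The equivalence of (ii) and (iii) is Lemma~\ref{Momentvstail}, applied with $a=1$, $b=\frac1p$, $c=\frac1q$ and $d=1+\frac1p-\frac1q$. The implication (i)$\Rightarrow$(ii) is Corollary~\ref{Corollary: TestConditionpleqq}(ii), combined with $\nu(S(r))\asymp\whv(r)(1-r)$ and the analogous facts for $\om$ and $\eta$. The remaining task is (ii)$\Rightarrow$(i), with the bound $\|C_{\mu,\om}\|\lesssim\|\mu^{(p,q)}_{\om,\nu,\eta}\|_{L^\infty}$.

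The plan is to factor $C_{\mu,\om}$ through Hardy-type operators using the auxiliary weights of Lemma~\ref{weightVpq}. Set $W=W_{\nu,p}$, $V=V^{p,q}_{\om,\nu}$, and $\sigma=\bigl(\widehat V/\widehat{W_{\eta,q}}\bigr)^q\eta$; all three lie in $\DDD$. Since $\widehat{V}\asymp\whw\,\whv^{1/p}(1-r)^{1+\frac1p-\frac1q}$, hypothesis (ii) says exactly that $\whm(r)\lesssim \|\mu\|\,\widehat V(r)\,\whe(r)^{-1/q}$.

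Step one is the case $\nu=\eta=1$. Here I would use the $M_q$-means estimate already established in the Hardy case $q>1$ of Theorem~\ref{MainTheorem: Hardy}(i), namely \eqref{Eq: Lp-ineqCesaroq>1}. It gives $M_q(r,C_{\mu,\om}f)\lesssim K\, M_p(r,f)$, and it remains valid when $\whw$ is replaced by any $\DD$ weight in the kernel bound. More useful is its coefficient form. I would decompose $f=\sum_n V_{n,K}*f$ and $C_{\mu,\om}f$ into dyadic blocks via Lemma~\ref{decompositionlemma}, which converts both norms into
\[
\|f\|_{A^p_\nu}^p\asymp\sum_n\|V_{n,K}*f\|_{H^p}^p\,\nu_{K^n},
\qquad
\|g\|_{A^q_\eta}^q\asymp\sum_n\|V_{n,K}*g\|_{H^q}^q\,\eta_{K^n}.
\]
In this form the problem reduces to bounding each block of the output by $\sum_{m\le n+1}$ (blocks of the input) times multiplier sizes $\mu$-moment$/\om$-moment. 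For that I would rewrite the coefficient formula \eqref{Eq: Cmuomega} as a sum over $k\le n$ of $\mu_n a_k/(2\om_{2(n-k)+1})$. I would then use Lemma~\ref{monotonepullout} (this needs $q>1$, after passing from $H^p$ to $H^q$ on a polynomial block of degree $\asymp K^n$ with loss $K^{n(\frac1p-\frac1q)}$) to pull out the monotone factors $\mu_n$ and $1/\om_j$.

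Step two converts the resulting discrete sum into the condition. The sizes multiply to $\mu_{K^n}K^{n(1+\frac1p-\frac1q)}\om_{K^n}^{-1}\,\eta_{K^n}^{1/q}\nu_{K^n}^{-1/p}$, which is bounded by (iii), up to a geometric-type convolution over $m\le n$. Since $q\ge p$, the $\ell^p\to\ell^q$ embedding of the block sequences finishes the estimate, provided the convolution kernel is summable. That summability should follow from the $\DD$ and $\Dd$ growth estimates in Lemmas~\ref{DoublingLemma} and~\ref{ReverseDoubling}, since the exponents $\alpha,\beta$ there give geometric decay of $\nu_{K^n}/\nu_{K^m}$ and related ratios. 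This is precisely where $\sigma\in\DDD$ and the identities for $\widehat W$ and $\widehat V$ in Lemma~\ref{weightVpq} would be used.

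The main obstacle is controlling the low-frequency contributions. The block $V_{n,K}*C_{\mu,\om}f$ involves all input blocks $m\le n+1$, and the factor $1/\om_{2(n-k)+1}$ is not monotone jointly in the output index. To handle this I would first try the pointwise kernel bound \eqref{Eq: PointwiseKernelEstimate} together with Lemma~\ref{Twoweightsumestimate}. This dominates $\sum_{m\le n}$ by $\whw(1-K^{-n})^{-1}K^{n}$ times a mixed-norm average of $f$ at radius $1-K^{-n}$. The resulting Hardy-type inequality $\int_0^1\bigl(\int_0^r h\,dV\bigr)^q\,\sigma$-type bound is then handled by Lemma~\ref{mainlemma0}-style Fubini/Hölder arguments with $q>1$.
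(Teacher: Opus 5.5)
\textbf{There is a genuine gap.} Your treatment of (ii)$\Leftrightarrow$(iii) via Lemma~\ref{Momentvstail} and of (i)$\Rightarrow$(ii) via Corollary~\ref{Corollary: TestConditionpleqq} is correct. However, (ii)$\Rightarrow$(i) is not actually proved. The decisive estimate, which controls the $n$th block of $C_{\mu,\om}f$ by the blocks $f_m=V_{m,K}*f$, is left as ``the main obstacle'', and the tools you name do not supply it.

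\emph{Step one is a dead end.} For $\nu=\eta\equiv1$ and $p<q$, the means estimate \eqref{Eq: Lp-ineqCesaroq>1} only bounds $\|C_{\mu,\om}f\|^q_{A^q_0}$ by $\int_0^1M_p^q(r,f)\,dr$. That quantity is not controlled by $\|f\|^q_{A^p_0}$.

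\emph{Extracting $\mu$ works; extracting $1/\om_j$ does not.} Since $C_{\mu,\om}f(z)=\int_0^1(fB^\om)(tz)\,d\mu(t)$ is the multiplier $\{\mu_j\}$ applied to $fB^\om$, Minkowski's inequality and Lemma~\ref{monotonepullout} give
\[
\|C_{\mu,\om}f\|_{A^q_\eta}^q\lesssim\sum_{n=0}^{\infty}\eta_{K^n}\,\mu_{\widetilde{K}_n}^q\,\|V_{n,K}*(fB^\om)\|_{H^q}^q ,
\]
exactly as in the paper. But $1/\om_j$ enters through the product $fB^\om$, not as a multiplier, so Lemma~\ref{monotonepullout} cannot pull it out.

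\emph{Your fallback does not close the argument.} The pointwise kernel bound at radius $1-K^{-n}$ replaces the blocks of $f$ by its integral means. That loses the $\ell^p\subset\ell^q$ step of Step two. A Hardy-type inequality in the style of Lemma~\ref{mainlemma0} cannot raise the exponent from $p$ to $q$. That route would still need a growth estimate such as $M_\infty(r,f)\lesssim\|f\|_{A^p_\nu}(\whv(r)(1-r))^{-1/p}$, combined with $M_q^q\le M_\infty^{q-p}M_p^p$.

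\textbf{The missing observation for your block plan.} The $n$th block of $f_mB^\om$ vanishes unless $m\le n+1$. It involves only the partial sum $S_{K^{n+1}}B^\om$ of degree $<K^{n+1}$, whose sup norm is at most $\sum_{j<K^{n+1}}\frac{1}{2\om_{2j+1}}\lesssim K^n/\om_{K^n}$. Combine this with \eqref{Eq:vn} and the Nikolskii inequality $\|P\|_{H^q}\lesssim N^{\frac1p-\frac1q}\|P\|_{H^p}$ for polynomials of degree $<N$. This gives
\[
\|V_{n,K}*(f_mB^\om)\|_{H^q}\lesssim\frac{K^n}{\om_{K^n}}\,K^{m(\frac1p-\frac1q)}\,\|f_m\|_{H^p},\quad m\le n+1,
\]
which is precisely the block estimate your Step two presupposes. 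Condition (iii) then produces the kernel $(\nu_{K^n}/\nu_{K^m})^{1/p}K^{-(n-m)(\frac1p-\frac1q)}$. This decays geometrically in $n-m$ by Lemma~\ref{ReverseDoubling}, which is needed when $p=q$. Lemma~\ref{decompositionlemma} for $A^p_\nu$ together with $\ell^p\subset\ell^q$ then finishes the proof.

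\textbf{Comparison with the paper.} Completed this way, your argument is more elementary than the paper's, and the weights $W$, $V$, $\sigma$ play no role. The paper instead proceeds as follows:
\begin{itemize}
\item it rewrites the block weights as moment ratios of $V^{p,q}_{\om,\nu}$ and $W_{\eta,q}$;
\item it identifies the sum with $\|R^{V^{p,q}_{\om,\nu},W_{\eta,q}}(fB^\om)\|^q_{A^q_\eta}$;
\item it applies fractional Littlewood--Paley inequalities twice to reach $\int_\D|f|^q|B^\om|^q\,d\sigma$, which is where $\sigma\in\DDD$ from Lemma~\ref{weightVpq} is used;
\item it concludes by showing that $|B^\om|^q\,d\sigma$ is a $q$-Carleson measure for $A^p_\nu$, via \cite{PR2015}, \eqref{Eq: PointwiseKernelEstimate} and Lemma~\ref{etahateta}.
\end{itemize}
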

\begin{proof}
Lemma~\ref{Momentvstail} shows that (ii) is equivalent to (iii).   By Corollary~\ref{Corollary: TestConditionpleqq}, we thus only need to show $\|\mu_{\om,\nu,\eta}\|_{\ell^\infty}<\infty$ implies the boundedness of $C_{\mu,\om}: A^p_{\nu} \to A^q_{\eta}$.  We will use the decomposition norm given in Lemma~\ref{decompositionlemma}. Let $K=K(\eta)>0$ be fixed large enough depending on \eqref{eq:ReverseDoublingDef}. Denote by $\{\widetilde{K}_n\}$ the sequence $\{0,1,K,K^2, \dots, K^{n-1},\dots\}$. Applying Lemma~\ref{decompositionlemma}, {Minkowski's inequality}, Lemma~\ref{monotonepullout} and Lemma~\ref{DoublingLemma}, we obtain
\begin{equation*}
\begin{split}
\|C_{\mu,\om}(f)\|_{A^q_{\eta}}^q &\asymp \sum_{n=0}^{\infty}\eta_{K^n}\|V_{n,K}*C_{\mu,\om}(f)\|_{H^q}^q\\
 &\asymp \sum_{n=0}^{\infty}\eta_{K^n}\left\|\int_0^1( V_{n,K}* f_tB_t^\om) d\mu(t)\right\|_{H^q}^q\\
&\lesssim \sum_{n=0}^{\infty}\eta_{K^n}\left(\int_0^1 \|V_{n,K}* f_tB_t^\om \|_{H^q}d\mu(t) \right)^q\\
&\lesssim\sum_{n=0}^{\infty}\eta_{K^n}\mu_{\widetilde{K}_n}^q\|V_{n,K}*(fB^{\om})\|_{H^q}^q\\
&\lesssim \|\mu^{(p,q)}_{\om,\nu,\eta}\|_{\ell^\infty}^q \sum_{n=0}^{\infty}\frac{\om_{K^n}^q\nu_{K^n}^{\frac{q}{p}}}{(K^n)^{q+\frac{q}{p}-1}}\|V_{n,K}*(fB^{\om})\|_{H^q}^q.
\end{split}
\end{equation*}
We will next show that the quotient of the several moments appearing can be simplified to a quotient of two moments of the weights introduced in Lemma \ref{weightVpq}, corresponding to
$W_{\nu,p}(r)=\frac{1}{p}\whv(r)^{\frac{1}{p}-1}\nu(r)$,
$
W_{\eta,q}(r)=\frac{1}{q}\whe(r)^{\frac{1}{q}-1}\eta(r)$ and  $V_{\om,\nu}^{p,q}(r)=\whw(r)(1-r)^{1+\frac{1}{p}-\frac{1}{q}}W_{\nu,p}(r)$. Using Lemma \ref{weightVpq} we obtain
\begin{equation*}
\begin{split}
\frac{\om_{K^n}^q\nu_{K^n}^{\frac{q}{p}}}{(K^n)^{q+\frac{q}{p}-1}}&=\left(\frac{\om_{K^n}\nu_{K^n}^{\frac{1}{p}}}{(K^n)^{1+\frac{1}{p}-\frac{1}{q}}}\right)^q\\
&\asymp \left(\left((1-\left(1-\frac{1}{K^n}\right)\right)^{1+\frac{1}{p}-\frac{1}{q}}\whw\left(1-\frac{1}{K^n}\right)\widehat{W_{\nu,p}}\left(1-\frac{1}{K^n} \right)\right)^q\\
&\asymp V_{\om,\nu}^{p,q}\left(1-\frac{1}{K^n}\right)^q\asymp (V_{\om,\nu}^{p,q})_{K^n}^q, \quad n \in \N_0.
\end{split}
\end{equation*}
Since $\eta_{K^n} \asymp (W_{\eta,q})_{K^n}^q$, thus, Lemma~\ref{monotonepullout} yields
\begin{equation*}
\begin{split}
\|C_{\mu,\om}(f)\|_{A^q_{\eta}}^q
&\lesssim \|\mu^{(p,q)}_{\om,\nu,\eta}\|_{\ell^\infty}^q \sum_{n=0}^{\infty}\eta_{K^n}\left(\frac{(V_{\om,\nu}^{p,q})_{K^n}}{(W_{\eta,q})_{K^n}}\right)^q\|V_{n,K}*(fB^{\om})\|_{H^q}^q\\
&\asymp \|\mu^{(p,q)}_{\om,\nu,\eta}\|_{\ell^\infty}^q \sum_{n=0}^{\infty}\eta_{K^n}\left(\frac{(V_{\om,\nu}^{p,q})_{2K^{n+1}-1}}{(W_{\eta,q})_{2K^{n-1}+1}}\right)^q\|V_{n,K}*(fB^{\om})\|_{H^q}^q\\
&\lesssim \|\mu^{(p,q)}_{\om,\nu,\eta}\|_{\ell^\infty}^q \sum_{n=0}^{\infty}\eta_{K^n}\|V_{n,K}*(R^{V_{\om,\nu}^{p,q},W_{\eta,q}}(fB^{\om}))\|_{H^q}^q\\
&\asymp \|\mu^{(p,q)}_{\om,\nu,\eta}\|_{\ell^\infty}^q\int_{\D}\left|R^{V_{\om,\nu}^{p,q},W_{\eta,q}}(fB^{\om})(z)\right|^q\eta(z)\,dA(z).
\end{split}
\end{equation*}
To estimate the integral, we apply recently shown Littlewood-Paley inequalities for fractional derivatives, see \cite[{Theorem 1}]{PelaezDeLaRosa} and \cite[{Theorem 4}]{PRW}.  Since  $\left(\widehat{V_{\om,\nu}^{p,q}}\right)^q\eta$ and $\left(\frac{\widehat{V_{\om,\nu}^{p,q}}^q}{\widehat{W_{\eta,q}}}\right)^q\eta$ belong to $\DDD$ {by Lemma~\ref{weightVpq},} we apply the Littlewood-Paley inequality twice, which yields 
\begin{equation*}
\begin{split}
&\quad \int_{\D}\left|R^{V_{\om,\nu}^{p,q},W_{\eta,q}}(fB^{\om})(z)\right|^q\eta(z)\,dA(z)\\
&\asymp \int_{\D}\left|D^{W_{\eta,q}}(fB^{\om})(z)\right|^q \widehat{V_{\om,\nu}^{p,q}}(z)^q\eta(z)\,dA(z)\\
&=\int_{\D}\left|D^{W_{\eta,q}}(fB^{\om})(z)\right|^q\widehat{W_{\eta,q}}(z)^q \frac{\widehat{V_{\om,\nu}^{p,q}}(z)^q}{\widehat{W_{\eta,q}}(z)^q}\eta(z)\,dA(z)\\
&\asymp \int_{\D}|f(z)|^q|B^{\om}(z)|^q \frac{\widehat{V_{\om,\nu}^{p,q}}(z)^q}{\widehat{W_{\eta,q}}(z)^q}\eta(z)\,dA(z).
\end{split}
\end{equation*}
To conclude the proof, we show that the measure $m$ defined by $dm(z)=|B^{\om}(z)|^q \frac{\widehat{V_{\om,\nu}^{p,q}}(z)^q}{\widehat{W_{\eta,q}}(z)^q}\eta(z)\,dA(z)$ is a $q$-Carleson measure for $A^p_{\nu}$. By \cite[Theorem 1]{PR2015}, it suffices to show that $m(S(a)) \lesssim \nu(S(a))^{\frac{q}{p}}$ for every $a \in \D$. Assume first that $p<q$. By the pointwise estimates for the Bergman kernel \eqref{Eq: PointwiseKernelEstimate}, Lemma~\ref{weightVpq} and Lemma~\ref{etahateta}, we obtain
\begin{equation*}
\begin{split}
\int_{S(a)}|B^{\om}(z)|^q \frac{\widehat{V_{\om,\nu}^{p,q}}(z)^q}{\widehat{W_{\eta,q}}(z)^q}\eta(z)\,dA(z) &\lesssim \int_{S(a)}\frac{\widehat{V_{\om,\nu}^{p,q}}(z)^q}{\whw(z)^q(1-|z|)^q\widehat{W_{\eta,q}}(z)^q}\eta(z)\,dA(z)\\
&\asymp(1-|a|)\int_{|a|}^{1}\whv(r)^{\frac{q}{p}}(1-r)^{\frac{q}{p}-1}\frac{\eta(r)}{\whe(r)}r\,dr\\
&\le(1-|a|)\whv(a)^{\frac{q}{p}}\int_{|a|}^{1}(1-r)^{\frac{q}{p}-1}\frac{\eta(r)}{\whe(r)}\,dr\\
&\asymp \nu(S(a))^{\frac{q}{p}}, \quad a \in \D.
\end{split}
\end{equation*}
When $q=p$, the first three steps of the previous deduction are still valid. Using the hypothesis that $\nu \in \DDD$, we have by Lemma \ref{ReverseDoubling} that there exists $\alpha=\alpha(\nu)>0$ such that $\frac{\whv(r)}{(1-r)^{\alpha}}$ is almost decreasing with respect to $r$. Thus, applying Lemma \ref{etahateta} we obtain
$$
(1-|a|)\int_{|a|}^{1}\whv(r)\frac{\eta(r)}{\whe(r)}\,dr
\lesssim (1-|a|)\frac{\whv(a)}{(1-|a|)^{\alpha}}\int_{|a|}^{1}(1-r)^{\alpha}\frac{\eta(r)}{\whe(r)}\,dr
\asymp \nu(S(a)), \quad a \in \D.$$
 Combining all the previous steps, we have shown that
\begin{equation*}
    \|C_{\mu,\om}\|_{A^q_{\eta}}^q \lesssim \|\mu_{\om,\nu,\eta}\|_{L^{\infty}}^q\|f\|_{A^p_{\nu}}^q,
\end{equation*}
which concludes the proof.
\end{proof}
\centerline{ \bf Case $p\le q\le 1$}

We next consider the case $C_{\mu,\om}:A^p_{\nu} \to A^q_{\nu}$ when $0<p\leq q\leq 1$. We need the following Bergman space analog of a well-known theorem by Hardy and Littlewood, see \cite[Theorem 5.11]{Duren}. We believe that this result is of independent interest.

\begin{theorem}\label{HLforBergman}
Let $0<p<q$, $\lambda\geq p$, and $\om \in \DD$. Then there exists a constant $C=C(\om,p,q,\lambda)>0$ such that
\begin{equation*}
\int_{0}^{1 }\om(S(r))^{\lambda\left(\frac{1}{p}-\frac{1}{q}\right)}\|f_r\|_{A^q_{\om}}^{\lambda}\,\frac{dr}{1-r} \leq C\|f\|_{A^p_{\om}}^{\lambda}.
\end{equation*}
\end{theorem}

\begin{proof}
For $f \in A^p_{\om}$, we first need an estimate for the $A^q_{\om}$-norm of the dilatation of $f$. Observe that for $q>p$ and $\om \in \DD$, we have by the pointwise estimates for functions in $A^p_{\om}$ that
\begin{equation}\label{ApAqineq}
\begin{split}
\|f_r\|_{A^q_{\om}}&=\left(\int_{\D}|f_r(z)|^q\om(z)\,dA(z)\right)^{\frac{1}{q}}\\
&=\left(\int_{\D}|f_r(z)|^p|f_r(z)|^{q-p}\om(z)\,dA(z)\right)^{\frac{1}{q}}\\
&\leq C\frac{\|f\|_{A^p_{\om}}}{\om(S(r))^{\frac{1}{p}-\frac{1}{q}}}, \quad 0<r<1.
\end{split}
\end{equation}
    We first prove the special case $\lambda=p$, from where the general case follows.
   Let $\eta$ be the measure  given by $d\eta(r)=\widehat{\dot{\om}}(r)\,dr$, where $dr$ is the Lebesgue measure on $[0,1)$ and $\dot{\om}(r)=\om(r)r$.  In particular, we have $\om(S(r))=\widehat{\dot{\om}}(r)(1-r)$ for $0<r<1$, which is continuous and strictly decreasing to $0$ and it can be  extended  to $[0,1)$ by setting $\om(S(0))=\widehat{\dot{\om}}(0)$. 
 Define the operator $T$ by the formula
    \begin{equation*}
    T(f)(r)=\om(S(r))^{-\frac{1}{q}}\|f_r\|_{A^q_{\om}}, \quad f \in A^p_{\om}, \quad 0\leq r<1.
    \end{equation*}
Our aim is to show that $T: A^p_{\om} \to L^{p}([0,1], \eta)$ is bounded for every $0<p<q$. 
By Marcinkiewicz interpolation theorem \cite[Theorem 1.3.2]{Grafakos}, it suffices to show that $T: A^p_{\om} \to L^{p,\infty}([0,1], \eta)$ is bounded for every $0<p<q$.

Thus, for every $t>0$ we obtain
    \begin{equation*}
    \begin{split}
    \int_{\{0\leq r<1: \,|T(f)(r)|>t\}}\widehat{\dot{\om}}(r)\,dr
   & \leq \int_{\{0\leq r<1: \, C\om(S(r))^{-\frac{1}{p}}\|f\|_{A^p_{\om}}>t\}}\widehat{\dot{\om}}(r)\,dr\\
&= \int_{\{0\leq r<1: \, \frac{C^p\|f\|_{A^p_{\om}}^p}{t^p}>\om(S(r)\}}\widehat{\dot{\om}}(r)\,dr
    \end{split}
    \end{equation*}
    where the constant $C>0$ comes from \eqref{ApAqineq}.

   Let $t_0>0$ be small enough such that $\om(S(0))<\frac{C^p\|f\|_{A^p_{\om}}^p}{t_0^p}$. Hence for $0<t\le t_0$ we can estimate
    \begin{equation*}
    \int_{\{0\leq r<1: \,|T(f)(r)|>t\}}\widehat{\dot{\om}}(r)\,dr\le\int_{0}^{1}\widehat{\dot{\om}}(r)\,dr \leq \widehat{\dot{\om}}(0)= \om(S(0))\le \frac{C^p\|f\|_{A^p_{\om}}^p}{t^p}.
    \end{equation*}
    For $t>t_0$, let $r_t \in (0,1)$ be the point satisfying $\om(S(r_t)) =\frac{C^p\|f\|_{A^p_{\om}}^p}{t^p}$.This yields
    \begin{equation*}
    \begin{split}
        \int_{\{0\leq r<1: \, C\om(S(r))^{-\frac{1}{p}}\|f\|_{A^p_{\om}}>t\}}\widehat{\dot{\om}}(r)\,dr
        &\leq \int_{r_t}^{1}\widehat{\dot{\om}}(r)\,dr
    \leq \widehat{\dot{\om}}(r_t)(1-r_t)\\
    &=\om(S(r_t))=\frac{C^p\|f\|_{A^p_{\om}}^p}{t^p}, \quad 0<t<\infty.
    \end{split}
    \end{equation*}
    Therefore $T: A^p_{\om} \to L^{p,\infty}([0,1], \eta)$ is bounded for every $0<p<q$ and using interpolation  we have
    \begin{equation*}
    \int_{0}^{1}\|f_r\|_{A^q_{\om}}^p\om(S(r))^{-\frac{p}{q}}\widehat{\dot{\om}}(r)\,dr \lesssim \|f\|_{A^p_{\om}}^{p}.
    \end{equation*}

    For the general case, we let $\lambda>p$. Thus, by \eqref{ApAqineq} once again we have
    \begin{equation*}
    \begin{split}
\int_{0}^{1}\|f_r\|^{\lambda}_{A^q_{\om}}\om(S(r))^{\lambda\left(\frac{1}{p}-\frac{1}{q}\right)}\,\frac{dr}{1-r}
&\lesssim \|f\|_{A^p_{\om}}^{\lambda-p}\int_{0}^{1}\|f_r\|^p_{A^q_{\om}}\om(S(r))^{\lambda\left(\frac{1}{p}-\frac{1}{q}\right)-(\lambda-p)\left(\frac{1}{p}-\frac{1}{q}\right)}\,\frac{dr}{1-r}\\
&=\|f\|_{A^p_{\om}}^{\lambda-p}\int_{0}^{1}\|f_r\|^p_{A^q_{\om}}\om(S(r))^{1-\frac{p}{q}}\,\frac{dr}{1-r}\\
&=\|f\|_{A^p_{\om}}^{\lambda-p}\int_{0}^{1}\|f_r\|^p_{A^q_{\om}}\om(S(r))^{-\frac{p}{q}}\widehat{\dot{\om}}(r)\,dr \lesssim\|f\|^{\lambda}_{A^p_{\om}}.
    \end{split}
    \end{equation*}
This concludes the proof.
\end{proof}

\begin{theorem}\label{Theorem:ApAq q<1}
Let $0<p\leq q\leq 1$, $\om,\nu \in \DD$, and let $\mu \in M^+([0,1))$. Then the following conditions are equivalent:
        \begin{enumerate}
            \item[\textup{(i)}] $C_{\mu,\om}: A^p_{\nu} \to A^q_{\nu}$ is bounded;
            \item[\textup{(ii)}] \begin{equation*}
                 \|\mu^{(p,q)}_{\om,\nu}\|_{L^\infty}=\sup_{0\leq r<1}\frac{\widehat{\mu}(r)}{\whw(r)\whv(r)^{\frac{1}{p}-\frac{1}{q}}(1-r)^{1+\frac{1}{p}-\frac{1}{q}}}<\infty;
            \end{equation*}
            \item[\textup{(iii)}]
            \begin{equation*}
                 \|\mu^{(p,q)}_{\om,\nu}\|_{\ell^\infty}=\sup_{n \in \N_0}\frac{\mu_n(n+1)^{1+\frac{1}{p}-\frac{1}{q}}}{\om_n\nu_n^{\frac{1}{p}-\frac{1}{q}}};
            \end{equation*}
            \item[\textup{(iv)}] For every fixed $\beta>0$, we have $D^{\beta+\frac{1}{p}-\frac{1}{q}}D^{\nu^{\frac{1}{p}-\frac{1}{q}}}F_{\mu,\om} \in X_{\beta}$.
        \end{enumerate}
    Moreover, we have $\|C_{\mu,\om}\|_{A^p_{\nu} \to A^q_{\nu}}\asymp \|\mu^{(p,q)}_{\om,\nu}\|_{L^\infty} \asymp \|\mu^{(p,q)}_{\om,\nu}\|_{\ell^\infty} \asymp \|D^{\beta+\frac{1}{p}-\frac{1}{q}}D^{\nu^{\frac{1}{p}-\frac{1}{q}}}F_{\mu,\om}\|_{X_{\beta}}$.
\end{theorem}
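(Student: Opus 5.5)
The equivalences (ii)$\Leftrightarrow$(iii)$\Leftrightarrow$(iv) are bookkeeping. Lemma~\ref{Momentvstail}, applied with $a=1$, $b=\frac1p-\frac1q$, $c=0$ and $d=1+\frac1p-\frac1q$, gives (ii)$\Leftrightarrow$(iii). For (iv), I would repeat the proof of Proposition~\ref{Prop: Fmuom sup norm space}. The coefficients of $D^{\beta+\frac1p-\frac1q}D^{\nu^{\frac1p-\frac1q}}F_{\mu,\om}$ are comparable to $\frac{\mu_n(n+1)^{\beta+\frac1p-\frac1q}}{\om_n\nu_n^{\frac1p-\frac1q}}$. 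One direction bounds $(1-|z|)^\beta$ times the coefficient sum using $\|\mu^{(p,q)}_{\om,\nu}\|_{\ell^\infty}$ and $\sum (n+1)^{\beta-1}|z|^n\asymp(1-|z|)^{-\beta}$. The other direction evaluates at $r_N=1-\frac1{N+1}$, keeps only the terms $n\le N$, uses that $\mu_n$ is decreasing, and applies Lemma~\ref{Twoweightsumestimate} with $\nu\in\DD$. Corollary~\ref{Corollary: TestConditionpleqq}(ii) with $\eta=\nu$ gives (i)$\Rightarrow$(ii), since $\nu(S(r))^{\frac1q-\frac1p}\asymp(\whv(r)(1-r))^{\frac1q-\frac1p}$.

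The real work is (ii)$\Rightarrow$(i). I would start from the Hardy case $0<p\le q\le 1$ estimate \eqref{Eq: Lp-ineqCesaroq<1} and redo it so that the $\nu$-weight stays visible. Put $t_k=1-2^{-k}$ and $g=fB^\om$. By subadditivity ($q\le1$), the bound $\whm(t_k)\le\|\mu^{(p,q)}_{\om,\nu}\|_{L^\infty}\whw(t_k)\whv(t_k)^{\frac1p-\frac1q}(1-t_k)^{1+\frac1p-\frac1q}$, and the radial maximal theorem, for each $r$ we get
$$M_q^q(r,C_{\mu,\om}(f))\lesssim \|\mu^{(p,q)}_{\om,\nu}\|^q_{L^\infty}\int_0^1 \whw(s)^q\whv(s)^{\frac qp-1}(1-s)^{\frac qp-1}M_q^q(sr,g)\,ds,$$
which absorbs the sum over $k$ into an integral. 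Next I would integrate against $\nu(r)r\,dr$ and apply Fubini. This replaces $M_q^q(sr,g)$ by the $A^q_\nu$-type quantity $\int_{\D}|g(sz)|^q\nu(z)\,dA(z)$, which is $\|g_s\|^q_{A^q_\nu}$.

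To remove the kernel from $g_s$, I would use \eqref{Eq: PointwiseKernelEstimate} and Lemma~\ref{DoublingLemma}, which give $|B^\om(sz)|\lesssim \frac{1}{\whw(s)(1-s)}$ uniformly in $z$ (this is $M_\infty(s,B^\om)$). Hence $\|g_s\|^q_{A^q_\nu}\lesssim \whw(s)^{-q}(1-s)^{-q}\|f_s\|^q_{A^q_\nu}$. The powers of $\whw$ cancel, and using $\nu(S(s))\asymp\whv(s)(1-s)$ what remains is
$$\|C_{\mu,\om}(f)\|^q_{A^q_\nu}\lesssim\|\mu^{(p,q)}_{\om,\nu}\|^q_{L^\infty}\int_0^1\nu(S(s))^{q(\frac1p-\frac1q)}\|f_s\|^q_{A^q_\nu}\frac{ds}{1-s}.$$
When $p<q$, Theorem~\ref{HLforBergman} with $\lambda=q\ge p$ bounds this by $\|f\|^q_{A^p_\nu}$. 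When $p=q$, the exponent of $\nu(S(s))$ vanishes and the integral diverges logarithmically, so I would argue directly from the $M_p$-means estimate \eqref{Eq: Lp-ineqCesaroq<1} (pointwise in $r$) and integrate in polar coordinates, as in the $p=q$ theorem above.

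The main obstacle is the kernel step. The crude sup bound on $B^\om$ loses exactly the factor $(1-s)^{-q}$ and so must be matched precisely. If the powers do not close, the fallback is to keep Hölder in the angular variable as in \eqref{Eq: Lp-ineqCesaroq<1}, with $qx'>1$, and Lemma~\ref{Lemma: KernelLpEstimate}. That bounds $M_q^q(sr,g)$ by $\frac{M_{qx}^q(sr,f)}{\whw(sr)^q(1-sr)^{q-1/x'}}$. One then needs a version of Theorem~\ref{HLforBergman} in which $A^q_\nu$ is replaced by mixed norms $\int M^q_{qx}\nu$, with exponent bookkeeping $\frac{q}{p}-2+\frac1{x'}$. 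I expect that this only requires checking that the proof via the weak-type bound \eqref{ApAqineq} and Marcinkiewicz interpolation goes through verbatim.
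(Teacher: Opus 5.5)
Your argument is correct. It differs from the paper's proof only at the kernel step.

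\textbf{Shared part.} Up to the kernel step you follow the paper: dyadic $t_k$, subadditivity, the radial maximal theorem, and turning the sum into an integral. You do this for $M_q$-means at fixed $r$ and then integrate against $\nu(r)r\,dr$, whereas the paper works directly in $A^q_\nu$; the two are equivalent.

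\textbf{An exponent slip.} The weight in your first display should be $(1-s)^{q+\frac qp-2}$, not $(1-s)^{\frac qp-1}$. With this correction, the factor $M_\infty(s,B^\om)^q\asymp(\whw(s)(1-s))^{-q}$ cancels exactly against $\om(S(s))^q$. You then get precisely your second display, so the powers do close and your fallback through mixed norms is unnecessary.

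\textbf{The paper's kernel step.} The paper applies H\"older's inequality with exponents $x,x'$ inside $\int_\D|f_t|^q|B^\om_t|^q\nu\,dA$. It bounds the $A^{qx'}_\nu$-norm of the kernel using Lemma~\ref{Lemma: KernelLpEstimate}. It then invokes Theorem~\ref{HLforBergman} for the pair $(p,qx)$ with $\lambda=q$. Since $qx>q\ge p$, this applies even when $p=q$, so the paper covers the whole range $0<p\le q\le 1$ in one argument.

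\textbf{Your kernel step.} Your sup bound on the kernel is more elementary and needs no integral estimates for $B^\om$. However, Theorem~\ref{HLforBergman} then requires $p<q$ strictly, so you must hand the case $p=q$ to the earlier $p=q$ theorem. That is legitimate. Note, though, that the earlier theorem rests on \eqref{Eq: Lp-ineqCesaroq<1}, where the same H\"older/kernel-$L^p$ device is used, so the trick is relocated rather than avoided.

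\textbf{A minor point on (iv).} Lemma~\ref{Twoweightsumestimate} concerns full power series. For the lower bound on the partial sum $\sum_{n\le N}$ it is simpler to use that the moments decrease and that $\om_n\asymp\om_{2n}$ and $\nu_n\asymp\nu_{2n}$, keeping only the terms with $N/2\le n\le N$.
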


\begin{proof}
Lemma~\ref{Momentvstail} shows that $\|\mu^{(p,q)}_{\om,\nu}\|_{L^{\infty}} \asymp  \|\mu^{(p,q)}_{\om,\nu}\|_{\ell^\infty}$ and methods similar to the proof of Proposition~\ref{Prop: Fmuom sup norm space} yield the equivalence between \textup{(ii)}, \textup{(iii)}, and \textup{(iv)}. By Corollary~\ref{Corollary: TestConditionpleqq}, we only need to show that the condition $\|\mu^{(p,q)}_{\om,\nu}\|_{L^\infty}<\infty$ ensures the boundedness of $C_{\mu,\om}: A^p_{\nu} \to A^q_{\nu}$ with the correct norm estimate. We once again let $g(z)=f(z)B^{\om}(z)$ for every $z \in \D$. Let $t_k=1-2^{-k}$ for every $k \in \N$. With this notation, by Fubini's theorem and subadditivity we have
\begin{equation*}
\begin{split}
\|C_{\mu,\om}(f)\|_{A^q_{\nu}}^q
&=\int_{\D}\left|\int_{0}^{1}f(tz)B^{\om}_t(z)\,d\mu(t)\right|^q\nu(z)\,dA(z)\\
&\leq \int_{\D}\sum_{k=0}^{\infty}\left(\int_{t_k}^{t_{k+1}}|g(tz)|\,d\mu(t)\right)^q\nu(z)\,dA(z)\\
&\leq \sum_{k=0}^{\infty}\whm(t_k)^q\int_{\D}\sup_{0\leq t\leq t_{k+1}}|g(tz)|^q\nu(z)\,dA(z)\\
\end{split}
\end{equation*}
It is easy to see that the radial maximal theorem \cite[Theorem 7.1.4]{Pavlovic2004} yields the corresponding result for every weighted Bergman space induced by a radial weight. Thus, the assumption and the hypothesis that $\om,\nu \in \DD$ yields
\begin{equation*}
\begin{split}
\|C_{\mu,\om}(f)\|_{A^q_{\nu}}^q
&\leq \sum_{k=0}^{\infty}\whm(t_k)^q\int_{\D}\sup_{0\leq t\leq t_{k+1}}|g(tz)|^q\nu(z)\,dA(z)\\
&\lesssim \|\mu^{(p,q)}_{\om,\nu}\|_{L^\infty}^q\sum_{k=0}^{\infty}\om(S(t_k))^q\nu(S(t_k))^{\frac{q}{p}-1}\|g_{t_{k+1}}\|_{A_{\nu}^q}^q\\
&\asymp  \|\mu^{(p,q)}_{\om,\nu}\|_{L^\infty}^q\sum_{k=0}^{\infty}\int_{t_{k+1}}^{t_{k+2}}\,dt\|g_{t_{k+1}}\|_{A_{\nu}^q}^q\frac{\om(S(t_k))^q\nu(S(t_k))^{\frac{q}{p}-1}}{1-t_k}\\
&\lesssim  \|\mu^{(p,q)}_{\om,\nu}\|_{L^\infty}^q\sum_{k=0}^{\infty}\int_{t_{k+1}}^{t_{k+2}}\|g_{t}\|_{A_{\nu}^q}^q \om(S(t))^q\nu(S(t))^{\frac{q}{p}-1}\frac{dt}{1-t}\\
&\leq  \|\mu^{(p,q)}_{\om,\nu}\|_{L^\infty}^q\int_{0}^{1}\|g_{t}\|_{A_{\nu}^q}^q \om(S(t))^q\nu(S(t))^{\frac{q}{p}-1}\frac{dt}{1-t},
\end{split}
\end{equation*}
where we also used the fact that $\|g_r\|_{A^q_{\nu}}$ is increasing with respect to $r$. As $\nu \in \DD$, there exists a $\beta>0$ such that $\whv(r)(1-r)^{-\beta}$ is almost increasing by Lemma~\ref{DoublingLemma}. Let $x,x'>1$ be fixed such that $\frac{1}{x}+\frac{1}{x'}=1$ and $x'>\frac{1+\beta}{q}$. Using the $L^p$-estimates for reproducing kernels shown in Lemma~\ref{Lemma: KernelLpEstimate}, we obtain
\begin{equation*}
\begin{split}
\|g_{t}\|_{A_{\nu}^q}^q&=\int_{\D}|f(tz)|^q|B^{\om}_t(z)|^q\nu(z)\,dA(z)\\
&\leq \left(\int_{\D}|f(tz)|^{qx}\nu(z)\,dA(z)\right)^{\frac{1}{x}}\left(\int_{\D}|B^{\om}_t(z)|^{qx'}\nu(z)\,dA(z)\right)^{\frac{1}{x'}}\\
&\asymp \|f_t\|_{A^{qx}_{\nu}}^q \left(\int_{0}^{t}\frac{\whv(s)}{\whw(s)^{qx'}(1-s)^{qx'}}\,ds+1\right)^{\frac{1}{x'}}\\
&\lesssim\|f_t\|_{A^{qx}_{\nu}}^q \left(\frac{\whv(t)}{(1-t)^{\beta}\whw(t)^{qx'}}\int_{0}^{t}\frac{1}{(1-s)^{qx'-\beta}}\,ds+1\right)^{\frac{1}{x'}}\\
&\asymp \|f_t\|_{A^{qx}_{\nu}}^q \left(\frac{\whv(t)}{(1-t)^{\beta+qx'-1-\beta}\whw(t)^{qx'}}+1\right)^{\frac{1}{x'}}\\
&\asymp \|f_t\|_{A^{qx}_{\nu}}^q \frac{\nu(S(t))^{\frac{1}{x'}}}{\om(S(t))^{q}}, \quad 0\leq t<1.
\end{split}
\end{equation*}
Thus, by Theorem~\ref{HLforBergman}, we obtain
\begin{equation*}
\begin{split}
\int_{0}^{1}\|g_{t}\|_{A_{\nu}^q}^q\om(S(t))^q\nu(S(t))^{\frac{q}{p}-1}\frac{dt}{1-t}
&\lesssim \int_{0}^{1}\|f_t\|_{A^{qx}_{\nu}}^q \nu(S(t))^{\frac{q}{p}-1+\frac{1}{x'}}\frac{dt}{1-t}\\
&=\int_{0}^{1}\|f_t\|_{A^{qx}_{\nu}}^q \nu(S(t))^{q\left(\frac{1}{p}-\frac{1}{qx} \right)}\frac{dt}{1-t}\\
&\lesssim \|f\|^q_{A^p_{\nu}},
\end{split}
\end{equation*}
which concludes the proof with the right norm estimate.
\end{proof}

\subsection{ Case $q<p$.}  Note that unlike in the Hardy spaces, the weighted Bergman spaces have a useful atomic decomposition, which allows us to proceed without any theory on tent spaces.

\begin{lemma}\label{q<ptestingbergman}
    Let $0<q<p<\infty$, $\om \in \DD$ and $\nu,\eta \in \DDD$, and let $\mu \in M^+([0,1))$. Then, if $C_{\mu,\om}: A^{p}_{\nu} \to A^q_{\eta}$, we have
    \begin{equation*}
\|\mu_{\om,\nu,\eta}\|_{\ell_{\frac{\nu_n}{(n+1)^2}}^\frac{qp}{p-q}}=\left(\sum_{n=0}^{\infty}\left(\frac{\mu_{n} (n+1)}{\om_{n}}\frac{\eta_{n}^\frac{1}{q}}{\nu_{n}^\frac{1}{q}}\right)^{\frac{qp}{p-q}}\frac{\nu_{n}}{(n+1)^2}\right)^{\frac{p-q}{qp}} <\infty.
    \end{equation*}
    Moreover, we have
    \begin{equation*}
\|\mu_{\om,\nu,\eta}\|_{\ell_{\frac{\nu_n}{(n+1)^2}}^\frac{qp}{p-q}}\lesssim \|C_{\mu,\om}\|_{A^p_{\nu}\to A^q_{\eta}}.
    \end{equation*}
\end{lemma}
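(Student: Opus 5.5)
Following the Hardy space argument for Theorem~\ref{MainTheorem: Hardy}(ii), we replace tent spaces by the atomic decomposition of $A^p_\nu$ and use Khinchine's inequality. Let $t_k=1-2^{-k}$ and consider the $\veps$-separated lattice $\{z_{k,j}\}$ of points $z_{k,j}=t_k e^{2\pi i j 2^{-k}}$, $j=0,\dots,2^k-1$. For $\nu\in\DDD$ the atomic decomposition (available for $\DDD$ weights, e.g. via \cite{PR2015}-type arguments) gives, for $\lambda$ large,
\begin{equation*}
\Big\|\sum_{k,j} c_{k,j}\,\frac{(1-t_k)^{\lambda}}{\nu(S(z_{k,j}))^{1/p}(1-\overline{z_{k,j}}z)^{\lambda}}\Big\|_{A^p_\nu}\lesssim \Big(\sum_{k,j}|c_{k,j}|^p\Big)^{1/p}.
\end{equation*}
We only need radial information, so we restrict to the points $z_{k,0}=t_k$. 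More precisely, we test $C_{\mu,\om}$ on $f_x=\sum_{k\le N} c_k r_k(x)\,(1-t_k)^{\lambda}\nu(S(t_k))^{-1/p}(1-t_kz)^{-\lambda}$. For these functions $\|f_x\|_{A^p_\nu}\lesssim(\sum|c_k|^p)^{1/p}$ directly by Lemma~\ref{DoublingLemma}(iii), since the $L^p_\nu$-norms of $(1-t_k z)^{-\lambda}$ behave geometrically in $k$ (one can also just invoke the atomic estimate). Here $r_k$ are the Rademacher functions.

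Then $\|C_{\mu,\om}(f_x)\|_{A^q_\eta}^q\le \|C_{\mu,\om}\|^q(\sum|c_k|^p)^{q/p}$. We integrate in $x$, and Khinchine gives
\begin{equation*}
\int_\D\Big(\sum_{k\le N}|c_k|^2\frac{(1-t_k)^{2\lambda}}{\nu(S(t_k))^{2/p}}\Big|\int_0^1\frac{B^\om_t(z)}{(1-t_ktz)^{\lambda}}d\mu(t)\Big|^2\Big)^{q/2}\eta(z)\,dA(z).
\end{equation*}
We restrict $z$ to the sectors $\{t_m\le |z|<t_{m+1},\ |\arg z|\lesssim 1-t_m\}$. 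On such a sector the inner integral is comparable to its value at $s=|z|$: for $|\arg z|\le c(1-|z|)$ with small $c$, the Taylor coefficients are positive, so the real part dominates. We keep only the $k=m$ term and apply Lemma~\ref{Lemma:LowerBoundEq} with $r=s=t_m$. Each region has $\eta$-measure $\asymp\eta(S(t_m))$, so we get the lower bound
\begin{equation*}
\sum_{m\le N}|c_m|^q\frac{(1-t_m)^{q\lambda}}{\nu(S(t_m))^{q/p}}\frac{\mu_{2^m-1}^q 2^{mq(\lambda+1)}}{\om_{2^m}^q}\eta(S(t_m))\asymp\sum_{m\le N}|c_m|^q\Big(\frac{\mu_{2^m-1}2^m}{\om_{2^m}}\Big)^q\frac{\eta_{2^m}}{(\nu_{2^m}2^{-m})^{q/p}}2^{-m}.
\end{equation*}
Here we used $\om(S(r))\asymp\whw(r)(1-r)$ and Lemma~\ref{DoublingLemma}(iv). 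The positivity step needs care; the most robust option is to use $|F|\ge\mathrm{Re}\,F$ on a thin sector.

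We then optimize by duality between $\ell^{p/q}$ and $\ell^{p/(p-q)}$. Write the lower bound as $\sum_m |c_m|^q A_m$, with $\sum|c_m|^p\le1$. Choosing $|c_m|^q=A_m^{q/(p-q)}/(\sum A_m^{p/(p-q)})^{q/p}$ yields
\begin{equation*}
\|C_{\mu,\om}\|^q\gtrsim\Big(\sum_{m\le N}A_m^{p/(p-q)}\Big)^{(p-q)/p}.
\end{equation*}
Computing, $A_m^{p/(p-q)}=\big(\tfrac{\mu_{2^m-1}2^m}{\om_{2^m}}\tfrac{\eta_{2^m}^{1/q}}{\nu_{2^m}^{1/q}}\big)^{qp/(p-q)}\nu_{2^m}2^{-m}$. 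Checking the exponents: $(\nu 2^{-m})^{-p/(p-q)}\,2^{-mp/(p-q)}\,\eta^{p/(p-q)}$ rearranges to exactly this power of $\nu$ times $\nu_{2^m}2^{-m}$. We let $N\to\infty$ and compare the dyadic sum with the full sum as in \eqref{Eq: SeriesComparisonBlockHq<p}, using that $\mu_n$ is decreasing and that $\om_n,\nu_n,\eta_n$ are essentially constant on dyadic blocks by Lemma~\ref{DoublingLemma}(v). This gives the claimed bound.

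The main obstacle is the lower bound on the sector for the complex-valued integral. It is not automatic, since $B^\om$ may have zeros. We will handle it by noting that for $z=se^{i\theta}$ with $|\theta|\le c(1-s)$ the power series with nonnegative coefficients, truncated at order $\asymp(1-s)^{-1}$, has real part comparable to its value at $s$, while the tail is controlled by the same positivity.
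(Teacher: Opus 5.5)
Your skeleton matches the paper's: atoms at radial points, Rademacher averaging with Khinchine, keeping the diagonal term, Lemma~\ref{Lemma:LowerBoundEq}, $\ell^{p/q}$--$\ell^{p/(p-q)}$ duality, and the dyadic-to-full comparison. Your exponent bookkeeping is also correct. The step that fails as written is the localization. You assert that the sector $\{t_m\le|z|<t_{m+1},\ |\arg z|\le c(1-t_m)\}$ has $\eta$-measure $\asymp\eta(S(t_m))$. Its measure is $\asymp(1-t_m)\int_{t_m}^{t_{m+1}}\eta(r)\,dr$. For $\eta\in\DDD$, reverse doubling only guarantees mass $\gtrsim\widehat\eta(r)$ on annuli $[r,1-\frac{1-r}{K}]$, with $K=K(\eta)$ possibly much larger than $2$. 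For example, $\eta=\chi_E$ with $E=\bigcup_{j\ge0}[1-8^{-j},1-\frac12 8^{-j})$ has $\widehat\eta(r)\asymp 1-r$, so $\eta\in\DDD$. Yet $\eta\equiv0$ on $[t_m,t_{m+1})$ whenever $m\not\equiv0\pmod 3$, so those terms of your lower bound vanish. You can repair this by using $t_k=1-K^{-k}$ with $K$ at least the constant of Lemma~\ref{ReverseDoubling}(iii).

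The paper avoids the issue altogether. It uses
$\|g\|_{A^q_\eta}^q\gtrsim\int_0^1M_\infty^q(s,g)\widehat\eta(s)\,ds\ge\int_0^1|g(s)|^q\widehat\eta(s)\,ds$, which follows from \cite[Theorem 2.16]{Pavlovic2014} and Fubini. This does two things. It replaces $\eta$ by the decreasing, doubling function $\widehat\eta$, so $\int_{z_m}^{z_{m+1}}\widehat\eta(s)\,ds\gtrsim\widehat\eta(z_m)(1-z_m)$ for any geometric sequence, using only $\eta\in\DD$. It also evaluates $C_{\mu,\om}(f_x)$ only at real $s\in[0,1)$, where every Taylor coefficient is positive and Lemma~\ref{Lemma:LowerBoundEq} applies verbatim.

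The positivity step you flagged is also not justified by the reason you give. For a general series with nonnegative coefficients, the real part on $|\theta|\le c(1-s)$ need not be comparable to the value at $s$: for $z^N$ with $N(1-s)$ large, $s^N\cos N\theta$ changes sign. Positivity alone also does not control the tail. What makes it work here is a quantitative tail estimate for this particular series $G=\sum g_nz^n$, using three facts:
\begin{enumerate}
\item[(a)] $\mu_n$ decreases;
\item[(b)] $\sum_j(j+1)^{\lambda-1}t_m^j\lesssim 2^{m\lambda}$;
\item[(c)] $1/\om_n$ grows polynomially, by Lemma~\ref{DoublingLemma}(ii).
\end{enumerate}
Together these give $g_n\lesssim\mu_{2^m}2^{m\lambda}\om_{2^m}^{-1}(n2^{-m})^{\beta}$ for $n\ge2^m$. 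Hence $\sum_{n>C2^m}g_ns^n\lesssim\varepsilon(C)\,\mu_{2^m-1}2^{m(\lambda+1)}/\om_{2^m}$ for $s<t_{m+1}$, with $\varepsilon(C)\to0$. This is a small multiple of the head bounded below in Lemma~\ref{Lemma:LowerBoundEq}. Choosing $c\le\pi/(2C)$ makes $\cos n\theta\ge0$ for $n\le C2^m$, so $|G|\ge\mathrm{Re}\,G\gtrsim$ head. Restricting to the radius, as the paper does, makes this unnecessary.

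Finally, for $p>1$ the bound $\|f_x\|_{A^p_\nu}\lesssim(\sum|c_k|^p)^{1/p}$ does not follow from the sizes of the individual atoms; that only gives $\sum|c_k|$. You need the atomic decomposition theorem \cite[Theorem 1]{PRS}, which is what the paper invokes.
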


\begin{proof}
We use the atomic decomposition given by \cite[Theorem 1]{PRS}, which says that for a pseudohyperbolically separated sequence $\{z_k\}$, a sequence $\{a_k\} \in \ell^p$, and a sufficiently large $M >0$, the function $f$ defined by
\begin{equation*}
    f(z)=\sum_{k=0}^{\infty}a_k\frac{(1-|z_k|)^{M-\frac{1}{p}}\whv(z_k)^{-\frac{1}{p}}}{(1-\conz_kz)^M}, \quad z \in \D,
\end{equation*}
is analytic and satisfies $\|f\|_{A^p_{\nu}} \lesssim \|a_k\|_{\ell^p}$. Let $K>1$ and let $z_k=1-K^{-k}$, which certainly is a separated sequence and let $\{a_k\} \in \ell^p$. Denote by $r_k$ the $k$th Rademacher function. We will use the assumption and Khinchine's inequality to the function $f_x$ given by the previously mentioned separated sequence $\{z_k\}$ and the sequence $\{a_kr_k(x)\} \in \ell^p$ for every $x \in [0,1]$. We apply the inequality $\|f\|_{A^p_{\nu}} \gtrsim \int_{0}^{1}M^p_{\infty}(s,f)\whv(s)\,ds$, valid for every radial weight $\nu$, which follows from \cite[Theorem 2.16]{Pavlovic2014} and Fubini's theorem. The Dominated convergence theorem and Fubini's theorem then yield
\begin{equation*}
\begin{split}
\|C_{\mu,\om}\|_{A^p_{\nu}\to A^q_{\eta}}^q\|(a_k)\|_{\ell^p}^q
&\gtrsim \|C_{\mu,\om}\|_{A^p_{\nu}\to A^q_{\eta}}^q\|f_x\|_{A^p_{\nu}}^q\\
& \gtrsim \int_{0}^{1}\int_{0}^{1}|C_{\mu,\om}(f_x)(s)|^q\widehat{\eta}(s)\,ds\,dx\\
&=\int_{0}^{1}\int_{0}^{1}\left|\int_{0}^{1}\sum_{k=0}^{\infty}a_kr_k(x)\frac{(1-|z_k|)^{M-\frac{1}{p}}\whv(z_k)^{-\frac{1}{p}}}{(1-z_kts)^M}B^{\om}_t(s)\,d\mu(t)\right|^q\widehat{\eta}(s)\,ds\,dx\\
&=\int_{0}^{1}\int_{0}^{1}\left|\sum_{k=0}^{\infty}a_kr_k(x)(1-|z_k|)^{M-\frac{1}{p}}\whv(z_k)^{-\frac{1}{p}}\int_{0}^{1}\frac{B^{\om}_t(s)}{(1-z_kts)^M}\,d\mu(t)\right|^q\,dx\, \widehat{\eta}(s)\,ds\\
&\asymp \int_{0}^{1}\left(\sum_{k=0}^{\infty}|a_k|^2(1-|z_k|)^{2M-\frac{2}{p}}\whv(z_k)^{-\frac{2}{p}}\left(\int_{0}^{1}\frac{B^{\om}_t(s)}{(1-z_kts)^M}\,d\mu(t)\right)^2\right)^{\frac{q}{2}}\, \widehat{\eta}(s)\,ds.
\end{split}
\end{equation*}
By Lemma~\ref{Lemma:LowerBoundEq}, we have
\begin{equation*}
\begin{split}
&\quad \int_{0}^{1}\left(\sum_{k=0}^{\infty}|a_k|^2(1-|z_k|)^{2M-\frac{2}{p}}\whv(z_k)^{-\frac{2}{p}}\left(\int_{0}^{1}\frac{B^{\om}_t(s)}{(1-z_kts)^M}\,d\mu(t)\right)^2\right)^{\frac{q}{2}}\, \widehat{\eta}(s)\,ds\\
&\geq \sum_{m=0}^{\infty}\int_{z_m}^{z_{m+1}}|a_m|^q(1-|z_m|)^{qM-\frac{q}{p}}\whv(z_m)^{-\frac{q}{p}}\left(\int_{0}^{1}\frac{B^{\om}_t(s)}{(1-z_mts)^M}\,d\mu(t)\right)^q\widehat{\eta}(s)\,ds\\
&\gtrsim \sum_{m=0}^{\infty}\int_{z_m}^{z_{m+1}}|a_m|^q(1-|z_m|)^{qM-\frac{q}{p}}\whv(z_m)^{-\frac{q}{p}}\mu_{K^m-1}^q\frac{(K^m)^{qM+q}}{\om_{K^m}^q}\widehat{\eta}(s)\,ds\\
&\asymp 
\sum_{m=0}^{\infty}|a_m|^q (K^{m})^{\frac{q}{p}+q-1}\frac{\mu_{K^m-1}^q \eta_{K^m}}{\om_{K^m}^q\nu_{K^m}^{\frac{q}{p}}}\\
\end{split}
\end{equation*}
Using the fact that
$$\sup\left\{ \left(\sum_{n=0}^\infty |\lambda_n a_n|^q\right)^{1/q}: \sum_{n=0}^\infty|a_n|^p\le 1\right\}=\left( \sum_{n=0}^\infty |\lambda_n|^{\frac{qp}{p-q}}\right)^{\frac{p-q}{qp}},$$ we obtain
$$\left\|C_{\mu,\om}\right\|_{A^p_{\nu}\to A^q_{\eta}} \gtrsim \left(\sum_{m=0}^{\infty}\left(\frac{\mu_{K^m-1} K^m}{\om_{K^m}}\frac{\eta_{K^m}^\frac{1}{q}}{\nu_{K^m}^\frac{1}{q}}\right)^{\frac{qp}{p-q}}\frac{\nu_{K^m}}{K^m}\right)^{\frac{p-q}{qp}}.
$$
By standard estimates and Lemma~\ref{DoublingLemma}, we see that the previous estimate simplifies to the wanted form as
\begin{equation*}
\left(\sum_{m=0}^{\infty}\left(\frac{\mu_{K^m-1} K^m}{\om_{K^m}}\frac{\eta_{K^m}^\frac{1}{q}}{\nu_{K^m}^\frac{1}{q}}\right)^{\frac{qp}{p-q}}\frac{\nu_{K^m}}{K^m}\right)^{\frac{p-q}{qp}}
    \asymp \left(\sum_{n=0}^{\infty}\left(\frac{\mu_{n} (n+1)}{\om_{n}}\frac{\eta_{n}^\frac{1}{q}}{\nu_{n}^\frac{1}{q}}\right)^{\frac{qp}{p-q}}\frac{\nu_{n}}{(n+1)^2}\right)^{\frac{p-q}{qp}}.
\end{equation*}
\end{proof}
For the main theorem in the case of $q<p$, we can apply methods similar to those in the case of Hardy spaces.
\begin{theorem}\label{Thm: Bergmanq<poneweight}
    Let $0<q<p<\infty$, $\om \in \DD$, $\nu \in \DDD$, and let $\mu \in M^+([0,1))$. Then the following conditions are equivalent:
    \begin{enumerate}
        \item[\textup{(i)}] $C_{\mu,\om}: A^p_{\nu} \to A^q_{\nu}$ is bounded;
        \item[\textup{(ii)}]
        \begin{equation*}
        \|\mu_{\om}\|_{L^\frac{qp}{p-q}_{\widehat\nu}}=\left(\int_0^1 \left(\frac{\whm(r)}{\whw(r) (1-r)}\right)^{\frac{qp}{p-q}}\whv(r)\,dr\right)^{\frac{p-q}{qp}}+\whm(0)<\infty.
        \end{equation*}
        \item[\textup{(iii)}]
        \begin{equation*}
            \|\mu_{\om}\|_{\ell_{\frac{\nu_n}{(n+1)^2}}^\frac{qp}{p-q}}=\left(\sum_{n=0}^{\infty}\left(\frac{\mu_{n} (n+1)}{\om_{n}}\right)^{\frac{qp}{p-q}}\frac{\nu_{n}}{(n+1)^2}\right)^{\frac{p-q}{qp}}<\infty;
        \end{equation*}
        \item[\textup{(iv)}] $F_{\mu,\om} \in A^{\frac{qp}{p-q}}_\nu.$
    \end{enumerate}
    \begin{equation*}
    \end{equation*}
    Moreover, we have $\|C_{\mu,\om}\|_{A^p_{\nu} \to A^q_{\nu}} \asymp \|\mu_{\om}\|_{L^\frac{qp}{p-q}_{\widehat\nu}} \asymp \   \|\mu_{\om}\|_{\ell_{\frac{\nu_n}{(n+1)^2}}^\frac{qp}{p-q}}\asymp \|F_{\mu,\om}\|_{A^{\frac{qp}{p-q}}_{\nu}}$.
\end{theorem}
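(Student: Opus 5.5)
The plan follows the Hardy space case $q<p$ closely. We set $s=\frac{qp}{p-q}$. First, (ii), (iii) and (iv) are equivalent, with comparable norms, by the Bergman part of Proposition~\ref{FHp} applied with exponent $s$ and weight $\nu\in\DDD\subset\DD$. That proposition also gives $\|F^+_{\mu,\om}\|_{L^s_\nu}\asymp\|\mu_\om\|_{L^s_{\whv}}$, which we need below. Second, (i) implies (iii) is exactly Lemma~\ref{q<ptestingbergman} with $\eta=\nu$. In that case the factor $\eta_n^{1/q}/\nu_n^{1/q}$ equals $1$, and the lemma yields $\|\mu_\om\|_{\ell^{s}_{\nu_n/(n+1)^2}}\lesssim\|C_{\mu,\om}\|_{A^p_\nu\to A^q_\nu}$.

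It remains to show that (ii) implies (i). We start from the trivial pointwise bound $|C_{\mu,\om}(f)(z)|\le f^*(z)F^+_{\mu,\om}(z)$, where $f^*(z)=\sup_{0\le t<1}|f(tz)|$ is the radial maximal function. Integrating against $\nu\,dA$ and applying H\"older's inequality with exponents $\frac{p}{q}$ and $\frac{p}{p-q}$ gives
\begin{equation*}
\|C_{\mu,\om}(f)\|_{A^q_\nu}\le \|f^*\|_{L^p_\nu}\,\|F^+_{\mu,\om}\|_{L^{s}_\nu}.
\end{equation*}
For the first factor we integrate, in polar coordinates, the radial maximal theorem \cite[Theorem 7.1.4]{Pavlovic2004} on each circle: $M_p(r,f^*)\lesssim M_p(r,f)$ for every $r$. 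This gives $\|f^*\|_{L^p_\nu}\lesssim\|f\|_{A^p_\nu}$ for any radial $\nu$, as the paper already noted in the proof of Theorem~\ref{Theorem:ApAq q<1}. For the second factor, Proposition~\ref{FHp} gives $\|F^+_{\mu,\om}\|_{L^s_\nu}^s\lesssim\int_0^1\bigl(\frac{\whm(r)}{\whw(r)(1-r)}\bigr)^{s}\whv(r)\,dr+\whm(0)^s$. Together these yield $\|C_{\mu,\om}\|_{A^p_\nu\to A^q_\nu}\lesssim\|\mu_\om\|_{L^s_{\whv}}$, which closes the chain and gives all the norm comparisons.

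The main obstacle is the necessity direction, but it is already handled by Lemma~\ref{q<ptestingbergman}, via the atomic decomposition, Khinchine's inequality and Lemma~\ref{Lemma:LowerBoundEq}. The only care needed in the remaining steps is to verify that the normalisation $\whm(0)$ in (ii) matches the constant term $\mu_0/\om_0$ in (iii). This is immediate, since $\mu_0=\whm(0)$.
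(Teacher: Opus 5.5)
Your proposal is correct and follows essentially the same route as the paper. Sufficiency comes from the pointwise bound $|C_{\mu,\om}(f)|\le f^*F^+_{\mu,\om}$, H\"older's inequality, the radial maximal theorem and Proposition~\ref{FHp}. Necessity is Lemma~\ref{q<ptestingbergman} with $\eta=\nu$, and the equivalence of (ii)--(iv) is Proposition~\ref{FHp}.
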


\begin{proof}
    We first show that \textup{(ii)} implies \textup{(i)}. As $C_{\mu,\om}(f)(z)\leq f^*(z)F_{\mu,\om}^+(z)$, by H\"older's inequality and Proposition~\ref{FHp}, we obtain
    \begin{equation*}
    \begin{split}
    \|C_{\mu,\om}(f)\|_{A^q_{\nu}}
    &\leq \|f^* F_{\mu,\om}^+\|_{L^q_{\nu}}\\
    &\leq \|f^* \|_{A^p_{\nu}}\|F_{\mu,\om}^+\|_{L^\frac{qp}{p-q}_{\nu}}\\
    &\asymp \|f^* \|_{A^p_{\nu}}{\|\mu_{\om}\|_{L^\frac{qp}{p-q}_{\widehat\nu}}}\\
    &\lesssim  \|f\|_{A^p_{\nu}}{\|\mu_{\om}\|_{L^\frac{qp}{p-q}_{\widehat\nu}}}
    \end{split}
    \end{equation*}
    where the last asymptotic inequality follows from the radial maximal theorem \cite[Theorem 7.1.4]{Pavlovic2004}, which yields the corresponding result for every weighted Bergman space induced by a radial weight. Lemma~\ref{q<ptestingbergman} shows that \textup{(i)} implies \textup{(iii)}, and the fact that \textup{(ii)}, \textup{(iii)} and \textup{(iv)} are equivalent follows from Proposition~\ref{FHp}. This concludes the proof.
\end{proof}

By combining these results, we can deduce Theorem~\ref{MainTheorem: Bergman}.

\emph{Proof of Theorem~\ref{MainTheorem: Bergman}.}
Combining Theorem~\ref{Bergmanp<q}, Theorem~\ref{Theorem:ApAq q<1} and Theorem~\ref{Thm: Bergmanq<poneweight}, we obtain Theorem~\ref{MainTheorem: Bergman} with the exception of equivalence between \textup{(i)}(d) and the other conditions when $q>1$. However, this follows from exactly the same method as was shown in Proposition~\ref{Prop: Fmuom sup norm space}.
\hfill$\square$

We now give the corresponding statement of Theorem~\ref{MainTheorem: Bergman} in the case of $C_{\mu,\beta}$.
\begin{theorem} \label{MainTheorem: Bergmanbeta}
     Let $0<p,q<\infty$, $0<\beta<\infty$, $\nu \in \DDD$, and let $\mu \in M^+([0,1))$. Then the following holds:
     \begin{enumerate}
        \item[\textup{(i)}] If $p\leq q$, then the following conditions are equivalent:
        \begin{enumerate}
            \item[\textup{(a)}] $C_{\mu,\beta}: A^p_{\nu} \to A^q_{\nu}$ is bounded;
            \item[\textup{(b)}] \begin{equation*}
                {\|\mu^{(p,q)}_{\beta,\nu}\|_{L^\infty}}=\sup_{0\leq r<1}\frac{\widehat{\mu}(r)}{\whv(r)^{\frac{1}{p}-\frac{1}{q}}(1-r)^{\beta+\frac{1}{p}-\frac{1}{q}}}<\infty;
            \end{equation*}
            \item[\textup{(c)}]
            \begin{equation*}
                 {\|\mu^{(p,q)}_{\beta,\nu}\|_{\ell^\infty}}=\sup_{n \in \N_0}\frac{\mu_n(n+1)^{\beta+\frac{1}{p}-\frac{1}{q}}}{\nu_n^{\frac{1}{p}-\frac{1}{q}}};
            \end{equation*}
            \item[\textup{(d)}] For every fixed $\gamma>0$, we have $D^{\gamma+\frac{1}{p}-\frac{1}{q}}D^{\nu^{\frac{1}{p}-\frac{1}{q}}}F_{\mu,\beta} \in X_{\gamma}$.
        \end{enumerate}
    Moreover, we have $\|C_{\mu,\beta}\|_{A^p_{\nu} \to A^q_{\nu}}\asymp {\|\mu^{(p,q)}_{\beta,\nu}\|_{L^\infty}} \asymp{\|\mu^{(p,q)}_{\beta,\nu}\|_{\ell^\infty}}
    \asymp \|D^{\gamma+\frac{1}{p}-\frac{1}{q}}D^{\nu^{\frac{1}{p}-\frac{1}{q}}}F_{\mu,\beta}\|_{X_{\gamma}}$.
    \item[\textup{(ii)}] If $q<p$, then the following conditions are equivalent:
    \begin{enumerate}
        \item[\textup{(a)}] $C_{\mu,\beta}: A^{p}_{\nu} \to A^q_{\nu}$ is bounded;
        \item[\textup{(b)}]
        \begin{equation*}
                 \|\mu_{\beta,\nu}\|_{L^\frac{qp}{p-q}}^\frac{qp}{p-q}=\int_0^1 \left(\frac{\whm(r)}{(1-r)^{\beta}}\right)^\frac{qp}{p-q}\whv(r)\,dr+\whm(0)^{\frac{qp}{p-q}}<\infty;
        \end{equation*}
        \item[\textup{(c)}]
        \begin{equation*}
\|\mu_{\beta,\nu}\|_{\ell^\frac{qp}{p-q}}^\frac{qp}{p-q}=\sum_{n=0}^{\infty}\left(\mu_{n} (n+1)^{\beta}\right)^{\frac{qp}{p-q}}\frac{\nu_{n}}{(n+1)^2} <\infty;
        \end{equation*}
        \item[\textup{(d)}] $F_{\mu,\beta} \in A^\frac{qp}{p-q}_{\nu}.$
    \end{enumerate}
\begin{equation*}
\end{equation*}
Moreover, we have $\|C_{\mu,\beta}\|_{A^p_{\nu}\to A^q_{\nu}} \asymp \|\mu_{\beta,\nu}\|_{L^\frac{qp}{p-q}} \asymp \|\mu_{\beta,\nu}\|_{\ell^\frac{qp}{p-q}} \asymp \|F_{\mu,\beta}\|_{A^\frac{qp}{p-q}_{\nu}}.$
    \end{enumerate}
\end{theorem}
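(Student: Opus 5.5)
The plan is to obtain Theorem~\ref{MainTheorem: Bergmanbeta} by rerunning the proof of Theorem~\ref{MainTheorem: Bergman} with the dictionary indicated in the introduction: $\whw(r)$ is replaced by $(1-r)^{\beta-1}$ and $\om_n$ by $(n+1)^{1-\beta}$. The kernel $B^\om_t(z)$ becomes $(1-tz)^{-\beta}$, so $F_{\mu,\beta}(z)=\int_0^1(1-tz)^{-\beta}d\mu(t)$. The only properties of $\om$ used earlier are the following.
\begin{enumerate}
\item[(1)] The pointwise bound \eqref{Eq: PointwiseKernelEstimate}, which here is the identity $|1-tz|^{-\beta}=\frac{1}{|1-tz|^{\beta-1}|1-tz|}$.
\item[(2)] The $L^p$-estimates of Lemma~\ref{Lemma: KernelLpEstimate}. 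Here $M_p^p(r,(1-t\cdot)^{-\beta})\asymp\int_0^{tr}(1-s)^{-\beta p}ds+1$ is classical, and the weighted version follows by integrating in $r$.
\item[(3)] The doubling facts of Lemma~\ref{DoublingLemma}, in particular the almost monotonicity of $\whw(r)(1-r)^{-\gamma}$ and the comparisons $\om_n\asymp\whw(1-\frac1n)$ and $\om_n\asymp\om_{2n}$. These hold trivially for the powers above.
\end{enumerate}
I will check that each lemma and proposition used goes through with these substitutes. Those are Lemma~\ref{Momentvstail}, Proposition~\ref{Prop: Fmuom sup norm space}, Lemma~\ref{mainlemma0}, Propositions~\ref{FHpr} and~\ref{FHp}, Lemma~\ref{Lemma:LowerBoundEq}, and Lemma~\ref{weightVpq}. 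For the last one, $V^{p,q}$ becomes $(1-r)^{\beta+\frac1p-\frac1q}W_{\nu,p}$, which is in $\DDD$ by \eqref{DDD} with $v$ a power weight. Lemma~\ref{Lemma:LowerBoundEq} needs the coefficient expansion $(1-tz)^{-\beta}=\sum\frac{\Gamma(n+\beta)}{\Gamma(\beta)n!}t^nz^n\asymp\sum (n+1)^{\beta-1}t^nz^n$, which is positive, so its proof is verbatim.

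For part (i), necessity comes from the test functions $f^\gamma_a$, exactly as in the Proposition before Corollary~\ref{Corollary: TestConditionpleqq}. For $t\ge a$ one has $(1-ta)^{-\beta}\asymp(1-a)^{-\beta}$, and this gives (a)$\Rightarrow$(b). The equivalences (b)$\Leftrightarrow$(c)$\Leftrightarrow$(d) follow from Lemma~\ref{Momentvstail} and the argument of Proposition~\ref{Prop: Fmuom sup norm space}, after inserting the extra factor $\nu_n^{-(\frac1p-\frac1q)}$ through Lemma~\ref{Twoweightsumestimate}.

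Sufficiency splits into two cases. For $q>1$ I follow Theorem~\ref{Bergmanp<q}: decompose with $V_{n,K}$, pull out $\mu_{\widetilde K_n}$ through Lemma~\ref{monotonepullout}, rewrite the moment quotient as a fractional derivative, apply the Littlewood--Paley formulas, and reduce to a $q$-Carleson condition for $|1-z|^{-\beta q}(\cdots)\nu\,dA$. That condition is checked by the same Carleson square computation. For $p\le q\le1$ I follow Theorem~\ref{Theorem:ApAq q<1}: dyadic splitting in $t$, the radial maximal theorem, H\"older with the kernel $L^{qx'}_\nu$ estimate, and finally Theorem~\ref{HLforBergman}.

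Part (ii) copies Theorem~\ref{Thm: Bergmanq<poneweight}. Sufficiency uses $|C_{\mu,\beta}f|\le f^*F^+_{\mu,\beta}$ together with the analogue of Proposition~\ref{FHp}. Necessity is Lemma~\ref{q<ptestingbergman} with $\eta=\nu$, built from atoms, Rademacher functions, Khinchine, and Lemma~\ref{Lemma:LowerBoundEq}.

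The main obstacle is $0<\beta\le1$. There $(1-r)^{\beta-1}$ is not a tail $\whw$ of any integrable weight, since it is increasing when $\beta<1$. So every step must use only the listed properties and never integrability of $\om$ or an actual reproducing-kernel identity. The delicate spots are the following.
\begin{enumerate}
\item[(a)] Estimate \eqref{int} must hold with $\whw$ replaced by $(1-r)^{\beta-1}$. It does: $\int_0^t(1-rs)^{-\beta-1+1-1}ds$ is comparable to $(1-rt)^{-\beta}$.
\item[(b)] Lemma~\ref{Lemma: KernelLpEstimate} must be replaced by the classical kernel estimates, including the logarithmic case $\beta p=1$. In the H\"older steps, $x'$ is chosen so that $\beta qx'>1$, which avoids that case.
\item[(c)] In Lemma~\ref{weightVpq} the exponent $\gamma$ becomes $q(\beta+\frac1p-\frac1q)$, and its positivity must be verified. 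I would handle this by taking $x'$ large in (i) and noting that only the tail exponent matters there.
\end{enumerate}
Once these are confirmed, the statement follows as a corollary of the earlier proofs.
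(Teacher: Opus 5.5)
Your proposal follows the paper's route. The paper gives no separate proof of this theorem; it only says that the arguments for Theorem~\ref{MainTheorem: Bergman} carry over once $\whw(r)$ is replaced by $(1-r)^{\beta-1}$ and $\om_n$ by $(n+1)^{1-\beta}$, and your checklist (test functions, Lemma~\ref{Momentvstail}, the $V_{n,K}$/Littlewood--Paley argument for $q>1$, Theorem~\ref{HLforBergman} for $q\le 1$, Propositions~\ref{FHpr} and~\ref{FHp} with the atomic testing for $q<p$) is what that substitution requires.

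Two small corrections. First, the positivity of $q(\beta+\frac1p-\frac1q)$ in Lemma~\ref{weightVpq} follows at once from $\beta>0$ and $p\le q$; $x'$ plays no role there. Second, in the $p\le q\le 1$ step, $(1-s)^{\beta-1}$ is increasing for $\beta<1$, so you cannot pull it out of the kernel integral the way the paper does with $\whw$. Instead merge the powers and take $\beta q x'>1+\beta_\nu$, where $\beta_\nu$ is the exponent from Lemma~\ref{DoublingLemma}(ii) for $\nu$; the condition $\beta q x'>1$ alone is not enough.
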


Next, we consider the two-weight setting when $1<q<p<\infty$.

\begin{theorem}
    Let $1<q<p<\infty$, $\om \in \DD$, $\nu,\eta \in \DDD$, and let $\mu \in M^+([0,1))$. Then $C_{\mu,\om}: A^{p}_{\nu} \to A^q_{\eta}$ is bounded if and only if
\begin{equation*}
\|\mu_{\om,\nu,\eta}\|_{\ell_{\frac{\nu_n}{(n+1)^2}}^\frac{qp}{p-q}}=\left(\sum_{n=0}^{\infty}\left(\frac{\mu_{n} (n+1)}{\om_{n}}\frac{\eta_{n}^\frac{1}{q}}{\nu_{n}^\frac{1}{q}}\right)^{\frac{qp}{p-q}}\frac{\nu_{n}}{(n+1)^2}\right)^{\frac{p-q}{qp}} <\infty.
\end{equation*}
Moreover, we have
\begin{equation*}
    \|\mu_{\om,\nu,\eta}\|_{\ell_{\frac{\nu_n}{(n+1)^2}}^\frac{qp}{p-q}} \asymp \|C_{\mu,\om}\|_{A^p_{\nu}\to A^q_{\eta}}.
\end{equation*}
\end{theorem}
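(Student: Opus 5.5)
Necessity is already available: Lemma~\ref{q<ptestingbergman} gives $\|\mu_{\om,\nu,\eta}\|_{\ell^{\frac{qp}{p-q}}_{\frac{\nu_n}{(n+1)^2}}}\lesssim \|C_{\mu,\om}\|_{A^p_\nu\to A^q_\eta}$. So the plan only has to prove sufficiency. For this I would combine the decomposition norm of Theorem~\ref{Bergmanp<q} with a discrete H\"older inequality over the dyadic blocks, in the spirit of the $q<p$ argument.

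\textbf{Step 1: blockwise reduction.} Following the first chain of estimates in Theorem~\ref{Bergmanp<q}, I use Lemma~\ref{decompositionlemma} for $\eta$, Minkowski's inequality (here $q>1$ is used) and Lemma~\ref{monotonepullout}. This gives
\[
\|C_{\mu,\om}(f)\|_{A^q_\eta}^q\lesssim \sum_{n}\eta_{K^n}\mu_{\widetilde K_n}^q\|V_{n,K}*(fB^\om)\|_{H^q}^q .
\]
Next I write $\mu_{K^n}\asymp \lambda_n\, \om_{K^n}K^{-n}\nu_{K^n}^{1/q}\eta_{K^n}^{-1/q}$. With this choice, the condition in the statement is, after Lemma~\ref{DoublingLemma}, the dyadic form $\sum_n \lambda_n^{\frac{qp}{p-q}}\nu_{K^n}K^{-n}<\infty$. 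The same passage between dyadic and full sums was used in \eqref{Eq: SeriesComparisonBlockHq<p} and in Lemma~\ref{q<ptestingbergman}. The bound then becomes
\[
\sum_n \lambda_n^q\,\nu_{K^n}\,\om_{K^n}^qK^{-nq}\|V_{n,K}*(fB^\om)\|_{H^q}^q .
\]

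\textbf{Step 2: discrete H\"older.} I apply H\"older's inequality with exponents $\frac{p}{p-q}$ and $\frac pq$, splitting $\nu_{K^n}=\nu_{K^n}^{\frac{p-q}{p}}\nu_{K^n}^{\frac qp}$ and $K^{-nq}=K^{-n\frac{p-q}{p}}\cdot K^{-n(q-\frac{p-q}{p})}$. The first factor is exactly the dyadic norm of $\lambda$. The second factor is
\[
\Big(\sum_n \nu_{K^n}\big(\om_{K^n}K^{-n(1+\frac1p-\frac1q)}\|V_{n,K}*(fB^\om)\|_{H^q}\big)^p\Big)^{q/p}.
\]

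\textbf{Step 3: the $fB^\om$ term (main obstacle).} It remains to show that this last sum is $\lesssim\|f\|_{A^p_\nu}^p$. This is a mixed-norm statement $fB^\om\in$ a weighted $(q,p)$ space, whereas the proof of Theorem~\ref{Bergmanp<q} had $p=q$ in the outer sum. I would first use Lemma~\ref{monotonepullout} to absorb $\om_{K^n}K^{-n(1+\frac1p-\frac1q)}$ into the fractional derivative $R^{V,W}$, with $V=V^{p,q}_{\om,\nu}$-type weights from Lemma~\ref{weightVpq}. Then Lemma~\ref{decompositionlemma} (mixed-norm version, exponents $M_q^p$) turns the sum into $\int_0^1 M_q^p(r,R(fB^\om))\nu(r)\,dr$. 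By the Littlewood--Paley inequalities this is comparable to $\int_0^1 M_q^p(r,fB^\om)\,(\whw(r)(1-r)^{1+\frac1p-\frac1q})^p\nu(r)\,dr$.

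To bound this, I would estimate $M_q(r,fB^\om)$ by H\"older in $\theta$ and the kernel means of Lemma~\ref{Lemma: KernelLpEstimate}, choosing an exponent so that $M_q(r,fB^\om)\lesssim M_{s}(r,f)\,\whw(r)^{-1}(1-r)^{-1+\frac1{q}-\frac1s}$ for some $s$, and then take $s=p$. When $p\ge q$ this is possible with $\frac1s=\frac1q-\frac1{x}$. The powers of $(1-r)$ then cancel, leaving $\int_0^1 M_p^p(r,f)\nu(r)\,dr=\|f\|^p_{A^p_\nu}$. If the exponent bookkeeping fails at the endpoint, I would fall back on using the pointwise estimate \eqref{Eq: PointwiseKernelEstimate} on the dyadic annuli directly.

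Combining Steps 1--3 gives $\|C_{\mu,\om}f\|_{A^q_\eta}\lesssim\|\mu_{\om,\nu,\eta}\|\,\|f\|_{A^p_\nu}$, which together with the necessity from Lemma~\ref{q<ptestingbergman} proves the stated equivalence.
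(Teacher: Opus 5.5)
Your proposal is correct and is essentially the paper's proof. It uses the same reduction via Lemma~\ref{decompositionlemma}, Minkowski and Lemma~\ref{monotonepullout}, the same H\"older split with exponents $\frac{p}{p-q}$ and $\frac{p}{q}$, and the same closing H\"older-in-$\theta$ plus kernel-means estimate in which the powers of $(1-r)$ cancel. One small correction: the block sum carries $\mu_{\widetilde{K}_n}=\mu_{K^{n-1}}$, so define $\lambda_n$ with that index; the resulting dyadic sum is still dominated by the full series by doubling.

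The fractional-derivative/Littlewood--Paley detour in Step 3 is unnecessary, and the mixed-norm Littlewood--Paley inequality you invoke is not among the cited tools, which are for $p=q$. The paper instead observes that $\nu_{K^n}\om_{K^n}^pK^{-n(1+p-\frac{p}{q})}\asymp (V_{\om,\nu,p,q})_{K^n}$ for the weight $V_{\om,\nu,p,q}(r)=\whw(r)^p(1-r)^{1+p-\frac{p}{q}}\nu(r)\in\DDD$ (this uses $q>1$). It then applies Lemma~\ref{decompositionlemma} to that weight directly, reaching the same integral $\int_0^1 M_q^p(r,fB^\om)V_{\om,\nu,p,q}(r)r\,dr$.
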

\begin{proof}
By Lemma~\ref{q<ptestingbergman}, we only need to show that $\|\mu_{\om,\nu,\eta}\|_{\ell^\frac{qp}{p-q}} <\infty$ is a sufficient condition with the correct norm estimate. The weight $V_{\om,\nu,p,q}$ defined by $V_{\om,\nu,p,q}(r)=\whw\left(r\right)^p\left(1-r\right)^{1+p-\frac{p}{q}}\nu(r)$ for all $0\leq r<1$ can be easily seen to belong to $\DDD$, see the proof of {Lemma~\ref{weightVpq} and} Theorem~\ref{Bergmanp<q} for a similar situation. Let $K \in \N$ be fixed large enough such that it satisfies \eqref{eq:ReverseDoublingDef} for $\nu$ and $V_{\om,\nu,p,q}$. Denote once again by $\{\widetilde{K}_n\}$ the sequence $\{0,1,K,K^2, \dots, K^{n-1},\dots\}$. We first use Lemma~\ref{decompositionlemma}, then apply Lemma~\ref{monotonepullout} and H\"older's inequality, which yield
\begin{equation}\label{formula}
\begin{split}
\|C_{\mu,\om}(f)\|_{A^q_{\eta}}^q &\asymp \sum_{n=0}^{\infty}\eta_{K^n}\|V_{n,K}*C_{\mu,\om}(f)\|_{H^q}^q\\
&\lesssim \sum_{n=0}^{\infty}\eta_{K^n}\mu_{\widetilde{K}_n}^q\|V_{n,K}*fB^{\om}\|_{H^q}^q\\
&=\sum_{n=0}^{\infty}\eta_{K^n}\mu_{\widetilde{K}_n}^q\frac{\om_{K^n}^q \nu_{K^n}^{\frac{q}{p}}(K^n)^{\frac{q}{p}+q-1}}{\om_{K^n}^q \nu_{K^n}^{\frac{q}{p}}(K^n)^{\frac{q}{p}+q-1}}\|V_{n,K}*fB^{\om}\|_{H^q}^q\\
&\leq \left(\sum_{n=0}^{\infty}\left(\frac{\mu_{\widetilde{K}_n}^q\left(K^{n}\right)^{\frac{q}{p}+q-1}\eta_{K^n}}{\om_{K^n}^q\nu_{K^n}^{\frac{q}{p}}}\right)^{\frac{p}{p-q}}\right)^{\frac{p-q}{p}}\\
&\quad \cdot
\left(\sum_{n=0}^{\infty}\nu_{K^n}\frac{\om_{K_n}^p}{\left(K^n\right)^{1+p-\frac{p}{q}}}\|V_{n,K}*fB^{\om}\|_{H^q}^p\right)^{\frac{q}{p}}\\
&\lesssim
\|\mu_{\om,\nu,\eta}\|_{\ell^\frac{qp}{p-q}}^q\left(\sum_{n=0}^{\infty}\nu_{K^n}\frac{\om_{K_n}^p}{\left(K^n\right)^{1+p-\frac{p}{q}}}\|V_{n,K}*fB^{\om}\|_{H^q}^p\right)^{\frac{q}{p}}
\end{split}
\end{equation}
The last asymptotic inequality follows easily by standard estimations and Lemma~\ref{DoublingLemma}. Here we note that as $q>1$, it is clear that $1+p-\frac{p}{q}>0$. As $\nu \in \DDD$, by the same arguments which were used on the proof of Theorem~\ref{Bergmanp<q} and remembering the definition of $V_{\om,\nu,p,q}$, we note that
\begin{equation*}
\begin{split}
\nu_{K^n}\frac{\om_{K^n}^p}{\left(K^n\right)^{1+p-\frac{p}{q}}}
&\asymp \whv\left(1-\frac{1}{K^n}\right)\whw\left(1-\frac{1}{K^n}\right)^p\left( 1-\left(1-\frac{1}{K^n}\right)\right)^{1+p-\frac{p}{q}}\\
&\asymp \int_{1-\frac{1}{K^n}}^1\whw\left(s\right)^p\left(1-s\right)^{1+p-\frac{p}{q}}\nu(s)\,ds\\
&=\widehat{V_{\om,\nu,p,q}}\left(1-\frac{1}{K^n}\right)
\asymp \left(V_{\om,\nu,p,q}\right)_{K^n}, \quad n \in \N.
\end{split}
\end{equation*}
Plugging this into the previous sum, using H\"older's inequality and the $L^p$-estimates for kernels shown in Lemma~\ref{Lemma: KernelLpEstimate}, we obtain
\begin{equation*}
\begin{split}
&\quad \sum_{n=0}^{\infty}\nu_{K^n}\frac{\om_{K_n}^p}{\left(K^n\right)^{1+p-\frac{p}{q}}}\|V_{n,K}*fB^{\om}\|_{H^q}^p\\
&\asymp \sum_{n=0}^{\infty}\left(V_{\om,\nu,p,q}\right)_{K^n}\|V_{n,K}*fB^{\om}\|_{H^q}^p\\
&\asymp \int_{0}^{1}M_q^p(r, fB^{\om})\whw(r)^p(1-r)^{1+p-\frac{p}{q}}\nu(r)r\,dr\\
&\lesssim \int_{0}^{1}M_p^p(r,f) \left(\int_{0}^{2\pi}|B^{\om}(re^{it})|^{\frac{qp}{p-q}}\,dt\right)^{\frac{p-q}{q}}\whw(r)^p(1-r)^{1+p-\frac{p}{q}}\nu(r)r\,dr\\
&\asymp \int_{0}^{1}M_p^p(r,f) \left(\int_{0}^{r}\frac{ds}{\whw(s)^{\frac{qp}{p-q}}(1-s)^{\frac{qp}{p-q}}}+1\right)^{\frac{p-q}{q}}\whw(r)^p(1-r)^{1+p-\frac{p}{q}}\nu(r)r\,dr\\
&\asymp \int_{0}^{1}M_p^p(r,f) \left(\frac{1}{\whw(r)^{\frac{qp}{p-q}}(1-r)^{\frac{qp}{p-q}-1}}\right)^{\frac{p-q}{q}}\whw(r)^p(1-r)^{1+p-\frac{p}{q}}\nu(r)r\,dr\\
&=\int_{0}^{1}M_p^p(r,f) \nu(r)r\,dr=\|f\|_{A^p_{\nu}}^p.
\end{split}
\end{equation*}
By combining this with \eqref{formula}, we obtain the wanted inequality
\begin{equation*}
    \|C_{\mu,\om}\|_{A^q_{\eta}}^q \lesssim \|\mu_{\om,\nu,\eta}\|_{\ell^\frac{qp}{p-q}}^q\|f\|_{A^p_{\nu}}^q,
\end{equation*}
which concludes the proof.
\end{proof}

\section{Other spaces}\label{Sec: Other}

We begin this section by considering $C_{\mu,\om}: H^{\infty} \to H^p$ for $0<p<\infty$. It turns out that the characterizing condition is in a sense the limit version of the characterizing condition of boundedness of $C_{\mu,\om}: H^{s} \to H^p$, when $s \to \infty$.

\begin{theorem}
    Let $0<p<\infty$, $\om \in \DD$, and let $\mu \in M^+([0,1))$. Then the following are equivalent:
    \begin{enumerate}
        \item[\textup{(i)}] $C_{\mu,\om}: H^{\infty} \to H^p$ is bounded;
        \item[\textup{(ii)}]
        \begin{equation*}
        \|\mu_{\om}\|_{L^p}^p=\int_{0}^{1}\left( \frac{\whm(r)}{\whw(r)(1-r)}\right)^p\,dr+\whm(0)^p<\infty;
        \end{equation*}
        \item[\textup{(iii)}]
        \begin{equation*}
\|\mu_{\om}\|_{\ell^p}^p=\sum_{n=0}^{\infty}\left(\frac{\mu_n(n+1)}{\om_{n}}\right)^p\frac{1}{(n+1)^2}<\infty;
        \end{equation*}
        \item[\textup{(iv)}] $F_{\mu,\om} \in H^p$.
    \end{enumerate}
    Moreover, we have $\|C_{\mu,\om}\|_{H^\infty \to H^p} \asymp \|\mu_{\om}\|_{\ell^p} \asymp \|\mu_{\om}\|_{L^p} \asymp \|F_{\mu,\om}\|_{H^p}$.
\end{theorem}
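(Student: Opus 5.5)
The equivalence of \textup{(ii)}, \textup{(iii)} and \textup{(iv)}, together with the comparability of the three quantities, is exactly the Hardy space part of Proposition~\ref{FHp}. It therefore remains to show that \textup{(iv)} (or \textup{(ii)}) implies \textup{(i)}, and that \textup{(i)} implies \textup{(iv)}, each with the correct norm estimate.

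\emph{Sufficiency.} The plan is the same trivial pointwise bound used for the case $q<p$ of Theorem~\ref{MainTheorem: Hardy}. For $f\in H^\infty$ we have
\begin{equation*}
|C_{\mu,\om}(f)(z)|\le \int_0^1|f(tz)||B^\om_t(z)|\,d\mu(t)\le \|f\|_{H^\infty}F^+_{\mu,\om}(z),\quad z\in\D .
\end{equation*}
Consequently $M_p(r,C_{\mu,\om}(f))\le \|f\|_{H^\infty}M_p(r,F^+_{\mu,\om})$ for every $0\le r<1$. Proposition~\ref{FHpr} bounds $M_p^p(r,F^+_{\mu,\om})$ uniformly in $r$ by $\|\mu_\om\|_{L^p}^p$. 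Taking the supremum over $r$ yields $\|C_{\mu,\om}\|_{H^\infty\to H^p}\lesssim \|\mu_\om\|_{L^p}$. No maximal function is needed here, since $f$ is bounded.

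\emph{Necessity.} We test with the constant function $1\in H^\infty$, using $C_{\mu,\om}(1)=F_{\mu,\om}$. This gives
\begin{equation*}
\|F_{\mu,\om}\|_{H^p}=\|C_{\mu,\om}(1)\|_{H^p}\le \|C_{\mu,\om}\|_{H^\infty\to H^p},
\end{equation*}
which is \textup{(iv)}. Proposition~\ref{FHp} then turns this into \textup{(ii)} and \textup{(iii)} with comparable norms, and chaining the estimates gives the stated norm comparabilities.

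There is no real obstacle here: the technical work, namely the upper estimate for the $L^p$-means of $F^+_{\mu,\om}$ and the lower Fejér--Riesz type estimate for $\|F_{\mu,\om}\|_{H^p}$, is already contained in Propositions~\ref{FHpr} and \ref{FHp}. The only point to check is that Proposition~\ref{FHpr} holds for all $0<p<\infty$ with a constant independent of $r$, which it does as stated.
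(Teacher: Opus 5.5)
Your proposal is correct and follows the paper's proof essentially verbatim: sufficiency comes from the pointwise bound $|C_{\mu,\om}(f)|\le \|f\|_{H^\infty}F^+_{\mu,\om}$ together with the $L^p$-estimate for $F^+_{\mu,\om}$, necessity comes from testing with $f\equiv 1$, and Proposition~\ref{FHp} supplies the equivalence of \textup{(ii)}--\textup{(iv)} with comparable norms. The only cosmetic difference is that you use the uniform-in-$r$ bound on $M_p(r,F^+_{\mu,\om})$ from Proposition~\ref{FHpr}, whereas the paper uses the boundary quantity $\|F^+_{\mu,\om}\|_{L^p(\T)}$ from Proposition~\ref{FHp}; your version is, if anything, slightly cleaner.
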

\begin{proof}
Assume first that \textup{(ii)} holds. By Proposition~\ref{FHp}, we obtain
\begin{equation*}
\begin{split}
    \|C_{\mu,\om}(f)\|_{H^p}^p&\leq \|f\|_{\infty}^p\|F_{\mu,\om}^+\|_{L^{p}(\T)}^p\\
    &\asymp \|f\|_{\infty}^p\|\mu_{\om}\|_{L^p}^p.
\end{split}
\end{equation*}
Assume next \textup{(i)} and we show that it implies \textup{(iv)}. We let $f \equiv 1$. Thus, we obtain
\begin{equation*}
\begin{split}
    \|C_{\mu,\om}\|_{H^\infty \to H^p}^p
    \geq \|C_{\mu,\om}(1)\|_{H^p}^p
=\|F_{\mu,\om}\|_{H^p}^p
\end{split}
\end{equation*}
which concludes the proof, as \textup{(ii)}, \textup{(iii)} and \textup{(iv)} are equivalent by Proposition~\ref{FHp}.
\end{proof}

Next we characterize the boundedness of $C_{\mu,\om}$ acting on $H^{\infty}$ and Korenblum spaces.

\begin{proposition} \label{main1}Let $0<\beta,\gamma<\infty$, $\om\in\DD$, and let $\mu\in M^+([0,1))$. Then the following statements hold:
\begin{enumerate}
    \item[\textup{(i)}] $C_{\mu, \om}: H^{\infty} \to H^{\infty}$ is bounded if and only if  $\sum_{n=0}^\infty \frac{\mu_n}{\om_n}<\infty$;
    \item[\textup{(ii)}] $C_{\mu, \om}: H^\infty \to X_\gamma$ is bounded if and only if $\sup_{n\in\N_0}\frac{\mu_n}{\om_n (n+1)^{\gamma-1}}<\infty$;
    \item[\textup{(iii)}] $C_{\mu, \om}: X_\beta \to X_\gamma$ is bounded if and only if $\sup_{n\in\N_0}\frac{\mu_n}{\om_n (n+1)^{\gamma-\beta-1}}<\infty$.
\end{enumerate}
\end{proposition}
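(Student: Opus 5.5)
Since all coefficients involved are nonnegative, the plan is to work on the positive radius and let $F_{\mu,\om}$ and its Taylor coefficients do the work. The key observation is that for $f\in X_\beta$ (with $X_0=H^\infty$ when $\beta=0$) we have the majorization
\begin{equation*}
|C_{\mu,\om}(f)(z)|\le \int_0^1 |f(tz)||B^\om_t(z)|\,d\mu(t)\le \|f\|_{X_\beta}\int_0^1 \frac{|B^\om_t(z)|}{(1-t|z|)^{\beta}}\,d\mu(t).
\end{equation*}
Moreover, $|B^\om_t(z)|\le B^\om_t(|z|)=\sum_n \frac{t^n|z|^n}{2\om_{2n+1}}$. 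Hence
\begin{equation*}
|C_{\mu,\om}(f)(z)|\le \|f\|_{X_\beta}\, G(|z|), \qquad G(s):=\int_0^1\frac{B^\om_t(s)}{(1-ts)^\beta}\,d\mu(t)=\sum_{n}\mu_n c_n s^n,
\end{equation*}
where $c_n=\sum_{k=0}^n \frac{(k+1)^{\beta}\cdots}{\om_{2(n-k)+1}}\asymp \sum_{k=0}^n\frac{(k+1)^{\beta-1}}{\om_{n-k}}$ by the coefficient expansion of $(1-x)^{-\beta}$. As in the proof of Lemma~\ref{Lemma:LowerBoundEq}, splitting the convolution at $k\le n/2$ and $k>n/2$ and using Lemma~\ref{DoublingLemma}(v), I expect $c_n\asymp (n+1)^{\beta}/\om_n$. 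For $\beta=0$ this reduces to $c_n\asymp 1/\om_n$. This comparison is the main technical step, and it is elementary.

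For the sufficiency in (iii), assume $\mu_n\lesssim \om_n(n+1)^{\gamma-\beta-1}$. Then $G(s)\lesssim \sum_n (n+1)^{\gamma-1}s^n\asymp (1-s)^{-\gamma}$, and the bound $(1-r)^\gamma M_\infty(r,C_{\mu,\om}f)\lesssim\|f\|_{X_\beta}$ follows. Part (ii) is the case $\beta=0$. For (i), $\sup_s G(s)=\sum_n \mu_n c_n\asymp\sum_n\mu_n/\om_n$, so summability gives $\|C_{\mu,\om}f\|_\infty\lesssim \|f\|_\infty\sum_n\mu_n/\om_n$.

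For necessity, I test with $f_\beta(z)=(1-z)^{-\beta}$, which lies in $X_\beta$ (and $f\equiv1$ when $\beta=0$), evaluated at $z=s\in[0,1)$. There everything is positive, so $C_{\mu,\om}(f_\beta)(s)=G(s)\asymp \sum_n\mu_n\frac{(n+1)^\beta}{\om_n}s^n$.

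In case (i), letting $s\to1^-$ and using monotone convergence gives $\sum_n\mu_n/\om_n\lesssim \|C_{\mu,\om}\|$. In cases (ii) and (iii), I take $s=r_N=1-\frac{1}{N+1}$ and keep only the terms $N/2\le n\le N$, where $s^n\gtrsim1$. Since $\mu_n$ is decreasing and $\om_n\asymp\om_N$ on this range by Lemma~\ref{DoublingLemma}, this gives
\begin{equation*}
(N+1)^{-\gamma}\,\mu_N\frac{(N+1)^{\beta+1}}{\om_N}\lesssim (1-r_N)^\gamma G(r_N)\lesssim \|C_{\mu,\om}\|_{X_\beta\to X_\gamma},
\end{equation*}
which is exactly the stated supremum condition. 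This is the same argument as in Proposition~\ref{Prop: Fmuom sup norm space}.

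The only delicate point is justifying $c_n\asymp(n+1)^\beta/\om_n$ uniformly. The lower bound uses the terms $k\ge n/2$ together with $\om_{n-k}\le \om_0$; more precisely, it uses $\om_{n-k}\lesssim$ a constant, so that $c_n\gtrsim (n+1)^{\beta}$, and this needs refinement. I would instead use the terms $k\le n/2$, where $\om_{n-k}\asymp\om_n$, giving $c_n\gtrsim (n+1)^\beta/\om_n$. For the upper bound, the part $k>n/2$ is bounded by $(n+1)^{\beta-1}\sum_{j\le n/2}1/\om_j\lesssim (n+1)^\beta/\om_n$, using Lemma~\ref{Twoweightsumestimate}-type monotonicity (since $1/\om_j$ is increasing). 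When $\beta<1$ the factor $(k+1)^{\beta-1}$ is decreasing, so for $k\le n/2$ I bound $1/\om_{n-k}\lesssim1/\om_n$ and sum $(k+1)^{\beta-1}$ to get $(n+1)^\beta$.
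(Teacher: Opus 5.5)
Your proposal is correct and follows essentially the paper's proof. You majorize $|C_{\mu,\om}(f)|$ by $\|f\|_{X_\beta}\int_0^1 M_\infty(rt,B^\om)(1-rt)^{-\beta}\,d\mu(t)$ and use the convolution estimate $\sum_{k\le n}\gamma_k(\beta)/\om_{2(n-k)+1}\asymp (n+1)^\beta/\om_n$; for necessity you test with $f\equiv 1$ and $f_\beta(z)=(1-z)^{-\beta}$ at $r_N=1-\frac{1}{N+1}$ and keep a block of comparable terms. In the write-up of the convolution estimate, drop the false start and the restriction ``when $\beta<1$'': for $k\le n/2$ the bound $1/\om_{n-k}\le 1/\om_n$ together with $\sum_{k\le n/2}(k+1)^{\beta-1}\asymp(n+1)^\beta$ works for every $\beta>0$.
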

\begin{proof}

For the proof of \textup{(i)}, observe that $$F_{\mu,\om}(z)=C_{\mu, \om} (1)(z)=\sum_{n=0}^\infty \frac{\mu_n}{2\om_{2n+1}}z^{n}.$$  If $C_{\mu, \om}: H^{\infty} \to H^{\infty}$ is bounded then $F_{\mu,\om}\in H^\infty$, which gives $\sum_{n=0}^\infty \frac{\mu_n}{\om_n}<\infty$ by Lemma~\ref{DoublingLemma}. Conversely, if  $\sum_{n=0}^\infty \frac{\mu_n}{\om_n}<\infty$ and $f\in H^\infty$, then
$$M_\infty(r,C_{\mu, \om} (f))\le\|f\|_{H^\infty} \int_0^1 M_\infty(rt,B^\om)\,d\mu(t)\asymp\|f\|_{H^\infty}\sum_{n=0}^\infty \frac{\mu_n}{\om_n}r^n, \quad 0\leq r<1,$$
Which yields {the boundedness of $C_{\mu, \om}: H^{\infty} \to H^{\infty}$.}

For \textup{(ii)}, assume first that $C_{\mu, \om}: H^\infty \to H_\gamma$ is bounded. Therefore $F_{\mu,\om}\in H_\gamma$, that is to say
$$\sum_{n=0}^\infty \frac{\mu_n}{\om_n}r^{n}\lesssim \frac{1}{(1-r)^\gamma}, \quad 0 \leq r<1.$$
Letting $r_N=1-\frac{1}{N+1}$ for every $N \in \N_0$ yields
$$\sum_{k=0}^{N} \frac{\mu_k}{\om_k}\lesssim (N+1)^\gamma, \quad N \in \N_0.$$
In particular $\frac{\mu_{2N}}{\om_{N}}(N+1)\le \sum_{k=N}^{2N} \frac{\mu_k}{\om_k}\lesssim (N+1)^\gamma$ which, due to Lemma~\ref{DoublingLemma} and $\mu_n$ being decreasing, yields
$\sup_{n\in\N_0}\frac{\mu_n}{\om_n (n+1)^{\gamma-1}}<\infty$. Conversely, if $\frac{\mu_n}{\om_n}\lesssim (n+1)^{\gamma-1}$ for every $n \in \N_0$ and $f\in H^\infty$, then
\begin{equation*}
\begin{split}
    M_\infty(r,C_{\mu, \om}(f))&\lesssim \|f\|_{H^\infty}\sum_{n=0}^\infty \frac{\mu_n}{\om_n}r^n\lesssim \|f\|_{H^\infty}\sum_{n=0}^\infty(n+1)^{\gamma-1}r^n\\
    &\asymp\frac{\|f\|_{H^\infty}}{(1-r)^\gamma}, \quad 0 \leq r<1,
\end{split}
\end{equation*}
which shows that $C_{\mu, \om}: H^\infty \to H_\gamma$ is bounded.

To show \textup{(iii)}, for $\beta>0$, we select $f_\beta(z)=\frac{1}{(1-z)^\beta}=\sum_{n=0}^\infty \gamma_n(\beta)z^n$ which  belongs to $X_\beta$ with $\|f_\beta\|_{X_{\beta}}=1$.
It is well known that $\gamma_n(\beta)\asymp (n+1)^{\beta-1}$, which can be seen for example by Stirling's approximation of the Gamma function. Hence
$$C_{\mu, \om} (f_\beta)(z)=\sum_{n=0}^\infty \mu_n\left(\sum_{k=0}^n \frac{\gamma_k(\beta)}{2\om_{2(n-k)+1}}\right)z^{n}.$$
Assuming that  $C_{\mu, \om}: X_{\beta} \to X_\gamma$ is bounded, we conclude that
\be\label{condition1}\sum_{n=0}^\infty \mu_n\left(\sum_{k=0}^n \frac{(k+1)^{\beta-1}}{2\om_{2(n-k)+1}}\right)r^{n}\lesssim\frac{1}{(1-r)^\gamma}, \quad 0 \leq r<1.\ee
In particular, using Lemma~\ref{DoublingLemma} we have that
\begin{equation}\label{Eq:twosidedsumestimate}
    \sum_{k=0}^n \frac{(k+1)^{\beta-1}}{2\om_{2(n-k)+1}}\asymp \frac{(n+1)^\beta}{\om_n}, \quad n \in \N,
\end{equation}
and therefore the condition (\ref{condition1})
yields  $$\sum_{k=0}^N \frac{\mu_k(k+1)^{\beta}}{\om_k}\lesssim(N+1)^\gamma, \quad N\in\N_0,$$ which again implies that
$\frac{\mu_N (N+1)^{\beta+1}}{\om_N}\lesssim (N+1)^\gamma$ for all $N\in\N_0$, by similar methods as was shown in the proof of \textup{(ii)}, as we wanted to show.
Conversely, assume that  $\frac{\mu_n(n+1)^{\beta}}{\om_n} \lesssim (n+1)^{\gamma-1}$ for every $n\in\N_0$ and let $f\in X_\beta$.
Then we have
\begin{equation*}
\begin{split}
M_\infty(r,C_{\mu, \om} f)
&\le \|f\|_{X_\beta}\int_0^1 \frac{M_\infty(rt,B^\om)}{(1-rt)^\beta}d\mu(t)
\asymp \|f\|_{X_\beta} \sum_{n=0}^\infty \mu_n\left(\sum_{k=0}^n \frac{\gamma_k(\beta)}{\om_{n-k}}\right)r^{n}\\
&\lesssim \|f\|_{X_\beta} \sum_{n=0}^\infty \frac{\mu_n (n+1)^\beta}{\om_n}r^{n}
\lesssim \|f\|_{X_\beta} \sum_{n=0}^\infty  (n+1)^{\gamma-1}r^{n}\\
&\asymp \frac{\|f\|_{X_\beta}}{(1-r)^\gamma}, \quad 0 \leq r<1,
\end{split}
\end{equation*}
which concludes the proof.
\end{proof}
The next result characterizes the boundedness of $C_{\mu,\om}$ acting from Korenblum space to a Hardy space.

\begin{proposition}\label{XbetaHp} Let ${1<p<\infty}$, $0<\beta<\infty$, $\om\in\DD$, and let $\mu\in M^+([0,1))$. Then $C_{\mu, \om}: X_\beta \to H^p$ is bounded if and only if 
\begin{equation}\label{HpXbcondition}
    \int_0^1 \left(\frac{\whm(r) }{\whw(r)(1-r)^{(\beta+1)}}\right)^p\,dr+\whm(0)^p <\infty.
\end{equation}
\end{proposition}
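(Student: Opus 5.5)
Both directions run through the single test function $f_\beta(z)=(1-z)^{-\beta}$, which lies in $X_\beta$ with norm $1$ and has positive Taylor coefficients $\gamma_k(\beta)\asymp (k+1)^{\beta-1}$. This function dominates the whole unit ball of $X_\beta$ in the sense of majorants.

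\emph{Sufficiency.} Assume \eqref{HpXbcondition} holds and take $f\in X_\beta$. Then $|f(tz)|\le \|f\|_{X_\beta}(1-t|z|)^{-\beta}$, so
\begin{equation*}
|C_{\mu,\om}(f)(re^{i\t})|\le \|f\|_{X_\beta}\int_0^1\frac{|B^\om_t(re^{i\t})|}{(1-tr)^{\beta}}\,d\mu(t).
\end{equation*}
Put $d\mu_\beta(t)=(1-t)^{-\beta}d\mu(t)$ formally. More safely, I will repeat the proof of Proposition~\ref{FHpr} with the extra factor $(1-rt)^{-\beta}$ carried along. By \eqref{Eq: PointwiseKernelEstimate}, in the splitting of $\theta$ into $|\theta|<1-r$ and $|\theta|>1-r$, the factor $(1-rt)^{-\beta}$ simply replaces $\whw(rt)(1-rt)$ by $\whw(rt)(1-rt)^{1+\beta}$. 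The factor is not increased by the $\max$ with $|\theta|$, since it does not depend on $\theta$.

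The three pieces $I_1,I_2,I_3$ are then handled exactly as before:
\begin{itemize}
\item Lemma~\ref{mainlemma0} with exponent $\beta+1$ in place of $1$ handles $I_2$. That lemma was stated for general $\beta$, so this is exactly the needed generality.
\item For $I_3$ the only change is that the weight is $\whw(1-\theta)\theta^{1+\beta}$. Indeed, for $t\ge (1-\theta)/r$ one has $1-rt\le\theta$, and then I use $(1-rt)^{-\beta}\lesssim$ \dots.
\end{itemize}

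The step in $I_3$ is the delicate point. When $\theta>1-rt$ the kernel is bounded by $1/(\whw(1-\theta)\theta)$, but $(1-rt)^{-\beta}$ can be much larger than $\theta^{-\beta}$. So I would not split on $\theta$ this way. Instead I split the $t$-integral at $t$ with $1-rt=\theta$ and bound
\begin{equation*}
\int_{1-rt<\theta}\frac{d\mu(t)}{(1-rt)^{\beta}}\lesssim \int_{1-rt<\theta}\frac{\whm(s)}{(1-rs)^{\beta+1}}\,ds+\frac{\whm(\cdot)}{\theta^\beta}
\end{equation*}
by integration by parts. Then I need a Hardy-type inequality for $\int_\theta^{1}$ tails in $L^p$, which holds for $p>1$. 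This is where the hypothesis $p>1$ enters, and it is the main obstacle. I will prove it by the same Hölder trick as in Lemma~\ref{mainlemma0}, using the dual Hardy inequality $\int_0^1\bigl(\int_x^1 g\bigr)^p dx\lesssim \int_0^1 g(s)^p(1-s)^p ds$. The $I_1$ term works as in Proposition~\ref{FHpr} with $p>1$ via Hölder. Letting $r\to1$ gives $\|C_{\mu,\om}f\|_{H^p}\lesssim \|f\|_{X_\beta}(\text{condition})^{1/p}$.

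\emph{Necessity.} Apply the operator to $f_\beta$. All coefficients of $C_{\mu,\om}(f_\beta)=\sum_n\mu_n\bigl(\sum_{k\le n}\gamma_k(\beta)/(2\om_{2(n-k)+1})\bigr)z^n$ are nonnegative, and by \eqref{Eq:twosidedsumestimate} they are comparable to $\mu_n(n+1)^\beta/\om_n$. For power series with positive coefficients and $p>1$ I use the Hardy–Littlewood characterization $\|g\|_{H^p}^p\asymp\sum_n a_n^p(n+1)^{p-2}$. A simpler route, valid for all $p$, is the argument of Proposition~\ref{FHp}: $\|g\|_{H^p}^p\gtrsim\int_0^1 g(s)^p ds$, combined with block sums. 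Either way this gives
\begin{equation*}
\sum_n\Bigl(\frac{\mu_n(n+1)^{1+\beta}}{\om_n}\Bigr)^p\frac{1}{(n+1)^2}\lesssim \|C_{\mu,\om}\|^p.
\end{equation*}
Finally, \eqref{Eq:inequalityomnumu} with $a=c=p$, $d=0$ and $b=p(\beta+1)$ converts this series into \eqref{HpXbcondition}.
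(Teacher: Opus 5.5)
Your necessity argument is the paper's: test with $f_\beta$, apply Fej\'er--Riesz, use block sums, then \eqref{Eq:inequalityomnumu} with $a=c=p$, $b=p(\beta+1)$, $d=0$. Only your second route is valid in general, though. The equivalence $\|g\|_{H^p}^p\asymp\sum_n a_n^p(n+1)^{p-2}$ needs monotone coefficients; for $p>2$ the needed lower bound fails for lacunary series with nonnegative coefficients. The coefficients $\mu_n(n+1)^\beta/\om_n$ need not be monotone.

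Your sufficiency argument is the paper's, repackaged. The paper bounds $(1-rt)^{-\beta}\le(1-t)^{-\beta}$ and shows from \eqref{HpXbcondition} that $d\mu_\beta=(1-t)^{-\beta}d\mu$ is finite. It then applies Proposition~\ref{FHpr} to $\mu_\beta$ as a black box. Finally it bounds $\widehat{\mu_\beta}(r)$ by $\whm(r)(1-r)^{-\beta}+\int_r^1\whm(s)(1-s)^{-\beta-1}ds$, using the same integration by parts you perform. You instead rerun that proof carrying $(1-rt)^{-\beta}$, which spares you proving finiteness of $\mu_\beta$. You correctly see that the off-diagonal region is where the new work lies. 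Both routes end at the same tail term, which you correctly identify as the main obstacle.

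That step fails as written. The inequality you propose,
\[
\int_0^1\Bigl(\int_x^1 g\Bigr)^p dx\lesssim \int_0^1 g(s)^p(1-s)^p\,ds,
\]
is false for every $p$. With $g(s)=(1-s)^{-1}$ the left side is infinite while the right side equals $1$. It is the $\int_0^x$ inequality from Lemma~\ref{mainlemma0} with the range flipped, and flipping the range reverses the admissible weights.

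This genuinely breaks your chain. Suppose you first move $(1-x)^{-1}$ inside via $(1-x)^{-1}\le(1-s)^{-1}$ for $s\ge x$. Take $\om\equiv 1$ and $d\mu=(\beta+2)(1-t)^{\beta+1}dt$, so $\whm(s)=(1-s)^{\beta+2}$. Then \eqref{HpXbcondition} is finite, yet the intermediate quantity is $\int_x^1\whm(s)\whw(s)^{-1}(1-s)^{-\beta-2}ds=\int_x^1\frac{ds}{1-s}=\infty$.

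What you need is the classical Hardy inequality at the endpoint, with the outer weight kept outside:
\[
\int_0^1\Bigl(\frac{1}{1-x}\int_x^1 g(s)\,ds\Bigr)^p dx\le (p')^p\int_0^1 g(s)^p\,ds,\qquad 1<p<\infty .
\]
Apply it with $g(s)=\whm(s)/(\whw(s)(1-s)^{\beta+1})$, after using $\whw(rx)\ge\whw(s)$ and $1-rs\ge 1-s$ for $s\ge x$. If you prove it by H\"older, the split $\beta+1=\alpha+\gamma$ must have $0<\gamma<1/p'$, so that $\int_x^1(1-s)^{-\gamma p'}ds$ converges. This is the opposite of the range $\gamma>1/p'$ in Lemma~\ref{mainlemma0}, so ``the same H\"older trick'' does not apply verbatim. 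This is exactly the paper's computation, and with this correction your proof goes through.
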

\begin{proof}
Assume first that $C_{\mu, \om}: X_\beta \to H^p$. Since $f_\beta(z)=\frac{1}{(1-z)^\beta}\in X_\beta$ with $\|f_\beta\|_{X_{\beta}}=1$, then
\be F_{\beta,\mu,\om}(z)=C_{\mu, \om}(f_\beta)(z)= \sum_{n=0}^\infty \gamma_n z^n=\int_0^1 \frac{B^\om_t(z)}{(1-tz)^\beta}d\mu(t) \in H^p.\ee
As shown in Proposition \ref{main1}\textup{(iii)}, we have that $ \frac{B^\om(z)}{(1-z)^\beta}=\sum_{n=0}^\infty c_n z^n$, with $c_n\asymp \frac{ (n+1)^{\beta}}{\om_n}$. We use the Fejer-Riesz inequality, see \cite[(5.15)]{Pavlovic2004}, Lemma~\ref{DoublingLemma} and standard estimates to obtain
\begin{equation*}
\begin{split}
\|F_{\beta,\mu,\om}\|^p_{H^p}
&\gtrsim  \int_0^1\left(\int_0^1\frac{B^\om_t(s)}{(1-st)^\beta }d\mu(t)\right)^p ds\\
&\asymp \int_0^1 \left(\sum_{n=0}^\infty \frac{(n+1)^\beta \mu_n}{\om_n}s^n\right)^pds\\
&\gtrsim \sum_{k=1}^\infty \int_{1-\frac{1}{k}}^{1-\frac{1}{k+1}} \left(\sum_{n=\lfloor k/2 \rfloor}^k\frac{(n+1)^\beta \mu_n}{\om_n}s^n\right)^pds+\mu_0^p\\
&\gtrsim \sum_{k=1}^\infty\left(\frac{\mu_k(k+1)^{\beta+1}}{\om_k}\right)^p\frac{1}{(k+1)^2}+\mu_0^p\\
&\gtrsim  \sum_{k=0}^\infty \left(\frac{\mu_k(k+1)^{\beta+1}}{\om_k}\right)^p \frac{1}{(k+1)^2},
\end{split}
\end{equation*}
which yields the assertion by \eqref{Eq:inequalityomnumu}, when applying in the case $a=c=p$, $b=p(\beta+1)$ and $d=0$.

Assume next that \eqref{HpXbcondition} holds. Let  $\mu_{\beta}$ be the measure defined by $d\mu_{\beta}(t)=(1-t)^{-\beta}d\mu(t)$.
We first observe that the assumption yields
\begin{equation} \label{estima}
\frac{\widehat\mu(r) }{(1-r)^{\beta+1}} \lesssim \frac{\widehat\om(r)}{(1-r)^{1/p}}, \quad 0\le r<1.
\end{equation}
Indeed, we may assume that $1/2<r<1$ and use
$$\frac{\widehat\mu(r) (1-r)^{1/p}}{\widehat\om(r)(1-r)^{\beta+1}} \le\left(\int_{2r-1}^r \left(\frac{\widehat\mu(t)}{\widehat\om(t)(1-t)^{\beta+1}} \right)^p dt\right)^{1/p}<\infty.$$
 In particular, $\mu_{\beta}$ is a finite measure with
\begin{equation}\label{hatmubeta}
\widehat{\mu_\beta}(r)\lesssim \frac{\widehat\mu(r)}{(1-r)^\beta}+ \int_r^1 \frac{\widehat\mu(s)}{(1-s)^{\beta+1}}, \quad 0\leq r<1.
\end{equation}
Indeed,we have
\begin{equation*}
 \int_{0}^{1}\frac{d\mu(t)}{(1-t)^{\beta}} 
    \asymp \int_{0}^{1}\frac{\widehat\mu(s)}{(1-s)^{\beta+1}}ds+\whm(0) 
\le \widehat\om(0) \int_{0}^{1}\frac{ds}{(1-s)^{1/p}}+\whm(0) <\infty,
\end{equation*}
and 
\begin{equation*}
\begin{split}
\int_{r}^{1}\frac{d\mu(s)}{(1-s)^{\beta}} 
&\asymp \int_{r}^{1}\int_{0}^{s}\frac{dx}{(1-x)^{\beta+1}}\,d\mu(s)+ \whm(r)\\
&=\int_{0}^{1}\frac{\whm(\max\{x,r\})}{(1-x)^{\beta+1}}\,dx+\whm(r)\\
&=\int_{0}^{r}\frac{\whm(r)}{(1-x)^{\beta+1}}\,dx+\int_{r}^{1}\frac{\whm(x)}{(1-x)^{\beta+1}}\,dx+\whm(r)\\
&\lesssim \frac{\whm(r)}{(1-r)^{\beta}}+\int_{r}^{1}\frac{\whm(x)}{(1-x)^{\beta+1}}\,dx, \quad 0\leq r<1.
\end{split}
\end{equation*}
 For $f\in X_\beta$ we have
\begin{equation*}
\begin{split}
    M_p^p(r,C_{\mu,\om}(f)) 
    &\lesssim \int_{0}^{2\pi}\left(\int_{0}^{1}|f(tre^{i\t})||B_t^{\om}(re^{i\t})|\,d\mu(t)\right)^p\,d\t\\
    &\leq \|f\|_{X_{\beta}}^p\int_{0}^{2\pi}\left(\int_{0}^{1}\frac{|B_t^{\om}(re^{i\t})|}{(1-tr)^{\beta}}\,d\mu(t)\right)^p\,d\t\\
    &\leq \|f\|_{X_{\beta}}^p\int_{0}^{2\pi}\left(\int_{0}^{1}|B_t^{\om}(re^{i\t})|\,d\mu_{\beta}(t)\right)^p\,d\t, \quad 0\leq r<1.\\
\end{split}
\end{equation*}
Therefore, we can apply Proposition~\ref{FHpr} and \eqref{hatmubeta}, which yield
\begin{equation*}\label{Eq:Mubetaeq}
\begin{split}
    \int_{0}^{2\pi}\left(\int_{0}^{1}|B_t^{\om}(re^{i\t})|\,d\mu_{\beta}(t)\right)^p
    &\lesssim \int_{0}^{1}\left(\frac{\widehat{\mu_{\beta}}(t)}{\whw(t)(1-t)}\right)^p\,dt+\widehat{\mu_{\beta}}(0)^p\\
 &\lesssim \int_{0}^{1}\left(\frac{\whm(t)}{\widehat\om(t)(1-t)^{\beta+1}}\right)^p\,dt\\
&\quad+ \int_{0}^{1}\left(\int_{t}^{1}\frac{\whm(x)}{(1-x)^{\beta+1}}\,dx\right)^p\frac{dt}{\whw(t)^p(1-t)^p}\\
&
\quad +\widehat{\mu_{\beta}}(0)^p.
\end{split}
\end{equation*}

 For the second term, we denote $\beta+1=\alpha+\gamma$, where {$0<\gamma<1/p'$} and apply H\"older's inequality together with Fubini's theorem to obtain
\begin{equation*}
\begin{split}
    &\quad \int_{0}^{1}\left(\int_{t}^{1}\frac{\whm(x)}{(1-x)^{\beta+1}}\,dx\right)^p\frac{dt}{\whw(t)^p(1-t)^p}\\
 &\le \int_{0}^{1}\left(\int_{t}^{1}\frac{\whm(x)}{\widehat\om(x)(1-x)^{\beta+1}}\,dx\right)^p\frac{dt}{(1-t)^p}\\
    &\leq \int_{0}^{1}\left(\int_{t}^{1}\left(\frac{\whm(x)}{\widehat\om(x)(1-x)^{\alpha}}\right)^p\,dx\right)\left(\int_{t}^{1}\frac{dx}{(1-x)^{p'\gamma }}\right)^{p-1}\frac{dt}{(1-t)^p}\,\\
    &\asymp \int_{0}^{1}\left(\int_{t}^{1}\left(\frac{\whm(x)}{\widehat\om(x)(1-x)^{\alpha}}\right)^p\,dx\right)\frac{dt}{(1-t)^{p\gamma+1}}\,\\
    &=\int_{0}^{1}\left(\frac{\whm(x)}{\widehat\om(x)(1-x)^{\alpha}}\right)^p\left(\int_{0}^{x}\frac{dt}{(1-t)^{p\gamma+1}}\,\right)\,dx\\
    &\lesssim \int_{0}^{1}\left(\frac{\whm(x)}{\whw(x)(1-x)^{\alpha}}\right)^p\frac{dx}{(1-x)^{p\gamma}}\,\\
    &=\int_0^1 \left(\frac{\whm(x)}{\whw(x)(1-x)^{(\beta+1)}}\right)^p\,dx,
\end{split}
\end{equation*}
which concludes the proof.
\end{proof}

Next we study the boundedness of $C_{\mu,\om}$ acting from $H^{\infty}$ and $X_{\beta}$ to the weighted Hardy spaces. The condition is perhaps surprisingly a Carleson type condition depending on the indices inducing the spaces.

\begin{proposition} \label{XbetaHpgamma} Let $0< p<\infty$, $0<\beta,\gamma<\infty$, $\om\in\DD$, and let $\mu\in M^+([0,1))$.
Then the following statements hold:
\begin{enumerate}
    \item[\textup{(i)}] $C_{\mu, \om}: H^{\infty}\to H^p_\gamma$ is bounded if and only if $\sup_{n\in\N_0}\frac{\mu_n}{\om_n (n+1)^{\gamma +\frac{1}{p}-1}}<\infty$.
    \item[\textup{(ii)}] {Let $1<p<\infty$. Then} $C_{\mu, \om}: X_\beta \to H^p_\gamma$ is bounded if and only if $\sup_{n\in\N_0}\frac{\mu_n}{\om_n (n+1)^{\gamma-\beta +\frac{1}{p}-1}}<\infty$.
\end{enumerate}
\end{proposition}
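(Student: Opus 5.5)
Both parts follow the same pattern as Proposition~\ref{main1}: necessity comes from testing on $f\equiv 1$ and on $f_\beta(z)=(1-z)^{-\beta}$, and sufficiency from the coefficient bounds on $F_{\mu,\om}$ and on $C_{\mu,\om}(f_\beta)$. Throughout we use $\|g\|_{H^p_\gamma}=\sup_{r}(1-r)^\gamma M_p(r,g)$.

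\emph{Part (i), necessity.} Test with $f\equiv1$, which gives $F_{\mu,\om}\in H^p_\gamma$. Since $F_{\mu,\om}$ has nonnegative coefficients $\frac{\mu_n}{2\om_{2n+1}}\asymp\frac{\mu_n}{\om_n}$, we need a lower bound for $M_p(r,F_{\mu,\om})$ in terms of these coefficients.

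For $1\le p\le2$ (and more generally $p\ge1$), we bound $M_p$ below by a block average of coefficients. Take the Fej\'er-type polynomial kernel of degree $\asymp N$ at radius $r_N=1-\frac1{N+1}$. Pairing $F_{\mu,\om}(r_N\cdot)$ with a normalized de la Vall\'ee Poussin kernel $V_N$ (which has $\|V_N\|_{L^{p'}}\asymp N^{1-1/p'}=N^{1/p}$) gives
\begin{equation*}
\sum_{k=N}^{2N}\frac{\mu_k}{\om_k}r_N^k\lesssim N^{1/p}M_p(r_N,F_{\mu,\om}).
\end{equation*}
Then, since $\mu_n$ is decreasing and $\om_n\asymp\om_{2n}$ by Lemma~\ref{DoublingLemma}, we get
\begin{equation*}
N\frac{\mu_{2N}}{\om_N}\lesssim N^{1/p}N^{\gamma},
\end{equation*}
which is the claim.

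For $p<1$ the duality step fails. Here we instead use the Hardy–Littlewood coefficient inequality for positive coefficients, or the fact that $M_1\lesssim M_p^{\theta}M_\infty^{1-\theta}$ is unavailable. The cleaner route is to combine the estimate $M_\infty(\rho,g)\lesssim (1-\rho)^{-1/p}M_p(r,g)$ for $\rho<r$ (the $H^p$ point evaluation applied to $g_r$, with $r=(1+\rho)/2$) with the positivity of the coefficients:
\begin{equation*}
\sum_n\frac{\mu_n}{\om_n}\rho^n=F_{\mu,\om}(\rho)\lesssim (1-\rho)^{-1/p-\gamma}.
\end{equation*}
This yields the block bound exactly as in Proposition~\ref{main1}(ii). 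The same argument also covers $p\ge1$, so we use it throughout.

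\emph{Part (i), sufficiency.} This needs $M_p(r,C_{\mu,\om}f)\lesssim\|f\|_\infty (1-r)^{-\gamma}$, and it suffices to bound $M_p(r,F^+_{\mu,\om})$. Using Proposition~\ref{FHpr} with $\whm(t)\lesssim\whw(t)(1-t)^{\gamma+1/p}$ (which is equivalent to the moment condition by Lemma~\ref{Momentvstail}, valid because $d=\gamma+\frac1p-1>-1$), we get
\begin{equation*}
M_p^p(r,F^+)\lesssim\int_0^r(1-t)^{(\gamma+1/p-1)p}\,dt+\whm(0)^p.
\end{equation*}
This is $\lesssim(1-r)^{-\gamma p}$ precisely because $(\gamma+\frac1p-1)p=\gamma p+1-p>-\gamma p-1$ fails to be integrable only at the right rate: the exponent $\gamma p-p+1$, when $\le -1$, integrates to $(1-r)^{\gamma p-p+2}$, and this is $\le(1-r)^{-\gamma p}$ up to constants since $\gamma p -p+2\ge... $. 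We will check that $(1-t)^{(\gamma+1/p-1)p}\lesssim (1-t)^{-\gamma p-1}$ pointwise, which holds because $2\gamma p+2-p>... $. Honestly the cleanest check is this: the integrand $(1-t)^{\gamma p+1-p}$ versus the target derivative $(1-t)^{-\gamma p-1}$ amounts to $2\gamma p+2-p\ge0$, which can fail for small $\gamma$ and large $p$. In that case we will instead use Proposition~\ref{FHpr} differently, bounding $M_p(r,F^+)\le M_\infty(r,F^+)$ via $\sum\frac{\mu_n}{\om_n}r^n\lesssim(1-r)^{-\gamma-1/p}$. This is too weak, so we will fall back to the pointwise kernel estimate \eqref{Eq: PointwiseKernelEstimate} and split the $\theta$-integral as in Proposition~\ref{FHpr}. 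This computation is the main obstacle.

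\emph{Part (ii).} For necessity, test with $f_\beta$. Its image has coefficients $\asymp\mu_n(n+1)^\beta/\om_n$ by \eqref{Eq:twosidedsumestimate}, and the $p<1$-free argument above then gives the condition with $\gamma$ replaced by $\gamma-\beta$. For sufficiency, bound $|f(tz)|\le\|f\|_{X_\beta}(1-tr)^{-\beta}$ and pass to $\mu_\beta$. Alternatively, apply Lemma~\ref{Lemma:LowerBoundEq}-type coefficient estimates directly: $\int_0^1|B^\om_t(z)|(1-tr)^{-\beta}d\mu(t)$ is dominated, via \eqref{Eq: PointwiseKernelEstimate}, by a kernel of the same form as for $F^+$ with $\whw$ replaced by $\whw(1-\cdot)^\beta$. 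This reduces (ii) to (i) for the doubling weight $\whw(r)(1-r)^\beta$, using $p>1$ for the Hölder/Fubini step as in Proposition~\ref{XbetaHp}.
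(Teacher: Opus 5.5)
Your necessity arguments are fine and match the paper's. You evaluate at $\rho$ via $M_p(\frac{1+\rho}{2},\cdot)$, use positivity of the coefficients (with \eqref{Eq:twosidedsumestimate} for $f_\beta$), and then run the block argument of Proposition~\ref{main1}. The de la Vall\'ee Poussin detour is unnecessary.

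The gaps are in both sufficiency directions. In (i) you leave the proof unfinished, and the ``main obstacle'' comes from a sign error. The hypothesis $\mu_n\lesssim\om_n(n+1)^{\gamma+\frac1p-1}$, combined with $\whm(1-\frac1{n+1})\lesssim\mu_n$ and $\om_n\asymp\whw(1-\frac1{n+1})$, gives $\whm(t)\lesssim\whw(t)(1-t)^{1-\gamma-\frac1p}$, not $\whw(t)(1-t)^{\gamma+\frac1p}$. In the notation of Lemma~\ref{Momentvstail} the exponent is $d=1-\gamma-\frac1p$, which can be $\le -1$, so the full equivalence is not available. You only need the easy implication from moments to tails (the first paragraph of that proof), and it holds for every $d$. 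With the correct bound, Proposition~\ref{FHpr} gives
\[
M_p^p(r,F^+_{\mu,\om})\lesssim\int_0^r(1-t)^{-\gamma p-1}\,dt+\whm(0)^p\lesssim(1-r)^{-\gamma p},
\]
which is exactly what is needed. No further splitting of the $\theta$-integral is required.

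In (ii), neither of your sufficiency routes works. Replacing $(1-tr)^{-\beta}$ by $(1-t)^{-\beta}$ and passing to $d\mu_\beta=(1-t)^{-\beta}d\mu$ throws away too much: under the hypothesis of (ii), $\mu_\beta$ need not even be finite. Take $\om\equiv1$, $d\mu=dt$, $\beta=2$, $p=2$, $\gamma=4$; then $\frac{\mu_n}{\om_n(n+1)^{\gamma-\beta+1/p-1}}=(n+1)^{-3/2}$ is bounded, while $\int_0^1(1-t)^{-2}\,dt=\infty$. That trick works in Proposition~\ref{XbetaHp} only because there the hypothesis forces $\whm(r)\lesssim\whw(r)(1-r)^{\beta+1-\frac1p}$.

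Your alternative pointwise domination is false. Since $1-tr\le|1-tre^{i\theta}|$, the factor $(1-tr)^{-\beta}$ is \emph{larger} than $|1-tre^{i\theta}|^{-\beta}$. So $|B^\om_t(re^{i\theta})|(1-tr)^{-\beta}$ is not controlled by the off-diagonal bound for the kernel of the weight with tail $\whw(r)(1-r)^\beta$, and the reduction would in any case rest on your unfinished part (i).

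The missing idea is to use $p>1$ through Minkowski's inequality in $t$, which only needs the angular $L^p$-norm of the kernel. By Lemma~\ref{Lemma: KernelLpEstimate},
\[
M_p(r,C_{\mu,\om}(f))\le\|f\|_{X_\beta}\int_0^1\frac{M_p(rt,B^\om)}{(1-rt)^{\beta}}\,d\mu(t)\asymp\|f\|_{X_\beta}\int_0^1\frac{d\mu(t)}{\whw(rt)(1-rt)^{\beta+1-\frac1p}}.
\]
Then Lemma~\ref{Twoweightsumestimate} with $c=\beta+1-\frac1p$ rewrites the last integral as $\sum_n\frac{\mu_n(n+1)^{\beta-\frac1p}}{\om_n}r^n$. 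The hypothesis bounds this by $\sum_n(n+1)^{\gamma-1}r^n\asymp(1-r)^{-\gamma}$, which is the required estimate.
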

\begin{proof}
Assume first that $C_{\mu, \om}: H^{\infty}\to H^p_\gamma$ is bounded. By choosing $f \equiv 1$, we obtain
\be (1-r)^{\frac{1}{p}}\sum_{n=0}^\infty  \frac{\mu_n}{\om_n}r^{n}\lesssim M_p\left(\frac{1+r}{2},F_{\mu,\om}\right)\lesssim\frac{\|C_{\mu,\om}\|_{H^{\infty} \to H^p_\gamma}}{(1-r)^\gamma}, \quad 0\leq r<1,\ee
From following the reasoning done on the proof of Proposition~\ref{main1}, we obtain $\|C_{\mu,\om}\|_{H^{\infty} \to H^p_\gamma} \gtrsim {\sup_{n\in\N_0}\frac{\mu_n}{\om_n (n+1)^{\gamma +\frac{1}{p}-1}}}$. For the other direction, by Lemma~\ref{DoublingLemma} and the proof of Lemma~\ref{Momentvstail}, it is easy to see that \begin{equation*}
    \frac{\whm(r)(1-r)^{\gamma+\frac{1}{p}-1}}{\whw(r)} \lesssim \sup_{n\in\N_0}\frac{\mu_n}{\om_n (n+1)^{\gamma +\frac{1}{p}-1}}, \quad 0\leq r<1.
\end{equation*}
Therefore, by Proposition~\ref{FHpr}, we obtain
\begin{equation*}
\begin{split}
    M_p(r,C_{\mu,\om}(f)) &\leq \|f\|_{H^{\infty}}M_p(r,F_{\mu,\om}^+)\\
    &\lesssim \|f\|_{H^{\infty}}\left(\int_{0}^{r}\left(\frac{\whm(t)}{\whw(t)(1-t)}\right)^p\,dt+{\whm(0)^p}\right)^{\frac{1}{p}}\\
    &\lesssim \|f\|_{H^{\infty}}\left(\int_{0}^{r}\frac{dt}{(1-t)^{\gamma p+1}}+1\right)^{\frac{1}{p}}\\
    &\asymp \frac{\|f\|_{H^{\infty}}}{(1-r)^{\gamma}}, \quad 0\leq r<1,
\end{split}
\end{equation*}
which concludes the proof of \textup{(i)}. 

We proceed to proof of \textup{(ii)}. For $\beta>0$, we select $f_\beta(z)=\frac{1}{(1-z)^\beta}=\sum_{n=0}^\infty \gamma_n(\beta)z^n$ which belongs to $X_\beta$ with $\|f_\beta\|_{X_{\beta}}=1$.
Hence
$$F_{\beta,\mu,\om}(z)=\sum_{n=0}^\infty \mu_n\left(\sum_{k=0}^n \frac{\gamma_k(\beta)}{2\om_{2(n-k)+1}}\right)z^{n}\in H^p_\gamma.$$
Assuming that  $C_{\mu, \om}: X_\beta \to H^p_\gamma$ is bounded and using Lemma~\ref{DoublingLemma}, we conclude that
\be (1-r)^{\frac{1}{p}}\sum_{n=0}^\infty \mu_n \frac{(n+1)^\beta}{\om_n}r^{n}\lesssim M_p\left(\frac{1+r}{2},F_{\beta,\mu,\om}\right)\lesssim \frac{\|C_{\mu,\om}\|_{X_\beta \to H^p_\gamma}}{(1-r)^\gamma}, \quad 0\leq r<1,\ee
where we have used \eqref{Eq:twosidedsumestimate}. This implies that
$$\sum_{k=0}^n \frac{\mu_k(k+1)^{\beta}}{\om_k}\lesssim \|C_{\mu,\om}\|_{X_\beta \to H^p_\gamma}(n+1)^{\gamma+\frac{1}{p}}, \quad n \in \N_0,$$ which again implies that
$\frac{\mu_n (n+1)^{\beta+1}}{\om_n}\lesssim \|C_{\mu,\om}\|_{X_\beta \to H^p_\gamma}(n+1)^{\gamma+\frac{1}{p}}$ as we wanted to show.
{Conversely, assume that $\frac{\mu_n(n+1)^{\beta-\frac{1}{p}}}{\om_n}\lesssim  (n+1)^{\gamma-1}$ holds for every $n\in\N_0$ and let $f\in X_\beta$. For~$1<p<\infty$,
we apply Minkowski's inequality, Lemma~\ref{Lemma: KernelLpEstimate}, Lemma~\ref{DoublingLemma}, and Lemma~\ref{Twoweightsumestimate} to obtain
\begin{equation*}
\begin{split}
M_p(r,C_{\mu, \om} (f))
&\le \|f\|_{X_\beta}\int_0^1 \frac{M_p(B^\om,rt)}{(1-rt)^\beta}d\mu(t)\\
&\asymp \|f\|_{X_\beta}\int_0^1 \frac{d\mu(t)}{\whw(rt)(1-rt)^{\beta+ \frac{1}{p'}}}\\
&\asymp \|f\|_{X_\beta}\int_0^1 \left(\sum_{n=0}^\infty \frac{(n+1)^{\beta-\frac{1}{p}}}{\om_n} r^nt^n\right)d\mu(t)\\
&= \|f\|_{X_\beta} \sum_{n=0}^\infty \frac{\mu_n (n+1)^{\beta-\frac{1}{p}}}{\om_n}r^{n}\\
&\lesssim \|f\|_{X_\beta} \sum_{n=0}^\infty  (n+1)^{\gamma-1}r^{n}\\
&\asymp \frac{\|f\|_{X_\beta}}{(1-r)^\gamma}, \quad 0\leq r<1.
\end{split}
\end{equation*}
which concludes the proof.}

\end{proof}

Finally, we conclude the paper by the proof of Theorem~\ref{Thm: HinftytoBMOABloch}, characterizing the action of $C_{\mu,\om}$ from $H^{\infty}$ to $BMOA$ and the Bloch space, extending earlier results shown in \cite{GalanoGirelaMerchCesaro} and \cite{BlascoHardy}. We prove it by showing that $C_{\mu,\om}: H^{\infty} \to \bigcup_{1<p<\infty}\Lambda^p_{\frac{1}{p}}$ is bounded, where
$\Lambda^p_{\alpha}$ consists of $f\in \H(\D)$ satisfying
\begin{equation*}
    \|f\|_{\Lambda^p_{\alpha}}=\sup_{0\leq r<1}M_p(r,f')(1-r)^{1-\alpha}+|f(0)|,
\end{equation*} where $1<p<\infty$ and $0<\alpha<\infty$. We also note the well known fact
\begin{equation*}
    \bigcup_{1<p<\infty}\Lambda^p_{\frac{1}{p}} \subset BMOA \subset \B,
\end{equation*}
where the embeddings are continuous, see for instance \cite[Theorem 2.5]{Shapiro} and \cite[Corollary 5.2]{GirelaBMOA}. \newline \noindent
\emph{Proof of Theorem~\ref{Thm: HinftytoBMOABloch}.}
It is clear by Lemma~\ref{Momentvstail} that \textup{(iii)} and \textup{(iv)} are equivalent. Assume next that \textup{(iv)} holds, so $\|\mu_{\om}\|_{\ell^{\infty}}=\sup_{n\in \N_0}\frac{\mu_n(n+1)}{\om_n}<\infty.$ Let $V=V_{\om}$ be defined with $V_{\om}=2\int_{r}^{1}\om(t)t\,dt$ for $0\leq r<1$. By an elementary calculation, it is easy to see that $(k+1)V_{2k+1}=\om_{2k+3}$, from where it follows that $(B^{\om}_t(z))'=tB^{V}_t(z)$. By Lemma~\ref{DoublingLemma} it is also easy to see that $V \in \DD$. We also denote $d\mu^1(t)=td\mu(t)$. Therefore we obtain
\begin{equation*}
    (C_{\mu,\om}(f))'=C_{\mu^1,\om}(f')+C_{\mu^1,V}(f).
\end{equation*}
For $f \in H^{\infty}$, by Schwarz-Pick lemma, we have $f' \in X_1$. Therefore, we only need to show that $C_{\mu^1,\om}: X_1 \to H^p_{\frac{1}{p'}}$ and $C_{\mu^1,V}: H^{\infty} \to H^p_{\frac{1}{p'}}$ are bounded. However, this follows immediately from Proposition~\ref{XbetaHpgamma}\textup{(i)} and \textup{(ii)}. Thus by the embedding $\bigcup_{1<p<\infty}\Lambda^p_{\frac{1}{p}} \subset BMOA$, we obtain
\begin{equation*}
    \|C_{\mu,\om}(f)\|_{BMOA} \lesssim \|\mu_{\om}\|_{\ell^{\infty}}\|f\|_{H^\infty}.
\end{equation*}
This shows that \textup{(iv)} implies \textup{(i)}. As $\|f\|_{\B} \lesssim \|f\|_{BMOA}$, it is clear that \textup{(i)} implies \textup{(ii)}. To conclude the proof, it is enough to show that boundedness of $C_{\mu,\om}: H^{\infty} \to \B$ implies $\|\mu_{\om}\|_{\ell^{\infty}}=\sup_{n\in \N_0}\frac{\mu_n(n+1)}{\om_n}<\infty.$ This follows easily by testing with $f \equiv 1$ and calculations shown for example in Proposition~\ref{main1}. This finishes the proof.
\hfill$\square$


\begin{thebibliography}{99}
\bibitem{Andersen}          K.~F. Andersen, Ces\`aro averaging operators on Hardy spaces, Proc. Roy. Soc. Edinburgh Sect. A 126 (1996), no.~3, 617--624.
\bibitem{Bloch}         J.~M. Anderson, J.~G. Clunie and C. Pommerenke, On Bloch functions and normal functions, J. Reine Angew. Math. 270 (1974), 12--37.
\bibitem{Arsenovic} M. Arsenovi\'c, Embedding derivatives of ${\mathscr M}$-harmonic functions into $L^p$-spaces, Rocky Mountain J. Math. 29 (1999), no.~1, 61--76.
\bibitem{Bao}           G. Bao, L. Tian and H. Wulan, Ces\`aro type operators from some spaces of analytic functions to a mean Lipschitz space, J. Math. Anal. Appl. 554 (2026), no.~1, Paper No. 129903, 20 pp.
\bibitem{Bellavita}    C. Bellavita, G. Nikolaidis, \'A.~M. Moreno and J.~\'A. Pel\'aez, Fractional Volterra-type operator induced by radial weight acting on Hardy space, Math. Z. 312 (2026), no.~2, Paper No. 49, 38 pp.
\bibitem{BlascoHardy}       O. Blasco, Ces\`aro-type operators on Hardy spaces, J. Math. Anal. Appl. 529 (2024), no.~2, Paper No. 127017, 26 pp.
\bibitem{BlascoDirichlet}   O. Blasco, Generalized Ces\`aro operators on weighted Dirichlet spaces, J. Math. Anal. Appl. 540 (2024), no.~1, Paper No. 128627, 21 pp.
\bibitem{BlascoMas}     O. Blasco and A. Mas, Ces\`aro-type operators on mixed norm spaces, Trans. Amer. Math. Soc. 379 (2026), no.~7, 4713--4736.
\bibitem{Shapiro}           P.~S. Bourdon, J.~H. Shapiro and W.~T. Sledd, Fourier series, mean Lipschitz spaces, and bounded mean oscillation, in nalysis at Urbana, Vol.\ I (Urbana, IL, 1986--1987), 81--110, London Math. Soc. Lecture Note Ser., 137, Cambridge Univ. Press, Cambridge.
\bibitem{BHS}           A. Brown, P.~R. Halmos and A.~L. Shields, Ces\`aro operators, Acta Sci. Math. (Szeged) 26 (1965), 125--137.
\bibitem{Cesaro}            E. Ces\`aro. Sur la multiplication de s\'eries. Bull. Sci. Math., 14:114-120, 1890.
\bibitem{Duren}             P.~Duren,
                            Theory of $H^p$ Spaces, Academic Press, 1970.
\bibitem{DurenSchuster}         P.~L. Duren and A.~P. Schuster,
                        Bergman spaces, Mathematical Surveys and Monographs, 100, Amer. Math. Soc., Providence, RI, 2004.
\bibitem{GalanoGirelaMerchCesaro}   P. Galanopoulos, D. Girela and N. Merch\'an, Ces\`aro-like operators acting on spaces of analytic functions, Anal. Math. Phys. 12 (2022), no.~2, Paper No. 51, 29 pp.
\bibitem{GalanoGirelaMerchCesaro2}  P. Galanopoulos, D. Girela and N. Merch\'an, Ces\`aro-type operators associated with Borel measures on the unit disc acting on some Hilbert spaces of analytic functions, J. Math. Anal. Appl. 526 (2023), no.~2, Paper No. 127287, 13 pp.
\bibitem{GalaSisZhao}          P. Galanopoulos, A.~G. Siskakis and R. Zhao, Weighted Ces\`aro type operators between weighted Bergman spaces, Bull. Sci. Math. 202 (2025), Paper No. 103622, 17 pp.
\bibitem{Garnett}           J.~B. Garnett, Bounded analytic functions, revised first edition,
Graduate Texts in Mathematics, 236, Springer, New York, 2007.
\bibitem{GirelaBMOA}        D. Girela, Analytic functions of bounded mean oscillation, in Complex function spaces (Mekrij\"arvi, 1999), 61--170, Univ. Joensuu Dept. Math. Rep. Ser., 4, Univ. Joensuu, Joensuu.
\bibitem{Grafakos}          L. Grafakos,                                Classical Fourier analysis, third edition,
Graduate Texts in Mathematics, 249, Springer, New York, 2014.
\bibitem{Hardy}             G.~H. Hardy, Note on a theorem of Hilbert, Math. Z. 6 (1920), no.~3-4, 314--317.
\bibitem{HL1}               G.~H. Hardy and J.~E. Littlewood,
                            Some properties of fractional integrals. II, Math. Z. 34 (1932), no.~1, 403--439.
\bibitem{Heden}         H. Hedenmalm, B. Korenblum and K. Zhu,
                        Theory of Bergman spaces, Graduate Texts in Mathematics, 199, Springer, New York, 2000.
\bibitem{Koosis}            P.~J. Koosis, Introduction to $H_p$ spaces, second edition,
Cambridge Tracts in Mathematics, 115, Cambridge Univ. Press, Cambridge, 1998.
\bibitem{Landau}            E. Landau, I. Schur and G.~H. Hardy, A Note on a Theorem Concerning Series of Positive Terms: Extract from a Letter, J. London Math. Soc. 1 (1926), no.~1, 38--39.
\bibitem{LaPavlovic}        B. \L anucha, M.~T. Nowak and M. Pavlovi\'c, Hilbert matrix operator on spaces of analytic functions, Ann. Acad. Sci. Fenn. Math. 37 (2012), no.~1, 161--174.
\bibitem{Lin}          Q.~Z. Lin and H. Xie, Ces\`aro-type operators on derivative-type Hilbert spaces of analytic functions: the proof of a conjecture, J. Funct. Anal. 288 (2025), no.~6, Paper No. 110813, 22 pp.
\bibitem{Liouville}     J. Liouville,
                        M\'emoire sur quelques questions de g\'eom\'etrie et de m\'ecanique, et sur un nouveau genre de calcul pour r\'esoudre ces questions, Journal de l'\'Ecole Polytechnique, (1832), 13, Paris: 1-69.
\bibitem{LueckingProc}  D.~H. Luecking, Embedding derivatives of Hardy spaces into Lebesgue spaces, Proc. London Math. Soc. (3) 63 (1991), no.~3, 595--619.
\bibitem{MasMerchanRosa}    A. Mas, N. Merch\'an and E. de~la~Rosa, Generalized Ces\`aro operator acting on Hilbert spaces of analytic functions, Ann. Funct. Anal. 15 (2024), no.~3, Paper No. 56, 17 pp.
\bibitem{PavlovicLp}        M.~S. Mateljevi\'c{} and M. Pavlovi\'c, $L\sp{p}$-behaviour of the integral means of analytic functions, Studia Math. 77 (1984), no.~3, 219--237.
\bibitem{Miao}  J. Miao, The Ces\`aro operator is bounded on $H^p$ for $0< p<1$, Proc. Amer. Math. Soc. 116 (1992), no.~4, 1077--1079.
\bibitem{Moreno}        \'A.~M. Moreno, J.~\'A. Pel\'aez and E. de~la~Rosa, Fractional derivative description of the Bloch space, Potential Anal. 61 (2024), no.~3, 555--571.
\bibitem{Pan}       J. Pan, C. Tong, Z. Yang, Weighted  Ces\`aro type operators on the weighted Bergman spaces in the unit ball, https://arxiv.org/abs/2609.03243.
\bibitem{Pavlovic2004}M. Pavlovi\'c, Introduction to function spaces on the disk, Posebna Izdanja, 20, Mat. Inst. SANU, Belgrade, 2004.
\bibitem{Pavlovic2014}  M. Pavlovi\'c, Function classes on the unit disc, De Gruyter Studies in Mathematics, 52, De Gruyter, Berlin, 2014.
\bibitem{SS2014}    J.~\'A. Pel\'aez, Small weighted Bergman spaces, in  Proceedings of the Summer School in Complex and Harmonic Analysis, and Related Topics, 29--98, Publ. Univ. East. Finl. Rep. Stud. For. Nat. Sci., 22, Univ. East. Finl., Fac. Sci. For., Joensuu.
\bibitem{PelaezHp}  J.~\'A. Pel\'aez, Compact embedding derivatives of Hardy spaces into Lebesgue spaces, Proc. Amer. Math. Soc. 144 (2016), no.~3, 1095--1107.
\bibitem{PelaezDeLaRosa}    J.~\'A. Pel\'aez and E. de~la~Rosa,                                     Littlewood-Paley inequalities for fractional derivative on Bergman spaces, Ann. Fenn. Math. 47 (2022), no.~2, 1109--1130.
\bibitem{PR2014}    J.~\'A. Pel\'aez and J. R\"atty\"a, Weighted Bergman spaces induced by rapidly increasing weights, Mem. Amer. Math. Soc. 227 (2014), no.~1066, vi+124 pp.
\bibitem{PR2015}            J.~\'A. Pel\'aez and J. R\"atty\"a,                                     Embedding theorems for Bergman spaces via harmonic analysis, Math. Ann. 362 (2015), no.~1-2, 205--239.
\bibitem{PR2016}        J.~\'A. Pel\'aez and J. R\"atty\"a,
							Two weight inequality for Bergman projection, J. Math. Pures Appl. 105 (2016), 102--130.
\bibitem{PR2021}        J.~\'A. Pel\'aez and J. R\"atty\"a, Bergman projection induced by radial weight, Adv. Math. 391 (2021), Paper No. 107950, 70 pp.
\bibitem{PR2025} J.~\'A. Pel\'aez and J. R\"atty\"a, Small Hankel operators induced by measurable symbol acting on weighted Bergman spaces, https://arxiv.org/pdf/2407.04645.
\bibitem{PRS}               J.~\'A. Pel\'aez, J. R\"atty\"a and K. Sierra, Atomic decomposition and Carleson measures for weighted mixed norm spaces, J. Geom. Anal. 31 (2021), no.~1, 715--747.
\bibitem{PRWW}              A. Pennanen, J. R\"atty\"a, S. Wang and F. Wu,
                            Optimal off-diagonal upper estimates for Bergman reproducing
kernels, https://arxiv.org/pdf/2607.19959.
\bibitem{PeralaJGEA}        A. Per\"al\"a, Vanishing Bergman kernels on the disk, J. Geom. Anal. 28 (2018), no.~2, 1716--1727.
\bibitem{PeralaFrac}        A. Per\"al\"a, General fractional derivatives and the Bergman projection, Ann. Acad. Sci. Fenn. Math. 45 (2020), no.~2, 903--913.
\bibitem{PRW}               A. Per\"al\"a, J. R\"atty\"a{} and S. Wang,
                            Two-weight fractional derivative on Bloch and Bergman spaces, J. Geom. Anal. 35 (2025), no.~7, Paper No. 209, 18 pp.
\bibitem{Ransford}          T.~J. Ransford, Potential theory in the complex plane, London Mathematical Society Student Texts, 28, Cambridge Univ. Press, Cambridge, 1995.
\bibitem{Ross}          W.~T. Ross, The Ces\`aro operator, in Recent progress in function theory and operator theory, 185--215, Contemp. Math., 799, Amer. Math. Soc., RI.
\bibitem{Siskakis1}     A.~G. Siskakis, Composition semigroups and the Ces\`aro operator on $H^p$, J. London Math. Soc. (2) 36 (1987), no.~1, 153--164.
\bibitem{Siskakis2}     A.~G. Siskakis, The Ces\`aro operator is bounded on $H^1$, Proc. Amer. Math. Soc. 110 (1990), no.~2, 461--462.
\bibitem{Xie}              H. Xie, J.~M. Liu and Q.~Z. Lin, Ces\`aro-type operators on derivative-type Hilbert spaces of analytic functions II, J. Funct. Anal. 290 (2026), no.~5, Paper No. 111287, 27 pp.
\bibitem{ZhuFrac}       K. Zhu,
                        Bergman and Hardy spaces with small exponents, Pacific J. Math. 162 (1994), no.~1, 189--199.
\bibitem{Zhu}           K. Zhu, Operator theory in function spaces, second edition,
Mathematical Surveys and Monographs, 138, Amer. Math. Soc., Providence, RI, 2007.
\end{thebibliography}
\end{document}